\newcommand\relphantom[1]{\mathrel{\phantom{#1}}}
\begin{document}

\title{Galerkin finite element approximation for semilinear stochastic time-tempered fractional wave equations with multiplicative white noise and fractional Gaussian noise
\thanks{This work was supported by the National Natural Science Foundation of China under Grants No. 41875084, No. 11801452, No. 11671182, No. 11571153, and the Fundamental Research Funds for the Central Universities under Grant No. lzujbky-2018-ot03.}
%\thanks{Grants or other notes
%about the article that should go on the front page should be
%placed here. General acknowledgments should be placed at the end of the article.}
}
%\subtitle{Do you have a subtitle?\\ If so, write it here}

\titlerunning{Galerkin approximation for stochastic PDEs}        % if too long for running head

\author{Yajing Li         \and
        Yejuan Wang %$^{*}$       
        \and      %etc.
        Weihua Deng
}

%\authorrunning{Short form of author list} % if too long for running head

\institute{Y.J. Li \at
              College of Science, Northwest A \& F University, Yangling 712100, Shaanxi, P.R.
China
 \\
              %Tel.: +123-45-678910\\
%              Fax: +123-45-678910\\
              \email{hliyajing@163.com}           %  \\
%             \emph{Present address:} of F. Author  %  if needed
           \and
           Y.J. Wang %$^{*}$ 
           \at
              School of Mathematics and Statistics, Gansu Key Laboratory of Applied Mathematics and Complex Systems, Lanzhou University, Lanzhou 730000, P.R. China
              \\
              \email{wangyj@lzu.edu.cn % (corresponding author)
              }
            \and
           W.H. Deng \at
              School of Mathematics and Statistics, Gansu Key Laboratory of Applied Mathematics and Complex Systems, Lanzhou University, Lanzhou 730000, P.R. China
              \\
              \email{dengwh@lzu.edu.cn}
}
\date{Received: date / Accepted: date}
% The correct dates will be entered by the editor
\maketitle

\begin{abstract}
To model wave propagation in inhomogeneous media with frequency-dependent power-law attenuation,
it is needed to use the fractional powers of symmetric coercive elliptic operators in space and the Caputo tempered fractional derivative in time. The model studied in this paper is semilinear stochastic space-time fractional wave equations driven by infinite dimensional multiplicative white noise and fractional Gaussian noise, because of the potential fluctuations of the external sources. The purpose of this work is to discuss the Galerkin finite element approximation for the semilinear stochastic fractional wave equation. We first provide a complete solution theory, e.g., existence, uniqueness, and regularity. Then the space-time multiplicative white noise and fractional Gaussian noise are discretized, which results in a regularized stochastic fractional wave equation while introducing a modeling error in the mean-square sense. We further present a complete regularity theory for the regularized equation. A standard finite element approximation is used for the spatial operator, and the mean-square priori estimates for the modeling error and for the approximation error to the solution of the regularized problem are established. 
%Finally, numerical experiments are performed to confirm the theoretical analyses.
%In this paper we discuss the Galerkin finite element approximations for semilinear stochastic fractional wave equations with fractional diffusion and Caputo tempered fractional time derivative driven by infinite dimensional multiplicative white noise and fractional noise.
%We first provide a complete solution theory, e.g., existence, uniqueness, and regularity, for semilinear stochastic fractional wave equations with Lipschitz nonlinear terms.
%Further, we discrete the space-time multiplicative white noise and fractional noise, which introduces a modeling error in the mean-square sense and result in a regularized stochastic fractional wave equation; then the complete regularity theory of the regularized equation is presented.
%For the discretization in space, a standard finite element approximation is used and the mean-square priori estimates for the modeling error and for the approximation error to the solution of the regularized problem are established. Finally,  numerical experiments are performed to confirm the theoretical analysis.

\keywords{Galerkin finite element method \and semilinear stochastic time-tempered fractional wave
equation \and fractional Laplacian \and multiplicative white noise \and multiplicative fractional Gaussian noise}
% \PACS{PACS code1 \and PACS code2 \and more}
 \subclass{65M60 \and 60H35 \and 35R11}
\end{abstract}

%Galerkin finite element method, semilinear stochastic time tempered fractional wave
%equation, fractional Laplacian, multiplicative white noise, multiplicative fractional noise

\section{Introduction}
\label{intro}
%Your text comes here. Separate text sections with

The classical wave equation well models the wave propagation in ideal medium. However, the wave propagation in complex inhomogeneous media generally has frequency-dependent attenuation, being observed in a wide range of areas including acoustics, viscous damping in the seismic isolation of buildings, structural vibration, and seismic wave propagation \cite{Chen04,Li:16,Meerschaert15,Szabo94}. The striking power-law feature of the attenuated wave propagation implies that the Laplacian in classical equation should be replaced by fractional powers of symmetric coercive elliptic operators in space, while the time-tempered derivative should be substituted for second time derivative. Because of the finite time/space scale, the tempered power-law distribution in some sense becomes more reasonable choice compared with the pure power-law one \cite{Bruno,Cartea,Meerschaert}. There are already some discussions on the numerical methods or correct ways of specifying the boundary conditions for tempered fractional differential equations; see, e.g., \cite{Baeumer,Deng-Zhang,Gajda,Zayernouri,Zhang:16} and the references therein or \cite{Deng-Zhang-Book,Deng2018}. As for the fractional wave equations, there are also some progresses not only on their numerical methods  \cite{Cuesta:06,Delic,Fan:2017,Gao,Sun:06,Wei} but also on their fundamental solutions and  properties \cite{Boyadjiev,Ferreira,Kian,Schneider}.

%Based on the CTRW model with truncated power-law waiting time and/or jump length
%distribution(s), anomalous diffusion is described and presents a very
%slow transition from anomalous to normal diffusion.  Tempered fractional differential equations play a significant role in the modeling of anomalous diffusion process. They arise naturally in a wide variety of applications such as physical,
%biological, and chemical processes \cite{Bruno,Cartea,Meerschaert}. Numerical methods have been studied for the tempered fractional differential equations; see, e.g., \cite{Baeumer,Deng-Zhang,Gajda,Zayernouri,Zhang:16} and the references therein.
%The wave propagation with frequency-dependent attenuation has been observed in a wide range of areas, such as acoustics, viscous dampers in seismic isolation of buildings, structural vibration, seismic wave propagation (see \cite{Chen04,Meerschaert,Szabo94}, for instance). The traditional wave equation models wave propagation in an ideal medium. to characterize wave propagation in inhomogeneous media with frequency-dependent power-law attenuation, the space-time fractional wave equation appears. For the deterministic time fractional wave equations, the fundamental solutions and their properties have been considered in, for example, \cite{Boyadjiev,Ferreira,Kian,Schneider} and the references therein; different kinds of numerical methods and approximation schemes have been developed, e.g., the finite difference method \cite{Cuesta:06,Delic,Gao,Sun:06,Wei}, the finite element method \cite{Fan:2017}.

Random effects arise naturally in practically physical systems; the ones considered in this paper are on the fluctuations of the external sources, and the fluctuations include both infinite dimensional multiplicative white noise and fractional Gaussian noise, which drive the semilinear space-time fractional wave equations. The multiplicative noise can capture the effects of geometrical confinements \cite{Lau:07}. The fractional Gaussian noise is the formal derivative of the fractional Brownian motion (fBm) $B^H$, being a centered Gaussian process with a special covariance function determined by Hurst parameter $H \in (0, 1)$. For $H = \frac{1}{2}$, $B^{\frac{1}{2}}$ is the standard Brownian motion, the formal time derivative of which is white noise. For $H \neq \frac{1}{2}$, $B^H$ behaves in a way completely different from the standard Brownian motion; especially, neither is a semi-martingale nor a Markov process. In addition, the fBm with Hurst parameter $H \in (\frac{1}{2}, 1)$ enjoys the property of a long range memory, which roughly implies that the decay of stochastic dependence with respect to the past is only sub-exponentially slow. This long-range dependence property of the fBm makes it a realistic choice of noise for problems with long memory in the applied sciences.

%The model studied in this paper is a semilinear stochastic space-time fractional wave equations driven by infinite dimensional multiplicative white noise and fractional noise
%
%%we also consider a multiplicative noise, which captures the effects of geometrical confinement \cite{Lau:07}.
%
%
%In probability theory, a fractional Brownian motion (fBm) $B^H$ is a centered Gaussian process with a special covariance function determined by Hurst parameter $H \in (0, 1)$. For $H = \frac{1}{2}$,
%$B^{\frac{1}{2}}$ is the standard Brownian motion where the generalized temporal derivative is white
%noise. For $H \neq \frac{1}{2}$, $B^H$ behaves in a completely different way than the standard Brownian motion; in particular, neither is a semi-martingale nor a Markov process. In addition, the fBm with
%Hurst parameter $H \in (\frac{1}{2}, 1)$ enjoys the property of a long range memory, which
%roughly implies that the decay of stochastic dependence with respect to the past is
%only sub-exponentially slow. This long-range dependence property of the fBm makes
%it a realistic choice of noise for problems with long memory in the applied sciences.
%
%Random effects arise naturally in many practically physical systems and are often the source of instability.

With the above introduction of the fractional wave equation and the external noises, now we propose the model, which is a space-time fractional wave equation driven by three nonlinear external source terms: a deterministic term and two stochastic terms, being respectively white noise and fractional Gaussian noise. Specifically, the model is a semilinear stochastic time tempered fractional wave equation with $\frac{3}{2}<\alpha<2$, $\frac{1}{2}<\beta<1$, $\frac{1}{2}< H<1$, and $\nu>0$:
\begin{equation} \label{eq0.1}
\left\{ \begin{array}
 {l@{\quad} l}
\displaystyle _0^c\partial_t^{\alpha,\nu}u(t,x)+(-\Delta)^{\beta} u(t,x)=f(t,u(t,x))+g(t,u(t,x))\frac{\partial^2\mathbb{W}(t,x)}{\partial t\partial x}\\
\\
\displaystyle~~~~~~~~~~~~~~~~~~~~~~~~~~~~~~~
+h(t,u(t,x))\frac{\partial^2\mathbb{W}^H(t,x)}{\partial t\partial x} \quad \rm{in~}
(0,T]\times\mathcal{D},\\
\\
 u(t,x)=0 \quad  \rm{on~} (0,T]\times\partial\mathcal{D},\\
\\
u(0,x)=a(x),~ \partial_tu(t,x)|_{t=0}=b(x)\quad  \rm{in~} \mathcal{D},\\
\end{array}\right.
\end{equation}
where $\mathcal{D}\subset \mathbb{R}^d$, $d=1,2,3$,  is a bounded convex polygonal domain with a boundary $\partial\mathcal{D}$;  $_0^c\partial_t^{\alpha,\nu}$ denotes the left-sided
Caputo tempered fractional derivative of order $\alpha$
with respect to $t$; $(-\Delta)^{\beta}$ is the
fractional Laplacian, the definition of which is based on the spectral decomposition of the Dirichlet Laplacian, as adopted in \cite{Nochetto15}; $\frac{\partial^2\mathbb{W}(t,x)}{\partial t\partial x}$ and $\frac{\partial^2\mathbb{W}^H(t,x)}{\partial t\partial x}$, respectively, represent the infinite dimensional white noise and fractional Gaussian noise defined on a complete filtered probability space $(\Omega, \mathcal{F}, \{\mathcal{F}_t \}_{t\geq 0}, \mathbb{P})$;
and the initial data $a$ and $b$ are $\mathcal{F}_0$-measurable random variables. Assumptions on the smoothness of the nonlinearites $f$, $g$, and $h$ will be given below.

Numerical approximations of stochastic wave equations with classical derivatives have been considered in recent literatures; see, e.g., \cite{Anton:2016,Cohen:2013,Du,Kovacs:2010,Wang:2014}. Stochastic solutions to wave equations with additive or multipicative fractional Gaussian noise have been studied in, for example, \cite{Balan:2012,Balan:2010,Caithamer,Sardanyons,Stojanovic,Tang:2010} and the references therein. There has, however, been  little mention of numerical approximations for semilinear stochastic fractional wave equations with fractional Gaussian noise even for the linear case. Very recently, we investigated the Galerkin finite element approximations for linear stochastic space-time fractional wave equations with an infinite dimensional additive noise \cite{Li:16}. The purpose of this paper is to consider the Galerkin finite element approximations for semilinear stochastic fractional wave equations with fractional Laplacian in space and Caputo tempered fractional time derivative driven by infinite dimensional multiplicative white noise and fractional Gaussian noise. The novelty and the difficulties of this work are in three aspects: (i) The nonlinear terms and nonlinear multiplicative noises (In comparison with our results recently published in \cite{Li:16}, the analysis of the nonlinear parts requires different mathematical machineries in order to derive error estimates); (ii) The multiplicative fractional Gaussian noise term (Since the fractional Brownian motion, neither is a semi-martingale nor a Markov process, and does not have the property of independent increments, some new ideas for dealing with multiplicative fractional Gaussian noise are developed here);  % to circumvent the difficulty caused by fractional Gaussian noise.
(iii) The complete solution theory (Here we develop a complete solution theory, e.g., existence, uniqueness, and regularity, for semilinear stochastic fractional wave equations with multiplicative white noise and fractional Gaussian noise in order to conduct new and very complicated error estimates).

This paper is organized as follows. In section \ref{sec:2}, we first
introduce some basic definitions, notations, and necessary preliminaries. We then, in section \ref{sec:3}, prove the existence, uniqueness, and regularity for the semilinear stochastic fractional wave equation. In Section \ref{sec:4}, we discretize the space-time multiplicative white noise and fractional Gaussian noise, which result in a regularized semilinear stochastic fractional wave equation while introducing a modeling error in the mean-square sense. The convergence order of the modeling error and the regularity of the regularized equation are well established. Section \ref{sec:5} is devoted to providing the finite element scheme for the regularized semilinear stochastic fractional wave equation, and the corresponding very detailed mean-square error estimates are presented. We conclude the paper with some discussions in the last section.
%In Section \ref{sec:6}, the numerical experiments are performed to confirm the convergence orders of the modeling error and the finite element approximations to the regularized equation.

\section{Preliminaries}
\label{sec:2}

In this section, we recall some basic definitions, notations, and necessary preliminaries; and collect useful facts on the Mittag-Leffler function, the Brownian motion, and the fractional Brownian motion.

\subsection{Fractional Laplacian and Caputo tempered fractional derivative} \label{subsec:2.2}
The operator $-\Delta$ $: L^2(\mathcal{D})\rightarrow L^2(\mathcal{D})$, with the domain
Dom$(-\Delta)=\{u \in H^1_0(\mathcal{D}),~ \Delta u \in L^2(
\mathcal{D})\}$, is positive, unbounded, and closed; and its inverse is
compact. Hence the spectrum of the operator $-\Delta$ is discrete, real, positive, and
accumulates at infinity. Moreover, the eigenfunctions $\{\varphi_k\}_{k\in \mathbb{N}} \subset
H^1_0 (\mathcal{D})$ satisfying
\begin{equation}\label{1.1}
\left\{
  \begin{array}{ll}
    -\Delta \varphi_k(x)=\lambda_k\varphi_k(x) & \hbox{in $\mathcal{D}$,} \\
\\
    \varphi_k(x)=0 & \hbox{on $\partial\mathcal{D}$,~ $k \in \mathbb{N}$,}
  \end{array}
\right.
\end{equation}
form an orthonormal basis of $L^2(\mathcal{D})$. Consequently, $\{\varphi_k\}_{k\in \mathbb{N}}$ is an orthogonal basis of $H^1_0(\mathcal{D})$ and $\|\nabla_x\varphi_k\|_{L^2(\mathcal{D})} =\sqrt{\lambda_k}$.

For any $s\in \mathbb{R}$, we denote by $\mathbb{H}^s(\mathcal{D})\subset L^2(\mathcal{D})$
the Hilbert space induced by the norm
\[\|u\|^2_{\mathbb{H}^s}=\sum^\infty_{k=1}\lambda^s_k (u,\varphi_k)^{2}.\]
In particular, $\mathbb{H}^0(\mathcal{D})=L^2(\mathcal{D})$ with $\|\cdot\|$ denoting the norm in $\mathbb{H}^0(\mathcal{D})$ and $(\cdot,\cdot)$ denoting the inner
product of $\mathbb{H}^0(\mathcal{D})$, $\mathbb{H}^1(\mathcal{D})=H_0^1(\mathcal{D})$,
and $\mathbb{H}^2(\mathcal{D})=H^2(\mathcal{D})\cap H_0^1(\mathcal{D})$. Then for any
$u\in\mathbb{H}^{2s}$, we have
\[(-\Delta)^s u =\sum^\infty_{k=1}\lambda^s_k (u,\varphi_k)\varphi_k,\]
which is the fractional Laplacian, also adopted in \cite{Nochetto15}.

Then we give some concepts of fractional calculus; for more details, one can refer to \cite{Cartea}, \cite[p. 91]{Kilbas}, and \cite[p. 78]{Podlubny}.
\begin{definition}
The left fractional integral of order $\alpha>0$ for a function $u$ is defined as
\[_0I_t^\alpha u(t)=\frac{1}{\Gamma(\alpha)}\int_0^t(t-s)^{\alpha-1}u(s)ds,\quad t>0,\]
where $\Gamma(\cdot)$ is the Gamma function.
\end{definition}
\begin{definition}\label{def2.2}
The left Caputo fractional derivative of order $\alpha>0$ for a function $u$ is defined as
\[^c_0\partial_t^\alpha u(t)=\frac{1}{\Gamma(n-\alpha)}\int_0^t(t-s)^{n-\alpha-1}\frac{\partial^nu(s)}{\partial s^n}ds,\quad t>0,~~0\leq n-1<\alpha<n,\]
where the function $u(t)$ has absolutely continuous derivatives up to order $n-1$.
\end{definition}
\begin{definition}\label{def2.3}
For $\alpha>0$, $\nu>0$, the left tempered fractional integral of order $\alpha$ for a function $u$ is defined as
\[_0I_t^{\alpha,\nu} u(t):=e^{-\nu t} {_0I}_t^{\alpha}[e^{\nu t}u(t)] =\frac{1}{\Gamma(\alpha)}\int_0^t(t-s)^{\alpha-1}e^{-\nu(t-s)}u(s)ds,\quad t>0.\]
\end{definition}
\begin{definition}\label{def2.4}
For $\alpha>0$, $\nu>0$, the left Caputo tempered fractional derivative of order $\alpha$ for a function $u$ is defined as
\begin{align*}
&^c_0\partial_t^{\alpha,\nu} u(t):=e^{-\nu t} {^c_0\partial}_t^\alpha [e^{\nu t}u(t)]\\
& \relphantom{=}{}=e^{-\nu t}\frac{1}{\Gamma(n-\alpha)}\int_0^t\frac{e^{\nu s}}{(t-s)^{\alpha-n+1}}\left(\frac{\partial}{\partial s}+\nu\right)^nu(s)ds,
\\
& \relphantom{====}{}
\quad t>0,~~0\leq n-1<\alpha<n,\end{align*}
where the function $u(t)$ has absolutely continuous derivatives up to order $n-1$, and
\[\left(\frac{\partial}{\partial s}+\nu\right)^n=\left(\frac{\partial}{\partial s}+\nu\right)
\left(\frac{\partial}{\partial s}+\nu\right)\cdots\left(\frac{\partial}{\partial s}+\nu\right).\]
\end{definition}
If $u$ is an abstract function belonging to $\mathbb{H}^s$ $(s\geq 0)$, then the integrals which appear in the above definitions are taken in Bochner's sense. A measurable function
$u:[0,\infty)\rightarrow \mathbb{H}^s$ is Bochner-integrable if $\|u\|_{\mathbb{H}^s}$ is Lebesgue-integrable.

\subsection{Mittag-Leffler function}\label{subsec:2.1}
Throughout this paper, we shall frequently use the Mittag-Leffler function $E_{\alpha,\beta}(z)$
defined as follows:
\begin{align}\label{eq.2.1}
E_{\alpha,\beta}(z)=\sum\limits_{k=0}^{\infty}\frac{z^k}{\Gamma(k\alpha+\beta
)},\quad z\in \mathbb{C},
\end{align}
%The Mittag-Leffler function $E_{\alpha,\beta}(z)$
which is a two-parameter family of
entire functions in $z$ of order $\alpha^{-1}$ and type 1 \cite[p. 42]{Kilbas}, and generalizes the exponential function in the sense that $E_{1,1}(z)=e^z$.
 For later use, we collect some results in the next lemma; see \cite{Kilbas,Podlubny}.
\begin{lemma}\label{le2.1}
Let $0<\alpha<2$ and $\beta\in\mathbb{R}$ be arbitrary. We suppose that $\mu$
is an arbitrary real number
 such that $\frac{\pi\alpha}{2}<\mu< \min(\pi,\pi\alpha)$.
Then there exists a constant $C=C(\alpha,\beta,\mu)>0$ such that
\begin{equation}\label{eq.2.3}
|E_{\alpha,\beta}(z)|\leq \frac{C}{1+|z|}, \quad\quad \mu\leq|\arg(z)|\leq \pi.
\end{equation}
Moreover, for $\lambda>0$, $\alpha>0$, and positive integer $m\in \mathbb{N}$, we have
\begin{equation}\label{eq.2.4}
\frac{d^m}{d t{^m}} E_{\alpha,1}(-\lambda^\beta t^\alpha)=
-\lambda^\beta t^{\alpha-m}E_{\alpha,\alpha -m+1}(-\lambda^\beta t^\alpha),
\quad t > 0
\end{equation}
and
\begin{equation}\label{eq.2.5}
\frac{d}{d t}\left(tE_{\alpha,2}(-\lambda^\beta t^\alpha)\right)=
E_{\alpha,1}(-\lambda^\beta t^\alpha), \quad t \geq 0.
\end{equation}
\end{lemma}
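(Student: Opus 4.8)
The plan is to dispose of the two differentiation identities \eqref{eq.2.4} and \eqref{eq.2.5} by termwise operations on the defining series \eqref{eq.2.1}, and to reserve the real work for the sectorial estimate \eqref{eq.2.3}, which rests on the asymptotic theory of the Mittag-Leffler function. Since $E_{\alpha,\beta}$ is entire, the series $E_{\alpha,1}(-\lambda^\beta t^\alpha)=\sum_{k=0}^\infty \frac{(-\lambda^\beta)^k}{\Gamma(k\alpha+1)}\,t^{k\alpha}$ and all of its termwise $t$-derivatives converge locally uniformly on $(0,\infty)$, so I may differentiate under the summation.

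For \eqref{eq.2.4}, the $k=0$ term is constant and drops out after one differentiation; for $k\geq 1$ one has $\frac{d^m}{dt^m}t^{k\alpha}=\frac{\Gamma(k\alpha+1)}{\Gamma(k\alpha-m+1)}t^{k\alpha-m}$, whose numerator cancels the $\Gamma(k\alpha+1)$ in the denominator of the series. After the index shift $k\mapsto k+1$ the surviving sum is exactly $-\lambda^\beta t^{\alpha-m}\sum_{j=0}^\infty \frac{(-\lambda^\beta t^\alpha)^j}{\Gamma(j\alpha+\alpha-m+1)}$, which is the right-hand side of \eqref{eq.2.4}. Identity \eqref{eq.2.5} is the same computation with $m=1$: differentiating $t\,E_{\alpha,2}(-\lambda^\beta t^\alpha)=\sum_{k=0}^\infty \frac{(-\lambda^\beta)^k}{\Gamma(k\alpha+2)}t^{k\alpha+1}$ produces the factor $(k\alpha+1)/\Gamma(k\alpha+2)=1/\Gamma(k\alpha+1)$, which reassembles $E_{\alpha,1}(-\lambda^\beta t^\alpha)$.

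The substantive part is \eqref{eq.2.3}. I would start from the Hankel-loop representation
\[E_{\alpha,\beta}(z)=\frac{1}{2\pi i}\int_{\mathcal{H}}\frac{\zeta^{\alpha-\beta}e^{\zeta}}{\zeta^\alpha-z}\,d\zeta,\]
where $\mathcal{H}$ runs in from $-\infty$ below the negative real axis, encircles the origin, and returns to $-\infty$ above it; this integral reproduces \eqref{eq.2.1} through the Hankel formula $\frac{1}{\Gamma(s)}=\frac{1}{2\pi i}\int_{\mathcal{H}}\zeta^{-s}e^{\zeta}\,d\zeta$. The decisive point is the location of the root $\zeta=z^{1/\alpha}$ of the denominator relative to the contour. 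In the sector $\mu\leq|\arg z|\leq\pi$ with $\mu>\tfrac{\pi\alpha}{2}$, the root satisfies $|\arg(z^{1/\alpha})|=|\arg z|/\alpha>\pi/2$, hence $\mathrm{Re}(z^{1/\alpha})<0$; so after deforming $\mathcal{H}$ past this root the residue contribution $\tfrac{1}{\alpha}z^{(1-\beta)/\alpha}e^{z^{1/\alpha}}$ is exponentially small and dominated by any algebraic power. Expanding $\frac{1}{\zeta^\alpha-z}=-\sum_{k=1}^{N}\frac{\zeta^{\alpha(k-1)}}{z^k}+\frac{\zeta^{\alpha N}}{z^N(\zeta^\alpha-z)}$ and integrating termwise (again via the Hankel formula, with $s=\beta-\alpha k$) yields the classical asymptotic expansion $E_{\alpha,\beta}(z)=-\sum_{k=1}^N \frac{z^{-k}}{\Gamma(\beta-\alpha k)}+O(|z|^{-N-1})$ uniformly in the sector. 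Taking $N=1$ gives $|E_{\alpha,\beta}(z)|\leq C|z|^{-1}$ for $|z|\geq R$; since $E_{\alpha,\beta}$ is entire and hence bounded on the compact set $\{|z|\leq R\}\cap\{\mu\leq|\arg z|\leq\pi\}$, combining the two regimes produces $|E_{\alpha,\beta}(z)|\leq C/(1+|z|)$ throughout the sector, which is \eqref{eq.2.3}.

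The main obstacle is precisely this sectorial asymptotics: one must justify the contour deformation and verify that the lower constraint $\mu>\pi\alpha/2$ places the sector beyond the line $|\arg z|=\pi\alpha/2$ where $\mathrm{Re}(z^{1/\alpha})$ changes sign, so that the exponential (residue) term decays rather than grows, while the upper constraint $\mu<\min(\pi,\pi\alpha)$ keeps the sector valid and separated from the region where that term dominates. Establishing the uniformity of the remainder $O(|z|^{-N-1})$ in $\arg z$ across the closed sector, including the endpoints $|\arg z|=\mu$ and $|\arg z|=\pi$, also requires care near the transition line. By contrast, \eqref{eq.2.4} and \eqref{eq.2.5} are routine once differentiation under the summation is justified.
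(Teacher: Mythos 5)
Your proof is correct. The paper itself does not prove this lemma---it merely collects these facts with a pointer to \cite{Kilbas} and \cite{Podlubny}---and what you have written is precisely the standard argument from those references: termwise differentiation of the defining series for \eqref{eq.2.4} and \eqref{eq.2.5} (the cancellation $\Gamma(k\alpha+2)=(k\alpha+1)\Gamma(k\alpha+1)$ and the index shift $k\mapsto k+1$ are exactly right), and the Hankel-contour representation with the finite geometric expansion of $(\zeta^\alpha-z)^{-1}$ for the sectorial bound \eqref{eq.2.3}, which is Theorem 1.6 of \cite{Podlubny}. One remark on presentation: in the sector $\mu\leq|\arg z|\leq\pi$ the textbook treatment chooses the opening angle of the contour strictly between $\pi\alpha/2$ and $\mu$, so that the pole $\zeta=z^{1/\alpha}$ is never enclosed and no residue term appears at all; your variant, in which the residue is picked up but is exponentially small because $\mathrm{Re}\,(z^{1/\alpha})<0$ once $|\arg z|/\alpha>\pi/2$, reaches the same conclusion. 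In either formulation you correctly identify that the lower constraint $\mu>\pi\alpha/2$ is what renders the exponential contribution harmless, and combining the $N=1$ algebraic term for $|z|\geq R$ with boundedness of the entire function on the truncated sector yields the stated bound $C/(1+|z|)$.
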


\subsection{Infinite dimensional white noise and fractional Gaussian noise}\label{subsec:2.3}
Let $(\Omega, \mathcal{F}, \{\mathcal{F}_t \}_{t\geq 0}, \mathbb{P})$ be a
complete filtered probability space satisfying that $\mathcal{F}_0$ contains
all $\mathbb{P}$-null sets of $\mathcal{F}$.
We define $L^2(\Omega; \mathbb{H}^{s}(\mathcal{D}))$ as the separable Hilbert space
of all strongly measurable, square-integrable
 random variables $\omega$, with values in $\mathbb{H}^{s}(\mathcal{D})$
 such that
\[\|\omega\|^2_{L^2(\Omega; \mathbb{H}^{s}(\mathcal{D}))}=\mathbb{E}
\|\omega\|_{\mathbb{H}^{s}}^2, \]
where $\mathbb{E}$ denotes the expectation. In the sequel, $C$ denotes an arbitrary positive constant, which may be different from line to line and even in the same line.
\begin{definition}
The two-sided one-dimensional fBm with Hurst index $H\in(0,1)$ is a Gaussian process $\xi^H=\{\xi^H(t),t\in\mathbb{R}\}$ on $(\Omega,\mathcal{F},\mathbb{P})$, having the properties
\begin{enumerate}
  \item [$(i)$] $\xi^H(0)=0$,
  \item [$(ii)$] $\mathbb{E}\xi^H(t)=0$, ~$t\in \mathbb{R}$,
  \item [$(iii)$] $\mathbb{E}[\xi^H(t)\xi^H(s)]=\frac{1}{2}\left(
      |t|^{2H}+|s|^{2H}-|t-s|^{2H}\right)$, ~$t$, $s\in\mathbb{R}$.
\end{enumerate}
\end{definition}
\begin{remark}
For $H=\frac{1}{2}$, we set $\xi^\frac{1}{2}(t)=\xi(t)$, where $\xi$ is a standard Brownian motion;  in this case the increments of the process are independent. On the contrary, for $H\neq \frac{1}{2}$ the increments are not independent.
\end{remark}
%Let $C^{\lambda}([0,T];L^2(\Omega; \mathbb{H}^{s}(\mathcal{D})))$ be the Banach space of H\"{o}lder continuous functions with exponent $\lambda > 0$ having values in $L^2(\Omega; \mathbb{H}^{s}(\mathcal{D}))$. A norm on this space is given by
%\[\|\omega\|^2_{C^{\lambda}}=\sup_{t\in[0,T]}\mathbb{E}\|\omega(t)\|_{\mathbb{H}^{s}}^2+
%\sup_{0\leq s<t \leq T}\frac{\mathbb{E}\|\omega(t)-\omega(s)\|_{\mathbb{H}^{s}}^2}{|t-s|^{2\lambda}}.\]
%$C([0, T]; L^2(\Omega; \mathbb{H}^{s}(\mathcal{D})))$ denotes the space of continuous functions on $[0, T]$ with values in $L^2(\Omega; \mathbb{H}^{s}(\mathcal{D}))$ with finite supremum norm.

Let $\mathbb{U}$ be a separable Hilbert space endowed with a Hilbert basis
$\{e_k\}_{k\geq1}$. We then consider $\frac{\partial^2{\mathbb{W}}(t,x)}{\partial t \partial x}$ and $\frac{\partial^2{\mathbb{W}^H}(t,x)}{\partial t \partial x}$, respectively, the $\mathbb{U}$-valued white noise and fractional Gaussian noise defined on $(\Omega, \mathcal{F},
\{\mathcal{F}_t \}_{t\geq 0}, {\mathbb{P}})$ such that
\begin{equation}\label{eq1.3}
 \frac{\partial^2{\mathbb{W}}(t,x)}{\partial t \partial x} =\sum\limits_{k=1}^{\infty}\varsigma_k(t)\dot{\xi}_k(t)e_k(x)
\end{equation}
and
\begin{equation}\label{eq1.3*}
 \frac{\partial^2{\mathbb{W}^H}(t,x)}{\partial t \partial x} =\sum\limits_{k=1}^{\infty}\varrho_k(t)\dot{\xi}_k^H(t)e_k(x),
\end{equation}
where $\varsigma_k(t)$ and $\varrho_k(t)$ are  continuous functions, rapidly decaying with the increase of $k$ to ensure
the convergence of the series, $\{\xi_k\}_{k=1}
^{\infty}$ and $\{\xi_k^H\}_{k=1}^{\infty}$, respectively, are the sequences of mutually independent one-dimensional standard Brownian motions and fractional Brownian motions with Hurst index $H\in\left(\frac{1}{2},1\right)$;
$\dot{\xi}_k(t)=\frac{d\xi_k(t)}{dt}$, $k=1,2,\cdots$, is the white noise, the
formal derivative of the Brownian motion $\xi_k(t)$, $k=1,2,\cdots$; and $\dot{\xi}_k^H(t)=\frac{d\xi_k^H(t)}{dt}$, $k=1,2,\cdots$, is the fractional Gaussian noise, the formal derivative of the fractional Brownian motion $\xi_k^H(t)$, $k=1,2,\cdots$.

We define $\mathbb{H}:=L^2(\mathcal{D})$ and denote the space of bounded linear operators from $\mathbb{U}$ to $\mathbb{H}$ by $\mathcal{L}(\mathbb{U},\mathbb{H})$. Let
\[\mathcal{L}_2^0(\mathbb{U}, \mathbb{H}) = \left\{R \in \mathcal{L}(\mathbb{U}, \mathbb{H}) :
\sum_{k=1}^{\infty}\|R \cdot e_k\|^2 < \infty\right\} \]
be the set of Hilbert-Schmidt operators from $\mathbb{U}$ to $\mathbb{H}$, and endow this set with the inner product $(R, S)_{\mathcal{L}_2^0}=\sum_{k}(Re_k, Se_k)$,
so that $\mathcal{L}_2^0(\mathbb{U}, \mathbb{H})$ can be considered as a Hilbert space with the norm $\|R\|_{\mathcal{L}_2^0}=\left(\sum\limits_{k=1}^{\infty}
\|R\cdot e_k\|^2\right)^{\frac{1}{2}}$.

The following notations will be used throughout the paper.
\begin{remark}\label{rem2.2}
For $g, h\in L^2(0,T;\mathcal{L}_2^0(\mathbb{U},\mathbb{H}))$,
$\frac{\partial^2{\mathbb{W}}(t,x)}{\partial t \partial x} =\sum\limits_{k=1}^{\infty}\varsigma_k(t)\dot{\xi}_k(t)e_k(x)$, and $\frac{\partial^2{\mathbb{W}^H}(t,x)}{\partial t \partial x} =\sum\limits_{k=1}^{\infty}\varrho_k(t)\dot{\xi}_k^H(t)e_k(x)$, we introduce the
notations $g(t,u)\frac{\partial^2{\mathbb{W}}(t,x)}{\partial t \partial x}$ and $h(t,u)\frac{\partial^2{\mathbb{W}^H}(t,x)}{\partial t \partial x}$ to formally represent 
\begin{equation}\label{eq.2.8}
\begin{split}
g(t,u)\frac{\partial^2{\mathbb{W}}(t,x)}{\partial t \partial x} &=
\sum\limits_{k=1}^{\infty}g(t,u)\cdot e_k\varsigma_k(t)\dot{\xi}_k(t)
\\
&=\sum\limits_{j,k=1}^{\infty}(g(t,u)\cdot e_k,\varphi_j)\varphi_j\varsigma_k(t)\dot{\xi}_k(t)\\
&=\sum\limits_{j,k=1}^{\infty}g^{j,k}(t)\varphi_j\varsigma_k(t)\dot{\xi}_k(t)
\end{split}\end{equation}
and
\begin{equation}\label{eq.2.9}
\begin{split}
h(t,u)\frac{\partial^2{\mathbb{W}^H}(t,x)}{\partial t \partial x} &=\sum\limits_{k=1}^{\infty}h(t,u)\cdot e_k\varrho_k(t)\dot{\xi}_k^H(t)
\\
&=\sum\limits_{j,k=1}^{\infty}(h(t,u)\cdot e_k,\varphi_j)\varphi_j\varrho_k(t)\dot{\xi}_k^H(t)\\
&=\sum\limits_{j,k=1}^{\infty}h^{j,k}(t)\varphi_j\varrho_k(t)\dot{\xi}_k^H(t),
\end{split}\end{equation}
where we have used the decomposition
\[g(t,u)\cdot e_k=\sum_{j=1}^\infty g^{j,k}(t)\varphi_j,\quad g^{j,k}(t):=(g(t,u)\cdot e_k,\varphi_j),\]
and
\[h(t,u)\cdot e_k=\sum_{j=1}^\infty h^{j,k}(t)\varphi_j,\quad h^{j,k}(t):=(h(t,u)\cdot e_k,\varphi_j),\]
which make sense since, from assumptions, $g(t,u)\cdot e_k$ and $h(t,u)\cdot e_k$ belong to $\mathbb{H}$ and $\{\varphi_j\}_{j\in\mathbb{N}^+}$ is a Hilbert basis of $\mathbb{H}$.
\end{remark}
The following proposition plays an important role in the proof of this paper (see, for instance, \cite{Kloeden,Mishura}).
\begin{proposition}\label{pro1}  For $H > \frac{1}{2}$ and $f, g \in L^2
(\Omega\times[0,T];\mathbb{R})$, we have
\begin{equation*}
\mathbb{E}\int^T_0f(s)d\xi^H(s)=0,
\end{equation*}
%and
\begin{equation*}
\begin{split}
& \mathbb{E}\left[\int^T_0f(s)d\xi^H(s)\int^T_0g(s)d\xi^H(s)\right]
\\
& =H(2H-1)
\int^T_0\int^T_0\mathbb{E}[f(s)g(r)]|s-r|^{2H-2}drds,
\end{split}
\end{equation*}
and
\begin{equation*}
\mathbb{E}\left[\int^T_0f(s)d\xi(s)\int^T_0g(s)d\xi(s)\right]=\int^T_0\mathbb{E}[f(s)g(s)]ds.
\end{equation*}
\end{proposition}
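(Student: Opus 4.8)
The plan is to reduce all three identities to explicit computations for simple (step) integrands and then to extend to general integrands by a density argument in the appropriate Hilbert space. The key analytic fact powering the second-moment identity is that the covariance $R_H(t,s)=\mathbb{E}[\xi^H(t)\xi^H(s)]=\frac12(|t|^{2H}+|s|^{2H}-|t-s|^{2H})$ satisfies
\[
\frac{\partial^2 R_H}{\partial t\,\partial s}(t,s)=H(2H-1)|t-s|^{2H-2},\qquad t\neq s,
\]
which one verifies by differentiating twice, the $\mathrm{sgn}(t-s)$ boundary contribution being annihilated by the second derivative. The hypothesis $H>\frac12$ ensures $2H-2>-1$, so this kernel is integrable on $[0,T]^2$, which is exactly what is needed for the right-hand sides to be finite.

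First I would treat deterministic step functions $f=\sum_i a_i\mathbf{1}_{[t_i,t_{i+1})}$, for which $\int_0^T f\,d\xi^H=\sum_i a_i\bigl(\xi^H(t_{i+1})-\xi^H(t_i)\bigr)$ is a finite Gaussian sum. The first identity is then immediate from property $(ii)$ of the fBm by linearity. For the second identity I would expand the product of two such sums, use property $(iii)$ to rewrite each increment covariance $\mathbb{E}\bigl[(\xi^H(b)-\xi^H(a))(\xi^H(d)-\xi^H(c))\bigr]$ as the second difference $R_H(b,d)-R_H(b,c)-R_H(a,d)+R_H(a,c)$, and then recognize this, via the fundamental theorem of calculus, as $\int_a^b\int_c^d \partial_t\partial_s R_H=H(2H-1)\int_a^b\int_c^d|t-s|^{2H-2}\,ds\,dt$. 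Summing over the blocks reassembles precisely $H(2H-1)\int_0^T\int_0^T f(s)g(r)|s-r|^{2H-2}\,dr\,ds$.

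Next I would pass to general integrands by density: the map $f\mapsto\int_0^T f\,d\xi^H$ is, by construction, an isometry from the space $\mathcal{H}$ of integrands endowed with the inner product $\langle f,g\rangle_{\mathcal{H}}=H(2H-1)\int_0^T\int_0^T f(s)g(r)|s-r|^{2H-2}\,dr\,ds$ into $L^2(\Omega)$, and step functions are dense in $\mathcal{H}$. Taking limits along approximating step functions preserves both the zero-mean property and the second-moment formula, and a polarization together with Fubini's theorem carries the identity over to random integrands $f,g\in L^2(\Omega\times[0,T])$ with $\mathbb{E}[f(s)g(r)]$ in place of $f(s)g(r)$. The third identity is the classical It\^o isometry for the standard Brownian motion $\xi=\xi^{1/2}$, obtainable by the same scheme with the degenerate kernel collapsing the double integral onto the diagonal, or simply by invoking the polarized It\^o isometry for adapted integrands.

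The main obstacle I anticipate is making the limiting step fully rigorous for genuinely random integrands. In contrast to the It\^o case, the integral against $\xi^H$ for $H>\frac12$ must be read as a Skorohod (divergence) or pathwise Young integral, and the clean second-moment identity holds only once the Malliavin-derivative trace corrections are shown to vanish under the operative hypotheses (e.g.\ for adapted or deterministic integrands). I would therefore fix the precise integration theory, control the convergence of the approximating simple functions in the $\mathcal{H}$-norm despite the singularity of $|s-r|^{2H-2}$, and confirm that no correction terms survive, after which the stated formulas follow from the isometry property as recorded in \cite{Kloeden,Mishura}.
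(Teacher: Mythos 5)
The paper does not actually prove this proposition: it is stated as a quoted fact with a pointer to \cite{Kloeden,Mishura}, so there is no in-paper argument to measure your proposal against. On its own terms, your outline is the standard derivation and is sound as far as it goes: the identity $\partial_t\partial_s R_H(t,s)=H(2H-1)|t-s|^{2H-2}$ for $t\neq s$, the expansion over step functions using property $(iii)$ and the fundamental theorem of calculus to rewrite second differences of $R_H$ as double integrals of the kernel, the integrability of $|s-r|^{2H-2}$ on $[0,T]^2$ for $H>\tfrac12$, and the extension by the isometry $f\mapsto\int_0^T f\,d\xi^H$ from the closure of the step functions in $\mathcal{H}$ into $L^2(\Omega)$ together with polarization. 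This cleanly establishes all three identities for deterministic integrands, the third being the classical It\^o isometry.

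The gap you flag in your final paragraph is, however, genuine and is not closed by the proposal. For random $f,g\in L^2(\Omega\times[0,T];\mathbb{R})$ and $H>\tfrac12$ the displayed second-moment identity is not true in general: if the integral is read as a divergence (Skorohod) integral, the second moment acquires an additional Malliavin trace term of the form $\mathbb{E}\int\int D_r f(s)\,D_s g(r)\,\phi(s,r)\,dr\,ds$ (with $\phi(s,r)=H(2H-1)|s-r|^{2H-2}$), which vanishes for deterministic integrands but not otherwise; adaptedness does not kill it, because fBm increments are not independent of the past. If instead the integral is read pathwise (Young), even the zero-mean identity can fail for random integrands. So ``confirm that no correction terms survive'' is precisely the missing step, not a formality: it requires either restricting the admissible class of $f,g$ or accepting the formula only as an upper bound --- which, notably, is all the paper ever extracts from it, since Lemma \ref{le2.10} and its applications use only the resulting inequality. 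Your argument is complete for the deterministic and Brownian cases; to obtain the proposition as literally stated you would need to fix the integral's meaning, delimit the class of integrands, and verify the vanishing of the trace term there.
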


As a simple consequence of Proposition \ref{pro1}, we have
\begin{lemma}\label{le2.10}
Let $H>\frac{1}{2}$ and $\phi\in L^2(\Omega\times[0,T];\mathbb{R})$. Then for any $t_1$, $t_2\in[0,T]$ with $t_2>t_1$,
\[\mathbb{E}\bigg{|}\int_{t_1}^{t_2}\phi(s)d\xi^H(s)\bigg{|}^2\leq 2H(t_2-t_1)^{2H-1}
\int_{t_1}^{t_2}\mathbb{E}|\phi(s)|^2ds.\]
\end{lemma}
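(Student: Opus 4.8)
The plan is to apply the second identity in Proposition \ref{pro1} directly, after which the estimate reduces to controlling a singular but integrable kernel. First I would apply the covariance formula with $f=g=\phi$ restricted to $[t_1,t_2]$ (extending $\phi$ by zero outside this interval), which yields the exact identity
\[
\mathbb{E}\bigg|\int_{t_1}^{t_2}\phi(s)\,d\xi^H(s)\bigg|^2
= H(2H-1)\int_{t_1}^{t_2}\int_{t_1}^{t_2}\mathbb{E}[\phi(s)\phi(r)]\,|s-r|^{2H-2}\,dr\,ds.
\]
Since $H\in(\tfrac12,1)$, the exponent $2H-2$ lies in $(-1,0)$, so $|s-r|^{2H-2}$ is locally integrable and the double integral is well defined; the hypothesis $\phi\in L^2(\Omega\times[0,T];\mathbb{R})$ guarantees $\mathbb{E}[\phi(s)\phi(r)]$ is meaningful for a.e.\ $(s,r)$.

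Next I would remove the cross term by the elementary bound $\mathbb{E}[\phi(s)\phi(r)]\le \tfrac12\big(\mathbb{E}|\phi(s)|^2+\mathbb{E}|\phi(r)|^2\big)$, which follows from the Cauchy--Schwarz inequality in $L^2(\Omega)$ together with $2ab\le a^2+b^2$. By the symmetry of the kernel $|s-r|^{2H-2}$ under interchange of $s$ and $r$, the two resulting terms contribute equally, so the right-hand side is bounded by
\[
H(2H-1)\int_{t_1}^{t_2}\mathbb{E}|\phi(s)|^2\Big(\int_{t_1}^{t_2}|s-r|^{2H-2}\,dr\Big)\,ds.
\]

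The remaining step is to compute and bound the inner integral. Splitting the integration at $r=s$ gives $\int_{t_1}^{t_2}|s-r|^{2H-2}\,dr=\frac{(s-t_1)^{2H-1}+(t_2-s)^{2H-1}}{2H-1}$, and since $2H-1>0$ each summand is at most $(t_2-t_1)^{2H-1}$, so the inner integral is at most $\frac{2(t_2-t_1)^{2H-1}}{2H-1}$. Substituting this back cancels the factor $2H-1$ and produces exactly the asserted constant $2H(t_2-t_1)^{2H-1}$. I expect the only delicate point to be the bookkeeping of this singular integral---namely, confirming integrability near $r=s$ and verifying that the split-and-bound step reproduces the sharp constant $2H$ rather than a larger one; all the probabilistic content is carried by Proposition \ref{pro1}, and the rest is a deterministic kernel estimate.
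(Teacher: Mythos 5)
Your proposal is correct and follows essentially the same route as the paper: apply the covariance identity from Proposition \ref{pro1} with $f=g=\phi$, symmetrize the cross term via $ab\le\tfrac12(a^2+b^2)$, and bound the inner integral $\int_{t_1}^{t_2}|s-r|^{2H-2}\,dr$ by splitting at $r=s$, which cancels the factor $2H-1$ and yields the constant $2H(t_2-t_1)^{2H-1}$. If anything, your explicit symmetrization step is slightly more careful than the paper's, which writes the intermediate bound $\mathbb{E}(|\phi(s)||\phi(r)|)\le\mathbb{E}(|\phi(s)|^2)$ without comment.
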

\begin{proof}
By Proposition \ref{pro1}, we find that
\begin{align*}
\mathbb{E}\bigg{|}\int_{t_1}^{t_2}\phi(s)d\xi^H(s)\bigg{|}^2
&\leq H(2H-1)\int_{t_1}^{t_2}
\int_{t_1}^{t_2}\mathbb{E}(|\phi(s)||\phi(r)|)|s-r|^{2H-2}drds\\
&\leq H(2H-1)\int_{t_1}^{t_2}
\int_{t_1}^{t_2}\mathbb{E}(|\phi(s)|^2)|s-r|^{2H-2}drds\\
&\leq H(2H-1)\int_{t_1}^{t_2}
\mathbb{E}(|\phi(s)|^2)\left(\int_{t_1}^{s}(s-r)^{2H-2}dr \right.
\\
&
~~~~
+
\left.\int^{t_2}_{s}(r-s)^{2H-2}dr\right)ds\\
&\leq 2H(t_2-t_1)^{2H-1}\int_{t_1}^{t_2}
\mathbb{E}|\phi(s)|^2ds,
\end{align*}
which completes the proof.
\end{proof}
We are now in a position to recall the Gr\"onwall-Bellman inequalities, which will be used later.
\begin{lemma}\label{lem4} Let $u(t)$ and $n(t)$ be real valued continuous functions
for $t \geq 0$, and $n(t)$ nonnegative for $t \geq 0$.
If $m \geq 0$ is a constant, $u(t)$ is nonnegative and satisfies the integral inequality
\begin{equation*}
\displaystyle u(t) \leq m +\int_{0}^tn(s)u(s)ds, ~~ t \geq 0,
\end{equation*}
then
\begin{equation*}
u(t) \leq m \exp{\bigg(\int_{0}^tn(s)ds\bigg)} {\rm~~for~~} t \geq 0.
\end{equation*}
If the negative part of the real valued function $m(t)$ is integrable on every closed and
bounded subinterval of $[0, \infty)$ and $u(t)$ satisfies the integral inequality
\begin{equation*}
u(t) \leq m(t) +\int_{0}^tn(s)u(s)ds, ~~t \geq 0,
\end{equation*}
then
\begin{equation*}
u(t) \leq m(t) +\int_{0}^tm(s)n(s) \exp{\bigg(\int_{s}^tn(\tau)d\tau\bigg)ds}
 {\rm~~for~~} t \geq 0.
\end{equation*}
If, in addition, the function $m(t)$ is nondecreasing, then
\begin{equation*}
u(t) \leq m(t)\exp{\bigg(\int_{0}^tn(s)ds\bigg)} {\rm~~for~~} t \geq 0.
\end{equation*}
\end{lemma}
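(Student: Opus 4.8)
The plan is to establish the three assertions in turn by the classical integrating-factor method, reducing each integral inequality to a differential inequality for a suitably chosen auxiliary function; the continuity of $u$ and $n$ together with $n\ge 0$ is exactly what makes this reduction rigorous.

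For the first assertion I would set $v(t) = m + \int_0^t n(s)u(s)\,ds$, so that $u(t) \le v(t)$ by hypothesis and $v$ is continuously differentiable with $v'(t) = n(t)u(t)$. Since $n \ge 0$, substituting $u \le v$ yields the differential inequality $v'(t) \le n(t)v(t)$. Multiplying through by the integrating factor $\exp\!\left(-\int_0^t n(s)\,ds\right)$ shows that $t \mapsto v(t)\exp\!\left(-\int_0^t n(s)\,ds\right)$ is nonincreasing; integrating from $0$ to $t$ and using $v(0)=m$ then gives $v(t) \le m\exp\!\left(\int_0^t n(s)\,ds\right)$, and the desired bound follows from $u \le v$.

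For the second assertion I would repeat the argument with $v(t) = \int_0^t n(s)u(s)\,ds$, where now $u(t) \le m(t) + v(t)$, producing $v'(t) \le n(t)m(t) + n(t)v(t)$. The same integrating factor gives $\frac{d}{dt}\!\left[v(t)\exp\!\left(-\int_0^t n\right)\right] \le n(t)m(t)\exp\!\left(-\int_0^t n\right)$; integrating from $0$ to $t$ and recombining the exponentials into $\exp\!\left(\int_s^t n\right)$ yields precisely $u(t) \le m(t) + \int_0^t m(s)n(s)\exp\!\left(\int_s^t n(\tau)\,d\tau\right)ds$. Here the integrability hypothesis on the negative part of $m$ serves only to guarantee that the right-hand integral is well defined. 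The third assertion then follows by invoking the additional monotonicity: for $s \le t$ one has $m(s) \le m(t)$, so $m(t)$ can be pulled outside the integral, leaving $\int_0^t n(s)\exp\!\left(\int_s^t n(\tau)\,d\tau\right)ds$. Recognizing $n(s)\exp\!\left(\int_s^t n(\tau)\,d\tau\right) = -\frac{d}{ds}\exp\!\left(\int_s^t n(\tau)\,d\tau\right)$, this integral telescopes to $\exp\!\left(\int_0^t n\right) - 1$, and the two surviving terms combine into $m(t)\exp\!\left(\int_0^t n(s)\,ds\right)$.

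The result is entirely standard and I anticipate no serious obstacle. The only points demanding care are the verification that the integrating factor renders the relevant product monotone, which rests squarely on the sign condition $n\ge 0$ and on the fact that continuity of $u$ and $n$ places the auxiliary function in $C^1$, and the sign bookkeeping in the telescoping step of the third part, where mistaking the sign of $\frac{d}{ds}\exp\!\left(\int_s^t n\right)$ would corrupt the final identity.
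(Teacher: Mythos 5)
Your proof is correct. Note that the paper itself gives no proof of this lemma --- it is merely recalled as the standard Gr\"onwall--Bellman inequality --- so there is nothing to compare against; your integrating-factor argument is the canonical one, and the three steps (monotonicity of $v(t)e^{-\int_0^t n}$, integration of the differential inequality for the second claim, and the telescoping identity $n(s)e^{\int_s^t n(\tau)d\tau}=-\frac{d}{ds}e^{\int_s^t n(\tau)d\tau}$ for the third) are all carried out correctly, including the observation that pulling $m(t)$ out of the integral uses both the monotonicity of $m$ and the nonnegativity of $n(s)e^{\int_s^t n(\tau)d\tau}$.
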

As for the generalization of Gr\"onwall's lemma for singular kernels \cite{Henry}, there is
\begin{lemma}\label{lem2.11}
%Let $u:[0,T]\rightarrow[0,\infty)$ be a real function and $w(\cdot)$ is a nonnegative, locally integrable function on $[0,T]$ and the constants $a>0$ and $0<b<1$ such that
Suppose $b \geq 0$, $\gamma > 0$, and $a(t)$ is a nonnegative function locally integrable on $0 \leq t<T$ (some $T \leq +\infty$), and let $u(t)$ be nonnegative and locally integrable on $0 \leq t < T$ with
\[u(t) \leq a(t) + b \int_0^t(t - s)^{\gamma-1}u(s)ds\]
on this interval. Then
\begin{align*}
u(t) \leq a(t) + \int_0^t\left[\sum_{n=1}^{\infty}
\frac{(b\Gamma(\gamma))^n}{\Gamma(n\gamma)} (t - s)^{n\gamma-1}a(s)\right]ds, \qquad 0 \leq t < T.
\end{align*}
\end{lemma}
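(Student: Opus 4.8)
The plan is to prove this by iterating the integral inequality and recognizing the resulting series as the successive convolution kernels of a positive operator. Introduce the linear operator
\[
(B\phi)(t) := b\int_0^t (t-s)^{\gamma-1}\phi(s)\,ds,
\]
acting on nonnegative, locally integrable functions. Since $b\geq 0$ and $(t-s)^{\gamma-1}\geq 0$, the operator $B$ is order-preserving: $0\leq \phi_1 \leq \phi_2$ implies $B\phi_1 \leq B\phi_2$. The hypothesis reads $u \leq a + Bu$, so substituting this bound into itself repeatedly and invoking monotonicity at each stage yields, for every $n\geq 1$,
\[
u(t) \leq \sum_{k=0}^{n-1}(B^k a)(t) + (B^n u)(t), \qquad B^0 a := a.
\]

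Next I would compute the iterated kernel in closed form. By induction on $n$, using Fubini's theorem (legitimate because all integrands are nonnegative) together with the Beta-function identity
\[
\int_s^t (t-r)^{\gamma-1}(r-s)^{n\gamma-1}\,dr = (t-s)^{(n+1)\gamma-1}\,\frac{\Gamma(n\gamma)\Gamma(\gamma)}{\Gamma((n+1)\gamma)},
\]
obtained from the substitution $r = s + (t-s)\tau$, one finds
\[
(B^n\phi)(t) = \frac{(b\Gamma(\gamma))^n}{\Gamma(n\gamma)}\int_0^t (t-s)^{n\gamma-1}\phi(s)\,ds.
\]
Taking $\phi = a$ reproduces precisely the general term of the claimed series, so the matter reduces to sending $n\to\infty$ and discarding the remainder $(B^n u)(t)$.

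The crucial, and essentially the only delicate, step is to show that $(B^n u)(t)\to 0$. Fix $t_0 < T$ and restrict to $t\in[0,t_0]$. For $n$ large enough that $n\gamma\geq 1$ one has $(t-s)^{n\gamma-1}\leq t_0^{n\gamma-1}$, whence
\[
0 \leq (B^n u)(t) \leq \frac{(b\Gamma(\gamma)\,t_0^{\gamma})^n}{t_0\,\Gamma(n\gamma)}\int_0^{t_0} u(s)\,ds.
\]
The integral is finite by local integrability of $u$, and the prefactor tends to zero because $\Gamma(n\gamma)$ grows faster than any geometric sequence $c^n$ — equivalently, the Mittag-Leffler-type series $\sum_n \frac{c^n}{\Gamma(n\gamma)}z^{n\gamma-1}$ is entire, forcing its general term to vanish. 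Hence $B^n u \to 0$ uniformly on $[0,t_0]$, and passing to the limit gives
\[
u(t) \leq a(t) + \sum_{k=1}^{\infty}\frac{(b\Gamma(\gamma))^k}{\Gamma(k\gamma)}\int_0^t (t-s)^{k\gamma-1}a(s)\,ds.
\]
Finally, since every summand is nonnegative, the monotone convergence (Tonelli) theorem permits interchanging the summation and the integral, producing the stated inequality. The Beta-integral evaluation and the Fubini interchanges are routine bookkeeping; the one point requiring genuine care is the vanishing of the remainder, which rests entirely on the super-geometric growth of $\Gamma(n\gamma)$.
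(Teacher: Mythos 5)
Your proof is correct. Note that the paper itself gives no proof of this lemma; it is quoted verbatim from Henry's book (the reference \cite{Henry}, Lemma 7.1.1), and your argument --- iterating the monotone convolution operator $B$, evaluating $B^n$ in closed form via the Beta-function identity, killing the remainder $B^n u$ through the super-geometric growth of $\Gamma(n\gamma)$, and interchanging sum and integral by Tonelli --- is precisely the standard proof found there. All the steps you flag as routine (the Beta evaluation, the Fubini interchanges, the finiteness of $Bu$ for locally integrable $u$) do go through, and you correctly identify the vanishing of the remainder as the one point needing care.
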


%\begin{lemma}
%Let $1\leq \delta<\infty$ and $0\leq s<t<\infty$. Then $t^{\delta}-s^{\delta}\leq \frac{1}{2-2^{\delta-1}}(t-s)^{\delta}$.
%\end{lemma}
%\begin{proof}
%For $1\leq \delta<\infty$, we have
%\[t^\delta=(t-s+s)^{\delta}\leq 2^{\delta-1}((t-s)^{\delta}+s^{\delta}),\]
%and
%\[s^{\delta}=(t-(t-s))^{\delta}\leq t^{\delta}-(t-s)^{\delta}.\]
%Then we have
%\[t^{\delta}\leq 2^{\delta-1}((t-s)^{\delta}+t^{\delta}-(t-s)^{\delta})
%=(t)\]
%\end{proof}

\section{Regularity of the solution}
\label{sec:3}

Now, we discuss the existence, uniqueness, and regularity of mild solutions to \eqref{eq0.1}.
Let $v(t,x)=e^{\nu t}u(t,x)$. Then by Definitions \ref{def2.3}-\ref{def2.4}, we rewrite \eqref{eq0.1}  as
\begin{equation} \label{eq1.4'}
\left\{ \begin{array}
 {l@{\quad} l}
\displaystyle _0^c\partial_t^{\alpha}v+(-\Delta)^{\beta} v=e^{\nu t}\left[f(t,u)
+g(t,u)\frac{\partial^2\mathbb{W}(t,x)}{\partial t\partial x}
+h(t,u)\frac{\partial^2\mathbb{W}^H(t,x)}{\partial t\partial x}\right]
\\
 ~~~~~~~~~~~~~~~~~~~~~~~~
\rm{in~} (0,T]\times\mathcal{D},\\
\\
 v(t,x)=0 \quad \rm{on~} (0,T]\times\partial\mathcal{D},\\
\\
v(0,x)=a(x),~ \partial_tv(0,x)=\nu a(x)+b(x)\quad \rm{in~} \mathcal{D},
\end{array}\right.
\end{equation}
where $_0^c\partial_t^{\alpha}$ denotes the left-sided Caputo fractional derivative of order $\alpha$ with respect to $t$. We assume that $a\in L^2(\Omega;\mathbb{H}^{2\widetilde{\gamma}})$ and $b\in L^2(\Omega;\mathbb{H}^{2\widetilde{\gamma}
-\frac{2\beta}{\alpha}})$ with $\widetilde{\gamma}=\max\{\gamma,\frac{2\beta}{\alpha}\}$ for some regularity parameter $\gamma>0$.

In order to ensure the existence and uniqueness of problem \eqref{eq0.1}, we list the following conditions:
\begin{itemize}
\item [$(\textbf{A}_1)$] There exists a positive constant $l$ such that the functions
$f:\mathbb{R}\times \mathbb{H}\rightarrow \mathbb{H}$, $g:\mathbb{R}\times \mathbb{H}\rightarrow \mathcal{L}_2^0(\mathbb{U},\mathbb{H})$, and $h:\mathbb{R}\times \mathbb{H}\rightarrow \mathcal{L}_2^0(\mathbb{U},\mathbb{H})$ satisfy
\begin{align*}
&\|f(t_1,u_1)-f(t_2,u_2)\|+\|g(t_1,u_1)-g(t_2,u_2)\|_{\mathcal{L}_2^0}
+\|h(t_1,u_1)-h(t_2,u_2)\|_{\mathcal{L}_2^0}\\
&\relphantom{=}{}\leq l(|t_1-t_2|+\|u_1-u_2\|)
 \quad \mbox{if~~} 0\leq \gamma\leq \frac{\beta}{\alpha},
\end{align*}
for all $t_1$, $t_2\in\mathbb{R}$ and $u_1$, $u_2\in \mathbb{H}$;
\begin{align*}
&\|(-\Delta)^{\gamma-\frac{\beta}{\alpha}}(f(t_1,u_1)-f(t_2,u_2))\|
+\|(-\Delta)^{\gamma-\frac{\beta}{\alpha}}(g(t_1,u_1)-g(t_2,u_2))\|_{\mathcal{L}_2^0}\\
&%\relphantom{=}{}
+\|(-\Delta)^{\gamma-\frac{\beta}{\alpha}}(h(t_1,u_1)-h(t_2,u_2))\|_{\mathcal{L}_2^0}
\\
&
\leq l(|t_1-t_2|+\|(-\Delta)^{\gamma-\frac{\beta}{\alpha}}(u_1-u_2)\|)
\quad \mbox{if}~~\gamma>\frac{\beta}{\alpha},
\end{align*}
for all $t_1$, $t_2\in \mathbb{R}$ and $u_1$, $u_2\in \mathbb{H}^{2\gamma-\frac{2\beta}{\alpha}}$;
\begin{align*}&\|f(t,u)\|
+\|g(t,u)\|_{\mathcal{L}_2^0}
+\|h(t,u)\|_{\mathcal{L}_2^0}\leq l(1+\|u\|) \quad \mbox{if~~} 0\leq \gamma\leq \frac{\beta}{\alpha},
\end{align*}
for all $t\in\mathbb{R}$ and $u\in\mathbb{H}$;
\begin{align*}&\|(-\Delta)^{\gamma-\frac{\beta}{\alpha}}f(t,u)\|
+\|(-\Delta)^{\gamma-\frac{\beta}{\alpha}}g(t,u)\|_{\mathcal{L}_2^0}
+\|(-\Delta)^{\gamma-\frac{\beta}{\alpha}}h(t,u)\|_{\mathcal{L}_2^0}\\
&\relphantom{=}{}\leq l\left(1+\|(-\Delta)^{\gamma-\frac{\beta}{\alpha}}u\|\right)
\quad \mbox{if~~}   \gamma> \frac{\beta}{\alpha},
\end{align*}
for all $t\in\mathbb{R}$ and $u\in\mathbb{H}^{2\gamma-\frac{2\beta}{\alpha}}$.
\item [$(\textbf{A}_2)$] $\{\varsigma_k(t)\}$, $\{\varrho_k(t)\}$, and their derivatives
are uniformly bounded by
\[|\varsigma_k(t)| \leq \mu_k,~ |\varrho_k(t)| \leq \widetilde{\mu}_k,~ |\varsigma^\prime_k(t)|\leq \gamma_k,~
|\varrho^\prime_k(t)|\leq \widetilde{\gamma}_k \quad \forall~
t \in [0, T],\]
and the series $(\{\mu_k\}$, $\{\widetilde{\mu}_k\}$, $\{\gamma_k\}$, $\{\widetilde{\gamma}_k \})$
is rapidly decaying with the increase of $k$.
\end{itemize}
\begin{remark}
It is worth mentioning that if $\alpha=2$ and $\beta=1$, then the condition $(\bf{A}_1)$ will reduce to the corresponding condition (9) in \cite{Anton:2016}.
\end{remark}

First, we give a representation of the mild solution to problem \eqref{eq0.1} using the
Dirichlet eigenpairs $\{(\lambda_k, \varphi_k )\}_{k=1}^{\infty}$.
\begin{lemma}\label{le1} The solution $u$ to problem \eqref{eq0.1} with $\frac{3}{2}<\alpha<2$,
$\frac{1}{2}<\beta\leq 1$, $\nu>0$, and
$\frac{1}{2}< H<1$ is given by
\begin{equation}\label{eq2.1}
\begin{split}
u(t,x)&=\int_{\mathcal{D}}\mathcal{T}^\nu_{\alpha,\beta}(t,x,y)a(y)dy
+\int_{\mathcal{D}}\mathcal{R}^\nu_{\alpha,\beta}(t,x,y)b(y)dy\\
&\relphantom{=}{}+\int_{0}^t\int_{\mathcal{D}}\mathcal{S}^\nu_{\alpha,\beta}(t-s,x,y)f(s,u(s,y))dyds\\
&\relphantom{=}{}+\int_{0}^t\int_{\mathcal{D}}\mathcal{S}^\nu_{\alpha,\beta}(t-s,x,y)g(s,u(s,y))
d{\mathbb{W}}(s,y)\\
&\relphantom{=}{}+\int_{0}^t\int_{\mathcal{D}}\mathcal{S}^\nu_{\alpha,\beta}(t-s,x,y)h(s,u(s,y))
d{\mathbb{W}}^H(s,y).
\end{split}
\end{equation}
Here
\begin{equation}\label{eq2.2'}
\mathcal{T}^\nu_{\alpha,\beta}(t,x,y)
=e^{-\nu t}\sum\limits_{k=1}^{\infty}\left(E_{\alpha,1}(-\lambda_k^\beta t^\alpha)+\nu
t E_{\alpha,2}(-\lambda_k^\beta t^\alpha)\right)\varphi_k(x)\varphi_k(y)
\end{equation}
is the fundamental solution of
\[\left\{
  \begin{array}{ll}
    ^c_0\partial_t^{\alpha,\nu} v(t, x)+ (-\Delta)^\beta v(t, x) = 0 & \hbox{{\rm in}~~$(0,T]\times\mathcal{D}$,} \\
    v(t,x)=0 & \hbox{{\rm on}~~$(0,T]\times\partial\mathcal{D}$,} \\
    v(0,x)=\phi(x),~\partial_tv(t, x)|_{t=0} = 0 & \hbox{{\rm in}~~$\mathcal{D}$,}
  \end{array}
\right.\]
so that $v(t, x) =\int_{\mathcal{D}} \mathcal{T}^\nu_{\alpha,\beta}(t,x,y)\phi(y)dy$, and
\begin{equation}\label{eq2.2''}
\mathcal{R}^\nu_{\alpha,\beta}(t,x,y) =t e^{-\nu t}\sum\limits_{k=1}^{\infty} E_{\alpha,2}(-\lambda_k^\beta t^\alpha)\varphi_k(x)\varphi_k(y)
\end{equation}
is the fundamental solution of
\[\left\{
  \begin{array}{ll}
    ^c_0\partial_t^{\alpha,\nu} v(t, x)+ (-\Delta)^\beta v(t, x) = 0 & \hbox{{\rm in}~~$(0,T]\times\mathcal{D}$,} \\
    v(t,x)=0 & \hbox{{\rm on}~~$(0,T]\times\partial\mathcal{D}$,} \\
    v(0,x)=0,~\partial_tv(0, x) = \psi(x) & \hbox{{\rm in}~~$\mathcal{D}$,}
  \end{array}
\right.\]
so that $v(t, x) =\int_{\mathcal{D}} \mathcal{R}^{\nu}_{\alpha,\beta}(t,x,y)\psi(y)dy$. For  \eqref{eq0.1} but with the initial data $v(0,x)=\partial_tv(0,x) \equiv 0$, we shall use the operator defined by
\begin{equation}\label{eq2.2}
\mathcal{S}^{\nu}_{\alpha,\beta}(t,x,y)=t^{\alpha-1}e^{-\nu t}\sum\limits_{k=1}^{\infty}E_{\alpha,\alpha}(-\lambda_k^\beta t^\alpha)\varphi_k(x) \varphi_k(y)
\end{equation}
and
\begin{align*}
v(t, x) =&\int_{0}^t\int_{\mathcal{D}}\mathcal{S}^\nu_{\alpha,\beta}(t-s,x,y)f(s,u(s,y))dyds
\\
&+\int_{0}^t\int_{\mathcal{D}}\mathcal{S}^\nu_{\alpha,\beta}(t-s,x,y)g(s,u(s,y))d{\mathbb{W}}(s,y)\\
&+\int_{0}^t\int_{\mathcal{D}}\mathcal{S}^\nu_{\alpha,\beta}(t-s,x,y)h(s,u(s,y))d{\mathbb{W}}^H(s,y).
\end{align*}
\end{lemma}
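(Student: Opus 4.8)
The plan is to strip off the tempering with the exponential substitution $v(t,x)=e^{\nu t}u(t,x)$, which, as already recorded in \eqref{eq1.4'}, converts the tempered Caputo problem into an ordinary Caputo fractional wave equation with forcing $e^{\nu t}\bigl[f+g\,\tfrac{\partial^2\mathbb{W}}{\partial t\partial x}+h\,\tfrac{\partial^2\mathbb{W}^H}{\partial t\partial x}\bigr]$ and shifted initial velocity $\partial_t v(0,x)=\nu a(x)+b(x)$. Since $(-\Delta)^\beta$ is diagonalized by the orthonormal eigenbasis $\{\varphi_k\}$, I would expand $v(t,\cdot)=\sum_k v_k(t)\varphi_k$ with $v_k(t)=(v(t,\cdot),\varphi_k)$ and project the equation onto each $\varphi_k$. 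Using $(-\Delta)^\beta\varphi_k=\lambda_k^\beta\varphi_k$, each mode then solves the scalar Caputo fractional ODE
\[ {}_0^c\partial_t^{\alpha} v_k(t)+\lambda_k^\beta v_k(t)=F_k(t),\qquad v_k(0)=a_k,\quad v_k'(0)=\nu a_k+b_k, \]
where $a_k=(a,\varphi_k)$, $b_k=(b,\varphi_k)$, and $F_k$ is the $k$-th Fourier coefficient of the (tempered) forcing.

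Next I would solve this scalar problem by the Laplace transform. For $1<\alpha<2$ the transform of the Caputo derivative produces $(p^\alpha+\lambda_k^\beta)V_k(p)=p^{\alpha-1}a_k+p^{\alpha-2}(\nu a_k+b_k)+F_k(p)$, and inverting term by term via the standard identities $\mathcal{L}^{-1}[p^{\alpha-1}/(p^\alpha+\lambda)]=E_{\alpha,1}(-\lambda t^\alpha)$, $\mathcal{L}^{-1}[p^{\alpha-2}/(p^\alpha+\lambda)]=tE_{\alpha,2}(-\lambda t^\alpha)$, and $\mathcal{L}^{-1}[1/(p^\alpha+\lambda)]=t^{\alpha-1}E_{\alpha,\alpha}(-\lambda t^\alpha)$ gives
\[ v_k(t)=a_kE_{\alpha,1}(-\lambda_k^\beta t^\alpha)+(\nu a_k+b_k)\,tE_{\alpha,2}(-\lambda_k^\beta t^\alpha)+\int_0^t(t-s)^{\alpha-1}E_{\alpha,\alpha}(-\lambda_k^\beta(t-s)^\alpha)F_k(s)\,ds. \]
Grouping the two $a_k$-contributions into $(E_{\alpha,1}+\nu tE_{\alpha,2})a_k$, transforming back by $u=e^{-\nu t}v$, and summing over $k$, the factor $e^{-\nu t}$ reattaches to each Mittag-Leffler block; since $e^{-\nu t}e^{\nu s}=e^{-\nu(t-s)}$ the convolution kernel collapses to $(t-s)^{\alpha-1}e^{-\nu(t-s)}E_{\alpha,\alpha}(-\lambda_k^\beta(t-s)^\alpha)$. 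The three summed kernels are then precisely $\mathcal{T}^\nu_{\alpha,\beta}$, $\mathcal{R}^\nu_{\alpha,\beta}$, and $\mathcal{S}^\nu_{\alpha,\beta}$ of \eqref{eq2.2'}, \eqref{eq2.2''}, and \eqref{eq2.2}, reproducing \eqref{eq2.1}.

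The hard part will be the rigorous handling of the two stochastic forcing terms. The formal derivatives in $F_k$ are not genuine functions, so the mode-wise step ``solve then convolve'' must be reinterpreted: using the expansions \eqref{eq.2.8}--\eqref{eq.2.9}, the white-noise contribution $\int_0^t(t-s)^{\alpha-1}E_{\alpha,\alpha}(\cdots)e^{\nu s}g^{j,k}(s)\,d\xi_k(s)$ is to be read as an It\^o integral, while the fractional-noise contribution $\int_0^t(\cdots)e^{\nu s}h^{j,k}(s)\,d\xi_k^H(s)$ is a Wiener integral against fractional Brownian motion, whose $L^2(\Omega)$-control is supplied by Lemma \ref{le2.10}. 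Licensing the interchange of the summations over $j,k$, the deterministic convolution, and the stochastic integration requires a stochastic Fubini argument together with convergence of the series, which rests on the Mittag-Leffler bound \eqref{eq.2.3}, the rapid-decay hypothesis $(\mathbf{A}_2)$ on $\{\varsigma_k\},\{\varrho_k\}$, and the linear-growth and Lipschitz conditions $(\mathbf{A}_1)$ on $f,g,h$. Once these interchanges are justified the kernel matching is routine, and I would adopt \eqref{eq2.1} as the \emph{definition} of the mild solution, whose existence, uniqueness, and regularity are then established by the fixed-point and a priori estimates carried out in the remainder of Section \ref{sec:3}.
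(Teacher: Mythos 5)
Your derivation direction is essentially the paper's own argument: the substitution $v=e^{\nu t}u$, the eigenfunction expansion $v=\sum_k v_k\varphi_k$, the reduction to the scalar Caputo problem ${}_0^c\partial_t^{\alpha}v_k+\lambda_k^{\beta}v_k=F_k$ with $v_k(0)=a_k$, $v_k'(0)=\nu a_k+b_k$, and the Mittag--Leffler solution formula, followed by regrouping and reattaching $e^{-\nu t}$ so that the kernels collapse to $\mathcal{T}^{\nu}_{\alpha,\beta}$, $\mathcal{R}^{\nu}_{\alpha,\beta}$, $\mathcal{S}^{\nu}_{\alpha,\beta}$. The only cosmetic difference is that you obtain the scalar solution by Laplace transform where the paper cites Theorem 5.15 of \cite{Kilbas}; these are the same computation. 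Your remarks on interpreting the noise terms as It\^o/Wiener integrals via \eqref{eq.2.8}--\eqref{eq.2.9} and on series convergence (the paper uses the asymptotics $E_{\alpha,\gamma}(-\lambda_k^{\beta}t^{\alpha})\sim[\lambda_k^{\beta}t^{\alpha}\Gamma(\gamma-\alpha)]^{-1}$) also match the paper's treatment.

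Where you diverge is at the end: you propose to \emph{adopt} \eqref{eq2.1} as the definition of the mild solution, whereas the paper's proof has a substantial converse half. It verifies that the candidate formula actually satisfies $u(0,x)=a(x)$ and $\partial_t u(t,x)|_{t=0}=b(x)$ (the latter requires differentiating the series term by term using \eqref{eq.2.5} and Eq.~(1.10.7) of \cite{Kilbas}, and checking that the three Duhamel terms contribute nothing at $t=0$), and then checks the once-integrated identity ${}_0I_t^{1,\nu}[{}_0^c\partial_t^{\alpha,\nu}u]+{}_0I_t^{1,\nu}[(-\Delta)^{\beta}u]={}_0I_t^{1,\nu}[f+g\,\partial^2_{t,x}\mathbb{W}+h\,\partial^2_{t,x}\mathbb{W}^H]$ by computing ${}_0^c\partial_t^{\alpha-1}v$ and ${}_0I_t^{1}[(-\Delta)^{\beta}v]$ explicitly and watching the Mittag--Leffler terms cancel. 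Without some version of this verification the lemma's claim that \eqref{eq2.1} \emph{is the solution to problem \eqref{eq0.1}} is not established -- the forward computation only shows that any solution must have this form, assuming one exists, and declaring the formula to be the definition makes the statement circular. So the derivation is sound, but you should either carry out the converse check or explicitly reframe the lemma as a uniqueness-of-representation statement; as written, half of the paper's proof is missing.
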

The proof of Lemma \ref{le1} is given in \ref{A}.

Then we have the following stability estimates for the homogeneous problem of \eqref{eq0.1}.
\begin{lemma}\label{le2.2}
Let $u$ be the solution of \eqref{eq0.1} with $f=g=h=0$. Then for all $t>0$,
%\begin{equation*}%\label{eq2.12}
\begin{align*}
&\displaystyle \| u(t)\|_{\mathbb{H}^p} \\
&
\displaystyle \leq  \left\{\begin{array}{l}C(1+\nu t)
e^{-\nu t}t^{-\frac{\alpha (p-q)}{2\beta}}\|a\|_{\mathbb{H}^q}+C e^{-\nu t}
t^{1-\frac{\alpha(p-r)}{2\beta}}\|b\|_{\mathbb{H}^r},\quad 0 \leq q, r\leq p\leq 2\beta,\\
\\
C(1+\nu t)e^{-\nu t}t^{-\alpha}\|a\|_{\mathbb{H}^q}+C e^{-\nu t}t^{1-\alpha}\|b\|_
{\mathbb{H}^r}, \quad\quad\qquad\qquad~~~~~~~~ q, r>p,
\end{array}\right.
\end{align*}
%\end{equation*}
and
\begin{equation*}\begin{split}
\displaystyle \|_0^c\partial_t^{\alpha,\nu} u(t)\|_{\mathbb{H}^p} \leq C(1+\nu t)
e^{-\nu t}t^{-\alpha-\frac{\alpha(p-q)}{2\beta}}\|a\|_{\mathbb{H}^q}+C e^{-\nu t}t^{1-\alpha-\frac{\alpha(p-r)}{2\beta}}\|b\|_{\mathbb{H}^r},
\end{split}\end{equation*}
where $0\leq p \leq q$ and $r\leq p+2\beta$.
\end{lemma}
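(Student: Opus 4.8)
The plan is to reduce everything to the spectral representation of the homogeneous solution furnished by Lemma \ref{le1}. Setting $f=g=h=0$ and pairing \eqref{eq2.1} against $\varphi_k$, I would first record that
\[
(u(t),\varphi_k)=e^{-\nu t}\left[E_{\alpha,1}(-\lambda_k^\beta t^\alpha)+\nu t E_{\alpha,2}(-\lambda_k^\beta t^\alpha)\right](a,\varphi_k)+te^{-\nu t}E_{\alpha,2}(-\lambda_k^\beta t^\alpha)(b,\varphi_k).
\]
By the definition of the $\mathbb{H}^p$-norm and the triangle inequality, $\|u(t)\|_{\mathbb{H}^p}$ then splits into an $a$-contribution and a $b$-contribution, each a weighted sum of the form $\sum_k\lambda_k^p|\cdots|^2(\cdot,\varphi_k)^2$. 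The engine of the estimate is the Mittag-Leffler bound \eqref{eq.2.3}: since $\frac{3}{2}<\alpha<2$ the hypotheses of Lemma \ref{le2.1} are met, and because $\arg(-\lambda_k^\beta t^\alpha)=\pi$ sits in the admissible sector, \eqref{eq.2.3} gives $|E_{\alpha,1}(-\lambda_k^\beta t^\alpha)|,|E_{\alpha,2}(-\lambda_k^\beta t^\alpha)|\leq C(1+\lambda_k^\beta t^\alpha)^{-1}$, whence $|E_{\alpha,1}+\nu tE_{\alpha,2}|\leq C(1+\nu t)(1+\lambda_k^\beta t^\alpha)^{-1}$.

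Everything then hinges on a single elementary inequality for the scalar weight $\lambda_k^{p-q}(1+\lambda_k^\beta t^\alpha)^{-2}$ (and its $r$-analogue), and I would distinguish the two regimes exactly as in the statement. When $0\leq p-q\leq 2\beta$ (the first case), the substitution $x=\lambda_k^\beta t^\alpha$ turns the weight into $t^{-\alpha(p-q)/\beta}\,x^{(p-q)/\beta}(1+x)^{-2}$, and since $x\mapsto x^{s}(1+x)^{-2}$ is bounded on $[0,\infty)$ for $0\leq s\leq 2$, I obtain $\lambda_k^{p-q}(1+\lambda_k^\beta t^\alpha)^{-2}\leq Ct^{-\alpha(p-q)/\beta}$; summing against $\lambda_k^q(a,\varphi_k)^2$ and taking square roots recovers the $a$-term $C(1+\nu t)e^{-\nu t}t^{-\alpha(p-q)/(2\beta)}\|a\|_{\mathbb{H}^q}$, and the $b$-term is obtained identically after replacing $q$ by $r$ and $E_{\alpha,1}+\nu tE_{\alpha,2}$ by $tE_{\alpha,2}$, which supplies the extra factor $t$. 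When $q,r>p$ (the second case) the exponent $s$ is negative, so the substitution fails; instead I would bound $(1+\lambda_k^\beta t^\alpha)^2\geq\lambda_k^{2\beta}t^{2\alpha}$ and use $\lambda_k^{p-q-2\beta}\leq\lambda_1^{p-q-2\beta}$, which is finite precisely because $p-q-2\beta<0$ and $\lambda_k\geq\lambda_1>0$. This yields $\lambda_k^{p-q}(1+\lambda_k^\beta t^\alpha)^{-2}\leq Ct^{-2\alpha}$ and hence the uniform factor $t^{-\alpha}$ claimed in the second case.

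For the derivative estimate I would avoid differentiating the series and instead invoke the equation itself: since $u$ solves the homogeneous problem, ${}_0^{c}\partial_t^{\alpha,\nu}u=-(-\Delta)^\beta u$, so that $\|{}_0^{c}\partial_t^{\alpha,\nu}u\|_{\mathbb{H}^p}=\|(-\Delta)^\beta u\|_{\mathbb{H}^p}=\|u\|_{\mathbb{H}^{p+2\beta}}$. Applying the already-proved bound with $p$ replaced by $p+2\beta$ then converts the exponents $-\alpha(p+2\beta-q)/(2\beta)$ and $1-\alpha(p+2\beta-r)/(2\beta)$ into the claimed $-\alpha-\alpha(p-q)/(2\beta)$ and $1-\alpha-\alpha(p-r)/(2\beta)$, the ranges $p\leq q$ and $r\leq p+2\beta$ being exactly what places the shifted parameters in the admissible first-case window. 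I expect the only genuinely delicate point to be the second regime: there one must exploit the strict positivity of the first eigenvalue $\lambda_1$ (not merely $\lambda_k\geq 0$) to turn a negative power of $\lambda_k$ into a harmless constant, and one must confirm at the outset that the decay bound \eqref{eq.2.3} is legitimately available, i.e. that $\alpha<2$ and that the argument lies outside the forbidden sector of Lemma \ref{le2.1}.
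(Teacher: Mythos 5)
Your proposal is correct and follows essentially the same route as the paper: spectral expansion of \eqref{eq2.1} with $f=g=h=0$, the Mittag--Leffler decay bound \eqref{eq.2.3} combined with the boundedness of $x^{s}(1+x)^{-2}$ for $0\le s\le 2$ in the first regime, the strict positivity of $\lambda_1$ in the regime $q,r>p$, and the identity $\|{}_0^{c}\partial_t^{\alpha,\nu}u\|_{\mathbb{H}^p}=\|u\|_{\mathbb{H}^{p+2\beta}}$ obtained from the homogeneous equation for the derivative bound. Your observation that the operative condition in the first case is really $0\le p-q\le 2\beta$ rather than $p\le 2\beta$ is exactly what the paper records separately in Remark \ref{re3.4} and then uses for the shifted exponent $p+2\beta$.
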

\begin{proof} Note that $\{\varphi_k\}_{k=1}^{\infty}$ is an orthonormal basis in $L^2(\mathcal{D})$.
By Lemma \ref{le2.1} and \eqref{eq2.1}, we obtain
\begin{flalign*}
&\|u(t)\|_{\mathbb{H}^p}^2
\\
&=\sum_{k=1}^{\infty}\lambda_k^p\left(
\int_{\mathcal{D}}\mathcal{T}_{\alpha,\beta}^{\nu}(t,\cdot,y)a(y)dy+
\int_{\mathcal{D}}\mathcal{R}_{\alpha,\beta}^{\nu}(t,\cdot,y)b(y)dy,\varphi_k\right)^2
&
\end{flalign*}
\begin{flalign*}
&=\sum_{k=1}^{\infty}\lambda_k^{p}\bigg(\int_{\mathcal{D}}
e^{-\nu t}\sum_{l=1}^{\infty}\left(E_{\alpha,1}(-\lambda_l^\beta t^\alpha)+\nu
t E_{\alpha,2}(-\lambda_l^\beta t^\alpha)\right)\varphi_l(\cdot)\varphi_l(y)a(y)dy\\
&\relphantom{=}{}
+\int_{\mathcal{D}}te^{-\nu t} \sum_{l=1}^{\infty}
E_{\alpha,2}(-\lambda_l^\beta t^\alpha)\varphi_l(\cdot)\varphi_l(y)b(y)dy,\varphi_k\bigg)^2
&
\end{flalign*}
\begin{flalign*}
&\leq \sum_{k=1}^{\infty}\lambda_k^{p}\bigg{|}\int_{\mathcal{D}}e^{-\nu t}
\left(E_{\alpha,1}(-\lambda_k^\beta t^\alpha)+\nu
t E_{\alpha,2}(-\lambda_k^\beta t^\alpha)\right)\varphi_k(y)a(y)dy\\
&\relphantom{=}{}+\int_{\mathcal{D}}te^{-\nu t}
E_{\alpha,2}(-\lambda_k^\beta t^\alpha)\varphi_k(y)b(y)dy\bigg{|}^2
&
\end{flalign*}
\begin{flalign*}
&\leq C(1+\nu^2 t^2)e^{-2 \nu t}t^{-\frac{\alpha (p-q)}{\beta}}
\sum_{k=1}^{\infty}\frac{(\lambda_k^{\beta}t^{\alpha})^{\frac{p-q}{\beta}}}{(1+\lambda_k^{\beta}
t^{\alpha})^2}\lambda_k^{q}(a,\varphi_k)^2\\
&\relphantom{=}{}
+Ct^2 e^{-2\nu t}t^{-\frac{\alpha (p-r)}{\beta}}
\sum_{k=1}^{\infty}\frac{(\lambda_k^{\beta}t^{\alpha})^{\frac{p-r}{\beta}}}{(1+\lambda_k^{\beta}
t^{\alpha})^2}\lambda_k^{r}(b,\varphi_k)^2
&
\end{flalign*}
\begin{flalign*}
&\leq C(1+\nu^2 t^2)e^{-2 \nu t}t^{-\frac{\alpha (p-q)}{\beta}}\|a\|_{\mathbb{H}^q}^2+C e^{-2\nu t}t^{2-\frac{\alpha (p-r)}{\beta}}\|b\|_{\mathbb{H}^r}^2, &
\end{flalign*}
where we have used $\frac{(\lambda_k^{\beta}t^{\alpha})
^{\frac{p-q}{\beta}}}{(1+\lambda_k^\beta t^\alpha)^2} \leq C$ and $\frac{(\lambda_k^{\beta}t^{\alpha})
^{\frac{p-r}{\beta}}}{(1+\lambda_k^\beta t^\alpha)^2} \leq C$
for $0\leq q, r \leq p \leq  2\beta$.

Now we consider the case $q,~r>p$. Since $0<\lambda_1 \leq \lambda_2 \leq \lambda_3
\leq \cdots$ and $\lambda_k \rightarrow \infty$ as $k\rightarrow \infty$, we
obtain from Lemma \ref{le2.1} and \eqref{eq2.1} that
\begin{align*}
\|u(t)\|_{\mathbb{H}^p}^2&=\sum_{k=1}^{\infty}\lambda_k^p\left(
\int_{\mathcal{D}}\mathcal{T}_{\alpha,\beta}^{\nu}(t,\cdot,y)a(y)dy+
\int_{\mathcal{D}}\mathcal{R}_{\alpha,\beta}^{\nu}(t,\cdot,y)b(y)dy,\varphi_k\right)^2\\
&\leq C(1+\nu^2 t^2)e^{-2\nu t} \sum\limits_{k=1}^{\infty}
\frac{1}{\lambda_k^{q-p}(1+\lambda_k^\beta t^\alpha)^2}
\lambda_k^{q}(a, \varphi_k)^2\\
&\relphantom{=}{}+C t^2e^{-2\nu t} \sum\limits_{k=1}^{\infty}
\frac{1}{\lambda_k^{r-p}(1+\lambda_k^\beta t^\alpha)^2}
\lambda_k^{r}(b, \varphi_k)^2\\
&\leq C(1+\nu^2 t^2)e^{-2 \nu t}t^{-2\alpha} \|a\|_{\mathbb{H}^q}^2+C e^{-2\nu t}t^{2-2\alpha} \|b\|_{\mathbb{H}^r}^2 .
\end{align*}
%Thus, the conclusion follows immediately by the triangle inequality.

On the other hand, it follows from Lemma \ref{le2.1} and \eqref{eq2.1} that
\begin{flalign*}
&\|_0^c\partial_t^{\alpha,\nu}u(t)\|_{\mathbb{H}^p}^2
=\left\|-(-\Delta)^{\beta}u(t)\right\|_{\mathbb{H}^p}^2
=\|u(t)\|_{\mathbb{H}^{p+2\beta}}^2
&
\end{flalign*}
\begin{flalign*}
&\relphantom{=}{}=\sum_{k=1}^{\infty}\lambda_k^{p+2\beta}
\left(\int_{\mathcal{D}}\mathcal{T}^{\nu}_{\alpha,\beta}(t,\cdot,y)a(y)dy
+\int_{\mathcal{D}}\mathcal{R}^{\nu}_{\alpha,\beta}(t,\cdot,y)b(y)dy,\varphi_k
\right)^2 &
\end{flalign*}
\begin{flalign*}
&\relphantom{=}{}=\sum_{k=1}^{\infty}\lambda_k^{p+2\beta}
\bigg(\int_{\mathcal{D}}e^{-\nu t}\sum_{l=1}^{\infty}\left(E_{\alpha,1}(-\lambda_l^\beta t^{\alpha})+\nu tE_{\alpha,2}(-\lambda_l^\beta t^{\alpha})\right)\varphi_l(\cdot)\varphi_l(y) a(y)dy\\
&\relphantom{==}{}+\int_{\mathcal{D}}te^{-\nu t}\sum_{l=1}^{\infty}E_{\alpha,2}(-\lambda_l^\beta t^{\alpha})\varphi_l(\cdot)\varphi_l(y) b(y)dy,\varphi_k\bigg)^2
&
\end{flalign*}
\begin{flalign*}
&\relphantom{=}{}\leq\sum_{k=1}^{\infty}\lambda_k^{p+2\beta}
\bigg|\int_{\mathcal{D}}e^{-\nu t}\left(E_{\alpha,1}(-\lambda_k^\beta t^{\alpha})+\nu tE_{\alpha,2}(-\lambda_k^\beta t^{\alpha})\right)\varphi_k(y) a(y)dy\\
&\relphantom{==}{}+\int_{\mathcal{D}}te^{-\nu t}E_{\alpha,2}(-\lambda_k^\beta t^{\alpha})\varphi_k(y) b(y)dy\bigg|^2
&
\end{flalign*}
\begin{flalign*}
&\relphantom{=}{}\leq C(1+\nu^2 t^2) e^{-2\nu t}t^{-2\alpha-\frac{\alpha(p-q)}{\beta}}
\sum_{k=1}^{\infty}\frac{(\lambda_k^\beta t^{\alpha})^{\frac{2\beta+p-q}{\beta}}}{(1+\lambda_k^\beta t^{\alpha})^2}\lambda_k^{q}(a,\varphi_k)^2\\
&\relphantom{==}{}+Ct^2e^{-2\nu t} t^{-2\alpha-\frac{\alpha(p-r)}{\beta}} \sum_{k=1}^{\infty}\frac{(\lambda_k^\beta t^{\alpha})^{\frac{2\beta+p-r}{\beta}}}{(1+\lambda_k^\beta t^{\alpha})^2}\lambda_k^{r}(b,\varphi_k)^2
&
\end{flalign*}
\begin{flalign*}
&\relphantom{=}{}\leq C(1+\nu^2 t^2) e^{-2\nu t}t^{-2\alpha-\frac{\alpha(p-q)}{\beta}}\|a\|_{\mathbb{H}^q}^2
+C e^{-2 \nu t}t^{2-2\alpha-\frac{\alpha(p-r)}{\beta}}\|b\|_{\mathbb{H}^r}^2, &
\end{flalign*}
where we have used $\frac{(\lambda_k^\beta t^{\alpha})^{\frac{2\beta+p-q}{\beta}}}{(1+\lambda_k^\beta t^{\alpha})^2}\leq C$ and $\frac{(\lambda_k^\beta t^{\alpha})^{\frac{2\beta+p-r}{\beta}}}{(1+\lambda_k^\beta t^{\alpha})^2}\leq C$ for $0\leq p\leq q, r\leq p+2\beta$.
The proof has been completed.
\end{proof}
\begin{remark}\label{re3.4}
It follows from the proof of Lemma \ref{le2.2} that
\begin{align*}
\|u(t)\|_{\mathbb{H}^p}\leq C(1+\nu t)e^{-\nu t}t^{-\frac{\alpha(p-q)}{2\beta}}\|a\|_{\mathbb{H}^q}
+Ce^{-\nu t}
t^{1-\frac{\alpha(p-r)}{2\beta}}\|b\|_{\mathbb{H}^r}
\end{align*}
also holds true for all $t>0$, $0\leq q, r\leq p$ and $p-q$, $p-r\leq 2\beta$.
\end{remark}
The following theorem shows the existence, uniqueness, decay, and regularity properties of mild solutions.
\begin{theorem}\label{thm3.5}
Let $(a,b)$ be $\mathcal{F}_0$-adapted random variable and  $\|a\|_{L^2(\Omega;\mathbb{H}^{2\widetilde{\gamma}}({\mathcal{D}}))}$ $+$
$\|b\|_{L^2(\Omega;\mathbb{H}^{2\widetilde{\gamma}-\frac{2\beta}{\alpha}}({\mathcal{D}}))}$
$<\infty$ with $\widetilde{\gamma}=\max\{\gamma,\frac{\beta}{\alpha}\}$, and that the functions $f$, $g$ and $h$ satisfy $(\bf{A}_1)$ for some $\gamma\geq 0$. Let $(\bf{A}_2)$ holds, $\frac{3}{2}<\alpha<2$, $\frac{1}{2}<\beta\leq 1$, $\nu>0$, and $\frac{1}{2}<H<1$. Then problem \eqref{eq0.1} has a unique mild solution given by
\begin{equation*}
\begin{split}
u(t,x)&=\int_{\mathcal{D}}\mathcal{T}^\nu_{\alpha,\beta}(t,x,y)a(y)dy
+\int_{\mathcal{D}}\mathcal{R}^\nu_{\alpha,\beta}(t,x,y)b(y)dy\\
&\relphantom{=}{}+\int_{0}^t\int_{\mathcal{D}}\mathcal{S}^\nu_{\alpha,\beta}(t-s,x,y)f(s,u(s,y))dyds\\
&\relphantom{=}{}+\int_{0}^t\int_{\mathcal{D}}\mathcal{S}^\nu_{\alpha,\beta}(t-s,x,y)g(s,u(s,y))
d{\mathbb{W}}(s,y)\\
&\relphantom{=}{}+\int_{0}^t\int_{\mathcal{D}}\mathcal{S}^\nu_{\alpha,\beta}(t-s,x,y)h(s,u(s,y))
d{\mathbb{W}}^H(s,y) \quad \mathbb{P}\mbox{-a.s.}
\end{split}
\end{equation*}
for each $t\in[0,T]$, $u\in C([0,T];L^2(\Omega;\mathbb{H}^{2\widetilde{\gamma}}(\mathcal{D}))$.
Moreover, if $\gamma>\frac{\beta}{\alpha}$, then for any $\delta\in \left(0,2\beta-\frac{3\beta}{\alpha}\right)$,
\begin{align*}
&t^{\frac{\alpha\delta}{2\beta}}u\in C([0,T];L^2(\Omega;\mathbb{H}^{2\widetilde{\gamma}+\delta}({\mathcal{D}})))
\mbox{~~with~value~zero~at~~} t=0,
\end{align*}
and for $0\leq \theta_1\leq \theta_2\leq T$,
\begin{align*}
& \mathbb{E}\|u(\theta_2)-u(\theta_1)\|^2
\\
& \leq C|\theta_2-\theta_1|^{2\alpha-2}\left(\mathbb{E}\|a\|^2_{\mathbb{H}^{2\widetilde{\gamma}}}
+\mathbb{E}\|b\|^2_{\mathbb{H}^{2\widetilde{\gamma}-\frac{2\beta}{\alpha}}}
+\sup_{s\in[0,T]}\mathbb{E}\left(1+\|u(s)\|^2_{\mathbb{H}^{2\gamma}}\right)\right).
\end{align*}
\end{theorem}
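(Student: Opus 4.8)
The plan is to run a Picard/contraction argument on the mild-solution representation from Lemma~\ref{le1}. Define the operator $\mathcal{M}$ on $X:=C([0,T];L^2(\Omega;\mathbb{H}^{2\widetilde{\gamma}}(\mathcal{D})))$ by the five-term right-hand side of \eqref{eq2.1}, and show $\mathcal{M}:X\to X$ with a unique fixed point. The backbone is a smoothing estimate for the solution operator $\mathcal{S}^\nu_{\alpha,\beta}$, obtained exactly as in the proof of Lemma~\ref{le2.2} from the Mittag--Leffler bound \eqref{eq.2.3} applied to \eqref{eq2.2}: for $0\le p-q\le 4\beta$,
\[
\|\mathcal{S}^\nu_{\alpha,\beta}(t)v\|_{\mathbb{H}^p}\le C\,t^{\alpha-1-\frac{\alpha(p-q)}{2\beta}}e^{-\nu t}\|v\|_{\mathbb{H}^q}.
\]
The two linear terms are controlled directly by Lemma~\ref{le2.2} and Remark~\ref{re3.4} using the data regularity $a\in L^2(\Omega;\mathbb{H}^{2\widetilde{\gamma}})$, $b\in L^2(\Omega;\mathbb{H}^{2\widetilde{\gamma}-\frac{2\beta}{\alpha}})$. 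For the three nonlinear terms one takes $p=2\widetilde\gamma$ and lets $q$ be $0$ (if $0\le\gamma\le\frac\beta\alpha$) or $2\gamma-\frac{2\beta}{\alpha}$ (if $\gamma>\frac\beta\alpha$), so that the growth/Lipschitz bounds in $(\mathbf{A}_1)$ and the bounds $(\mathbf{A}_2)$ on $\{\varsigma_k\},\{\varrho_k\}$ apply.

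The deterministic term is estimated by the Cauchy--Schwarz inequality in $s$ together with the smoothing estimate, requiring the time exponent $\alpha-1-\frac{\alpha(p-q)}{2\beta}>-1$, i.e. $p-q<2\beta$, which holds since $p-q=\frac{2\beta}{\alpha}$. For the white-noise term one expands along $\{e_k\}$, uses independence of the $\xi_k$ and the Itô isometry from Proposition~\ref{pro1}, giving
\[
\mathbb{E}\Big\|\int_0^t\!\mathcal{S}^\nu_{\alpha,\beta}(t-s)g\,d\mathbb{W}\Big\|_{\mathbb{H}^p}^2
\le C\sum_k\mu_k^2\int_0^t(t-s)^{2\alpha-2-\frac{\alpha(p-q)}{\beta}}\mathbb{E}\|g(s)e_k\|_{\mathbb{H}^q}^2\,ds,
\]
so integrability near $s=t$ needs $2\alpha-2-\frac{\alpha(p-q)}{\beta}>-1$, i.e. $p-q<2\beta-\frac\beta\alpha$; with $p-q=\frac{2\beta}{\alpha}$ this is precisely $\alpha>\frac32$, explaining that hypothesis. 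For the fractional-Gaussian-noise term the Itô isometry is unavailable, so one instead invokes Lemma~\ref{le2.10} (a consequence of Proposition~\ref{pro1}), which bounds the increment by $2H\,t^{2H-1}\int_0^t\mathbb{E}|\cdot|^2\,ds$; the factor $t^{2H-1}$ is harmless since $H>\frac12$, and the same kernel singularity again forces $\alpha>\frac32$. Summing the three contributions and applying the linear growth in $(\mathbf{A}_1)$ shows $\mathcal{M}$ maps $X$ into itself; replacing growth by Lipschitz bounds gives, for the difference of iterates, an inequality of the form $\mathbb{E}\|\mathcal{M}(u)(t)-\mathcal{M}(v)(t)\|_{\mathbb{H}^{2\widetilde\gamma}}^2\le C\int_0^t(t-s)^{2\alpha-2-\frac{2\beta/\alpha\cdot\alpha}{\beta}}\,\mathbb{E}\|u(s)-v(s)\|_{\mathbb{H}^{2\widetilde\gamma}}^2\,ds$ with a locally integrable singular kernel. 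The singular Grönwall inequality (Lemma~\ref{lem2.11}) then yields uniqueness and the uniform-in-time a priori bound, while convergence of the Picard iterates gives existence; continuity in $t$ of the fixed point follows from the increment estimates of the third part.

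For the higher regularity when $\gamma>\frac\beta\alpha$ (so $\widetilde\gamma=\gamma$), I would repeat the nonlinear estimates with $p=2\gamma+\delta$ and $q=2\gamma-\frac{2\beta}{\alpha}$, spending the extra smoothing of $\mathcal{S}^\nu_{\alpha,\beta}$ at the cost of a factor $t^{-\frac{\alpha\delta}{2\beta}}$. The binding constraint is again the white-noise exponent $p-q=\delta+\frac{2\beta}{\alpha}<2\beta-\frac\beta\alpha$, i.e. $\delta<2\beta-\frac{3\beta}{\alpha}$, which is exactly the stated range; multiplying through by the weight $t^{\frac{\alpha\delta}{2\beta}}$ absorbs the singularity and forces the value zero at $t=0$. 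For the time-Hölder estimate with $0\le\theta_1\le\theta_2$, I would split each increment as $\int_{\theta_1}^{\theta_2}\mathcal{S}^\nu_{\alpha,\beta}(\theta_2-s)\,(\cdots)$ plus $\int_0^{\theta_1}\big(\mathcal{S}^\nu_{\alpha,\beta}(\theta_2-s)-\mathcal{S}^\nu_{\alpha,\beta}(\theta_1-s)\big)(\cdots)$, treat the stochastic pieces again via Proposition~\ref{pro1} and Lemma~\ref{le2.10}, and estimate the increments of the kernels $\mathcal{T}^\nu_{\alpha,\beta},\mathcal{R}^\nu_{\alpha,\beta},\mathcal{S}^\nu_{\alpha,\beta}$ through the Mittag--Leffler derivative identities \eqref{eq.2.4}--\eqref{eq.2.5} together with \eqref{eq.2.3}; balancing the boundedness of $E_{\alpha,\alpha}$ against its derivative bound (interpolating the two) produces the increment exponent, and the slowest term dictates the overall rate $|\theta_2-\theta_1|^{2\alpha-2}$.

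The main obstacle, as the authors emphasize, is the multiplicative fractional-Gaussian-noise term: since $\xi^H$ is neither a semimartingale nor Markov and lacks independent increments, no Itô isometry is available, and its covariance kernel $|s-r|^{2H-2}$ must be reconciled with the singular wave kernel $(t-s)^{\alpha-1}$. Reducing this to a single-integral bound via Lemma~\ref{le2.10} is the crucial device; the delicate point is verifying that, even after this reduction, the combined time singularity remains integrable, which is precisely what the constraint $\alpha>\frac32$ (together with $H>\frac12$) guarantees and what ties the regularity budget to the admissible range of $\delta$.
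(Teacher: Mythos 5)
Your overall architecture coincides with the paper's: mild formulation from Lemma~\ref{le1}, a fixed-point argument in $C([0,T];L^2(\Omega;\mathbb{H}^{2\widetilde{\gamma}}))$, the It\^o isometry of Proposition~\ref{pro1} for the white-noise term, Lemma~\ref{le2.10} for the fractional-Gaussian term, and the same exponent bookkeeping (the kernel $(t-s)^{2\alpha-4}$ forcing $\alpha>\tfrac32$, and $\delta+\tfrac{2\beta}{\alpha}<2\beta-\tfrac{\beta}{\alpha}$ giving the stated range of $\delta$). The only methodological difference in the contraction step is that you close it with Picard iteration plus the singular Gr\"onwall inequality (Lemma~\ref{lem2.11}), whereas the paper uses the weighted norm $\|u\|_\theta^2=\max_t\mathbb{E}\|e^{-\theta t}u(t)\|_{\mathbb{H}^{2\gamma}}^2$ and shows $\mathcal{P}$ is a strict contraction for $\theta$ large; these are interchangeable and buy the same thing.

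There is, however, one genuine gap: you assert that continuity of $t\mapsto\mathcal{M}u(t)$ (and of the fixed point) in $L^2(\Omega;\mathbb{H}^{2\widetilde{\gamma}})$ ``follows from the increment estimates of the third part,'' but the H\"older estimate you prove there is only in the $\mathbb{H}^0=L^2(\mathcal{D})$ norm, i.e.\ $\mathbb{E}\|u(\theta_2)-u(\theta_1)\|^2\le C|\theta_2-\theta_1|^{2\alpha-2}$. That does not give continuity in the top-order norm $\mathbb{H}^{2\widetilde{\gamma}}$, which is what membership in $X=C([0,T];L^2(\Omega;\mathbb{H}^{2\widetilde{\gamma}}))$ requires; indeed, in the top norm no H\"older rate is available at all, because the smoothing of $\mathcal{S}^\nu_{\alpha,\beta}$ has already been fully spent on reaching $\mathbb{H}^{2\widetilde{\gamma}}$. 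The paper devotes its entire Step~1 (the estimates $I_1,\dots,I_5$ in \eqref{eq.3.8}--\eqref{eq.3.14}) to exactly this point, and the convolution terms there are handled not by a rate but by splitting the kernel increment as in \eqref{eq.3.11} and invoking Lebesgue's dominated convergence theorem to conclude $I_3,I_4,I_5\to0$ as $\sigma\to0$. Your argument needs this (rate-free) continuity verification inserted before the fixed-point theorem can be applied in $X$; without it, the map is only shown to be bounded in time, not continuous.
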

\begin{proof}
We give the proof for the case of $\gamma>\frac{\beta}{\alpha}$; it can be similarily proved for  the other case of $0\leq \gamma\leq \frac{\beta}{\alpha}$.

We define a nonlinear mapping $\mathcal{P}$ as follows
\begin{align}\label{eq.3.7*}
\mathcal{P}u(t)&=\int_{\mathcal{D}}\mathcal{T}^\nu_{\alpha,\beta}(t,x,y)a(y)dy
+\int_{\mathcal{D}}\mathcal{R}^\nu_{\alpha,\beta}(t,x,y)b(y)dy\nonumber\\
&\relphantom{=}{}+\int_{0}^t\int_{\mathcal{D}}\mathcal{S}^\nu_{\alpha,\beta}(t-s,x,y)f(s,u(s,y))dyds\nonumber\\
&\relphantom{=}{}+\int_{0}^t\int_{\mathcal{D}}\mathcal{S}^\nu_{\alpha,\beta}(t-s,x,y)g(s,u(s,y))
d{\mathbb{W}}(s,y)\nonumber\\
&\relphantom{=}{}+\int_{0}^t\int_{\mathcal{D}}\mathcal{S}^\nu_{\alpha,\beta}(t-s,x,y)h(s,u(s,y))
d{\mathbb{W}}^H(s,y),
\end{align}
and denote by $C([0,T];L^2(\Omega;\mathbb{H}^{2\gamma}(\mathcal{D})))_{\theta}$ the space $C([0,T];L^2(\Omega;\mathbb{H}^{2\gamma}(\mathcal{D})))$ equipped with the following weighted norm: $\|u\|_{\theta}^2:=\max_{0\leq t\leq T}\mathbb{E}\|e^{-\theta t}u(t)\|^2_{\mathbb{H}^{2\gamma}}$ $\forall u\in C([0,T];L^2(\Omega;\mathbb{H}^{2\gamma}(\mathcal{D})))_{\theta}$, which is equivalent to the standard norm of $C([0,T];L^2(\Omega;\mathbb{H}^{2\gamma}(\mathcal{D})))$ for any fixed parameter $\theta>0$. Then we show that the map $\mathcal{P}:$ $C([0,T];L^2(\Omega;\mathbb{H}^{2\gamma}(\mathcal{D})))_{\theta}$ $\rightarrow$ $C([0,T];L^2(\Omega;\mathbb{H}^{2\gamma}(\mathcal{D})))_{\theta}$ has a unique fixed point.

Step 1. $\mathcal{P}$ maps $C([0,T];L^2(\Omega;\mathbb{H}^{2\gamma}(\mathcal{D})))_{\theta}$ into itself.

Let us consider $\sigma>0$ small enough. Then we have
\begin{flalign}\label{eq.3.8}
&\mathbb{E}\|e^{-\theta(t+\sigma)}\mathcal{P}u(t+\sigma)-e^{-\theta t}\mathcal{P}u(t)\|^2_{\mathbb{H}^{2\gamma}}\nonumber\\
&\leq C\mathbb{E}\bigg{\|}e^{-\theta(t+\sigma)}\int_{\mathcal{D}}
\mathcal{T}^\nu_{\alpha,\beta}(t+\sigma,\cdot,y)a(y)dy-e^{-\theta t}\int_{\mathcal{D}}
\mathcal{T}^\nu_{\alpha,\beta}(t,\cdot,y)a(y)dy\bigg{\|}_{\mathbb{H}^{2\gamma}}^2\nonumber\\
&\relphantom{=}{}+C\mathbb{E}\bigg{\|}e^{-\theta(t+\sigma)}\int_{\mathcal{D}}
\mathcal{R}^\nu_{\alpha,\beta}(t+\sigma,\cdot,y)b(y)dy-e^{-\theta t}\int_{\mathcal{D}}
\mathcal{R}^\nu_{\alpha,\beta}(t,\cdot,y)b(y)dy\bigg{\|}_{\mathbb{H}^{2\gamma}}^2\nonumber\\
&\relphantom{=}{}+C\mathbb{E}\bigg{\|}e^{-\theta(t+\sigma)}\int_0^{t+\sigma}\int_{\mathcal{D}}
\mathcal{S}^\nu_{\alpha,\beta}(t+\sigma-s,\cdot,y)f(s,u(s,y))dyds\nonumber\\
&\relphantom{==}{}-e^{-\theta t}\int_0^t\int_{\mathcal{D}}
\mathcal{S}^\nu_{\alpha,\beta}(t-s,\cdot,y)f(s,u(s,y))dyds\bigg{\|}_{\mathbb{H}^{2\gamma}}^2\nonumber\\
&\relphantom{=}{}+C\mathbb{E}\bigg{\|}e^{-\theta(t+\sigma)}\int_0^{t+\sigma}\int_{\mathcal{D}}
\mathcal{S}^\nu_{\alpha,\beta}(t+\sigma-s,\cdot,y)g(s,u(s,y))d\mathbb{W}(s,y)\nonumber\\
&\relphantom{==}{}-e^{-\theta t}\int_0^t\int_{\mathcal{D}}
\mathcal{S}^\nu_{\alpha,\beta}(t-s,\cdot,y)g(s,u(s,y))d\mathbb{W}(s,y)\bigg{\|}_{\mathbb{H}^{2\gamma}}^2\nonumber\\
&\relphantom{=}{}+C\mathbb{E}\bigg{\|}e^{-\theta(t+\sigma)}\int_0^{t+\sigma}\int_{\mathcal{D}}
\mathcal{S}^\nu_{\alpha,\beta}(t+\sigma-s,\cdot,y)h(s,u(s,y))d\mathbb{W}^H(s,y)\nonumber\\
&\relphantom{==}{}-e^{-\theta t}\int_0^t\int_{\mathcal{D}}
\mathcal{S}^\nu_{\alpha,\beta}(t-s,\cdot,y)h(s,u(s,y))d\mathbb{W}^H(s,y)\bigg{\|}_{\mathbb{H}^{2\gamma}}^2\nonumber\\
&\relphantom{}{}:=I_1+I_2+I_3+I_4+I_5.
\end{flalign}
Noticing that $\{\varphi_k\}_{k=1}^{\infty}$ is an orthonormal basis in $L^2(\mathcal{D})$, by Lemma \ref{le2.1} and Lagrange's mean value theorem, we get
\begin{flalign}\label{eq.3.9}
I_1&=C\mathbb{E}\sum_{k=1}^{\infty}\lambda_k^{2\gamma}\bigg(\int_{\mathcal{D}}
e^{-(\theta+\nu)(t+\sigma)}\sum_{l=1}^{\infty}\left(E_{\alpha,1}(-\lambda_l^\beta
(t+\sigma)^{\alpha}) \right.
\nonumber\\
&
\left. \relphantom{=}{}+\nu(t+\sigma)E_{\alpha,2}(-\lambda_l^\beta(t+\sigma)^{\alpha})\right)
\varphi_l(\cdot)\varphi_l(y)\nonumber\\
&\relphantom{=}{}\cdot a(y)dy-\int_{\mathcal{D}}e^{-(\theta+\nu)t}
\sum_{l=1}^{\infty}\left(E_{\alpha,1}(-\lambda_l^\beta
t^{\alpha})
\right.
\nonumber\\
&
\left. \relphantom{=}{}
+\nu t E_{\alpha,2}(-\lambda_l^\beta t^{\alpha})\right)\varphi_l(\cdot)\varphi_l(y)a(y)dy,\varphi_k\bigg)^2  &
\end{flalign}
\begin{flalign*}
&=C\mathbb{E}\sum_{k=1}^{\infty}\lambda_k^{2\gamma}\bigg|\int_{\mathcal{D}}
e^{-(\theta+\nu)(t+\sigma)}\left(E_{\alpha,1}(-\lambda_k^\beta
(t+\sigma)^{\alpha})
\right.
\nonumber\\
&
\left. \relphantom{=}{}
+\nu(t+\sigma)E_{\alpha,2}(-\lambda_k^\beta(t+\sigma)^{\alpha})\right)
\varphi_k(y)a(y)dy
\nonumber\\
&\relphantom{=}{}-\int_{\mathcal{D}}e^{-(\theta+\nu)t}
\left(E_{\alpha,1}(-\lambda_k^\beta
t^{\alpha})+\nu t E_{\alpha,2}(-\lambda_k^\beta t^{\alpha})\right)\varphi_k(y)a(y)dy\bigg|^2\nonumber
&
\end{flalign*}
\begin{flalign*}
&\leq C\mathbb{E}\sum_{k=1}^{\infty}\lambda_k^{2\gamma}\bigg|\int_{\mathcal{D}}
\left(e^{-(\theta+\nu)(t+\sigma)}E_{\alpha,1}(-\lambda_k^\beta
(t+\sigma)^{\alpha})-e^{-(\theta+\nu)t}E_{\alpha,1}(-\lambda_k^\beta
t^{\alpha})\right)
%\right.
\nonumber\\
&
%\left.
\relphantom{=}{}
\cdot\varphi_k(y)a(y)dy\bigg|^2
%\nonumber\\
%&\relphantom{=}{}
+C\nu^2\mathbb{E}\sum_{k=1}^{\infty}\lambda_k^{2\gamma}\bigg|\int_{\mathcal{D}}
\left(e^{-(\theta+\nu)(t+\sigma)}(t+\sigma)
\right.
\nonumber\\
&
\left.
\relphantom{=}{}
\cdot E_{\alpha,2}(-\lambda_k^\beta(t+\sigma)^{\alpha})
-e^{-(\theta+\nu)t}t E_{\alpha,2}(-\lambda_k^\beta t^{\alpha})\right)\varphi_k(y)a(y)dy\bigg|^2\nonumber\\
&
\end{flalign*}
\begin{flalign*}
&\leq C\left|e^{-(\theta+\nu)(t+\sigma)}-e^{-(\theta+\nu)t}\right|^2\mathbb{E}\sum_{k=1}^{\infty}
|E_{\alpha,1}(-\lambda_k^\beta(t+\sigma)^{\alpha})|^2\lambda_k^{2\gamma}(a,\varphi_k)^2\nonumber\\
&\relphantom{=}{}+Ce^{-2(\theta+\nu)t}\mathbb{E}\sum_{k=1}^{\infty}\left|E_{\alpha,1}(-\lambda_k^\beta
(t+\sigma)^{\alpha})-E_{\alpha,1}(-\lambda_k^\beta
t^{\alpha})\right|^2\lambda_k^{2\gamma}(a,\varphi_k)^2\nonumber\\
&\relphantom{=}{}
+C\nu^2\left|e^{-(\theta+\nu)(t+\sigma)}-e^{-(\theta+\nu)t}\right|^2\mathbb{E}\sum_{k=1}^{\infty}
|(t+\sigma)E_{\alpha,2}(-\lambda_k^\beta(t+\sigma)^{\alpha})|^2
\nonumber\\
&\relphantom{=}{}
\cdot\lambda_k^{2\gamma}(a,\varphi_k)^2
+C\nu^2e^{-2(\theta+\nu)t}\mathbb{E}\sum_{k=1}^{\infty}\left|(t+\sigma)E_{\alpha,2}(-\lambda_k^\beta(t+\sigma)^{\alpha})
\right.
\nonumber\\
&
\left.
\relphantom{=}{}
-tE_{\alpha,2}(-\lambda_k^\beta t^{\alpha})\right|^2\lambda_k^{2\gamma}(a,\varphi_k)^2\nonumber
&
\end{flalign*}
\begin{flalign*}
&\leq C\left|e^{-(\theta+\nu)(t+\sigma)}-e^{-(\theta+\nu)t}\right|^2\mathbb{E}\|a\|_{\mathbb{H}^{2\gamma}}^2
\nonumber\\
&\relphantom{=}{}
+C\sigma^2\mathbb{E}\sum_{k=1}^{\infty}\left|-\lambda_k^{\beta}\widehat{\theta}^{\alpha-1}E_{\alpha,\alpha}(-\lambda_k^\beta\widehat{\theta}^{\alpha})\right|^2
\lambda_k^{2\gamma}(a,\varphi_k)^2\nonumber\\
&\relphantom{=}{}
+C\nu^2\left|e^{-(\theta+\nu)(t+\sigma)}-e^{-(\theta+\nu)t}\right|^2 T^2\mathbb{E}\|a\|_{\mathbb{H}^{2\gamma}}^2
\nonumber\\
&\relphantom{=}{}
+C\nu^2\sigma^2\mathbb{E}\sum_{k=1}^{\infty}\left|E_{\alpha,1}(-\lambda_k^\beta\widetilde{\theta}^\alpha)\right|^2
\lambda_k^{2\gamma}(a,\varphi_k)^2\nonumber
&
\end{flalign*}
\begin{flalign*}
&\leq C\left|e^{-(\theta+\nu)(t+\sigma)}-e^{-(\theta+\nu)t}\right|^2\mathbb{E}\|a\|_{\mathbb{H}^{2\gamma}}^2
\nonumber\\
&\relphantom{=}{}
+\frac{C}{t^2}\sigma^2\mathbb{E}\sum_{k=1}^{\infty}\left|\frac{\lambda_k^\beta\widehat{\theta}^\alpha}{1+\lambda_k^\beta\widehat{\theta}^\alpha}\right|^2
\lambda_k^{2\gamma}(a,\varphi_k)^2
+C\sigma^2\mathbb{E}\|a\|_{\mathbb{H}^{2\gamma}}^2\nonumber
&
\end{flalign*}
\begin{flalign*}
&\leq C\left|e^{-(\theta+\nu)(t+\sigma)}-e^{-(\theta+\nu)t}\right|^2\mathbb{E}\|a\|_{\mathbb{H}^{2\widetilde{\gamma}}}^2
+\frac{C}{t^2}\sigma^2\mathbb{E}\|a\|_{\mathbb{H}^2_{2\widetilde{\gamma}}}
\nonumber\\
&\relphantom{=}{}
+
C\sigma^2\mathbb{E}\|a\|_{\mathbb{H}^{2\widetilde{\gamma}}}^2
\rightarrow 0 \quad \mbox{as}~~\sigma\rightarrow 0, \nonumber &
\end{flalign*}
where $\widehat{\theta}$, $\widetilde{\theta}\in (t,t+\sigma)$, and we have used $\frac{\lambda_k^\beta \widehat{\theta}^\alpha}{1+\lambda_k^\beta \widehat{\theta}^\alpha}\leq C$ and $\mathbb{E}\|a\|_{\mathbb{H}^{2\gamma}}^2\leq C \mathbb{E}\|a\|^2_{\mathbb{H}^{2\widetilde{\gamma}}}.\\$
In a similar way, we deduce that
\begin{flalign}\label{eq.3.10}
I_2&=C\mathbb{E}\sum_{k=1}^{\infty}\lambda_k^{2\gamma}\bigg(\int_{\mathcal{D}}(t+\sigma)
e^{-(\theta+\nu)(t+\sigma)}\sum_{l=1}^{\infty}E_{\alpha,2}(-\lambda_l^\beta(t+\sigma)^\alpha)
\varphi_l(\cdot)\varphi_l(y)
\nonumber\\
&\relphantom{=}{}
\cdot b(y)dy-\int_{\mathcal{D}}te^{-(\theta+\nu)t}\sum_{l=1}^{\infty}
E_{\alpha,2}(-\lambda_l^\beta t^\alpha)\varphi_l(\cdot)\varphi_l(y)b(y)dy,\varphi_k\bigg)^2\nonumber &
\end{flalign}
\begin{flalign}
&=C\mathbb{E}\sum_{k=1}^{\infty}\lambda_k^{2\gamma}\bigg{|}\int_{\mathcal{D}}(t+\sigma)e^{-(\theta+\nu)(t+\sigma)}
E_{\alpha,2}(-\lambda_k^{\beta}(t+\sigma)^\alpha)\varphi_k(y)b(y)dy\nonumber\\
&\relphantom{=}{}-\int_{\mathcal{D}}
te^{-(\theta+\nu)t}E_{\alpha,2}(-\lambda_k^{\beta}t^\alpha)\varphi_k(y)b(y)dy\bigg{|}^2\nonumber &
\end{flalign}
\begin{flalign}
&\leq C\left|e^{-(\theta+\nu)(t+\sigma)}-e^{-(\theta+\nu)t}\right|^2\mathbb{E}\sum_{k=1}^{\infty}
\left|(t+\sigma)E_{\alpha,2}(-\lambda_k^\beta(t+\sigma)^\alpha)\right|^2
\nonumber\\
&\relphantom{=}{}
\cdot\lambda_k^{2\gamma}
(b,\varphi_k)^2+Ce^{-2(\theta+\nu)t}\mathbb{E}\sum_{k=1}^{\infty}\left|(t+\sigma)
E_{\alpha,2}(-\lambda_k^\beta (t+\sigma)^\alpha) \right.
\nonumber\\
&\relphantom{=}{}
\left.
-t
E_{\alpha,2}(-\lambda_k^\beta t^\alpha)\right|^2\lambda_k^{2\gamma}(b,\varphi_k)^2  &
\end{flalign}
\begin{flalign}
& \leq C\left|e^{-(\theta+\nu)(t+\sigma)}-e^{-(\theta+\nu)t}\right|^2\mathbb{E}
\sum_{k=1}^{\infty}\frac{(\lambda_k^\beta(t+\sigma)^{\alpha})^{\frac{2}{\alpha}}}
{(1+\lambda_k^\beta(t+\sigma)^\alpha)^2}\lambda_k^{2\gamma-\frac{2\beta}{\alpha}}
(b,\varphi_k)^2\nonumber\\
&\relphantom{=}{}+C\sigma^2\mathbb{E}\sum_{k=1}^{\infty}
\left|E_{\alpha,1}(-\lambda_k^\beta\widetilde{\theta}^\alpha)\right|^2
\lambda_k^{2\gamma}(b,\varphi_k)^2\nonumber &
\end{flalign}
\begin{flalign}
& \leq C\left|e^{-(\theta+\nu)(t+\sigma)}-e^{-(\theta+\nu)t}\right|^2
\mathbb{E}\|b\|^2_{\mathbb{H}^{2\gamma-\frac{2\beta}{\alpha}}}
\nonumber\\
&\relphantom{=}{}
+\frac{C}{\widetilde{\theta}^2}
\sigma^2\mathbb{E}\sum_{k=1}^{\infty}\frac{(\lambda_k^\beta\widetilde{\theta}^\alpha)^{\frac{2}{\alpha}}}
{(1+\lambda_k^\beta\widetilde{\theta}^\alpha)^2}\lambda_k^{2\gamma-\frac{2\beta}{\alpha}}(b,\varphi_k)^2\nonumber &
\end{flalign}
\begin{flalign}
&\leq C\left|e^{-(\theta+\nu)(t+\sigma)}-e^{-(\theta+\nu)t}\right|^2
\mathbb{E}\|b\|^2_{\mathbb{H}^{2\widetilde{\gamma}-\frac{2\beta}{\alpha}}}
\nonumber\\
&\relphantom{=}{}
+\frac{C}{\widetilde{\theta}^{2}}
\sigma^2\mathbb{E}\|b\|_{\mathbb{H}^{2\widetilde{\gamma}-\frac{2\beta}{\alpha}}}^2
\rightarrow 0 ~~\mbox{as}~~\sigma\rightarrow 0, \nonumber &
\end{flalign}
where $\widetilde{\theta}\in (t,t+\sigma)$, we have used $\frac{(\lambda_k^\beta(t+\sigma)^\alpha)^{\frac{2}{\alpha}}}{(1+\lambda_k^\beta(t+\sigma)^\alpha)^2}
\leq C$, $\frac{(\lambda_k^\beta\widetilde{\theta}^\alpha)^{\frac{2}{\alpha}}}
{(1+\lambda_k^\beta\widetilde{\theta}^\alpha)^2}\leq C$, and $\mathbb{E}\|b\|^2_{\mathbb{H}^{2\gamma-\frac{2\beta}{\alpha}}}\leq C\mathbb{E}\|b\|^2_{\mathbb{H}^{2\widetilde{\gamma}-\frac{2\beta}{\alpha}}}$.

By $(\bf{A}_1)$, Lemma \ref{le2.1}, H\"{o}lder's inequality, and $u\in C([0,T];L^2(\Omega;\mathbb{H}^{2\gamma}(\mathcal{D})))$, we have
\begin{flalign}\label{eq.3.10*}
I_3&=C\mathbb{E}\sum_{k=1}^{\infty}\lambda_k^{2\gamma}\bigg(\int_0^{t+\sigma}
\int_{\mathcal{D}}(t+\sigma-s)^{\alpha-1}e^{-\theta(t+\sigma)}e^{-\nu(t+\sigma-s)}
\nonumber\\
&\relphantom{=}{}
\cdot\sum_{l=1}^{\infty}E_{\alpha,\alpha}(-\lambda_l^\beta(t+\sigma-s)^\alpha)\varphi_l(\cdot)
\varphi_l(y)f(s,u(s,y))dyds
\nonumber\\
&\relphantom{=}{}
-\int_0^t\int_{\mathcal{D}}(t-s)^{\alpha-1}e^{-\theta t}e^{-\nu(t-s)}\sum_{l=1}^{\infty}E_{\alpha,\alpha}(-\lambda_l^\beta(t-s)^\alpha)
\varphi_l(\cdot)\varphi_l(y)\nonumber\\
&\relphantom{=}{}\cdot f(s,u(s,y))dyds,\varphi_k\bigg)^2
&
\end{flalign}
\begin{flalign}
&=C\mathbb{E}\sum_{k=1}^{\infty}\lambda_k^{2\gamma}\bigg|\int_0^{t+\sigma}\int_{\mathcal{D}}
(t+\sigma-s)^{\alpha-1}e^{-\theta(t+\sigma)}e^{-\nu (t+\sigma-s)}
\nonumber\\
&\relphantom{=}{}
\cdot E_{\alpha,\alpha}(-\lambda_k^\beta(t+\sigma-s)^\alpha)\varphi_k(y) f(s,u(s,y))dyds
\nonumber\\
&\relphantom{=}{}
-\int_0^t\int_{\mathcal{D}}(t-s)^{\alpha-1}e^{-\theta t}e^{-\nu(t-s)}E_{\alpha,\alpha}(-\lambda_k^\beta(t-s)^{\alpha})\varphi_k(y)
\nonumber\\
&\relphantom{=}{}\cdot
f(s,u(s,y))dyds\bigg|^2\nonumber
&
\end{flalign}
\begin{flalign}
&\leq C\mathbb{E}\sum_{k=1}^{\infty}\bigg|\int_0^t\Big((t+\sigma-s)^{\alpha-1}e^{-\theta(t+\sigma)}
e^{-\nu(t+\sigma-s)}E_{\alpha,\alpha}(-\lambda_k^\beta(t+\sigma-s)^\alpha)\nonumber\\
&\relphantom{=}{} -(t-s)^{\alpha-1}e^{-\theta t}e^{-\nu(t-s)} E_{\alpha,\alpha}(-\lambda_k^\beta(t-s)^\alpha)\Big)\lambda_k^\gamma(f(s,u(s)),\varphi_k)ds\bigg|^2\nonumber\\
&\relphantom{=}{}+C\mathbb{E}\sum_{k=1}^{\infty}\bigg|\int_t^{t+\sigma}(t+\sigma-s)^{\alpha-1}
e^{-\theta(t+\sigma)}e^{-\nu(t+\sigma-s)}
\nonumber\\
&\relphantom{=}{}
\cdot
E_{\alpha,\alpha}(-\lambda_k^\beta(t+\sigma-s)^\alpha)
\lambda_k^\gamma(f(s,u(s)),\varphi_k)ds\bigg|^2\nonumber
&
\end{flalign}
\begin{flalign}
&\leq CT\mathbb{E}\sum_{k=1}^{\infty}\int_0^t\Big|(t+\sigma-s)^{\alpha-1}e^{-\theta(t+\sigma)}
e^{-\nu(t+\sigma-s)}E_{\alpha,\alpha}(-\lambda_k^\beta(t+\sigma-s)^\alpha)\nonumber\\
&\relphantom{=}{} -(t-s)^{\alpha-1}
e^{-\theta t}e^{-\nu(t-s)}E_{\alpha,\alpha}(-\lambda_k^\beta(t-s)^\alpha)\Big|^2\lambda_k^{2\gamma}
(f(s,u(s)),\varphi_k)^2ds\nonumber\\
&\relphantom{=}{}
+C\sigma\int_t^{t+\sigma}(t+\sigma-s)^{2\alpha-4}\sum_{k=1}^{\infty}
\frac{(\lambda_k^\beta(t+\sigma-s)^{\alpha})^{\frac{2}{\alpha}}}{(1+\lambda_k^\beta(t+\sigma-s)^{\alpha})^2}
\lambda_k^{2\gamma-\frac{2\beta}{\alpha}}
\nonumber\\
&\relphantom{=}{}
\cdot(f(s,u(s)),\varphi_k)^2ds\nonumber
&
\end{flalign}
\begin{flalign}
&\leq C\mathbb{E}\sum_{k=1}^{\infty}\int_0^t\Big|(t+\sigma-s)^{\alpha-1}e^{-\theta(t+\sigma)}
e^{-\nu(t+\sigma-s)}E_{\alpha,\alpha}(-\lambda_k^\beta(t+\sigma-s)^\alpha)\nonumber\\
&\relphantom{=}{} -(t-s)^{\alpha-1}
e^{-\theta t}e^{-\nu(t-s)}E_{\alpha,\alpha}(-\lambda_k^\beta(t-s)^\alpha)\Big|^2\lambda_k^{2\gamma}
(f(s,u(s)),\varphi_k)^2ds\nonumber\\
&\relphantom{=}{} +C\sigma\int_t^{t+\sigma}(t+\sigma-s)^{2\alpha-4}
\left(1+\mathbb{E}\|u(s)\|^2_{\mathbb{H}^{2\gamma-\frac{2\beta}{\alpha}}}\right)ds\nonumber &
\end{flalign}
\begin{flalign}
&\leq C\mathbb{E}\sum_{k=1}^{\infty}\int_0^t \Big|(t+\sigma-s)^{\alpha-1}e^{-\theta(t+\sigma)}
e^{-\nu(t+\sigma-s)}E_{\alpha,\alpha}(-\lambda_k^\beta(t+\sigma-s)^\alpha)\nonumber\\
&\relphantom{=}{} -(t-s)^{\alpha-1}
e^{-\theta t}e^{-\nu(t-s)}E_{\alpha,\alpha}(-\lambda_k^\beta(t-s)^\alpha)\Big|^2\lambda_k^{2\gamma}
(f(s,u(s)),\varphi_k)^2ds
\nonumber\\
&\relphantom{=}{}
+C\sigma^{2\alpha-2}, \nonumber &
\end{flalign}
where we have used $\frac{(\lambda_k^\beta(t+\sigma-s)^{\alpha})^{\frac{2}{\alpha}}}{(1+\lambda_k^\beta(t+\sigma-s)^{\alpha})^2}
\leq C$ and $\mathbb{E}\|u(s)\|^2_{\mathbb{H}^{2\gamma-\frac{2\sigma}{\alpha}}}
\leq C\mathbb{E}\|u(s)\|_{\mathbb{H}^{2\gamma}}^2$. Furthermore,
\begin{flalign}\label{eq.3.11}
&\lambda_k^{\frac{\beta}{\alpha}}\Big|(t+\sigma-s)^{\alpha-1}e^{-\theta(t+\sigma)}
e^{-\nu(t+\sigma-s)}E_{\alpha,\alpha}(-\lambda_k^\beta(t+\sigma-s)^\alpha)\nonumber\\
&\relphantom{=}{}-(t-s)^{\alpha-1}
e^{-\theta t}e^{-\nu(t-s)}E_{\alpha,\alpha}(-\lambda_k^\beta(t-s)^{\alpha})\Big|\nonumber
&
\end{flalign}
\begin{flalign}
&\relphantom{}{}\leq e^{-\theta(t+\sigma)}e^{-\nu(t+\sigma-s)}\lambda_k^{\frac{\beta}{\alpha}}\left|
(t+\sigma-s)^{\alpha-1}E_{\alpha,\alpha}(-\lambda_{k}^\beta(t+\sigma-s)^\alpha)\right.
\nonumber\\
&\relphantom{=}{}
\left.
-(t-s)^{\alpha-1}
E_{\alpha,\alpha}(-\lambda_k^\beta(t-s)^\alpha)\right|\nonumber\\
&\relphantom{=}{}+(t-s)^{\alpha-1}\lambda_k^{\frac{\beta}{\alpha}}
\left|E_{\alpha,\alpha}(-\lambda_k^\beta(t-s)^{\alpha})\right|
\left|e^{-\theta(t+\sigma)}e^{-\nu(t+\sigma-s)}-e^{-\theta t}e^{-\nu(t-s)}\right|\nonumber
&
\end{flalign}
\begin{flalign}
&\relphantom{}{}\leq \lambda_k^{\frac{\beta}{\alpha}}\left|(t+\sigma-s)^{\alpha-1}E_{\alpha,\alpha}(-\lambda_k^\beta(t+\sigma-s)^\alpha)
\right.
\nonumber\\
&\relphantom{=}{}
\left.
-(t-s)^{\alpha-1}E_{\alpha,\alpha}(-\lambda_k^\beta(t-s)^\alpha)\right|\nonumber\\
&\relphantom{=}{}
+C(t-s)^{\alpha-2}\frac{(\lambda_k^\beta(t-s)^{\alpha})^{\frac{1}{\alpha}}}{1+\lambda_k^\beta(t-s)^{\alpha}}
\left|e^{-\theta(t+\sigma)}e^{-\nu(t+\sigma-s)}-e^{-\theta t}e^{-\nu(t-s)}\right| %\nonumber
&
\end{flalign}
\begin{flalign}
&\relphantom{}{}\leq \left|\int_t^{t+\sigma}(\tau-s)^{\alpha-2}\lambda_k^{\frac{\beta}{\alpha}}
E_{\alpha,\alpha-1}(-\lambda_k^\beta(\tau-s)^{\alpha})d\tau\right|\nonumber\\
&\relphantom{=}{}+
C(t-s)^{\alpha-2}\left|e^{-\theta(t+\sigma)}e^{-\nu(t+\sigma-s)}-e^{-\theta t}e^{-\nu(t-s)}\right|\nonumber
&
\end{flalign}
\begin{flalign}
&\relphantom{}{}\leq C\int_t^{t+\sigma}(\tau-s)^{\alpha-3}\frac{(\lambda_k^\beta(\tau-s)^\alpha)^{\frac{1}{\alpha}}}
{1+\lambda_k^\beta(\tau-s)^{\alpha}}d\tau
\nonumber\\
&\relphantom{=}{}
+C(t-s)^{\alpha-2}
\left|e^{-\theta(t+\sigma)}e^{-\nu(t+\sigma-s)}-e^{-\theta t}e^{-\nu(t-s)}\right|\nonumber
&
\end{flalign}
\begin{flalign}
&\relphantom{}{}\leq \int_t^{t+\sigma}(\tau-s)^{\alpha-3}d\tau+C(t-s)^{\alpha-2}
\left|e^{-\theta(t+\sigma)}e^{-\nu(t+\sigma-s)}-e^{-\theta t}e^{-\nu(t-s)}\right|, \nonumber &
\end{flalign}
where we have used $\frac{(\lambda_k^\beta(\tau-s)^\alpha)^{\frac{1}{\alpha}}}
{1+\lambda_k^\beta(\tau-s)^{\alpha}}\leq C$.

Combining \eqref{eq.3.10*} and \eqref{eq.3.11}, in view of $(\bf{A}_1)$, and $u\in C([0,T];L^2(\Omega;\mathbb{H}^{2\gamma}(\mathcal{D})))$, we conclude from Lebesgue's dominated convergence theorem that
\begin{flalign}\label{eq.3.12}
I_3&\leq C\mathbb{E}\sum_{k=1}^{\infty}\int_0^t\left(\int_t^{t+\sigma}(\tau-s)^{\alpha-3}d\tau\right)^2
\lambda_k^{2\gamma-\frac{2\beta}{\alpha}}(f(s,u(s)),\varphi_k)^2ds\nonumber\\
&\relphantom{=}{}+C\mathbb{E}\sum_{k=1}^{\infty}\int_0^t(t-s)^{2\alpha-4}
\left|e^{-\theta(t+\sigma)}e^{-\nu(t+\sigma-s)}-e^{-\theta t}e^{-\nu (t-s)}\right|^2\nonumber\\
&\relphantom{=}{}\cdot \lambda_k^{2\gamma-\frac{2\beta}{\alpha}}(f(s,u(s)),\varphi_k)^2ds+C\sigma^{2\alpha-2}\nonumber&
\end{flalign}
\begin{flalign}
&\leq C\int_0^t\left((t+\sigma-s)^{\alpha-2}-(t-s)^{\alpha-2}\right)^2
\left(1+\mathbb{E}\|u(s)\|_{\mathbb{H}^{2\gamma-\frac{2\beta}{\alpha}}}^2\right)ds\nonumber\\
&\relphantom{=}{}+C\int_0^t(t-s)^{2\alpha-4}\left|e^{-\theta(t+\sigma)}e^{-\nu(t+\sigma-s)}
-e^{-\theta t}e^{-\nu(t-s)}\right|^2
\nonumber\\
&\relphantom{=}{}
\cdot\left(1+\mathbb{E}\|u(s)\|_{\mathbb{H}^{2\gamma-\frac{2\beta}{\alpha}}}^2\right)ds
+C\sigma^{2\alpha-2}
&
\end{flalign}
\begin{flalign}
&\leq C\sigma^{2\alpha-3}+C\int_0^t\left((t+\sigma-s)^{\alpha-2}-(t-s)^{\alpha-2}\right)^2ds
\nonumber\\
&\relphantom{=}{}
+ C\sigma^{2\alpha-2} \rightarrow 0 ~~\mbox{as}~~\sigma\rightarrow 0, \nonumber &
\end{flalign}
where we have used $\mathbb{E}\|u(s)\|^2_{\mathbb{H}^{2\gamma-\frac{2\sigma}{\alpha}}}
\leq C\mathbb{E}\|u(s)\|_{\mathbb{H}^{2\gamma}}^2.$

For $I_4$, arguing as in the proof of \eqref{eq.3.10*}-\eqref{eq.3.12} and noticing that $\{\varphi_k\}_{k=1}^{\infty}$ is an orthonormal basis in $L^2(\mathcal{D})$ and $\{\xi_l\}_{l=1}^{\infty}$ is a family of mutually independent one-dimensional standard Brownian motions, we deduce from \eqref{eq.2.8}, $(\bf{A}_1)$-$(\bf{A}_2)$, Proposition \ref{pro1},  and $u\in C([0,T];L^2(\Omega;\mathbb{H}^{2\gamma}(\mathcal{D})))$ that
\begin{flalign}\label{eq.3.13}
&I_4
\nonumber\\
&=C\mathbb{E}\sum_{k=1}^{\infty}\lambda_k^{2\gamma}\bigg(\int_0^{t+\sigma}
\int_{\mathcal{D}}(t+\sigma-s)^{\alpha-1}e^{-\theta(t+\sigma)}e^{-\nu(t+\sigma-s)}
\nonumber\\
&\relphantom{=}{}
\cdot\sum_{l=1}^{\infty}E_{\alpha,\alpha}(-\lambda_l^\beta(t+\sigma-s)^\alpha)\varphi_l(\cdot)
\varphi_l(y)g(s,u(s,y))d\mathbb{W}(s,y)
%\nonumber
\\
&\relphantom{=}{}
-\int_0^t\int_{\mathcal{D}}(t-s)^{\alpha-1}e^{-\theta t}e^{-\nu(t-s)}\sum_{l=1}^{\infty}E_{\alpha,\alpha}(-\lambda_l^\beta(t-s)^\alpha)
\varphi_l(\cdot)\varphi_l(y)\nonumber\\
&\relphantom{=}{} \cdot g(s,u(s,y))d\mathbb{W}(s,y),\varphi_k\bigg)^2\nonumber &
\end{flalign}
\begin{flalign}
&=C\mathbb{E}\sum_{k=1}^{\infty}\lambda_k^{2\gamma}\bigg|\int_0^{t+\sigma}
\int_{\mathcal{D}}(t+\sigma-s)^{\alpha-1}e^{-\theta(t+\sigma)}e^{-\nu(t+\sigma-s)}
\nonumber\\
&\relphantom{=}{}
\cdot E_{\alpha,\alpha}(-\lambda_k^\beta(t+\sigma-s)^\alpha)
\varphi_k(y)g(s,u(s,y)) d\mathbb{W}(s,y)
\nonumber\\
&\relphantom{=}{}
-\int_0^t\int_{\mathcal{D}}(t-s)^{\alpha-1}e^{-\theta t}e^{-\nu(t-s)}E_{\alpha,\alpha}(-\lambda_k^\beta(t-s)^\alpha)
\nonumber\\
&\relphantom{=}{}
\cdot\varphi_k(y)g(s,u(s,y))d\mathbb{W}(s,y)\bigg|^2 \nonumber &
\end{flalign}
\begin{flalign}%\label{eq.3.12}
\nonumber\\
&\leq C\mathbb{E}\sum_{k=1}^{\infty}\bigg|\int_0^{t}
\int_{\mathcal{D}}\Big((t+\sigma-s)^{\alpha-1}e^{-\theta(t+\sigma)}e^{-\nu(t+\sigma-s)}
E_{\alpha,\alpha}(-\lambda_k^\beta(t+\sigma-s)^\alpha)
\nonumber\\
&\relphantom{=}{}
-(t-s)^{\alpha-1}e^{-\theta t} e^{-\nu(t-s)}
E_{\alpha,\alpha}(-\lambda_k^\beta(t-s)^\alpha)\Big)\lambda_k^{\gamma}
\varphi_k(y)
\nonumber\\
&\relphantom{=}{}
\cdot\sum_{j,l=1}^{\infty}(g(s,u(s))\cdot e_l,\varphi_j)\varphi_j(y)\varsigma_l(s)dyd\xi_l(s)\bigg|^2\nonumber\\
&\relphantom{=}{}+C\mathbb{E}\sum_{k=1}^{\infty}\bigg|\int_t^{t+\sigma}
\int_{\mathcal{D}}(t+\sigma-s)^{\alpha-1}e^{-\theta(t+\sigma)}e^{-\nu(t+\sigma-s)}
\nonumber\\
&\relphantom{=}{} \cdot E_{\alpha,\alpha}(-\lambda_k^\beta(t+\sigma-s)^\alpha)\lambda_k^\gamma
\varphi_k(y) \sum_{j,l=1}^{\infty}(g(s,u(s))\cdot e_l,\varphi_j)\varphi_j(y)\varsigma_l(s)dyd\xi_l(s)\bigg|^2\nonumber &
\end{flalign}
\begin{flalign*}
&=C\mathbb{E}\sum_{k=1}^{\infty}\bigg|\int_0^{t}
\Big((t+\sigma-s)^{\alpha-1}e^{-\theta(t+\sigma)}e^{-\nu(t+\sigma-s)}
E_{\alpha,\alpha}(-\lambda_k^\beta(t+\sigma-s)^\alpha)
\nonumber\\
&\relphantom{=}{}
-(t-s)^{\alpha-1}e^{-\theta t} e^{-\nu(t-s)}
E_{\alpha,\alpha}(-\lambda_k^\beta(t-s)^\alpha)\Big)\lambda_k^{\gamma}
\sum_{l=1}^{\infty}(g(s,u(s))\cdot e_l,\varphi_k)
\nonumber\\
&\relphantom{=}{}
\cdot\varsigma_l(s)d\xi_l(s)\bigg|^2+C\mathbb{E}\sum_{k=1}^{\infty}\bigg|\int_t^{t+\sigma}
(t+\sigma-s)^{\alpha-1}e^{-\theta(t+\sigma)}e^{-\nu(t+\sigma-s)}
\nonumber\\
&\relphantom{=}{}\cdot E_{\alpha,\alpha}(-\lambda_k^\beta(t+\sigma-s)^\alpha)\lambda_k^\gamma
\sum_{l=1}^{\infty}(g(s,u(s))\cdot e_l,\varphi_k)\varsigma_l(s)d\xi_l(s)\bigg|^2 &
\end{flalign*}

\begin{flalign*}%\label{eq.3.12}
\nonumber\\
&=C\mathbb{E}\sum_{k,l=1}^{\infty}\int_0^{t}\Big|
\Big((t+\sigma-s)^{\alpha-1}e^{-\theta(t+\sigma)}e^{-\nu(t+\sigma-s)}
E_{\alpha,\alpha}(-\lambda_k^\beta(t+\sigma-s)^\alpha)\nonumber\\
&\relphantom{=}{} -(t-s)^{\alpha-1}e^{-\theta t}e^{-\nu(t-s)}
E_{\alpha,\alpha}(-\lambda_k^\beta(t-s)^\alpha)\Big)\lambda_k^{\gamma}
(g(s,u(s))\cdot e_l,\varphi_k)\varsigma_l(s)\Big|^2 ds\nonumber\\
&\relphantom{=}{}+C\mathbb{E}\sum_{k,l=1}^{\infty}\int_t^{t+\sigma}
\Big|(t+\sigma-s)^{\alpha-1}e^{-\theta(t+\sigma)}e^{-\nu(t+\sigma-s)}
E_{\alpha,\alpha}(-\lambda_k^\beta(t+\sigma-s)^\alpha)\nonumber\\
&\relphantom{=}{}
\cdot\lambda_k^\gamma(g(s,u(s))\cdot e_l,\varphi_k)\varsigma_l(s)\Big|^2 ds\nonumber &
\end{flalign*}
\begin{flalign*}
&\leq C\mathbb{E}\sum_{k,l=1}^{\infty}\int_0^t\bigg(\left(\int_t^{t+\sigma}(\tau-s)^{\alpha-3}d\tau\right)^2+
(t-s)^{2\alpha-4}\left|e^{-\theta(t+\sigma)}e^{-\nu(t+\sigma-s)}\right.\nonumber\\
&\relphantom{=}{} \left. -e^{-\theta t}e^{-\nu(t-s)}\right|^2\bigg) \left|\lambda_k^{\gamma-\frac{\beta}{\alpha}}(g(s,u (s))\cdot e_l,\varphi_k)\varsigma_l(s)\right|^2ds\nonumber\\
&\relphantom{=}{}+ C\mathbb{E}\sum_{k,l=1}^{\infty}\int_t^{t+\sigma}(t+\sigma-s)^{2\alpha-4}
\frac{(\lambda_k^\beta(t+\sigma-s)^\alpha)^{\frac{2}{\alpha}}}{(1+\lambda_k^\beta(t+\sigma-s)^\alpha)^2} \nonumber\\
&\relphantom{=}{}
\cdot\left|\lambda_k^{\gamma-\frac{\beta}{\alpha}}(g(s,u(s))\cdot e_l,\varphi_k)\varsigma_l(s)\right|^2ds\nonumber
&
\end{flalign*}
\begin{flalign*}
&\leq C\int_0^t\left(\left((t+\sigma-s)^{\alpha-2}-(t-s)^{\alpha-2}\right)^2+(t-s)^{2\alpha-4}
\left|e^{-\theta(t+\sigma)}e^{-\nu(t+\sigma-s)}
\right.\right.
\nonumber\\
&\relphantom{=}{}\left.\left. -e^{-\theta t}e^{-\nu(t-s)}\right|^2\right) \left(1+\mathbb{E}\|u(s)\|_{\mathbb{H}^{2\gamma-\frac{2\beta}{\alpha}}}^2\right)ds
+C\sigma^{2\alpha-3}
\nonumber
&
\end{flalign*}
\begin{flalign*}
&\leq C\int_0^t\left((t+\sigma-s)^{\alpha-2}-(t-s)^{\alpha-2}\right)^2ds\nonumber\\
&\relphantom{=}{}+C\int_0^t(t-s)^{2\alpha-4}
\left|e^{-\theta(t+\sigma)}e^{-\nu(t+\sigma-s)}-e^{-\theta t}e^{-\nu(t-s)}\right|^2ds
\nonumber\\
&\relphantom{=}{}
+C\sigma^{2\alpha-3}\rightarrow 0~~\mbox{as}~~\sigma\rightarrow 0. &
\end{flalign*}
Similar to the arguments in \eqref{eq.3.13}, in view of (2.9), $(\bf{A}_1)$-$(\bf{A}_2)$, Lemma \ref{le2.10}, $u\in$ $C([0,T];$ $L^2(\Omega;$ $\mathbb{H}^{2\gamma}(\mathcal{D})))$, and the mutual independence of the family of one-dimensional fractional Brownian motions $\{\xi_l^H\}_{l=1}^{\infty}$, we obtain that
\begin{flalign}\label{eq.3.14}
&I_5
\nonumber\\
&=C\mathbb{E}\sum_{k=1}^{\infty}\lambda_k^{2\gamma}\bigg(\int_0^{t+\sigma}
\int_{\mathcal{D}}(t+\sigma-s)^{\alpha-1}e^{-\theta(t+\sigma)}e^{-\nu(t+\sigma-s)}
\nonumber\\
&\relphantom{=}{}
\cdot\sum_{l=1}^{\infty}E_{\alpha,\alpha}(-\lambda_l^\beta(t+\sigma-s)^\alpha)\varphi_l(\cdot)
\varphi_l(y) h(s,u(s,y))d\mathbb{W}^H(s,y)
\nonumber\\
&\relphantom{=}{}
-\int_0^t\int_{\mathcal{D}}(t-s)^{\alpha-1}e^{-\theta t}e^{-\nu(t-s)}\sum_{l=1}^{\infty}E_{\alpha,\alpha}(-\lambda_l^\beta(t-s)^\alpha)
\varphi_l(\cdot)\varphi_l(y)\nonumber\\
&\relphantom{=}{} \cdot h(s,u(s,y))d\mathbb{W}^H(s,y),\varphi_k\bigg)^2\nonumber &
\end{flalign}
\begin{flalign}
&=C\mathbb{E}\sum_{k=1}^{\infty}\lambda_k^{2\gamma}\bigg|\int_0^{t+\sigma}
\int_{\mathcal{D}}(t+\sigma-s)^{\alpha-1}e^{-\theta(t+\sigma)}e^{-\nu(t+\sigma-s)}
\nonumber\\
&\relphantom{=}{} \cdot E_{\alpha,\alpha}(-\lambda_k^\beta(t+\sigma-s)^\alpha)
\varphi_k(y)h(s,u(s,y)) d\mathbb{W}^H(s,y)
\nonumber\\
&\relphantom{=}{}
-\int_0^t\int_{\mathcal{D}}(t-s)^{\alpha-1}e^{-\theta t}e^{-\nu(t-s)}E_{\alpha,\alpha}(-\lambda_k^\beta(t-s)^\alpha)
\varphi_k(y)
\nonumber\\
&\relphantom{=}{}
\cdot h(s,u(s,y))d\mathbb{W}^H(s,y)\bigg|^2 &
\end{flalign}
\begin{flalign}
&\leq C\mathbb{E}\sum_{k=1}^{\infty}\bigg|\int_0^{t}
\int_{\mathcal{D}}\Big((t+\sigma-s)^{\alpha-1}e^{-\theta(t+\sigma)}e^{-\nu(t+\sigma-s)}
E_{\alpha,\alpha}(-\lambda_k^\beta(t+\sigma-s)^\alpha)\nonumber\\
&\relphantom{=}{} -(t-s)^{\alpha-1}e^{-\theta t} e^{-\nu(t-s)}
E_{\alpha,\alpha}(-\lambda_k^\beta(t-s)^\alpha)\Big)\lambda_k^{\gamma}
\varphi_k(y)
\nonumber\\
&\relphantom{=}{}
\cdot\sum_{j,l=1}^{\infty}(h(s,u(s,y))\cdot e_l,\varphi_j)\varphi_j(y)\varrho_l(s)dyd\xi_l^H(s)\bigg|^2\nonumber\\
&\relphantom{=}{}+C\mathbb{E}\sum_{k=1}^{\infty}\bigg|\int_t^{t+\sigma}
\int_{\mathcal{D}}(t+\sigma-s)^{\alpha-1}e^{-\theta(t+\sigma)}e^{-\nu(t+\sigma-s)}
\varphi_k(y)\nonumber\\
&\relphantom{=}{}\cdot E_{\alpha,\alpha}(-\lambda_k^\beta(t+\sigma-s)^\alpha)\lambda_k^\gamma \sum_{j,l=1}^{\infty}(h(s,u(s,y))\cdot e_l,\varphi_j)\varphi_j(y)\varrho_l(s)dyd\xi_l^H(s)\bigg|^2\nonumber &
\end{flalign}
\begin{flalign}
&\leq CHT^{2H-1}\mathbb{E}\sum_{k,l=1}^{\infty}\int_0^{t}\Big|
\Big((t+\sigma-s)^{\alpha-1}e^{-\theta(t+\sigma)}e^{-\nu(t+\sigma-s)}
\nonumber\\
&\relphantom{=}{}
\cdot E_{\alpha,\alpha}(-\lambda_k^\beta(t+\sigma-s)^\alpha)-(t-s)^{\alpha-1}e^{-\theta t}e^{-\nu(t-s)}
E_{\alpha,\alpha}(-\lambda_k^\beta(t-s)^\alpha)\Big)
\nonumber\\
&\relphantom{=}{}\cdot\lambda_k^{\gamma}
(h(s,u(s))\cdot e_l,\varphi_k)\varrho_l(s)\Big|^2ds+CH\sigma^{2H-1}\mathbb{E}\sum_{k,l=1}^{\infty}\int_t^{t+\sigma}
\Big|(t+\sigma-s)^{\alpha-1}\nonumber\\
&\relphantom{=}{} \cdot e^{-\theta(t+\sigma)}e^{-\nu(t+\sigma-s)}
E_{\alpha,\alpha}(-\lambda_k^\beta(t+\sigma-s)^\alpha)\lambda_k^\gamma
(h(s,u(s))\cdot e_l,\varphi_k)\varrho_l(s)\Big|^2 ds\nonumber &
\end{flalign}
\begin{flalign}
&\leq C\int_0^t\bigg(\left(\int_t^{t+\sigma}(\tau-s)^{\alpha-3}d\tau\right)^2+
(t-s)^{2\alpha-4}\left|e^{-\theta(t+\sigma)}e^{-\nu(t+\sigma-s)}\right.\nonumber\\
&\relphantom{=}{} \left. -e^{-\theta t}e^{-\nu(t-s)}\right|^2\bigg)\left(1+\mathbb{E}\|u(s)\|_{\mathbb{H}^{2\gamma-\frac{2\beta}{\alpha}}}^2\right)ds
\nonumber\\
&\relphantom{=}{}+C\sigma^{2H-1}\int_t^{t+\sigma}(t+\sigma-s)^{2\alpha-4}
\left(1+\mathbb{E}\|u(s)\|_{\mathbb{H}^{2\gamma-\frac{2\beta}{\alpha}}}^2\right)ds\nonumber &
\end{flalign}
\begin{flalign}
&\leq C\int_0^t\left((t+\sigma-s)^{\alpha-2}-(t-s)^{\alpha-2}\right)^2ds\nonumber\\
&\relphantom{=}{}+C\int_0^t(t-s)^{2\alpha-4}
\left|e^{-\theta(t+\sigma)}e^{-\nu(t+\sigma-s)}-e^{-\theta t}e^{-\nu(t-s)}\right|^2ds\nonumber\\
&\relphantom{=}{} +C\sigma^{2H+2\alpha-4}\rightarrow 0~~\mbox{as}~~\sigma\rightarrow 0. \nonumber &
\end{flalign}
From \eqref{eq.3.8}-\eqref{eq.3.10} and \eqref{eq.3.12}-\eqref{eq.3.14}, it follows that
$\mathbb{E}\|e^{-\theta(t+\sigma)}\mathcal{P}u(t+\sigma)-e^{-\theta t}\mathcal{P}u(t)\|_{\mathbb{H}^{2\gamma}}^2$ tends to zero as $\sigma\rightarrow 0$. Consequently, $\mathcal{P}u$ belongs to $C([0,T];L^2(\Omega;\mathbb{H}^{2\gamma}(\mathcal{D})))_{\theta}$.

Step 2. $\mathcal{P}:$ $C([0,T];L^2(\Omega;\mathbb{H}^{2\gamma}(\mathcal{D})))_{\theta}$ $\rightarrow$ $C([0,T];L^2(\Omega;\mathbb{H}^{2\gamma}(\mathcal{D})))_{\theta}$ has a unique fixed point.

For any $u_1$, $u_2\in C([0,T];L^2(\Omega;\mathbb{H}^{2\gamma}(\mathcal{D})))_{\theta}$, by \eqref{eq.3.7*} we have
\begin{align}\label{2.1}
\displaystyle &\mathbb{E}\|e^{-\theta t}(\mathcal{P}u_1(t)-\mathcal{P}u_2(t))\|_{\mathbb{H}^{2\gamma}}^2\nonumber\\
&\relphantom{}{} \leq
C\mathbb{E}\bigg{\|}e^{-\theta t}\int_{0}^t\int_{\mathcal{D}}\mathcal{S}^\nu_{\alpha,\beta}(t-s,\cdot,y)
(f(s,u_1(s,y))-f(s,u_2(s,y)))\nonumber\\
&\relphantom{=}{}\cdot dyds\bigg{\|}_{\mathbb{H}^{2\gamma}}^2+C\mathbb{E}\bigg{\|}e^{-\theta t}\int_{0}^t\int_{\mathcal{D}}\mathcal{S}^\nu_{\alpha,\beta}(t-s,\cdot,y)
(g(s,u_1(s,y))\nonumber\\
&\relphantom{=}{}-g(s,u_2(s,y)))d{\mathbb{W}}(s,y)\bigg{\|}_{\mathbb{H}^{2\gamma}}^2+C\mathbb{E}\bigg{\|}e^{-\theta t}\int_{0}^t\int_{\mathcal{D}}\mathcal{S}^\nu_{\alpha,\beta}(t-s,\cdot,y)
\nonumber\\
&\relphantom{=}{}
\cdot(h(s,u_1(s,y))-h(s,u_2(s,y)))d{\mathbb{W}^H}(s,y)\bigg{\|}_{\mathbb{H}^{2\gamma}}^2\nonumber\\
&\relphantom{}{}:=\mathcal{Z}_1+\mathcal{Z}_2+\mathcal{Z}_3.
\end{align}
Since $\{\varphi_k\}_{k=1}^{\infty}$ is an orthonormal basis in $L^2({\mathcal{D}})$, by \eqref{eq2.2}, H\"{o}lder's inequality, Lemma \ref{le2.1}, and $(\bf{A}_1)$, we have
\begin{flalign}\label{2.2}
\mathcal{Z}_1&= C\mathbb{E}\bigg{\|}e^{-\theta t}\int_{0}^t\int_{\mathcal{D}}(t-s)^{\alpha-1}e^{-\nu(t-s)}
\sum_{l=1}^{\infty}E_{\alpha,\alpha}\left(-\lambda_l^{\beta}(t-s)^\alpha\right)
\varphi_l(\cdot)\varphi_l(y)\nonumber\\
&\relphantom{}{}\cdot(f(s,u_1(s,y))-f(s,u_2(s,y)))dyds\bigg{\|}_{\mathbb{H}^{2\gamma}}^2\nonumber &
\end{flalign}
\begin{flalign}
&=C\mathbb{E}\sum_{k=1}^{\infty}\lambda_k^{2\gamma}\bigg(\int_0^t\int_{\mathcal{D}}(t-s)^{\alpha-1}
e^{-\theta t}e^{-\nu(t-s)}\sum_{l=1}^{\infty}
E_{\alpha,\alpha}(-\lambda_l^\beta(t-s)^{\alpha})\nonumber\\
&\relphantom{}{} \cdot\varphi_l(\cdot)\varphi_l(y)(f(s,u_1(s,y))-f(s,u_2(s,y)))dyds,\varphi_k\bigg)^2 &
\end{flalign}
\begin{flalign}
&=C\mathbb{E}\sum_{k=1}^{\infty} \lambda_k^{2\gamma} \bigg{|}\int_{0}^t\int_{\mathcal{D}}(t-s)^{\alpha-1}e^{-\theta t}e^{-\nu(t-s)}
E_{\alpha,\alpha}\left(-\lambda_k^{\beta}(t-s)^\alpha\right)
\varphi_k(y)\nonumber\\
&\relphantom{}{}\cdot (f(s,u_1(s,y))-f(s,u_2(s,y)))dyds\bigg{|}^2\nonumber
&
\end{flalign}
\begin{flalign}
&\leq C\mathbb{E}\int_{0}^t(t-s)^{2\alpha-2}e^{-2\theta t}
e^{-2\nu(t-s)}\sum_{k=1}^{\infty}
\bigg{|}E_{\alpha,\alpha}\left(-\lambda_k^{\beta}(t-s)^\alpha\right)\bigg{|}^2
\lambda_k^{2\gamma}\nonumber\\
&\relphantom{}{}\cdot
((f(s,u_1(s))-f(s,u_2(s))),\varphi_k)^2ds\nonumber&
\end{flalign}
\begin{flalign}
&\leq C\int_{0}^t(t-s)^{2\alpha-4}e^{-2\theta t} e^{-2\nu(t-s)}\mathbb{E}\sum_{k=1}^{\infty}
\frac{(\lambda_k^\beta(t-s)^{\alpha})^{\frac{2}{\alpha}}}{(1+\lambda_k^\beta(t-s)^\alpha)^2}
\lambda_k^{2\gamma-\frac{2\beta}{\alpha}}\nonumber\\
&\relphantom{}{}\cdot((f(s,u_1(s))-f(s,u_2(s))),
\varphi_k)^2ds\nonumber &
\end{flalign}
\begin{flalign}
&= C\int_{0}^t(t-s)^{2\alpha-4}e^{-2\theta t} e^{-2\nu(t-s)}\mathbb{E}
{\|}f(s,u_1(s))-f(s,u_2(s)){\|}_{\mathbb{H}^{2\gamma-\frac{2\beta}{\alpha}}}^2ds\nonumber
&
\end{flalign}
\begin{flalign}
&\leq C\int_{0}^t(t-s)^{2\alpha-4}e^{-2\theta t} e^{-2\nu(t-s)}\mathbb{E}
{\|}u_1(s)-u_2(s){\|}_{\mathbb{H}^{2\gamma}}^2ds, \nonumber &
\end{flalign}
where we have used $\frac{(\lambda_k^\beta(t-s)^{\alpha})^{\frac{2}{\alpha}}}{(1+\lambda_k^\beta(t-s)^\alpha)^2}
\leq C$ and $\mathbb{E}\|u_1(s)-u_2(s)\|^2_{\mathbb{H}^{2\gamma-\frac{2\beta}{\alpha}}}
\leq C\mathbb{E}\|u_1(s)-u_2(s)\|_{\mathbb{H}^{2\gamma}}^2$.

For $\mathcal{Z}_2$, noticing that $\{\xi_l\}_{l=1}^\infty$ is a family of independent one-dimensional standard Brownian motions and $\{\varphi_k\}_{k=1}^{\infty}$ is an orthonormal basis in $L^2(\mathcal{D})$, in view of Lemma \ref{le2.1}, Proposition \ref{pro1}, \eqref{eq.2.8}, \eqref{eq2.2}, and $(\bf{A}_1)$-$(\bf{A}_2)$, we get that
\begin{flalign}\label{2.3}
\mathcal{Z}_2&= C\mathbb{E}\bigg{\|}e^{-\theta t} \int_{0}^t\int_{\mathcal{D}}(t-s)^{\alpha-1}e^{-\nu(t-s)}
\sum_{l=1}^{\infty}E_{\alpha,\alpha}\left(-\lambda_l^{\beta}(t-s)^\alpha\right)
\varphi_l(\cdot)\varphi_l(y)\nonumber\\
&\relphantom{=}{}\cdot(g(s,u_1(s,y))-g(s,u_2(s,y)))d\mathbb{W}(s,y)\bigg{\|}_{\mathbb{H}^{2\gamma}}^2\nonumber &
\end{flalign}
\begin{flalign}
&=C\mathbb{E}\sum_{k=1}^{\infty}\lambda_k^{2\gamma}\bigg(\int_0^t\int_{\mathcal{D}}
(t-s)^{\alpha-1}e^{-\theta t} e^{-\nu(t-s)} \sum_{l=1}^{\infty}E_{\alpha,\alpha}\left(-\lambda_l^{\beta}(t-s)^\alpha\right)
\nonumber\\
&\relphantom{=}{}\cdot\varphi_l(\cdot)\varphi_l(y)(g(s,u_1(s,y))-g(s,u_2(s,y)))d\mathbb{W}(s,y),\varphi_k\bigg)^2\nonumber &
\end{flalign}
\begin{flalign}
&=C\mathbb{E}\sum_{k=1}^{\infty} \lambda_k^{2\gamma} \bigg{|}\int_{0}^t\int_{\mathcal{D}}(t-s)^{\alpha-1}e^{-\theta t}e^{-\nu(t-s)}
E_{\alpha,\alpha}\left(-\lambda_k^{\beta}(t-s)^\alpha\right)
\varphi_k(y)\nonumber\\
&\relphantom{=}{}\cdot
\sum\limits_{j,l=1}^{\infty}((g(s,u_1(s))-g(s,u_2(s)))\cdot e_l,\varphi_j)\varphi_j(y)\varsigma_l(s)dyd\xi_l(s)\bigg{|}^2\nonumber &
\end{flalign}
\begin{flalign}
&=C\mathbb{E}\sum_{k=1}^{\infty} \bigg{|}\int_{0}^t(t-s)^{\alpha-1}e^{-\theta t}  e^{-\nu(t-s)}
E_{\alpha,\alpha}\left(-\lambda_k^{\beta}(t-s)^\alpha\right)\lambda_k^\gamma \\
&\relphantom{=}{}\cdot
\sum\limits_{l=1}^{\infty}((g(s,u_1(s))-g(s,u_2(s)))\cdot e_l,\varphi_k)\varsigma_l(s)d\xi_l(s)\bigg{|}^2\nonumber &
\end{flalign}
\begin{flalign}
&\leq C\mathbb{E}\sum_{k,l=1}^{\infty}\int_{0}^t(t-s)^{2\alpha-4} e^{-2\theta t}e^{-2\nu(t-s)}\frac{(\lambda_k^\beta(t-s)^\alpha)^{\frac{2}{\alpha}}}{(1+\lambda_k^\beta(t-s)^\alpha)^2}
\lambda_k^{2\gamma-\frac{2\beta}{\alpha}}\nonumber\\
&\relphantom{=}{}\cdot
((g(s,u_1(s))-g(s,u_2(s)))\cdot e_l,\varphi_k)^2|\varsigma_l(s)|^2ds\nonumber &
\end{flalign}
\begin{flalign}
&\leq C\int_{0}^t(t-s)^{2\alpha-4}e^{-2\theta t} e^{-2\nu(t-s)}
\mathbb{E}\|(-\Delta)^{\gamma-\frac{\beta}{\alpha}}(g(s,u_1(s))
\nonumber\\
&\relphantom{=}{}
-g(s,u_2(s)))\|_{\mathcal{L}_2^0}^2ds\nonumber &
\end{flalign}
\begin{flalign}
&\leq C\int_{0}^t(t-s)^{2\alpha-4}e^{-2\theta t} e^{-2\nu(t-s)}\mathbb{E}{\|}u_1(s)-u_2(s){\|}_{\mathbb{H}^{2\gamma}}^2ds. \nonumber &
\end{flalign}
Arguing as in the proof of \eqref{2.3} and noticing that $\{\xi_l^H\}_{l=1}^\infty$ is a family of independent one-dimensional fractional Brownian motions, we deduce from Lemmas \ref{le2.1} and \ref{le2.10}, \eqref{eq.2.9}, \eqref{eq2.2}, $(\bf{A}_1)$, and $(\bf{A}_2)$ that
\begin{flalign}\label{2.4}
\mathcal{Z}_3&= C\mathbb{E}\bigg{\|}e^{-\theta t} \int_{0}^t\int_{\mathcal{D}}(t-s)^{\alpha-1}e^{-\nu(t-s)}
\sum_{l=1}^{\infty}E_{\alpha,\alpha}\left(-\lambda_l^{\beta}(t-s)^\alpha\right)
\varphi_l(\cdot)\varphi_l(y)\nonumber\\
&\relphantom{=}{}\cdot (h(s,u_1(s,y))-h(s,u_2(s,y)))d\mathbb{W}^H(s,y)\bigg{\|}_{\mathbb{H}^{2\gamma}}^2\nonumber &
\end{flalign}
\begin{flalign}
&=C\mathbb{E}\sum_{k=1}^{\infty} \lambda_k^{2\gamma}
\bigg(\int_0^t\int_{\mathcal{D}}(t-s)^{\alpha-1}e^{-\theta t}e^{-\nu(t-s)}\sum_{l=1}^{\infty}E_{\alpha,\alpha}\left(-\lambda_l^{\beta}(t-s)^\alpha\right)
\nonumber\\
&\relphantom{=}{}\cdot \varphi_l(\cdot)\varphi_l(y) (h(s,u_1(s,y))-h(s,u_2(s,y)))d\mathbb{W}^H(s,y),\varphi_k\bigg)^2\nonumber &
\end{flalign}
\begin{flalign}
&=C\mathbb{E}\sum_{k=1}^{\infty} \lambda_k^{2\gamma} \bigg{|}\int_{0}^t\int_{\mathcal{D}}(t-s)^{\alpha-1} e^{-\theta t} e^{-\nu(t-s)}
E_{\alpha,\alpha}\left(-\lambda_k^{\beta}(t-s)^\alpha\right)
\varphi_k(y)\nonumber\\
&\relphantom{=}{}\cdot
\sum\limits_{j,l=1}^{\infty}((h(s,u_1(s))-h(s,u_2(s)))\cdot e_l,\varphi_j)\varphi_j(y)\varrho_l(s)dyd\xi_l^H(s)\bigg{|}^2\nonumber &
\end{flalign}
\begin{flalign}
&=C\mathbb{E}\sum_{k=1}^{\infty} \bigg{|}\int_{0}^t(t-s)^{\alpha-1}e^{-\theta t} e^{-\nu(t-s)}
E_{\alpha,\alpha}\left(-\lambda_k^{\beta}(t-s)^\alpha\right)\lambda_k^\gamma\nonumber\\
&\relphantom{=}{}\cdot
\sum\limits_{l=1}^{\infty}((h(s,u_1(s))-h(s,u_2(s)))\cdot e_l,\varphi_k)\varrho_l(s)d\xi_l^H(s)\bigg{|}^2
&
\end{flalign}
\begin{flalign}
&\leq CHT^{2H-1}\mathbb{E}\sum_{k,l=1}^{\infty}\int_{0}^t(t-s)^{2\alpha-4}e^{-2\theta t} e^{-2\nu(t-s)}\frac{(\lambda_k^{\beta}(t-s)^\alpha)^{\frac{2}{\alpha}}}{(1+\lambda_k^{\beta}(t-s)^\alpha)^2}
\nonumber\\
&\relphantom{=}{}\cdot \lambda_k^{2\gamma-\frac{2\beta}{\alpha}}
((h(s,u_1(s))-h(s,u_2(s)))\cdot e_l,\varphi_k)^2|\varrho_l(s)|^2ds\nonumber &
\end{flalign}
\begin{flalign}
&\leq C\int_{0}^t(t-s)^{2\alpha-4}e^{-2\theta t} e^{-2\nu(t-s)}
\nonumber\\
&\relphantom{=}{}\cdot
\mathbb{E}
\|(-\Delta)^{\gamma-\frac{\beta}{\alpha}}(h(s,u_1(s))-h(s,u_2(s)))\|_{\mathcal{L}_2^0}^2ds\nonumber&
\end{flalign}
\begin{flalign}
&\leq C\int_{0}^t(t-s)^{2\alpha-4}e^{-2\theta t} e^{-2\nu(t-s)}\mathbb{E}{\|}u_1(s)-u_2(s){\|}_{\mathbb{H}^{2\gamma}}^2ds. \nonumber  &
\end{flalign}

Then it follows from \eqref{2.1}-\eqref{2.4} that
\begin{flalign*}
\displaystyle &\mathbb{E}\|e^{-\theta t} (\mathcal{P}u_1(t)-\mathcal{P}u_2(t))\|_{\mathbb{H}^{2\gamma}}^2\nonumber\\
&\relphantom{}{} \leq C
\int_{0}^t(t-s)^{2\alpha-4}e^{-2(\theta+\nu)(t-s)}\mathbb{E}{\|}e^{-\theta s} (u_1(s)-u_2(s)){\|}_{\mathbb{H}^{2\gamma}}^2ds\nonumber\\
&\relphantom{}{}\leq C\int_{0}^t(t-s)^{2\alpha-4}e^{-2(\theta+\nu)(t-s)}\max_{s\in[0,T]}
\mathbb{E}{\|}e^{-\theta s} (u_1(s)-u_2(s)){\|}_{\mathbb{H}^{2\gamma}}^2ds\nonumber\\
&\relphantom{}{}=C\theta^{-(2\alpha-3)}\int_0^1(1-\tau)^{2\alpha-4}(\theta t)^{2\alpha-3}e^{-2\theta t(1-\tau)}e^{-2\nu t(1-\tau)}d\tau\|u_1-u_2\|_{\theta}^2\nonumber\\
&\relphantom{}{}\leq C\max_{\theta>0,T\geq t\geq 0,\tau\in[0,1]}
\left([(1-\tau)\theta t]^{\alpha-\frac{3}{2}}e^{-2\theta t(1-\tau)}\right)
\frac{t^{\alpha-\frac{3}{2}}}{\theta^{\alpha-\frac{3}{2}}}
\int_0^1(1-\tau)^{\alpha-\frac{5}{2}}d\tau
\nonumber\\
&\relphantom{=}{}
\cdot\|u_1-u_2\|_\theta^2\nonumber\\
&\relphantom{}{}=C\frac{t^{\alpha-\frac{3}{2}}}{\theta^{\alpha-\frac{3}{2}}}
\|u_1-u_2\|_\theta^2\quad \forall u_1,\, u_2\in C([0,T]; L^2(\Omega;\mathbb{H}^{2\gamma}(\mathcal{D}))).
\end{flalign*}

Step 3. We show that for any $\delta\in \left(0,2\beta-\frac{3\beta}{\alpha}\right)$,
\[t^{\frac{\alpha\delta}{2\beta}}u\in C([0,T];L^2(\Omega;\mathbb{H}^{2\gamma+\delta}(\mathcal{D})))\]
with value zero at $t=0$.

Thanks to Lemma \ref{le2.2} and Remark \ref{re3.4}, it follows from \eqref{eq2.1} that
\begin{align}\label{2.5}
& t^{\frac{\alpha\delta}{\beta}}\mathbb{E}\|u(t)\|_{\mathbb{H}^{2\gamma+\delta}}^2
\nonumber\\
&\leq Ct^{\frac{\alpha\delta}{\beta}}\mathbb{E}\bigg{\|} \int_{\mathcal{D}}\mathcal{T}^\nu_{\alpha,\beta}(t,\cdot,y)a(y)dy\bigg{\|}_{
\mathbb{H}^{2\gamma+\delta}}^2+Ct^{\frac{\alpha\delta}{\beta}}\mathbb{E}\bigg{\|} \int_{\mathcal{D}}\mathcal{R}^\nu_{\alpha,\beta}(t,\cdot,y) b(y)dy\bigg{\|}_{\mathbb{H}^{2\gamma+\delta}}^2\nonumber\\
&\relphantom{=}{}+ Ct^{\frac{\alpha\delta}{\beta}}\mathbb{E}\bigg{\|} \int_{0}^t\int_{\mathcal{D}}\mathcal{S}^\nu_{\alpha,\beta}(t-s,\cdot,y)
f(s,u(s,y))dyds\bigg{\|}_{\mathbb{H}^{2\gamma+\delta}}^2\nonumber\\
&\relphantom{=}{}+Ct^{\frac{\alpha\delta}{\beta}}\mathbb{E}\bigg{\|} \int_{0}^t\int_{\mathcal{D}}\mathcal{S}^\nu_{\alpha,\beta}(t-s,\cdot,y)
g(s,u(s,y))d{\mathbb{W}}(s,y)\bigg{\|}_{\mathbb{H}^{2\gamma+\delta}}^2\nonumber\\
&\relphantom{=}{}+Ct^{\frac{\alpha\delta}{\beta}}\mathbb{E}\bigg{\|} \int_{0}^t\int_{\mathcal{D}}\mathcal{S}^\nu_{\alpha,\beta}(t-s,\cdot,y)
h(s,u(s,y))d{\mathbb{W}}^H(s,y)\bigg{\|}_{\mathbb{H}^{2\gamma+\delta}}^2\nonumber\\
&\leq C(1+\nu^2t^2)e^{-2\nu t}t^{-\frac{\alpha(2\gamma-2\widetilde{\gamma})}{\beta}}\mathbb{E}\|a\|_{\mathbb{H}^{2\widetilde{\gamma}}}^2
+Ce^{-2\nu t}t^{-\frac{\alpha(2\gamma-2\widetilde{\gamma})}{\beta}}\mathbb{E}\|b\|_{\mathbb{H}^{2\widetilde{\gamma}-\frac{2\beta}{\alpha}}}^2\nonumber\\
&\relphantom{=}{}+M_1+M_2+M_3.
\end{align}
Arguing as in the proof of \eqref{2.2}, \eqref{2.3}, and \eqref{2.4}, noticing that $\{\varphi_l\}_{l=1}^{\infty}$ is an orthonormal basis in $L^2(\mathcal{D})$, $\{\xi_l\}_{l=1}^{\infty}$ is a family of independent one-dimensional standard Brownian motions, and $\{\xi_l^H\}_{l=1}^{\infty}$ is a family of independent one-dimensional fractional Brownian motions, we deduce from Lemmas \ref{le2.1} and \ref{le2.10}, Proposition \ref{pro1}, \eqref{eq.2.8}-\eqref{eq.2.9}, \eqref{eq2.2}, $(\bf{A}_1)$-$(\bf{A}_2)$, and H\"{o}lder's inequality that
\begin{flalign}\label{2.6}
M_1&=Ct^{\frac{\alpha\delta}{\beta}}\mathbb{E}\bigg{\|} \int_{0}^t\int_{\mathcal{D}}(t-s)^{\alpha-1}e^{-\nu (t-s)}\sum\limits_{l=1}^{\infty}E_{\alpha,\alpha}(-\lambda_l^\beta (t-s)^\alpha)\varphi_l(\cdot)\varphi_l(y)
\nonumber\\
&\relphantom{=}{}
\cdot f(s,u(s,y))dyds\bigg{\|}_{\mathbb{H}^{2\gamma+\delta}}^2\nonumber
&
\end{flalign}
\begin{flalign}
&=Ct^{\frac{\alpha\delta}{\beta}}
\mathbb{E}\sum\limits_{k=1}^{\infty}\lambda_k^{2\gamma+\delta}\bigg(\int_{0}^t\int_{\mathcal{D}}(t-s)^{\alpha-1}e^{-\nu (t-s)}\sum_{l=1}^{\infty}E_{\alpha,\alpha}(-\lambda_l^\beta (t-s)^\alpha)\nonumber\\
&\relphantom{=}{}\cdot \varphi_l(\cdot)\varphi_l(y) f(s,u(s,y))dyds,\varphi_k\bigg)^2 &
\end{flalign}
\begin{flalign}
&=Ct^{\frac{\alpha\delta}{\beta}}
\mathbb{E}\sum\limits_{k=1}^{\infty}\lambda_k^{2\gamma+\delta}\bigg|\int_{0}^t\int_{\mathcal{D}}(t-s)^{\alpha-1}e^{-\nu (t-s)}E_{\alpha,\alpha}(-\lambda_k^\beta (t-s)^\alpha)
\nonumber\\
&\relphantom{=}{}\cdot
\varphi_k(y)f(s,u(s,y))dyds\bigg|^2\nonumber
&
\end{flalign}
\begin{flalign}
&\leq Ct^{\frac{\alpha\delta}{\beta}} \mathbb{E}\int_0^t(t-s)^{2\alpha-2}e^{-2\nu(t-s)}\sum\limits_{k=1}^{\infty}\lambda_k^{\delta}
\left|E_{\alpha,\alpha}(-\lambda_k^\beta (t-s)^\alpha)\right|^2
\nonumber\\
&\relphantom{=}{}\cdot
\lambda_k^{2\gamma}(f(s,u(s)),\varphi_k)^2ds\nonumber
&
\end{flalign}
\begin{flalign}
&\leq Ct^{\frac{\alpha\delta}{\beta}} \mathbb{E}\int_0^t(t-s)^{2\alpha-\frac{\alpha\delta}{\beta}-4}e^{-2\nu(t-s)}
\sum\limits_{k=1}^{\infty}\frac{(\lambda_k^\beta(t-s)^{\alpha})^{\frac{\delta}{\beta}+\frac{2}{\alpha}}}
{(1+\lambda_k^{\beta}(t-s)^\alpha)^2}
\lambda_k^{2\gamma-\frac{2\beta}{\alpha}}
\nonumber\\
&\relphantom{=}{}\cdot
(f(s,u(s)),\varphi_k)^2ds\nonumber
&
\end{flalign}
\begin{flalign}
&\leq Ct^{\frac{\alpha\delta}{\beta}}\int_0^t(t-s)^{2\alpha-\frac{\alpha\delta}{\beta}-4}
\left(1+\mathbb{E}\|u(s)\|_{\mathbb{H}^{2\gamma}}^2\right)ds, \nonumber &
\end{flalign}

\begin{flalign}\label{2.8}
M_2&= Ct^{\frac{\alpha\delta}{\beta}}\mathbb{E}\bigg{\|} \int_{0}^t\int_{\mathcal{D}}(t-s)^{\alpha-1}e^{-\nu (t-s)}\sum\limits_{l=1}^{\infty}E_{\alpha,\alpha}(-\lambda_l^\beta (t-s)^\alpha)\varphi_l(\cdot)\varphi_l(y)\nonumber\\
&\relphantom{=}{}\cdot g(s,u(s,y))d\mathbb{W}(s,y)\bigg{\|}_{\mathbb{H}^{2\gamma+\delta}}^2\nonumber &
\end{flalign}
\begin{flalign}
&=Ct^{\frac{\alpha\delta}{\beta}}
\mathbb{E}\sum\limits_{k=1}^{\infty}\lambda_k^{2\gamma+\delta}\bigg(\int_{0}^t\int_{\mathcal{D}}(t-s)^{\alpha-1}e^{-\nu (t-s)}\sum_{l=1}^{\infty}E_{\alpha,\alpha}(-\lambda_l^\beta (t-s)^\alpha)\nonumber\\
&\relphantom{=}{}\cdot \varphi_l(\cdot)\varphi_l(y) g(s,u(s,y))d\mathbb{W}(s,y),\varphi_k\bigg)^2\nonumber &
\end{flalign}
\begin{flalign}
&=Ct^{\frac{\alpha\delta}{\beta}}
\mathbb{E}\sum\limits_{k=1}^{\infty}\lambda_k^{2\gamma+\delta}\bigg|\int_{0}^t\int_{\mathcal{D}}(t-s)^{\alpha-1}e^{-\nu (t-s)}E_{\alpha,\alpha}(-\lambda_k^\beta (t-s)^\alpha)\varphi_k(y)\nonumber  \\
&\relphantom{=}{}\cdot \sum_{j,l=1}^{\infty}(g(s,u(s,y))\cdot e_l,\varphi_j)\varphi_j(y)\varsigma_l(s)dyd\xi_l(s)\bigg|^2\nonumber &
\end{flalign}
\begin{flalign}
&= Ct^{\frac{\alpha\delta}{\beta}} \mathbb{E}\sum\limits_{k=1}^{\infty}\bigg|\int_0^t(t-s)^{\alpha-1}e^{-\nu(t-s)}
E_{\alpha,\alpha}(-\lambda_k^\beta (t-s)^\alpha)\lambda_k^{\gamma+\frac{\delta}{2}}\nonumber\\
&\relphantom{=}{}\cdot\sum_{l=1}^{\infty}(g(s,u(s))\cdot e_l,\varphi_k)\varsigma_l(s)d\xi_l(s)\bigg|^2 &
\end{flalign}
\begin{flalign}
&\leq Ct^{\frac{\alpha\delta}{\beta}} \mathbb{E}\sum\limits_{k,l=1}^{\infty}\int_0^t(t-s)^{2\alpha-\frac{\alpha\delta}{\beta}-4}
e^{-2\nu(t-s)}\frac{(\lambda_k^\beta(t-s)^{\alpha})^{\frac{\delta}{\beta}+\frac{2}{\alpha}}}
{(1+\lambda_k^{\beta}(t-s)^\alpha)^2}
\lambda_k^{2\gamma-\frac{2\beta}{\alpha}}\nonumber \\
&\relphantom{=}{}\cdot(g(s,u(s))\cdot e_l,\varphi_k)^2|\varsigma_l(s)|^2ds\nonumber &
\end{flalign}
\begin{flalign}
&\leq Ct^{\frac{\alpha\delta}{\beta}}\int_0^t(t-s)^{2\alpha-\frac{\alpha\delta}{\beta}-4}
\mathbb{E}\|(-\Delta)^{\gamma-\frac{\beta}{\alpha}}g(s,u(s))\|_{\mathcal{L}_2^0}^2ds\nonumber &
\end{flalign}
\begin{flalign}
&\leq Ct^{\frac{\alpha\delta}{\beta}}\int_0^t(t-s)^{2\alpha-\frac{\alpha\delta}{\beta}-4}
\left(1+\mathbb{E}\|u(s)\|_{\mathbb{H}^{2\gamma}}^2\right)ds, \nonumber &
\end{flalign}
and
\begin{flalign}\label{2.11}
M_3&= Ct^{\frac{\alpha\delta}{\beta}}\mathbb{E}\bigg{\|} \int_{0}^t\int_{\mathcal{D}}(t-s)^{\alpha-1}e^{-\nu (t-s)}\sum\limits_{l=1}^{\infty}E_{\alpha,\alpha}(-\lambda_l^\beta (t-s)^\alpha)\varphi_l(\cdot)\varphi_l(y)\nonumber\\
&\relphantom{=}{}\cdot h(s,u(s,y))d\mathbb{W}^H(s,y)\bigg{\|}_{\mathbb{H}^{2\gamma+\delta}}^2\nonumber &
\end{flalign}
\begin{flalign}
&=Ct^{\frac{\alpha\delta}{\beta}}
\mathbb{E}\sum\limits_{k=1}^{\infty}\lambda_k^{2\gamma+\delta}\bigg(\int_{0}^t\int_{\mathcal{D}}(t-s)^{\alpha-1}e^{-\nu (t-s)}\sum_{l=1}^{\infty}E_{\alpha,\alpha}(-\lambda_l^\beta (t-s)^\alpha)\nonumber\\
&\relphantom{=}{}\cdot \varphi_l(\cdot)\varphi_l(y) h(s,u(s,y))d\mathbb{W}^H(s,y),\varphi_k\bigg)^2\nonumber
&
\end{flalign}
\begin{flalign}
&=Ct^{\frac{\alpha\delta}{\beta}}
\mathbb{E}\sum\limits_{k=1}^{\infty}\lambda_k^{2\gamma+\delta}\bigg|\int_{0}^t\int_{\mathcal{D}}
(t-s)^{\alpha-1}e^{-\nu (t-s)}
E_{\alpha,\alpha}(-\lambda_k^\beta (t-s)^\alpha)
\varphi_k(y)\nonumber\\
&\relphantom{=}{}\cdot\sum_{j,l=1}^{\infty}(h(s,u(s,y))\cdot e_l,\varphi_j)
\varphi_j(y)\varrho_l(s)dyd\xi_l^H(s)\bigg|^2\nonumber
&
\end{flalign}
\begin{flalign}
&= Ct^{\frac{\alpha\delta}{\beta}} \mathbb{E}\sum\limits_{k=1}^{\infty}\bigg|\int_0^t(t-s)^{\alpha-1}e^{-\nu(t-s)}
E_{\alpha,\alpha}(-\lambda_k^\beta (t-s)^\alpha)\lambda_k^{\gamma+\frac{\delta}{2}}\nonumber\\
&\relphantom{=}{}\cdot\sum_{l=1}^{\infty}(h(s,u(s))\cdot e_l,\varphi_k)\varrho_l(s)d\xi_l^H(s)\bigg|^2
&
\end{flalign}
\begin{flalign}
&\leq CHT^{2H-1}t^{\frac{\alpha\delta}{\beta}} \mathbb{E}\sum\limits_{k,l=1}^{\infty}\int_0^t(t-s)^{2\alpha-\frac{\alpha\delta}{\beta}-4}e^{-2\nu(t-s)}
\frac{(\lambda_k^\beta(t-s)^{\alpha})^{\frac{\delta}{\beta}+\frac{2}{\alpha}}}
{(1+\lambda_k^{\beta}(t-s)^\alpha)^2}
\nonumber\\
&\relphantom{=}{}\cdot\lambda_k^{2\gamma-\frac{2\beta}{\alpha}} (h(s,u(s))\cdot e_l,\varphi_k)^2|\varrho_l(s)|^2ds\nonumber &
\end{flalign}
\begin{flalign}
&\leq Ct^{\frac{\alpha\delta}{\beta}}\int_0^t(t-s)^{2\alpha-\frac{\alpha\delta}{\beta}-4}
\mathbb{E}\|(-\Delta)^{\gamma-\frac{\beta}{\alpha}}h(s,u(s))\|_{\mathcal{L}_2^0}^2ds\nonumber &
\end{flalign}
\begin{flalign}
&\leq Ct^{\frac{\alpha\delta}{\beta}}\int_0^t(t-s)^{2\alpha-\frac{\alpha\delta}{\beta}-4}
\left(1+\mathbb{E}\|u(s)\|_{\mathbb{H}^{2\gamma}}^2\right)ds, \nonumber &
\end{flalign}
where we have used $\frac{(\lambda_k^\beta(t-s)^{\alpha})^{\frac{\delta}{\beta}+\frac{2}{\alpha}}}
{(1+\lambda_k^{\beta}(t-s)^\alpha)^2}\leq C$ and $\mathbb{E}\|u(s)\|_{\mathbb{H}^{2\gamma-\frac{\beta}{\alpha}}}\leq C\mathbb{E}\|u(s)\|_{\mathbb{H}^{2\gamma}}^2$.

Noticing that $u\in C([0,T];L^2(\Omega;\mathbb{H}^{2\gamma}(\mathcal{D})))$, by the similar arguments in Step 1, we obtain from \eqref{2.6}-\eqref{2.11} that for all $t\in [0,T]$,
\begin{align}
t^{\frac{\alpha\delta}{\beta}}\mathbb{E}\|u(t)\|_{\mathbb{H}^{2\gamma+\delta}}^2
&\leq C(1+\nu^2t^2)e^{-2\nu t}
t^{-\frac{\alpha(2\gamma-2\widetilde{\gamma})}{\beta}}\mathbb{E}\|a\|_{\mathbb{H}^{2\widetilde{\gamma}}}^2
\nonumber\\
&\relphantom{=}{}
+Ce^{-2\nu t}t^{-\frac{\alpha(2\gamma-2\widetilde{\gamma})}{\beta}}
\mathbb{E}\|b\|_{\mathbb{H}^{2\widetilde{\gamma}-\frac{2\beta}{\alpha}}}^2\nonumber\\
&\relphantom{=}{}+Ct^{\frac{\alpha\delta}{\beta}}\int_0^t(t-s)^{2\alpha-\frac{\alpha\delta}{\beta}-4}
\left(1+\mathbb{E}\|u(s)\|_{\mathbb{H}^{2\gamma}}^2\right)ds\nonumber\\
&\leq Ct^{\frac{-\alpha(2\gamma-2\widetilde{\gamma})}{\beta}}+Ct^{2\alpha-3},
\end{align}
and $t^{\frac{\alpha\delta}{2\beta}}u\in C([0,T];L^2(\Omega;\mathbb{H}^{2\gamma+\delta}(\mathcal{D})))$.

It follows from Remark \ref{re3.4} that the operators
\begin{equation}\label{eq.3.25}
t^{\frac{\alpha\delta}{\beta}}\mathbb{E}\bigg{\|}\int_{\mathcal{D}}\mathcal{T}_{\alpha,\beta}^{\nu}
(t,\cdot,y)a(y)dy\bigg{\|}_{\mathbb{H}^{2\gamma+\delta}}^2\leq C\mathbb{E}\|a\|_{\mathbb{H}^{2\widetilde{\gamma}}}^2,
\end{equation}
\begin{equation}\label{eq.3.26}
t^{\frac{\alpha\delta}{\beta}}\mathbb{E}\bigg{\|}\int_{\mathcal{D}}\mathcal{R}_{\alpha,\beta}^{\nu}
(t,\cdot,y)b(y)dy\bigg{\|}_{\mathbb{H}^{2\gamma+\delta}}^2\leq C\mathbb{E}\|b\|_{\mathbb{H}^{2\widetilde{\gamma}-\frac{2\beta}{\alpha}}}^2,
\end{equation}
and
\begin{align}\label{eq.3.27}
& t^{\frac{\alpha\delta}{\beta}}\mathbb{E}\bigg{\|}\int_{\mathcal{D}}\mathcal{T}_{\alpha,\beta}^{\nu}
(t,\cdot,y)a(y)dy\bigg{\|}_{\mathbb{H}^{2\gamma+\delta}}^2
\\
& \leq Ct^{\frac{\alpha\delta}{\beta}}t^{-\frac{\alpha(2\gamma-2\widetilde{\gamma})}{\beta}}\mathbb{E}\|a\|_{\mathbb{H}^{2\widetilde{\gamma}+\delta}}^2 \rightarrow 0 ~~\mbox{as}~~t\rightarrow 0, \nonumber
\end{align}
\begin{align}\label{eq.3.28}
& t^{\frac{\alpha\delta}{\beta}}\mathbb{E}\bigg{\|}\int_{\mathcal{D}}\mathcal{T}_{\alpha,\beta}^{\nu}
(t,\cdot,y)b(y)dy\bigg{\|}_{\mathbb{H}^{2\gamma+\delta}}^2
\\
& \leq Ct^{\frac{\alpha\delta}{\beta}}t^{-\frac{\alpha(2\gamma-2\widetilde{\gamma})}{\beta}}
\mathbb{E}\|b\|_{\mathbb{H}^{2\widetilde{\gamma}+\delta-\frac{2\beta}{\alpha}}}^2
\rightarrow 0 ~~\mbox{as}~~t\rightarrow 0.
\end{align}
Since $\mathbb{H}^{2\widetilde{\gamma}+\delta}$ and $\mathbb{H}^{2\widetilde{\gamma}+\delta-\frac{2\beta}{\alpha}}$ are, respectively, dense subsets of $\mathbb{H}^{2\widetilde{\gamma}}$ and $\mathbb{H}^{2\widetilde{\gamma}-\frac{2\beta}{\alpha}}$, we find that $t^{\frac{\alpha\delta}{2\beta}}u(t)$ vanishes at $t=0$.

Step 4. We prove that for $0\leq \theta_1\leq \theta_2\leq T$,

\begin{align*}
& \mathbb{E}\|u(\theta_2)-u(\theta_1)\|^2
\\
& \leq C|\theta_2-\theta_1|^{2\alpha-2}\left(\mathbb{E}\|a\|_{\mathbb{H}^{2\widetilde{\gamma}}}^2
+\mathbb{E}\|b\|_{\mathbb{H}^{2\widetilde{\gamma}-\frac{2\beta}{\alpha}}}^2
+\sup_{s\in[0,T]}\mathbb{E}\left(1+\|u(s)\|_{\mathbb{H}^{2\gamma}}^2
\right)\right).
\end{align*}

Arguing as in the proof of Step 1, for $0\leq \theta_1\leq \theta_2\leq T$ we have 
\begin{flalign}\label{eq.3.29}
& \mathbb{E}\|u(\theta_2)-u(\theta_1)\|^2 \nonumber &
\end{flalign}
\begin{flalign}
&\leq C\mathbb{E}\bigg{\|}\int_{\mathcal{D}}\mathcal{T}_{\alpha,\beta}^{\nu}(\theta_2,\cdot,y)a(y)dy-
\int_{\mathcal{D}}\mathcal{T}_{\alpha,\beta}^{\nu}(\theta_1,\cdot,y)a(y)dy\bigg{\|}^2
\nonumber &
\end{flalign}
\begin{flalign}
&\relphantom{=}{}+C\mathbb{E}\bigg{\|}\int_{\mathcal{D}}\mathcal{R}_{\alpha,\beta}^{\nu}(\theta_2,\cdot,y)b(y)dy-
\int_{\mathcal{D}}\mathcal{R}_{\alpha,\beta}^{\nu}(\theta_1,\cdot,y)b(y)dy\bigg{\|}^2\nonumber &
\end{flalign}
\begin{flalign}
&\relphantom{=}{}+C\mathbb{E}\bigg{\|}\int_{0}^{\theta_2}\int_{\mathcal{D}}\mathcal{S}^\nu_{\alpha,\beta}(\theta_2-s,\cdot,y)
f(s,u(s,y))dyds  \nonumber\\
&\relphantom{=}{}-\int_{0}^{\theta_1}\int_{\mathcal{D}}\mathcal{S}^\nu_{\alpha,\beta}(\theta_1-s,\cdot,y)
f(s,u(s,y))dyds\bigg{\|}^2
\nonumber &
\end{flalign}
\begin{flalign}
&\relphantom{=}{}+C\mathbb{E}\bigg{\|}\int_{0}^{\theta_2}\int_{\mathcal{D}}\mathcal{S}^\nu_{\alpha,\beta}(\theta_2-s,\cdot,y)
g(s,u(s,y))d{\mathbb{W}}(s,y) \nonumber \\
&\relphantom{=}{}-\int_{0}^{\theta_1}\int_{\mathcal{D}}\mathcal{S}^\nu_{\alpha,\beta}(\theta_1-s,\cdot,y)
g(s,u(s,y))d{\mathbb{W}}(s,y)\bigg{\|}^2\nonumber &
\end{flalign}
\begin{flalign}
&\relphantom{=}{}+C\mathbb{E}\bigg{\|}\int_{0}^{\theta_2}\int_{\mathcal{D}}\mathcal{S}^\nu_{\alpha,\beta}(\theta_2-s,\cdot,y)
h(s,u(s,y))d{\mathbb{W}^H}(s,y)\nonumber\\
&\relphantom{=}{}-\int_{0}^{\theta_1}\int_{\mathcal{D}}\mathcal{S}^\nu_{\alpha,\beta}(\theta_1-s,\cdot,y)
h(s,u(s,y))d{\mathbb{W}^H}(s,y)\bigg{\|}^2\nonumber\\
&:=\mathcal{E}_1+\mathcal{E}_2+\mathcal{E}_3+\mathcal{E}_4+\mathcal{E}_5. &
\end{flalign}
Since $\{\varphi_k\}_{k=1}^{\infty}$ is an orthonormal basis in $L^2(\mathcal{D})$, by Lemma \ref{le2.1}, and Lagrange's mean value theorem, we obtain
\begin{flalign}\label{eq.3.30}
&\mathcal{E}_1
 \nonumber\\
& =C\mathbb{E}\sum_{k=1}^{\infty}\bigg(\int_{\mathcal{D}}
e^{-\nu\theta_2}\sum_{l=1}^{\infty}\left(E_{\alpha,1}(-\lambda_l^\beta\theta_2^\alpha)
+\nu\theta_2E_{\alpha,2}(-\lambda_l^\beta\theta_2^\alpha)\right)  \nonumber\\
& \relphantom{=}{} \varphi_l(\cdot)\varphi_l(y)a(y)dy
-\int_{\mathcal{D}}e^{-\nu\theta_1}\sum_{l=1}^{\infty}\left(E_{\alpha,1}(-\lambda_l^\beta\theta_1^\alpha)
+\nu\theta_1E_{\alpha,2}(-\lambda_l^\beta\theta_1^\alpha)\right) \nonumber\\
& \relphantom{=}{} \varphi_l(\cdot)\varphi_l(y)a(y)dy,\varphi_k
\bigg)^2 &
\end{flalign}

\begin{flalign*}
&\leq C\mathbb{E}\sum_{k=1}^{\infty}\bigg|\int_{\mathcal{D}}\left(e^{-\nu\theta_2}E_{\alpha,1}(-\lambda_k^\beta\theta_2^\alpha)
-e^{-\nu\theta_1}E_{\alpha,1}(-\lambda_k^\beta\theta_1^\alpha)\right)\varphi_k(y)a(y)dy\bigg|^2\nonumber\\
&\relphantom{=}{}+C\nu^2\mathbb{E}\sum_{k=1}^{\infty}\bigg|\int_{\mathcal{D}}\left(
e^{-\nu\theta_2}\theta_2E_{\alpha,2}(-\lambda_k^\beta\theta_2^\alpha)-e^{-\nu\theta_1}\theta_1E_{\alpha,2}(-\lambda_k^\beta\theta_1^\alpha)\right)
\nonumber\\
&\relphantom{=}{}
\cdot\varphi_k(y)a(y)dy\bigg|^2\nonumber &
\end{flalign*}

\begin{flalign*}
&\leq C|e^{-\nu\theta_2}-e^{-\nu\theta_1}|^2\mathbb{E}\sum_{k=1}^{\infty}
\left|E_{\alpha,1}(-\lambda_k^\beta\theta_2^\alpha)\right|^2(a,\varphi_k)^2\nonumber\\
&\relphantom{=}{}
+Ce^{-2\nu\theta_1}\mathbb{E}\sum_{k=1}^{\infty}\left|E_{\alpha,1}(-\lambda_k^\beta\theta_2^\alpha)
-E_{\alpha,1}(-\lambda_k^\beta\theta_1^\alpha)\right|^2(a,\varphi_k)^2\nonumber\\
&\relphantom{=}{}
+C\nu^2|e^{-\nu\theta_2}-e^{-\nu\theta_1}|^2\mathbb{E}\sum_{k=1}^{\infty}\left|\theta_2E_{\alpha,2}(-\lambda_k^\beta\theta_2^\alpha)\right|^2
(a,\varphi_k)^2\nonumber\\
&\relphantom{=}{}
+C\nu^2e^{-2\nu\theta_1}\mathbb{E}\sum_{k=1}^{\infty}\left|\theta_2E_{\alpha,2}(-\lambda_k^\beta\theta_2^\alpha)-\theta_1E_{\alpha,2}(-\lambda_k^\beta\theta_1^\alpha)\right|^2
(a,\varphi_k)^2\nonumber &
\end{flalign*}

\begin{flalign*}
&\leq C|\theta_2-\theta_1|^2\mathbb{E}\|a\|^2+C|\theta_2-\theta_1|^2\mathbb{E}\sum_{k=1}^{\infty}
\left|-\lambda_k^\beta\widehat{\theta}^{\alpha-1}E_{\alpha,\alpha}(-\lambda_k^\beta\widehat{\theta}^\alpha)\right|^2
(a,\varphi_k)^2\nonumber\\
&\relphantom{=}{}+C\nu^2|\theta_2-\theta_1|^2T^2\mathbb{E}\|a\|^2
+C\nu^2|\theta_2-\theta_1|^2\mathbb{E}\sum_{k=1}^{\infty}\left|E_{\alpha,1}(-\lambda_k^\beta\widetilde{\theta}^\alpha)\right|^2
(a,\varphi_k)^2
&
\end{flalign*}

\begin{flalign*}
&\leq C|\theta_2-\theta_1|^2\mathbb{E}\|a\|_{\mathbb{H}^{2\widetilde{\gamma}}}^2
+C|\theta_2-\theta_1|^2\mathbb{E}\sum_{k=1}^{\infty}\left|\frac{(\lambda_k^\beta\widehat{\theta}^\alpha)^{\frac{\alpha-1}{\alpha}}}{1+\lambda_k^\beta\widehat{\theta}^\alpha}\right|^2
\lambda_k^{\frac{2\beta}{\alpha}}(a,\varphi_k)^2
\\
&
\relphantom{=}{}+C|\theta_2-\theta_1|^2\mathbb{E}\|a\|^2\\
&\leq C|\theta_2-\theta_1|^2\mathbb{E}\|a\|_{\mathbb{H}^{2\widetilde{\gamma}}}^2, &
\end{flalign*}
where $\widehat{\theta}$, $\widetilde{\theta}\in (\theta_1,\theta_2)$, and we have used $\frac{(\lambda_k^\beta\widehat{\theta}^\alpha)^{\frac{\alpha-1}{\alpha}}}{1+\lambda_k^\beta\widehat{\theta}^\alpha}
\leq C$, $\mathbb{E}\|a\|^2\leq C\mathbb{E}\|a\|_{\mathbb{H}^{2\widetilde{\gamma}}}^2$, and $\mathbb{E}\|a\|_{\mathbb{H}^{\frac{2\beta}{\alpha}}}^2\leq C\mathbb{E}\|a\|_{\mathbb{H}^{2\widetilde{\gamma}}}^2$.

Similar to \eqref{eq.3.30}, we have
\begin{flalign}\label{eq.3.31}
& \mathcal{E}_2 \nonumber
\\
&=C\mathbb{E}\sum_{k=1}^{\infty}\bigg(\int_{\mathcal{D}}
\theta_2e^{-\nu\theta_2}\sum_{l=1}^{\infty}E_{\alpha,2}(-\lambda_l^\beta\theta_2^\alpha)
\varphi_l(\cdot)\varphi_l(y)b(y)dy-\int_{\mathcal{D}}\theta_1e^{-\nu\theta_1}\nonumber\\
&\relphantom{=}{}\cdot
\sum_{l=1}^{\infty}E_{\alpha,2}(-\lambda_l^\beta\theta_1^\alpha)
\varphi_l(\cdot)\varphi_l(y)b(y)dy,\varphi_k\bigg)^2 &
\end{flalign}

\begin{flalign*}
&\leq C\mathbb{E}\sum_{k=1}^{\infty}\bigg|\int_{\mathcal{D}}\theta_2e^{-\nu\theta_2}E_{\alpha,2}(-\lambda_k^\beta\theta_2^\alpha)\varphi_k(y)b(y)dy
\\
&\relphantom{=}{}
-\int_{\mathcal{D}}\theta_1e^{-\nu\theta_1}E_{\alpha,2}(-\lambda_k^\beta\theta_1^\alpha)\varphi_k(y)b(y)dy\bigg|^2\nonumber &
\end{flalign*}

\begin{flalign*}
&\leq C|e^{-\nu\theta_2}-e^{-\nu\theta_1}|^2\mathbb{E}\sum_{k=1}^{\infty}
\left|\theta_2E_{\alpha,2}(-\lambda_k^\beta\theta_2^\alpha)\right|^2(b,\varphi_k)^2 \\
&\relphantom{=}{}
+Ce^{-2\nu\theta_1}\mathbb{E}\sum_{k=1}^{\infty}\left|\theta_2E_{\alpha,2}(-\lambda_k^\beta\theta_2^\alpha)-\theta_1E_{\alpha,1}(-\lambda_k^\beta\theta_1^\alpha)\right|^2
(b,\varphi_k)^2
&
\end{flalign*}
\begin{flalign*}
&\leq C|e^{-\nu\theta_2}-e^{-\nu\theta_1}|^2\mathbb{E}\sum_{k=1}^{\infty}\frac{(\lambda_k^\beta\theta_2^\alpha)^{\frac{2}{\alpha}}}{(1+\lambda_k^\beta\theta_2^\alpha)^2}
\lambda_k^{-\frac{2\beta}{\alpha}}(b,\varphi_k)^2
\\
&\relphantom{=}{}
+C|\theta_2-\theta_1|^2\mathbb{E}\sum_{k=1}^{\infty}\left|E_{\alpha,1}(-\lambda_k^\beta\widetilde{\theta}^\alpha)\right|^2
(b,\varphi_k)^2\nonumber\\
&\leq C|\theta_2-\theta_1|^2\mathbb{E}\|b\|^2
\leq C|\theta_2-\theta_1|^2\mathbb{E}\|b\|_{\mathbb{H}^{2\widetilde{\gamma}-\frac{2\beta}{\alpha}}}^2, &
\end{flalign*}
where $\widetilde{\theta}\in(\theta_1,\theta_2)$, and we have used $\frac{(\lambda_k^\beta\theta_2^\alpha)^{\frac{2}{\alpha}}}{(1+\lambda_k^\beta\theta_2^\alpha)^2}
\leq C$, $\frac{1}{(1+\lambda_k^\beta\widetilde{\theta}^\alpha)^2}\leq C$, $\mathbb{E}\|b\|^2\leq C\mathbb{E}\|b\|_{\mathbb{H}^{2\widetilde{\gamma}-\frac{2\beta}{\alpha}}}^2$.

Note that $a^\theta-b^\theta\leq (a-b)^{\theta}$ for $a>b>0$ and $0<\theta<1$. Thanks to Lemma \ref{le2.1} and Eq.(1.83) in \cite{Podlubny}, by $(\bf{A}_1)$, and H\"{o}lder's inequality, there is
\begin{flalign}\label{eq.3.33}
& \mathcal{E}_3 \nonumber
\\
&=C\mathbb{E}\sum_{k=1}^{\infty}\bigg(\int_0^{\theta_2}\int_{\mathcal{D}}
(\theta_2-s)^{\alpha-1}e^{-\nu(\theta_2-s)}\sum_{l=1}^{\infty}E_{\alpha,\alpha}(-\lambda_l^\beta(\theta_2-s)^\alpha)
\varphi_l(\cdot)\varphi_l(y)\nonumber\\
&\relphantom{=}{} \cdot f(s,u(s,y))dyds-\int_0^{\theta_1}\int_{\mathcal{D}}
(\theta_1-s)^{\alpha-1}e^{-\nu(\theta_1-s)} \\
& \relphantom{=}{} \cdot\sum_{l=1}^{\infty}E_{\alpha,\alpha}(-\lambda_l^\beta(\theta_1-s)^\alpha)
\varphi_l(\cdot)\varphi_l(y)f(s,u(s,y))dyds,\varphi_k\bigg)^2 \nonumber &
\end{flalign}
\begin{flalign*}
&=C\mathbb{E}\sum_{k=1}^{\infty}\bigg|\int_0^{\theta_2}\int_{\mathcal{D}}
(\theta_2-s)^{\alpha-1}e^{-\nu(\theta_2-s)}E_{\alpha,\alpha}(-\lambda_k^\beta(\theta_2-s)^\alpha)
\nonumber\\
&\relphantom{=}{}\cdot \varphi_k(y)f(s,u(s,y))dyds-\int_0^{\theta_1}\int_{\mathcal{D}}
(\theta_1-s)^{\alpha-1}e^{-\nu(\theta_1-s)}
\nonumber\\
&\relphantom{=}{}\cdot
E_{\alpha,\alpha}(-\lambda_k^\beta(\theta_1-s)^\alpha)
\varphi_k(y)f(s,u(s,y))dyds\bigg|^2\nonumber &
\end{flalign*}
\begin{flalign*}
&\leq CT\mathbb{E}\sum_{k=1}^{\infty}\int_0^{\theta_1}\Big|(\theta_2-s)^{\alpha-1}e^{-\nu(\theta_2-s)}
E_{\alpha,\alpha}(-\lambda_k^\beta(\theta_2-s)^\alpha)
\\
&
\relphantom{=}{}-(\theta_1-s)^{\alpha-1}e^{-\nu(\theta_1-s)} E_{\alpha,\alpha}(-\lambda_k^\beta(\theta_1-s)^\alpha)\Big|^2(f(s,u(s)),\varphi_k)^2ds\\
&\relphantom{=}{} +C|\theta_2-\theta_1|\mathbb{E}\sum_{k=1}^{\infty}\int_{\theta_1}^{\theta_2}
\bigg|(\theta_2-s)^{\alpha-1}e^{-\nu(\theta_2-s)}
E_{\alpha,\alpha}(-\lambda_k^\beta(\theta_2-s)^{\alpha})\bigg|^2 
\\
&\relphantom{=}{} \cdot
(f(s,u(s)),\varphi_k)^2ds\nonumber
&
\end{flalign*}
\begin{flalign*}
&\leq C\mathbb{E}\sum_{k=1}^{\infty}\int_0^{\theta_1}e^{-2\nu(\theta_2-s)}\Big|(\theta_2-s)^{\alpha-1}
E_{\alpha,\alpha}(-\lambda_k^\beta(\theta_2-s)^\alpha)-(\theta_1-s)^{\alpha-1}
\nonumber\\
&\relphantom{=}{}\cdot E_{\alpha,\alpha}(-\lambda_k^\beta(\theta_1-s)^\alpha)\Big|^2(f(s,u(s)),\varphi_k)^2ds+C\mathbb{E}\sum_{k=1}^{\infty}\int_0^{\theta_1}(\theta_1-s)^{2\alpha-2}
\\
&\relphantom{=}{}\cdot \left|E_{\alpha,\alpha}(-\lambda_k^\beta(\theta_1-s)^\alpha)\right|^2 \left|e^{-\nu(\theta_2-s)}
-e^{-\nu(\theta_1-s)}\right|^2(f(s,u(s)),\varphi_k)^2ds\\
&\relphantom{=}{}+C|\theta_2-\theta_1|\int_{\theta_1}^{\theta_2}
(\theta_2-s)^{2\alpha-2} \mathbb{E}\sum_{k=1}^{\infty}\frac{\lambda_k^{-(2\gamma-\frac{2\beta}{\alpha})}}{(1+\lambda_k^\beta
(\theta_2-s)^{\alpha})^2}\lambda_k^{2\gamma-\frac{2\beta}{\alpha}}\\
&\relphantom{=}{}\cdot(f(s,u(s)),\varphi_k)^2ds &
\end{flalign*}
\begin{flalign*}
&\leq C\mathbb{E}\sum_{k=1}^{\infty}\int_0^{\theta_1}\bigg|\int_{\theta_1}^{\theta_2}
(\tau-s)^{\alpha-2}E_{\alpha,\alpha-1}(-\lambda_k^\beta(\tau-s)^\alpha)d\tau\bigg|^2
(f(s,u(s)),\varphi_k)^2ds\nonumber\\
&\relphantom{=}{}+C\mathbb{E}\sum_{k=1}^{\infty}\int_0^{\theta_1}(\theta_1-s)^{2\alpha-2}|\theta_2-\theta_1|^2
\frac{\lambda_k^{-(2\gamma-\frac{2\beta}{\alpha})}}{(1+\lambda_k^\beta(\theta_1-s)^\alpha)^2}
\lambda_k^{2\gamma-\frac{2\beta}{\alpha}}\nonumber\\
&\relphantom{=}{}\cdot (f(s,u(s)),\varphi_k)^2ds+C|\theta_2-\theta_1|\int_{\theta_1}^{\theta_2}
(\theta_2-s)^{2\alpha-2}\mathbb{E}
\|f(s,u(s))\|_{\mathbb{H}^{2\gamma-\frac{2\beta}{\alpha}}}^2ds 
 &
\end{flalign*}
\begin{flalign*}
&\leq C|\theta_2-\theta_1|^{2\alpha-2}\int_0^{\theta_1}
\mathbb{E}\|f(s,u(s))\|_{\mathbb{H}^{2\gamma-\frac{2\beta}{\alpha}}}^2ds\nonumber\\
&\relphantom{=}{}+C|\theta_2-\theta_1|^2\int_0^{\theta_1}
(\theta_1-s)^{2\alpha-2}\mathbb{E}
\|f(s,u(s))\|_{\mathbb{H}^{2\gamma-\frac{2\beta}{\alpha}}}^2ds\nonumber\\
&\relphantom{=}{}+C|\theta_2-\theta_1|\int_{\theta_1}^{\theta_2}
(\theta_2-s)^{2\alpha-2}\mathbb{E}
\|f(s,u(s))\|_{\mathbb{H}^{2\gamma-\frac{2\beta}{\alpha}}}^2ds\nonumber\\
&\leq C\left(|\theta_2-\theta_1|^{2\alpha-2}+|\theta_2-\theta_1|^2
+|\theta_2-\theta_1|^{2\alpha}\right)\sup_{s\in[0,T]}\mathbb{E}\left(1+\|u(s)\|_{\mathbb{H}^{2\gamma}}^2
\right), &
\end{flalign*}
where we have used $\frac{\lambda_k^{-(2\gamma-\frac{2\beta}{\alpha})}}{(1+\lambda_k^\beta(\theta_2-s)^\alpha)^2}\leq C$ for $\gamma>\frac{\beta}{\alpha}$ and $\mathbb{E}\|u(s)\|_{\mathbb{H}^{2\gamma-\frac{2\beta}{\alpha}}}^2\leq C \mathbb{E}\|u(s)\|_{\mathbb{H}^{2\gamma}}^2$.

Arguing as in the proof of \eqref{eq.3.33} and noticing that $\{\varphi_k\}_{k=1}^{\infty}$ is an orthonormal basis in $L^2({\mathcal{D}})$ and $\{\xi_l\}_{l=1}^\infty$ is a family of mutually independent one-dimensional standard Brownian motions, in view of \eqref{eq.2.8}, $(\bf{A}_1)$-$(\bf{A}_2)$, and Proposition \ref{pro1}, we deduce that
\begin{flalign}\label{eq.3.34}
&\mathcal{E}_4 \nonumber
\\
&=C\mathbb{E}\sum_{k=1}^{\infty}\bigg(
\int_{0}^{\theta_2}\int_{\mathcal{D}}(\theta_2-s)^{\alpha-1}e^{-\nu(\theta_2-s)}
\sum_{l=1}^{\infty}E_{\alpha,\alpha}\left(-\lambda_l^{\beta}(\theta_2-s)^\alpha\right)\varphi_l(\cdot)\varphi_l(y)\nonumber\\
&\relphantom{=}{} \cdot g(s,u(s,y)) d\mathbb{W}(s,y)-\int_{0}^{\theta_1}\int_{\mathcal{D}}(\theta_1-s)^{\alpha-1}e^{-\nu(\theta_1-s)}
\\
&
\relphantom{=}{} \cdot \sum_{l=1}^{\infty}E_{\alpha,\alpha}\left(-\lambda_l^{\beta}(\theta_1-s)^\alpha\right)\varphi_l(\cdot)\varphi_l(y)g(s,u(s,y)) d\mathbb{W}(s,y),\varphi_k\bigg)^2 \nonumber&
\end{flalign}
\begin{flalign}
&=C\mathbb{E}\sum_{k=1}^{\infty}\bigg|
\int_{0}^{\theta_2}\int_{\mathcal{D}}(\theta_2-s)^{\alpha-1}e^{-\nu(\theta_2-s)}
E_{\alpha,\alpha}\left(-\lambda_k^{\beta}(\theta_2-s)^\alpha\right)\varphi_k(y)\nonumber\\
&\relphantom{=}{} \cdot g(s,u(s,y)) d\mathbb{W}(s,y)-\int_{0}^{\theta_1}\int_{\mathcal{D}}(\theta_1-s)^{\alpha-1}e^{-\nu(\theta_1-s)}
\nonumber\\
&\relphantom{=}{} \cdot
E_{\alpha,\alpha}\left(-\lambda_k^{\beta}(\theta_1-s)^\alpha\right)\varphi_k(y)g(s,u(s,y)) d\mathbb{W}(s,y)\bigg|^2\nonumber&
\end{flalign}
\begin{flalign*}
&\leq C\mathbb{E}\sum_{k=1}^{\infty}\bigg|
\int_{0}^{\theta_1}\int_{\mathcal{D}}\bigg((\theta_2-s)^{\alpha-1}e^{-\nu(\theta_2-s)}
E_{\alpha,\alpha}\left(-\lambda_k^{\beta}(\theta_2-s)^\alpha\right)\nonumber\\
&\relphantom{=}{} -(\theta_1-s)^{\alpha-1}e^{-\nu(\theta_1-s)} E_{\alpha,\alpha}\left(-\lambda_k^{\beta}(\theta_1-s)^\alpha\right)\bigg) \varphi_k(y)\nonumber\\
&\relphantom{=}{}
\cdot\sum_{j,l=1}^{\infty}(g(s,u(s))\cdot e_l,\varphi_j)\varphi_j(y)\varsigma_l(s)dyd\xi_l(s)\bigg|^2 & 
\\
&\relphantom{=}{}+C\mathbb{E}\sum_{k=1}^{\infty}\bigg|\int_{\theta_1}^{\theta_2}\int_{\mathcal{D}}
(\theta_2-s)^{\alpha-1}e^{-\nu(\theta_2-s)}
E_{\alpha,\alpha}\left(-\lambda_k^{\beta}(\theta_2-s)^\alpha\right)\varphi_k(y)\nonumber\\
&\relphantom{=}{}\cdot
\sum_{j,l=1}^{\infty}(g(s,u(s))\cdot e_l,\varphi_j)\varphi_j(y)\varsigma_l(s)dyd\xi_l(s)\bigg|^2\nonumber &
\end{flalign*}
\begin{flalign*}
&=C\mathbb{E}\sum_{k=1}^{\infty}\bigg|
\int_{0}^{\theta_1}\bigg((\theta_2-s)^{\alpha-1}e^{-\nu(\theta_2-s)}
E_{\alpha,\alpha}\left(-\lambda_k^{\beta}(\theta_2-s)^\alpha\right)\nonumber\\
&\relphantom{=}{} -(\theta_1-s)^{\alpha-1}e^{-\nu(\theta_1-s)}
E_{\alpha,\alpha}\left(-\lambda_k^{\beta}(\theta_1-s)^\alpha\right)\bigg) \\
&\relphantom{=}{}\cdot\sum_{l=1}^{\infty}(g(s,u(s))\cdot e_l,\varphi_k)\varsigma_l(s)d\xi_l(s)\bigg|^2
+C\mathbb{E}\sum_{k=1}^{\infty}\bigg|\int_{\theta_1}^{\theta_2}
(\theta_2-s)^{\alpha-1}
\\
&\relphantom{=}{}
\cdot e^{-\nu(\theta_2-s)}
E_{\alpha,\alpha}\left(-\lambda_k^{\beta}(\theta_2-s)^\alpha\right)\sum_{l=1}^{\infty}(g(s,u(s))\cdot e_l,\varphi_k)\varsigma_l(s)d\xi_l(s)\bigg|^2 &
\end{flalign*}
\begin{flalign*}
&=C\mathbb{E}\sum_{k,l=1}^{\infty}
\int_{0}^{\theta_1}\bigg|\bigg((\theta_2-s)^{\alpha-1}e^{-\nu(\theta_2-s)}
E_{\alpha,\alpha}\left(-\lambda_k^{\beta}(\theta_2-s)^\alpha\right)\nonumber\\
&\relphantom{=}{}-(\theta_1-s)^{\alpha-1}e^{-\nu(\theta_1-s)}\cdot
E_{\alpha,\alpha}\left(-\lambda_k^{\beta}(\theta_1-s)^\alpha\right)\bigg)(g(s,u(s))\cdot e_l,\varphi_k)\varsigma_l(s)\bigg|^2 ds \\
&\relphantom{=}{}+C\mathbb{E}\sum_{k,l=1}^{\infty}\int_{\theta_1}^{\theta_2}\bigg|
(\theta_2-s)^{\alpha-1}e^{-\nu(\theta_2-s)}
E_{\alpha,\alpha}\left(-\lambda_k^{\beta}(\theta_2-s)^\alpha\right)
\\
&\relphantom{=}{}
\cdot(g(s,u(s))\cdot e_l,\varphi_k)\varsigma_l(s)\bigg|^2 ds &
\end{flalign*}
\begin{flalign*}
&\leq C\mathbb{E}\sum_{k,l=1}^{\infty}
\int_{0}^{\theta_1}\bigg(\bigg|\int_{\theta_1}^{\theta_2}(\tau-s)^{\alpha-2}
E_{\alpha,\alpha-1}\left(-\lambda_k^{\beta}(\tau-s)^\alpha\right)d\tau\bigg|^2 \\
&\relphantom{=}{} +(\theta_1-s)^{2\alpha-2}|\theta_2-\theta_1|^2\frac{1}{(1+\lambda_k^\beta(\theta_1-s)^\alpha)^2}\bigg)
\lambda_k^{-(2\gamma-\frac{2\beta}{\alpha})}
\\
&\relphantom{=}{}
 \cdot\left|\lambda_k^{\gamma-\frac{\beta}{\alpha}}(g(s,u(s))\cdot e_l,\varphi_k)\varsigma_l(s)\right|^2 ds\nonumber\\
&\relphantom{=}{}+C\mathbb{E}\sum_{k,l=1}^{\infty}\int_{\theta_1}^{\theta_2}
(\theta_2-s)^{2\alpha-2}\frac{\lambda_k^{-(2\gamma-\frac{2\beta}{\alpha})}}{(1+\lambda_k^\beta(\theta_2-s)^\alpha)^2}
\\
&\relphantom{=}{}
\cdot\left|\lambda_k^{\gamma-\frac{\beta}{\alpha}}(g(s,u(s))\cdot e_l,\varphi_k)\varsigma_l(s)\right|^2 ds &
\end{flalign*}
\begin{flalign*}
&\leq C|\theta_2-\theta_1|^{2\alpha-2}
\int_{0}^{\theta_1}\mathbb{E}\|(-\Delta)^{\gamma-\frac{\beta}{\alpha}}g(s,u(s))\|_{\mathcal{L}_2^0}^2ds\nonumber\\
&\relphantom{=}{}
+C|\theta_2-\theta_1|^2\int_{0}^{\theta_1}(\theta_1-s)^{2\alpha-2} \mathbb{E}\|(-\Delta)^{\gamma-\frac{\beta}{\alpha}}g(s,u(s))\|_{\mathcal{L}_2^0}^2ds\nonumber\\
&\relphantom{=}{}+C\int_{\theta_1}^{\theta_2}\bigg|
(\theta_2-s)^{2\alpha-2}\mathbb{E}\|(-\Delta)^{\gamma-\frac{\beta}{\alpha}}g(s,u(s))\|_{\mathcal{L}_2^0}^2ds\nonumber\\
&\leq C\left(|\theta_2-\theta_1|^{2\alpha-2}+|\theta_2-\theta_1|^2+|\theta_2-\theta_1|^{2\alpha-1}\right)
\\
&
\relphantom{=}{}\cdot\sup_{s\in[0,T]}\mathbb{E}\left(1+\|u(s)\|_{\mathbb{H}^{2\gamma}}^2\right). &
\end{flalign*}
Analogous to the arguments in \eqref{eq.3.34}, in view of \eqref{eq.2.9}, $(\bf{A}_1)$-$(\bf{A}_2)$, Lemma \ref{le2.10}, and the mutual independence of the family of one-dimensional fractional Brownian motions $\{\xi_l^H\}_{l=1}^\infty$, there is
\begin{flalign}\label{eq.3.35}
&\mathcal{E}_5 \nonumber
\\
&=C\mathbb{E}\sum_{k=1}^{\infty}\bigg(
\int_{0}^{\theta_2}\int_{\mathcal{D}}(\theta_2-s)^{\alpha-1}e^{-\nu(\theta_2-s)}
\sum_{l=1}^{\infty}E_{\alpha,\alpha}\left(-\lambda_l^{\beta}(\theta_2-s)^\alpha\right)\nonumber\\
&\relphantom{=}{} \cdot \varphi_l(\cdot)
\varphi_l(y) h(s,u(s,y)) d\mathbb{W}^H(s,y)-\int_{0}^{\theta_1}\int_{\mathcal{D}}(\theta_1-s)^{\alpha-1}e^{-\nu(\theta_1-s)}
\nonumber\\
&\relphantom{=}{}\cdot \sum_{l=1}^{\infty}E_{\alpha,\alpha}\left(-\lambda_l^{\beta}(\theta_1-s)^\alpha\right)\cdot\varphi_l(\cdot)\varphi_l(y)h(s,u(s,y)) d\mathbb{W}^H(s,y),\varphi_k\bigg)^2 &
\end{flalign}
\begin{flalign*}
&= C\mathbb{E}\sum_{k=1}^{\infty}\bigg|
\int_{0}^{\theta_1}\int_{\mathcal{D}}\bigg((\theta_2-s)^{\alpha-1}e^{-\nu(\theta_2-s)}
E_{\alpha,\alpha}\left(-\lambda_k^{\beta}(\theta_2-s)^\alpha\right)\\
&\relphantom{=}{}-(\theta_1-s)^{\alpha-1}e^{-\nu(\theta_1-s)} E_{\alpha,\alpha}\left(-\lambda_k^{\beta}(\theta_1-s)^\alpha\right)\bigg) \varphi_k(y) \\
&\relphantom{=}{}\cdot\sum_{j,l=1}^{\infty}(h(s,u(s))\cdot e_l,\varphi_j)\varphi_j(y)\varrho_l(s)dyd\xi_l^H(s)\bigg|^2
\\
&\relphantom{=}{}+C\mathbb{E}\sum_{k=1}^{\infty}\bigg|\int_{\theta_1}^{\theta_2}\int_{\mathcal{D}}
(\theta_2-s)^{\alpha-1}e^{-\nu(\theta_2-s)}
E_{\alpha,\alpha}\left(-\lambda_k^{\beta}(\theta_2-s)^\alpha\right)\varphi_k(y)\nonumber\\
&\relphantom{=}{}\cdot
\sum_{j,l=1}^{\infty}(h(s,u(s))\cdot e_l,\varphi_j)\varphi_j(y)\varrho_l(s)dyd\xi_l^H(s)\bigg|^2&
\end{flalign*}
\begin{flalign*}
 &=C\mathbb{E}\sum_{k=1}^{\infty}\bigg|
\int_{0}^{\theta_1}\bigg((\theta_2-s)^{\alpha-1}e^{-\nu(\theta_2-s)}
E_{\alpha,\alpha}\left(-\lambda_k^{\beta}(\theta_2-s)^\alpha\right)\\
&\relphantom{=}{} -(\theta_1-s)^{\alpha-1}e^{-\nu(\theta_1-s)}
E_{\alpha,\alpha}\left(-\lambda_k^{\beta}(\theta_1-s)^\alpha\right)\bigg) \sum_{l=1}^{\infty}(h(s,u(s))\cdot e_l,\varphi_k)\\
&\relphantom{=}{}\cdot \varrho_l(s)d\xi_l^H(s)\bigg|^2+C\mathbb{E}\sum_{k=1}^{\infty}\bigg|\int_{\theta_1}^{\theta_2}
(\theta_2-s)^{\alpha-1}e^{-\nu(\theta_2-s)}
E_{\alpha,\alpha}\left(-\lambda_k^{\beta}(\theta_2-s)^\alpha\right)\nonumber\\
&\relphantom{=}{}\cdot\sum_{l=1}^{\infty}(h(s,u(s))\cdot e_l,\varphi_k)\varrho_l(s)d\xi_l^H(s)\bigg|^2
&
\end{flalign*}
\begin{flalign*}
&\leq CHT^{2H-1}\mathbb{E}\sum_{k,l=1}^{\infty}
\int_{0}^{\theta_1}\bigg|\bigg((\theta_2-s)^{\alpha-1}e^{-\nu(\theta_2-s)}
E_{\alpha,\alpha}\left(-\lambda_k^{\beta}(\theta_2-s)^\alpha\right)\nonumber\\
&\relphantom{=}{}-(\theta_1-s)^{\alpha-1}e^{-\nu(\theta_1-s)}
E_{\alpha,\alpha}\left(-\lambda_k^{\beta}(\theta_1-s)^\alpha\right)\bigg) (h(s,u(s))\cdot e_l,\varphi_k)\\
&\relphantom{=}{}\cdot \varrho_l(s)\bigg|^2 ds+CH|\theta_2-\theta_1|^{2H-1}
\mathbb{E}\sum_{k,l=1}^{\infty}\int_{\theta_1}^{\theta_2}\bigg|
(\theta_2-s)^{\alpha-1}e^{-\nu(\theta_2-s)}
\\
&\relphantom{=}{} \cdot E_{\alpha,\alpha} \left(-\lambda_k^{\beta}(\theta_2-s)^\alpha\right)\cdot(h(s,u(s))\cdot e_l,\varphi_k)\varrho_l(s)\bigg|^2 ds
&
\end{flalign*}
\begin{flalign*}
&\leq C\mathbb{E}\sum_{k,l=1}^{\infty}
\int_{0}^{\theta_1}\bigg(\bigg|\int_{\theta_1}^{\theta_2}(\tau-s)^{\alpha-2}
E_{\alpha,\alpha-1}\left(-\lambda_k^{\beta}(\tau-s)^\alpha\right)d\tau\bigg|^2
+(\theta_1-s)^{2\alpha-2}  \\
&\relphantom{=}{}\cdot |\theta_2-\theta_1|^2 \frac{1}{(1+\lambda_k^\beta(\theta_1-s)^\alpha)^2}\bigg) \lambda_k^{-(2\gamma-\frac{2\beta}{\alpha})}\lambda_k^{2\gamma-\frac{2\beta}{\alpha}}
|(h(s,u(s))\cdot e_l,\varphi_k)\\
&\relphantom{=}{}\cdot \varrho_l(s)|^2 ds+C|\theta_2-\theta_1|^{2H-1}\mathbb{E}\sum_{k,l=1}^{\infty}\int_{\theta_1}^{\theta_2}
(\theta_2-s)^{2\alpha-2}\frac{\lambda_k^{-(2\gamma-\frac{2\beta}{\alpha})}}{(1+\lambda_k^\beta(\theta_2-s)^\alpha)^2}\nonumber\\
&\relphantom{=}{} \cdot \left|\lambda_k^{\gamma-\frac{\beta}{\alpha}}(h(s,u(s))\cdot e_l,\varphi_k)\varrho_l(s)\right|^2 ds&
\end{flalign*}
\begin{flalign*}
&\leq C|\theta_2-\theta_1|^{2\alpha-2}
\int_{0}^{\theta_1}\mathbb{E}\|(-\Delta)^{\gamma-\frac{\beta}{\alpha}}h(s,u(s))\|_{\mathcal{L}_2^0}^2ds\\
&\relphantom{=}{}
+C|\theta_2-\theta_1|^2\int_{0}^{\theta_1}(\theta_1-s)^{2\alpha-2} \mathbb{E}\|(-\Delta)^{\gamma-\frac{\beta}{\alpha}}h(s,u(s))\|_{\mathcal{L}_2^0}^2ds\\
&\relphantom{=}{}+C|\theta_2-\theta_1|^{2H-1}\int_{\theta_1}^{\theta_2}\bigg|
(\theta_2-s)^{2\alpha-2}\mathbb{E}\|(-\Delta)^{\gamma-\frac{\beta}{\alpha}}h(s,u(s))\|_{\mathcal{L}_2^0}^2ds&
\end{flalign*}
\begin{flalign*}
&\leq C\left(|\theta_2-\theta_1|^{2\alpha-2}+|\theta_2-\theta_1|^2+|\theta_2-\theta_1|^{2H+2\alpha-2}\right) &
\\
&\relphantom{=}{}\cdot\sup_{s\in[0,T]}\mathbb{E}\left(1+\|u(s)\|_{\mathbb{H}^{2\gamma}}^2\right).
\end{flalign*}
Hence, \eqref{eq.3.29}-\eqref{eq.3.35} imply that
\begin{align}\label{eq.3.36}
\mathbb{E}\|u(\theta_2)-u(\theta_1)\|^2\leq & C|\theta_2-\theta_1|^{2\alpha-2}\Big(\mathbb{E}\|a\|_{\mathbb{H}^{2\widetilde{\gamma}}}^2
+\mathbb{E}\|b\|_{\mathbb{H}^{2\widetilde{\gamma}-\frac{2\beta}{\alpha}}}^2
\\
&
+\sup_{s\in[0,T]}\mathbb{E}\left(1+\|u(s)\|_{\mathbb{H}^{2\gamma}}^2\right)\Big), \nonumber
\end{align}
which completes the proof of this theorem.
\end{proof}
\begin{theorem} Assume that the conditions of Theorem \ref{thm3.5} hold. Then we have
$$\sup_{0\leq t\leq T}\|\partial_t u(t)\|_{\mathbb{H}^{2\widetilde{\gamma}-\frac{2\beta}{\alpha}}}\leq C.$$
\end{theorem}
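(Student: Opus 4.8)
The plan is to differentiate the mild solution representation \eqref{eq2.1} term by term and to bound each of the five resulting pieces in the $\mathbb{H}^{2\widetilde{\gamma}-\frac{2\beta}{\alpha}}$-norm uniformly in $t\in[0,T]$, writing $p:=2\widetilde{\gamma}-\frac{2\beta}{\alpha}$. For the two initial-data terms I would use the identities \eqref{eq.2.4} and \eqref{eq.2.5}: one computes
\[
\partial_t\!\Big[e^{-\nu t}\big(E_{\alpha,1}(-\lambda_k^\beta t^\alpha)+\nu t E_{\alpha,2}(-\lambda_k^\beta t^\alpha)\big)\Big]=-\nu^2 t e^{-\nu t}E_{\alpha,2}(-\lambda_k^\beta t^\alpha)-\lambda_k^\beta t^{\alpha-1}e^{-\nu t}E_{\alpha,\alpha}(-\lambda_k^\beta t^\alpha),
\]
and $\partial_t\big[t e^{-\nu t}E_{\alpha,2}(-\lambda_k^\beta t^\alpha)\big]=e^{-\nu t}E_{\alpha,1}(-\lambda_k^\beta t^\alpha)-\nu t e^{-\nu t}E_{\alpha,2}(-\lambda_k^\beta t^\alpha)$. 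The only genuinely singular contribution comes from $\lambda_k^\beta t^{\alpha-1}E_{\alpha,\alpha}$; with $z=\lambda_k^\beta t^\alpha$ and \eqref{eq.2.3}, the per-mode coefficient satisfies $\lambda_k^{p}\,\lambda_k^{2\beta}t^{2\alpha-2}|E_{\alpha,\alpha}(-z)|^2\le C\lambda_k^{2\widetilde{\gamma}}\,z^{\frac{2(\alpha-1)}{\alpha}}/(1+z)^2\le C\lambda_k^{2\widetilde{\gamma}}$, the ratio being bounded for $z\ge0$ since the exponent $\tfrac{2(\alpha-1)}{\alpha}\in[0,2)$. This gives $\|\partial_t\!\int_{\mathcal D}\mathcal T^\nu_{\alpha,\beta}a\,dy\|_{\mathbb H^p}^2\le C\|a\|_{\mathbb H^{2\widetilde\gamma}}^2$, while the remaining (bounded Mittag-Leffler) factors and the $\mathcal R^\nu_{\alpha,\beta}$-term yield $\le C\|b\|_{\mathbb H^{2\widetilde\gamma-\frac{2\beta}{\alpha}}}^2$; both are finite by hypothesis.

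For the three convolution terms I would differentiate under the integral sign. Since $\mathcal S^\nu_{\alpha,\beta}$ carries the factor $(t-s)^{\alpha-1}$, which vanishes as $s\to t$ because $\alpha>1$, no Leibniz boundary term survives, and using $\frac{d}{dt}\big(t^{\alpha-1}E_{\alpha,\alpha}(-\lambda_k^\beta t^\alpha)\big)=t^{\alpha-2}E_{\alpha,\alpha-1}(-\lambda_k^\beta t^\alpha)$ (Eq.~(1.83) in \cite{Podlubny}, as already exploited in \eqref{eq.3.33}),
\[
\partial_t\mathcal S^\nu_{\alpha,\beta}(t,x,y)=\sum_{k=1}^\infty\Big[t^{\alpha-2}e^{-\nu t}E_{\alpha,\alpha-1}(-\lambda_k^\beta t^\alpha)-\nu t^{\alpha-1}e^{-\nu t}E_{\alpha,\alpha}(-\lambda_k^\beta t^\alpha)\Big]\varphi_k(x)\varphi_k(y).
\]
For the deterministic term, Cauchy--Schwarz with the weight $(t-s)^{\alpha-2}$ (integrable since $\alpha-2>-1$) together with $(\bf{A}_1)$ reduces the estimate to $C\int_0^t(t-s)^{\alpha-2}\,\mathbb E\|(-\Delta)^{\gamma-\frac{\beta}{\alpha}}f(s,u(s))\|^2\,ds\le C\int_0^t(t-s)^{\alpha-2}\big(1+\mathbb E\|u(s)\|_{\mathbb H^{2\gamma}}^2\big)\,ds$, which is bounded because $\sup_{s\in[0,T]}\mathbb E\|u(s)\|_{\mathbb H^{2\gamma}}^2<\infty$ by Theorem \ref{thm3.5}. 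The essential point here is that $\partial_t\mathcal S^\nu_{\alpha,\beta}$ acts as an operator of order zero (an integrable singular factor, no gain of $\frac{2\beta}{\alpha}$ derivatives), which matches the regularity $f(s,u(s))\in\mathbb H^{2\gamma-\frac{2\beta}{\alpha}}$ exactly onto the target space.

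For the two stochastic convolutions $v_g(t):=\int_0^t\!\int_{\mathcal D}\mathcal S^\nu_{\alpha,\beta}(t-s,\cdot,y)g(s,u(s,y))\,d\mathbb W(s,y)$ and its $h$-analogue with $d\mathbb W^H$, I would apply Proposition \ref{pro1} (It\^o isometry) and Lemma \ref{le2.10}, respectively, together with the expansions \eqref{eq.2.8}--\eqref{eq.2.9} and $(\bf{A}_1)$--$(\bf{A}_2)$; the uniform boundedness of $\varsigma_l,\varrho_l$ collapses the sum over $l$ into the $\mathcal L_2^0$-norm, leading to
\[
\mathbb E\big\|\partial_t v_g(t)\big\|_{\mathbb H^{p}}^2\le C\int_0^t(t-s)^{2\alpha-4}\,\mathbb E\|(-\Delta)^{\gamma-\frac{\beta}{\alpha}}g(s,u(s))\|_{\mathcal L_2^0}^2\,ds,
\]
and the same with an extra harmless factor $HT^{2H-1}$ for $h$. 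The main obstacle, and precisely the place where the standing hypothesis $\alpha>\tfrac32$ is indispensable, is the integrability of $(t-s)^{2\alpha-4}$: squaring the singular kernel produces the exponent $2\alpha-4$, which exceeds $-1$ exactly when $\alpha>\tfrac32$, after which $(\bf{A}_1)$ and $\sup_s\mathbb E\|u(s)\|_{\mathbb H^{2\gamma}}^2<\infty$ give uniform bounds. A secondary technical issue is the rigorous justification of differentiating under the It\^o and fractional integrals against the singular kernel $\partial_t\mathcal S^\nu_{\alpha,\beta}$, which I would settle by defining $\partial_t u$ through the differentiated series and verifying, via the very estimates above, that the increment quotients converge in $L^2(\Omega;\mathbb H^{p})$. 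Collecting the five $t$-uniform bounds yields $\sup_{0\le t\le T}\|\partial_t u(t)\|_{\mathbb H^{2\widetilde\gamma-\frac{2\beta}{\alpha}}}\le C$; the case $0\le\gamma\le\frac{\beta}{\alpha}$ is handled identically with $p=0$.
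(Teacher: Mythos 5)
Your proposal is correct and follows essentially the same route as the paper's proof: term-by-term differentiation of the mild solution via the Mittag--Leffler identities \eqref{eq.2.4}--\eqref{eq.2.5} and Eq.~(1.83) of \cite{Podlubny}, the per-mode bound $(\lambda_k^\beta t^\alpha)^{\frac{2(\alpha-1)}{\alpha}}/(1+\lambda_k^\beta t^\alpha)^2\le C$ for the singular initial-data piece, and the It\^o isometry / Lemma \ref{le2.10} together with $(\mathbf{A}_1)$--$(\mathbf{A}_2)$ and $\sup_{s}\mathbb{E}\|u(s)\|^2_{\mathbb{H}^{2\gamma}}<\infty$ to reduce the convolution terms to the integrable weight $(t-s)^{2\alpha-4}$ (which is where $\alpha>\tfrac32$ enters, exactly as you note). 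The only cosmetic differences are that the paper keeps the two $a$-terms separate as $\mathcal{X}_1,\mathcal{X}_2$ and applies the cruder Cauchy--Schwarz bound $CT\int_0^t(t-s)^{2\alpha-4}(\cdots)\,ds$ also to the deterministic convolution, which changes nothing under the standing hypotheses.
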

\begin{proof}
Thanks to Lemma \ref{le2.1}, \eqref{eq2.1}, and Eq. (1.83) in \cite{Podlubny}, we find that
\begin{align}\label{eq.3.37}
&\partial_tu(t,x) \nonumber
\\
&=\int_{\mathcal{D}}(-\nu e^{-\nu t})\sum\limits_{k=1}^{\infty}
\left(E_{\alpha,1}(-\lambda_k^\beta t^\alpha)+\nu tE_{\alpha,2}(-\lambda_k^\beta t^\alpha)\right)
\varphi_k(x)\varphi_k(y)a(y)dy\nonumber\\
&\relphantom{=}{}+\int_{\mathcal{D}}e^{-\nu t}
\sum\limits_{k=1}^{\infty}\left(-\lambda_k^\beta t^{\alpha-1}E_{\alpha,\alpha}(-\lambda_k^\beta t^\alpha)+\nu E_{\alpha,1}(-\lambda_k^\beta t^\alpha)\right)\nonumber\\
& \relphantom{=}{} \cdot \varphi_k(x)\varphi_k(y)a(y)dy \nonumber\\
&\relphantom{=}{}+\int_{\mathcal{D}}e^{-\nu t}\sum\limits_{k=1}^{\infty}\left(-\nu tE_{\alpha,2}(-\lambda_k^\beta t^\alpha)+E_{\alpha,1}(-\lambda_k^\beta t^\alpha)\right)\varphi_k(x) \varphi_k(y)b(y)dy\nonumber\\
&\relphantom{=}{}+\int^t_0\int_{\mathcal{D}}e^{-\nu(t-s)}\sum\limits_{k=1}^{\infty}
\Big(-\nu (t-s)^{\alpha-1}E_{\alpha,\alpha}(-\lambda_k^\beta (t-s)^\alpha) \nonumber \\
&\relphantom{=}{}+(t-s)^{\alpha-2} E_{\alpha,\alpha-1}(-\lambda_k^\beta(t-s)^\alpha)\Big)
\varphi_k(x)\varphi_k(y)f(s,u(s,y))dyds%\nonumber\\
&
\end{align}

\begin{flalign*}
&\relphantom{=}{}+\int^t_0\int_{\mathcal{D}}e^{-\nu(t-s)}\sum\limits_{k=1}^{\infty}
\Big(-\nu (t-s)^{\alpha-1}E_{\alpha,\alpha}(-\lambda_k^\beta (t-s)^\alpha)\nonumber\\
&\relphantom{=}{}
+(t-s)^{\alpha-2} E_{\alpha,\alpha-1}(-\lambda_k^\beta(t-s)^\alpha)\Big)
\varphi_k(x)\varphi_k(y)g(s,u(s,y))d\mathbb{W}(s,y)\nonumber\\
&\relphantom{=}{}+\int^t_0\int_{\mathcal{D}}e^{-\nu(t-s)}\sum\limits_{k=1}^{\infty}
\Big(-\nu (t-s)^{\alpha-1}E_{\alpha,\alpha}(-\lambda_k^\beta (t-s)^\alpha)\nonumber\\
&\relphantom{=}{}
+(t-s)^{\alpha-2} E_{\alpha,\alpha-1}(-\lambda_k^\beta(t-s)^\alpha)\Big)
\varphi_k(x)\varphi_k(y)h(s,u(s,y))d\mathbb{W}^H(s,y)\nonumber\\
&=\sum_{i=1}^6\mathcal{X}_i. 
\end{flalign*}
Observe that $\{\varphi_k\}_{k=1}^{\infty}$ is an orthonormal basis in $L^2({\mathcal{D}})$.
By Lemma \ref{le2.1}, we have
\begin{flalign*}\label{eq.3.38}
&\mathbb{E}\|\mathcal{X}_1\|_{\mathbb{H}^{2\widetilde{\gamma}-\frac{2\beta}{\alpha}}}^2\nonumber\\ 
&%\relphantom{=}{}
= \mathbb{E}\sum\limits_{k=1}^{\infty}\lambda_k^{2\widetilde{\gamma}-\frac{2\beta}{\alpha}}
\bigg(\int_{\mathcal{D}}(-\nu e^{-\nu t})\sum\limits_{l=1}^{\infty}
\left(E_{\alpha,1}(-\lambda_l^\beta t^\alpha)+\nu tE_{\alpha,2}(-\lambda_l^\beta t^\alpha)\right)
\nonumber\\
&\relphantom{=}{}\cdot\varphi_l(\cdot)\varphi_l(y)a(y)dy,\varphi_k\bigg)^2
\nonumber\\
&%\relphantom{=}{}
=\mathbb{E}\sum\limits_{k=1}^{\infty}\lambda_k^{2\widetilde{\gamma}-\frac{2\beta}{\alpha}}
\bigg|\int_{\mathcal{D}}(-\nu e^{-\nu t})
\left(E_{\alpha,1}(-\lambda_k^\beta t^\alpha)+\nu tE_{\alpha,2}(-\lambda_k^\beta t^\alpha)\right)
\varphi_k(y)a(y)dy\bigg|^2 \nonumber&  
\end{flalign*}
\begin{flalign}
&%\relphantom{=}{}
\leq C e^{-2\nu t}\mathbb{E}\sum\limits_{k=1}^{\infty}
\left|E_{\alpha,1}(-\lambda_k^\beta t^\alpha)\right|^2\lambda_k^{2\widetilde{\gamma}-\frac{2\beta}{\alpha}}
(a,\varphi_k)^2 \\
&\relphantom{=}{}
+Ce^{-2\nu t}\mathbb{E}\sum\limits_{k=1}^{\infty}t^2
\left|E_{\alpha,2}(-\lambda_k^\beta t^\alpha)\right|^2\lambda_k^{2\widetilde{\gamma}-\frac{2\beta}{\alpha}}
(a,\varphi_k)^2 \nonumber \\
&%\relphantom{=}{}
\leq Ce^{-2\nu t}\mathbb{E}\sum\limits_{k=1}^{\infty}
\frac{1}{(1+\lambda_k^\beta t^\alpha)^2\lambda_k^{\frac{2\beta}{\alpha}}}\lambda_k^{2\widetilde{\gamma}}
(a,\varphi_k)^2\leq C\mathbb{E}\|a\|_{\mathbb{H}^{2\widetilde{\gamma}}}^2, \nonumber & 
\end{flalign}
\begin{flalign}\label{eq.3.39}
&\mathbb{E}\|\mathcal{X}_2\|_{\mathbb{H}^{2\widetilde{\gamma}-\frac{2\beta}{\alpha}}}^2\nonumber\\  
&%\relphantom{=}{}
= \mathbb{E}\sum\limits_{k=1}^{\infty}\lambda_k^{2\widetilde{\gamma}-\frac{2\beta}{\alpha}}
\bigg(\int_{\mathcal{D}}e^{-\nu t}
\sum\limits_{l=1}^{\infty}\left(-\lambda_l^\beta t^{\alpha-1}E_{\alpha,\alpha}(-\lambda_l^\beta t^\alpha)+\nu E_{\alpha,1}(-\lambda_l^\beta t^\alpha)\right)\nonumber
\\
&\relphantom{=}{}\cdot\varphi_l(\cdot)\varphi_l(y)a(y)dy,\varphi_k\bigg)^2 \nonumber
\\
&%\relphantom{=}{}
=\mathbb{E}\sum\limits_{k=1}^{\infty}\lambda_k^{2\widetilde{\gamma}-\frac{2\beta}{\alpha}}
\bigg{|}\int_{\mathcal{D}}e^{-\nu t}
\left(-\lambda_k^\beta t^{\alpha-1}E_{\alpha,\alpha}(-\lambda_k^\beta t^\alpha)+\nu E_{\alpha,1}(-\lambda_k^\beta t^\alpha)\right)
 \nonumber
\\
&
\relphantom{=}{}
\cdot\varphi_k(y)a(y)dy\bigg{|}^2 &
\end{flalign}
\begin{flalign}
&%\relphantom{=}{}
\leq Ce^{-2\nu t}\mathbb{E}\sum\limits_{k=1}^{\infty}\left|\lambda_k^\beta t^{\alpha-1}E_{\alpha,\alpha}(-\lambda_k^\beta t^\alpha)\right|^2\lambda_k^{2\widetilde{\gamma}-\frac{2\beta}{\alpha}}
(a,\varphi_k)^2\nonumber\\
&\relphantom{=}{}
+Ce^{-2\nu t}\mathbb{E}\sum\limits_{k=1}^{\infty}
\left|E_{\alpha,1}(-\lambda_k^\beta t^\alpha)\right|^2\lambda_k^{2\widetilde{\gamma}-\frac{2\beta}{\alpha}}
(a,\varphi_k)^2\nonumber\\
&%\relphantom{=}{}
\leq Ce^{-2\nu t}\mathbb{E}\sum\limits_{k=1}^{\infty}
\left|\frac{(\lambda_k^\beta t^\alpha)^{\frac{\alpha-1}{\alpha}}}{1+\lambda_k^\beta t^\alpha}\right|^2\lambda_k^{2\widetilde{\gamma}}(a,\varphi_k)^2
\nonumber\\
&\relphantom{=}{}
+Ce^{-2\nu t}\mathbb{E}\sum\limits_{k=1}^{\infty}
\frac{1}{\lambda_k^{\frac{2\beta}{\alpha}}(1+\lambda_k^\beta t^\alpha)^2}\lambda_k^{2\widetilde{\gamma}}
(a,\varphi_k)^2\nonumber\\
&%\relphantom{=}{}
\leq C\mathbb{E}\|a\|_{\mathbb{H}^{2\widetilde{\gamma}}}^2, \nonumber &
\end{flalign}
and
\begin{flalign}\label{eq.3.40}
&\mathbb{E}\|\mathcal{X}_3\|_{\mathbb{H}^{2\widetilde{\gamma}-\frac{2\beta}{\alpha}}}^2 \nonumber\\
&%\relphantom{=}{}
= \mathbb{E}\sum\limits_{k=1}^{\infty}\lambda_k^{2\widetilde{\gamma}-\frac{2\beta}{\alpha}}
\bigg(\int_{\mathcal{D}}e^{-\nu t}
\sum\limits_{l=1}^{\infty}\left(-\nu t E_{\alpha,2}(-\lambda_l^\beta t^\alpha)+ E_{\alpha,1}(-\lambda_l^\beta t^\alpha)\right)
\nonumber\\
&
\relphantom{=}{} \cdot\varphi_l(\cdot)\varphi_l(y)b(y)dy,\varphi_k\bigg)^2\nonumber\\
& %\relphantom{=}{}
= \mathbb{E}\sum\limits_{k=1}^{\infty}\lambda_k^{2\widetilde{\gamma}-\frac{2\beta}{\alpha}}
\bigg{|}\int_{\mathcal{D}}e^{-\nu t}\left(-\nu tE_{\alpha,2}(-\lambda_k^\beta t^\alpha)+E_{\alpha,1}(-\lambda_k^\beta t^\alpha)\right) 
\nonumber\\
&
\relphantom{=}{} \cdot\varphi_k(y)b(y)dy\bigg{|}^2 &
\end{flalign}
\begin{flalign*}
&%\relphantom{=}{}
\leq Ce^{-2\nu t}\mathbb{E}\sum\limits_{k=1}^{\infty}t^2
\left|E_{\alpha,2}(-\lambda_k^\beta t^\alpha)\right|^2\lambda_k^{2\widetilde{\gamma}-\frac{2\beta}{\alpha}}(b,\varphi_k)^2\nonumber\\
&\relphantom{=}{}
+Ce^{-2\nu t}\mathbb{E}\sum\limits_{k=1}^{\infty}
\left|E_{\alpha,1}(-\lambda_k^\beta t^\alpha)\right|^2\lambda_k^{2\widetilde{\gamma}-\frac{2\beta}{\alpha}}(b,\varphi_k)^2\nonumber\\
&%\relphantom{=}{}
\leq Ce^{-2\nu t}\mathbb{E}\sum\limits_{k=1}^{\infty}
\frac{1}{(1+\lambda_k^\beta t^\alpha)^2}\lambda_k^{2\widetilde{\gamma}-\frac{2\beta}{\alpha}}
(b,\varphi_k)^2
\leq C\mathbb{E}\|b\|_{\mathbb{H}^{2\widetilde{\gamma}-\frac{2\beta}{\alpha}}}^2, &
\end{flalign*}
where we have used $\frac{1}{(1+\lambda_k^\beta t^\alpha)^2\lambda_k^{\frac{2\beta}{\alpha}}}\leq C$, $\frac{(\lambda_k^\beta t^\alpha)^{\frac{\alpha-1}{\alpha}}}{(1+\lambda_k^\beta t^\alpha)^2}\leq C$ and
$\frac{1}{(1+\lambda_k^\beta t^\alpha)^2}\leq C$.\\
Noticing that  $\{\varphi_k\}_{k=1}^{\infty}$ is an orthonormal basis in $L^2({\mathcal{D}})$,  $\{\xi_l\}_{l=1}^\infty$ and $\{\xi_l^H\}_{l=1}^\infty$, respectively, are the sequences of mutually independent one-dimensional standard Brownian motions and fractional Brownian motions, we deduce from \eqref{eq.2.8}-\eqref{eq.2.9}, $(\bf{A}_1)$-$(\bf{A}_2)$, Lemma \ref{le2.1}, Proposition \ref{pro1}, Lemma \ref{le2.10}, and H\"{o}lder's inequality that
\begin{align}\label{eq.3.41}
&\mathbb{E}\|\mathcal{X}_4\|_{\mathbb{H}^{2\widetilde{\gamma}-\frac{2\beta}{\alpha}}}^2  \nonumber\\
&%\relphantom{=}{}
=\mathbb{E}\sum\limits_{k=1}^{\infty}\lambda_k^{2\widetilde{\gamma}-\frac{2\beta}{\alpha}}
\bigg(\int^t_0\int_{\mathcal{D}}e^{-\nu(t-s)}\sum\limits_{l=1}^{\infty}
\Big(-\nu (t-s)^{\alpha-1}E_{\alpha,\alpha}(-\lambda_l^\beta (t-s)^\alpha)\nonumber\\
&\relphantom{=}{}+(t-s)^{\alpha-2} E_{\alpha,\alpha-1}(-\lambda_l^\beta(t-s)^\alpha)\Big)
\varphi_l(\cdot)\varphi_l(y)f(s,u(s,y))dyds,\varphi_k\bigg)^2\nonumber\\
&%\relphantom{=}{}
=\mathbb{E}\sum\limits_{k=1}^{\infty}\lambda_k^{2\widetilde{\gamma}-\frac{2\beta}{\alpha}}
\bigg{|}
\int^t_0\int_{\mathcal{D}}e^{-\nu(t-s)}
\Big(-\nu (t-s)^{\alpha-1}E_{\alpha,\alpha}(-\lambda_k^\beta (t-s)^\alpha)\nonumber\\
&\relphantom{=}{}+(t-s)^{\alpha-2} E_{\alpha,\alpha-1}(-\lambda_k^\beta(t-s)^\alpha)\Big)
\varphi_k(y)f(s,u(s,y))dyds\bigg{|}^2&
\end{align}
\begin{flalign*}
&\relphantom{=}{}\leq CT \mathbb{E}\sum\limits_{k=1}^{\infty}\int^t_0\bigg|e^{-\nu(t-s)}
\Big(-\nu (t-s)^{\alpha-1}E_{\alpha,\alpha}(-\lambda_k^\beta (t-s)^\alpha)\nonumber\\
&\relphantom{==}{}+(t-s)^{\alpha-2} E_{\alpha,\alpha-1}(-\lambda_k^\beta(t-s)^\alpha)\Big)\bigg|^2
\lambda_k^{2\widetilde{\gamma}-\frac{2\beta}{\alpha}}
(f(s,u(s)),\varphi_k)^2ds \nonumber \\
&\relphantom{=}{}\leq C \mathbb{E}\sum\limits_{k=1}^{\infty}
\int^t_0 \Big( (t-s)^{2\alpha-2}+(t-s)^{2\alpha-4}\Big)\frac{1}{(1+\lambda_k^\beta(t-s)^\alpha)^2} \nonumber \\
& \relphantom{==}{}
\cdot\lambda_k^{2\widetilde{\gamma}-\frac{2\beta}{\alpha}}
(f(s,u(s)),\varphi_k)^2ds
&
\end{flalign*}
\begin{flalign*}
&\relphantom{=}{}\leq C \int^t_0 \Big( (t-s)^{2\alpha-2}+(t-s)^{2\alpha-4}\Big)
\left(1+\mathbb{E}\|u(s)\|_{\mathbb{H}^{2\widetilde{\gamma}-\frac{2\beta}{\alpha}}}^2\right)ds\nonumber\\
&\relphantom{=}{}\leq C \int^t_0 \Big( (t-s)^{2\alpha-2}+(t-s)^{2\alpha-4}\Big)
\left(1+\mathbb{E}\|u(s)\|_{\mathbb{H}^{2\widetilde{\gamma}}}^2\right)ds \\
%&
%\end{align*}
%\begin{flalign*}
&\relphantom{=}{}
\leq C\sup_{s\in[0,T]}\mathbb{E}\left(1+\|u(s)\|_{\mathbb{H}^{2\widetilde{\gamma}}}^2\right), &
\end{flalign*}
\begin{flalign}\label{eq.3.42}
&\mathbb{E}\|\mathcal{X}_5\|_{\mathbb{H}^{2\widetilde{\gamma}-\frac{2\beta}{\alpha}}}^2 \nonumber\\
&%\relphantom{=}{}
=\mathbb{E}\sum\limits_{k=1}^{\infty}\lambda_k^{2\widetilde{\gamma}-\frac{2\beta}{\alpha}}
\bigg(\int^t_0\int_{\mathcal{D}}e^{-\nu(t-s)}\sum\limits_{l=1}^{\infty}
\Big(-\nu (t-s)^{\alpha-1}E_{\alpha,\alpha}(-\lambda_l^\beta (t-s)^\alpha)\nonumber\\
&\relphantom{=}{}+(t-s)^{\alpha-2} E_{\alpha,\alpha-1}(-\lambda_l^\beta(t-s)^\alpha)\Big)
\varphi_l(\cdot)\varphi_l(y)g(s,u(s,y))d\mathbb{W}(s,y),\varphi_k\bigg)^2\nonumber\\
&%\relphantom{=}{}
=\mathbb{E}\sum\limits_{k=1}^{\infty}\lambda_k^{2\widetilde{\gamma}-\frac{2\beta}{\alpha}}
\bigg{|}\int^t_0\int_{\mathcal{D}}e^{-\nu(t-s)}
\Big(-\nu (t-s)^{\alpha-1}E_{\alpha,\alpha}(-\lambda_k^\beta (t-s)^\alpha)\nonumber\\
&\relphantom{=}{}+(t-s)^{\alpha-2} E_{\alpha,\alpha-1}(-\lambda_k^\beta(t-s)^\alpha)\Big)
\varphi_k(y)g(s,u(s,y))d\mathbb{W}(s,y)\bigg{|}^2 &
\end{flalign}
\begin{flalign*}
&=\mathbb{E}\sum\limits_{k=1}^{\infty}\bigg{|}
\int^t_0\int_{\mathcal{D}}e^{-\nu(t-s)}
\Big(-\nu (t-s)^{\alpha-1}E_{\alpha,\alpha}(-\lambda_k^\beta (t-s)^\alpha)+(t-s)^{\alpha-2} \nonumber\\ &\relphantom{=}{}\cdot E_{\alpha,\alpha-1}(-\lambda_k^\beta(t-s)^\alpha)\Big)
\lambda_k^{\widetilde{\gamma}-\frac{\beta}{\alpha}}
\varphi_k(y)\sum_{j,l=1}^{\infty}(g(s,u(s))\cdot e_l,\varphi_j)\varphi_j(y)\varsigma_l(s)
\\ &\relphantom{=}{}
\cdot dyd\xi_l(s)\bigg{|}^2 \nonumber\\
&=\mathbb{E}\sum\limits_{k=1}^{\infty}\bigg{|}
\int^t_0e^{-\nu(t-s)}\Big(-\nu (t-s)^{\alpha-1}E_{\alpha,\alpha}(-\lambda_k^\beta (t-s)^\alpha)+(t-s)^{\alpha-2} \nonumber\\ &\relphantom{=}{}\cdot E_{\alpha,\alpha-1}(-\lambda_k^\beta(t-s)^\alpha)\Big)
\lambda_k^{\widetilde{\gamma}-\frac{\beta}{\alpha}}
\sum_{l=1}^{\infty}(g(s,u(s))\cdot e_l,\varphi_k)\varsigma_l(s)d\xi_l(s)\bigg{|}^2
&
\end{flalign*}
\begin{flalign*}
&\leq C\mathbb{E}\sum\limits_{k,l=1}^{\infty}\int^t_0\Big( (t-s)^{2\alpha-2}+(t-s)^{2\alpha-4}\Big)\frac{1}{(1+\lambda_k^\beta(t-s)^\alpha)^2}\\
&\relphantom{=}{}\cdot
\bigg{|}\lambda_k^{\widetilde{\gamma}-\frac{\beta}{\alpha}}
(g(s,u(s))\cdot e_l,\varphi_k)\varsigma_l(s)\bigg{|}^2ds \nonumber\\
&\leq C
\int^t_0\Big((t-s)^{2\alpha-2}+(t-s)^{2\alpha-4}\Big)\mathbb{E}\|(-\Delta)^{\widetilde{\gamma}-\frac{\beta}{\alpha}}g(s,u(s))\|_{\mathcal{L}_2^0}^2ds 
&
\end{flalign*}
\begin{flalign*}
&\leq C
\int^t_0\Big((t-s)^{2\alpha-2}+(t-s)^{2\alpha-4}\Big)\left(1+\mathbb{E}\|u(s)\|_{\mathbb{H}^{2\widetilde{\gamma}-\frac{2\beta}{\alpha}}}^2\right)ds \nonumber\\
&\leq C
\int^t_0\Big((t-s)^{2\alpha-2}+(t-s)^{2\alpha-4}\Big)
\left(1+\mathbb{E}\|u(s)\|_{\mathbb{H}^{2\widetilde{\gamma}}}^2\right)
ds\nonumber\\
&\leq C\sup_{s\in[0,T]}\mathbb{E}\left(1+\|u(s)\|_{\mathbb{H}^{2\widetilde{\gamma}}}^2\right), &
\end{flalign*}
and
\begin{flalign} \label{eq.3.43}
&\mathbb{E}\|\mathcal{X}_6\|_{\mathbb{H}^{2\widetilde{\gamma}-\frac{2\beta}{\alpha}}}^2 \nonumber\\
&%\relphantom{=}{}
=\mathbb{E}\sum\limits_{k=1}^{\infty}\lambda_k^{2\widetilde{\gamma}-\frac{2\beta}{\alpha}}
\bigg(\int^t_0\int_{\mathcal{D}}e^{-\nu(t-s)}\sum\limits_{l=1}^{\infty}
\Big(-\nu (t-s)^{\alpha-1}E_{\alpha,\alpha}(-\lambda_l^\beta (t-s)^\alpha)\nonumber\\
&\relphantom{=}{}+(t-s)^{\alpha-2} E_{\alpha,\alpha-1}(-\lambda_l^\beta(t-s)^\alpha)\Big)
\varphi_l(\cdot)\varphi_l(y)h(s,u(s,y))d\mathbb{W}^H(s,y),\varphi_k\bigg)^2\nonumber\\
&%\relphantom{=}{}
=\mathbb{E}\sum\limits_{k=1}^{\infty}\lambda_k^{2\widetilde{\gamma}-\frac{2\beta}{\alpha}}
\bigg{|}\int^t_0\int_{\mathcal{D}}e^{-\nu(t-s)}
\Big(-\nu(t-s)^{\alpha-1}E_{\alpha,\alpha}(-\lambda_k^\beta (t-s)^\alpha)\nonumber\\
&\relphantom{=}{}+(t-s)^{\alpha-2} E_{\alpha,\alpha-1}(-\lambda_k^\beta(t-s)^\alpha)\Big)
\varphi_k(y)h(s,u(s,y))d\mathbb{W}^H(s,y)\bigg{|}^2 
&%\relphantom{=}{}
\end{flalign}

\begin{flalign*}
&
=\mathbb{E}\sum\limits_{k=1}^{\infty}\bigg{|}
\int^t_0\int_{\mathcal{D}}e^{-\nu(t-s)}\Big(-\nu (t-s)^{\alpha-1}E_{\alpha,\alpha}(-\lambda_k^\beta (t-s)^\alpha)+(t-s)^{\alpha-2} \nonumber\\ &\relphantom{=}{}\cdot E_{\alpha,\alpha-1}(-\lambda_k^\beta(t-s)^\alpha)\Big)
\lambda_k^{\widetilde{\gamma}-\frac{\beta}{\alpha}}
\varphi_k(y)\sum_{j,l=1}^{\infty}(h(s,u(s))\cdot e_l,\varphi_j)\varphi_j(y)\varrho_l(s)
\\ &
\relphantom{=}{}\cdot
dyd\xi^H_l(s)\bigg{|}^2 \nonumber\\
&%\relphantom{=}{}
\leq CHT^{2H-1}\mathbb{E}\sum\limits_{k,l=1}^{\infty}
\int^t_0\bigg{|}e^{-\nu(t-s)}\Big(-\nu (t-s)^{\alpha-1}E_{\alpha,\alpha}(-\lambda_k^\beta (t-s)^\alpha) \\ 
&\relphantom{=}{} +(t-s)^{\alpha-2} E_{\alpha,\alpha-1}(-\lambda_k^\beta(t-s)^\alpha)\Big)
\lambda_k^{\widetilde{\gamma}-\frac{\beta}{\alpha}}
(h(s,u(s))\cdot e_l,\varphi_k)\varrho_l(s)\bigg{|}^2ds &
\end{flalign*}
\begin{flalign*}
& %\relphantom{=}{}
\leq C\mathbb{E}\sum\limits_{k,l=1}^{\infty}
\int^t_0\Big((t-s)^{2\alpha-2}+(t-s)^{2\alpha-4}\Big)\frac{1}{(1+\lambda_k^\beta(t-s)^\alpha)^2}\nonumber\\
&\relphantom{=}{}\cdot\left|\lambda_k^{\widetilde{\gamma}-\frac{\beta}{\alpha}}(h(s,u(s))\cdot e_l,\varphi_k)\varrho_l(s)\right|^2ds\nonumber\\
&%\relphantom{=}{}
\leq
C\int^t_0\Big((t-s)^{2\alpha-2}+(t-s)^{2\alpha-4}\Big)
\mathbb{E}\|(-\Delta)^{\widetilde{\gamma}-\frac{\beta}{\alpha}}h(s,u(s))\|_{\mathcal{L}_2^0}^2ds &
\end{flalign*}
\begin{flalign*}
&
%\relphantom{=}{}
\leq C
\int^t_0\Big((t-s)^{2\alpha-2}+(t-s)^{2\alpha-4}\Big)\left(1+\mathbb{E}\|u(s)\|_{\mathbb{H}^{2\widetilde{\gamma}-\frac{2\beta}{\alpha}}}^2\right)ds \\
&%\relphantom{=}{}
\leq C
\int^t_0\Big((t-s)^{2\alpha-2}+(t-s)^{2\alpha-4}\Big)
\left(1+\mathbb{E}\|u(s)\|_{\mathbb{H}^{2\widetilde{\gamma}}}^2\right)ds \\
&%\relphantom{=}{}
\leq C\sup_{s\in[0,T]}\mathbb{E}\left(1+\|u(s)\|_{\mathbb{H}^{2\widetilde{\gamma}}}^2\right), &
\end{flalign*}
where we have used $\frac{1}{(1+\lambda_k^\beta(t-s)^\alpha)^2}\leq C$ and $\mathbb{E}\|u(s)\|_{\mathbb{H}^{2\widetilde{\gamma}-\frac{2\beta}{\alpha}}}^2 \leq C \mathbb{E}\|u(s)\|_{\mathbb{H}^{2\widetilde{\gamma}}}^2$. Then it follows from \eqref{eq.3.37}-\eqref{eq.3.43} that
\begin{align}\label{eq.3.44}
&\sup_{0\leq t\leq T}\mathbb{E}\|\partial_t u(t)\|_{\mathbb{H}^{2\widetilde{\gamma}-\frac{2\beta}{\alpha}}}^2
\\
&\leq C\left(\mathbb{E}\|a\|_{\mathbb{H}^{2\widetilde{\gamma}}}^2+
\mathbb{E}\|b\|_{\mathbb{H}^{2\widetilde{\gamma}-\frac{2\beta}{\alpha}}}^2
+\sup_{s\in[0,T]}\mathbb{E}\left(1+\|u(s)\|_{\mathbb{H}^{2\widetilde{\gamma}}}^2\right)\right), \nonumber
\end{align}
which completes the proof of this theorem.
\end{proof}
\section{Regularity and approximation of white noise and fractional Gaussian noise}
\label{sec:4}

~~~~~Now we define a partition of $[0, T]$ by intervals $[t_i, t_{i+1}]$ for $i = 1,
2, \ldots, N$, where $t_i = (i-1)\tau $, $\tau = T/N$. A sequence of noise which
 approximates the space-time white noise is defined as
\begin{equation*}
\displaystyle \frac{\partial^2{\mathbb{W}_n}(t,x)}{\partial t \partial x}=\sum
\limits_{k=1}^{\infty}\varsigma^n_k(t)e_k(x)\left(\sum\limits_{i=1}^{N}
\frac{1}{\sqrt{\tau}}{\xi}_{ki}\chi_i(t)\right),
\end{equation*}
and another sequence of noise which approximates the space-time fractional noise with Hurst parameter $H\in(1/2,1)$ is defined as
\begin{equation*}
\displaystyle \frac{\partial^2{\mathbb{W}^H_n}(t,x)}{\partial t \partial x}=\sum
\limits_{k=1}^{\infty}\varrho^n_k(t)e_k(x)\left(\sum\limits_{i=1}^{N}
\frac{1}{\tau^{1-H}}{\xi}^H_{ki}\chi_i(t)\right),
\end{equation*}
where $\chi_i(t)$ is the
characteristic function for the $i$th time subinterval,
\[\xi_{ki} =\frac{1}{\sqrt{\tau}}\int_{t_i}^{t_{i+1}}d\xi_k(t)=\frac{1}{\sqrt{\tau}}
(\xi_k(t_{i+1})-\xi_k(t_i))\sim  \mathcal{N}(0, 1),\]
\[\xi_{ki}^H =\frac{1}{\tau^{H}}\int_{t_i}^{t_{i+1}}d\xi^H_k(t)=\frac{1}{\tau^{H}}
(\xi^H_k(t_{i+1})-\xi_k^H(t_i))\sim  \mathcal{N}(0, 1),\]
$\varsigma^n_k (t)$ and $\varrho^n_k (t)$, respectively, are the approximations of $\varsigma_k(t)$
and $\varrho_k(t)$ in the space direction.
 Then $\frac{\partial^2{\mathbb{W}_n}(t,x)}{\partial t \partial x}$ and  $\frac{\partial^2{\mathbb{W}^H_n}(t,x)}{\partial t \partial x}$ are, respectively,
 substituted for $\frac{\partial^2\mathbb{W}(t,x)}{\partial t \partial x}$ and  $\frac{\partial^2\mathbb{W}^H(t,x)}{\partial t \partial x}$ in
  \eqref{eq0.1} to obtain the equation %:
\begin{equation}\label{eq2.14}
\left\{ \begin{array}
 {l@{\quad} l}
\displaystyle _0^c \partial_t^{\alpha,\nu}u_n(t,x)+(-\Delta)^{\beta} u_n(t,x)=f(t,u_n(t,x))+g(t,u_n(t,x))\frac{\partial^2\mathbb{W}_n(t,x)}{\partial t \partial x}\\
\\
\displaystyle~~~~~~~~~~~~~~~~~~~~~~~~~~~~~~~~~~~~+h(t,u_n(t,x))\frac{\partial^2\mathbb{W}^H_n(t,x)}{\partial t \partial x}
\quad \rm{in~} (0,T]\times\mathcal{D},\\
 \\
u_n(t,x)=0 \quad \rm{on~} (0,T]\times\partial\mathcal{D},\\
 \\
u_n(0,x)=a(x),~\partial_tu_n(0,x)=b(x)\quad  \rm{in~} \mathcal{D}. \end{array}\right.
\end{equation}

As a simple consequence of Lemma \ref{le1}, we get an integral formulation of \eqref{eq2.14}.
\begin{lemma}\label{lem4.1} The solution $u_n$ to problem \eqref{eq2.14} with $\frac{3}{2}<\alpha<2$, $\frac{1}{2}<\beta\leq 1$, $\nu>0$, and $\frac{1}{2}<H<1$ is given by
\begin{equation}\label{eq2.15}\begin{split}
\displaystyle u_n(t,x)&=\int_{\mathcal{D}}\mathcal{T}^\nu_{\alpha,\beta}(t,x,y)a(y)dy
+\int_{\mathcal{D}}\mathcal{R}^\nu_{\alpha,\beta}(t,x,y)b(y)dy\\
&\relphantom{=}{}+\int_{0}^t\int_{\mathcal{D}}\mathcal{S}^\nu_{\alpha,\beta}(t-s,x,y)f(s,u_n(s,y))dyds\\
&\relphantom{=}{}+\int_{0}^t\int_{\mathcal{D}}\mathcal{S}^\nu_{\alpha,\beta}(t-s,x,y)g(s,u_n(s,y))
d{\mathbb{W}_n}(s,y)\\
&\relphantom{=}{}+\int_{0}^t\int_{\mathcal{D}}\mathcal{S}^\nu_{\alpha,\beta}(t-s,x,y)h(s,u_n(s,y))
d{\mathbb{W}^H_n}(s,y),
\end{split}\end{equation}
where $\mathcal{T}^\nu_{\alpha,\beta}(t,x,y)$, $\mathcal{R}^\nu_{\alpha,\beta}(t,x,y)$, and $\mathcal{S}^\nu_{\alpha,\beta}(t-s,x,y)$ are given in Lemma \ref{le1}.
\end{lemma}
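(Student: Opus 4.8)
The plan is to deduce \eqref{eq2.15} directly from Lemma \ref{le1}, exploiting the fact that the regularized problem \eqref{eq2.14} is structurally identical to \eqref{eq0.1}: it carries the same linear operator $_0^c\partial_t^{\alpha,\nu}+(-\Delta)^{\beta}$, the same nonlinearities $f$, $g$, $h$, and the same initial and boundary data; only the driving noises are replaced by their temporally piecewise-constant approximations $\frac{\partial^2\mathbb{W}_n(t,x)}{\partial t\partial x}$ and $\frac{\partial^2\mathbb{W}^H_n(t,x)}{\partial t\partial x}$. Since the mild-solution representation is obtained by treating the right-hand side as a generic forcing and inverting the linear part, substituting the approximated noises into the source cannot alter the form of the Duhamel formula, and the claim reduces to verifying that the kernels are unchanged.

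Concretely, I would first set $v_n(t,x)=e^{\nu t}u_n(t,x)$ and invoke Definitions \ref{def2.3}--\ref{def2.4} to rewrite \eqref{eq2.14} in the untempered form, exactly as \eqref{eq1.4'} was obtained from \eqref{eq0.1}, carrying the replacements $\mathbb{W}\mapsto\mathbb{W}_n$ and $\mathbb{W}^H\mapsto\mathbb{W}^H_n$ through every term. The derivation of Lemma \ref{le1} given in Appendix \ref{A} then applies verbatim: expanding $v_n$ in the Dirichlet eigenbasis $\{\varphi_k\}$ decouples the problem into scalar tempered fractional equations, which are solved through the Mittag-Leffler functions. The kernels $\mathcal{T}^\nu_{\alpha,\beta}$, $\mathcal{R}^\nu_{\alpha,\beta}$, and $\mathcal{S}^\nu_{\alpha,\beta}$ of \eqref{eq2.2'}--\eqref{eq2.2} arise solely from the homogeneous linear part together with the initial data and are therefore insensitive to the choice of forcing; hence the variation-of-constants formula produces \eqref{eq2.15} with precisely the same kernels and with $d\mathbb{W}$, $d\mathbb{W}^H$ replaced by $d\mathbb{W}_n$, $d\mathbb{W}^H_n$.

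The only point deserving comment is the interpretation of the stochastic convolutions against the regularized noises. Because $\frac{\partial^2\mathbb{W}_n(t,x)}{\partial t\partial x}$ and $\frac{\partial^2\mathbb{W}^H_n(t,x)}{\partial t\partial x}$ are built from the characteristic functions $\chi_i$ and the scaled increments $\xi_{ki}$, $\xi^H_{ki}$, each convolution $\int_0^t\int_{\mathcal{D}}\mathcal{S}^\nu_{\alpha,\beta}(t-s,\cdot,y)\,(\cdot)\,d\mathbb{W}_n(s,y)$ reduces to a finite sum of ordinary (respectively fractional) stochastic integrals over the subintervals, so all interchanges of summation and integration remain legitimate by Proposition \ref{pro1} and the rapid decay assumed in $(\textbf{A}_2)$. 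No new analytical difficulty arises here; on the contrary, the regularized noise is the more regular object. I therefore expect the entire content of the proof to be this bookkeeping—checking that the substitution leaves the kernel computation of Lemma \ref{le1} unchanged—rather than any genuine estimate, which is why the statement is flagged as a simple consequence of Lemma \ref{le1}.
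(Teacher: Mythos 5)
Your proposal is correct and follows exactly the route the paper intends: the paper offers no separate argument for Lemma \ref{lem4.1} beyond declaring it "a simple consequence of Lemma \ref{le1}," i.e.\ the eigenfunction expansion and Mittag--Leffler inversion of \ref{A} are repeated verbatim with $\mathbb{W}$, $\mathbb{W}^H$ replaced by $\mathbb{W}_n$, $\mathbb{W}^H_n$. Your additional remark that the convolutions against the regularized noises reduce to finite sums of pathwise integrals is a sound (and slightly more careful) justification of why the substitution is harmless.
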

The following theorem shows the regularity of the solution of \eqref{eq2.15}, which will be used in the error analysis.
\begin{theorem}\label{thm4.2}
Assume $\{\varsigma_k^n(t)\}$ and $\{\varrho_k^n(t)\}$ are uniformly bounded by $|\varsigma_k^n|\leq \mu_k^n$ and $|\varrho_k^n|\leq \widetilde{\mu}_k^n$ for all $t\in[0,T]$, and the series $(\{\mu_k^n\},\{\widetilde{\mu}_k^n\})$ is rapidly decaying with the increase of $k$. Further assume that the functions $f$, $g$, and $h$ satisfy $(\textbf{A}_1)$ for some $\gamma\geq 0$.
Let $(\textbf{A}_2)$ holds, $\frac{3}{2}<\alpha<2$, $\frac{1}{2}<\beta\leq 1$, $\nu>0$, and $\frac{1}{2}<H<1$, and let $u_n$ be the solution to \eqref{eq2.15}, where the $\mathcal{F}_0$-adapted random initial values satisfy $a\in L^2(\Omega;\mathbb{H}^{2\widetilde{\gamma}}(\mathcal{D}))$, $b\in L^2(\Omega;\mathbb{H}^{2\widetilde{\gamma}-\frac{2\beta}{\alpha}}(\mathcal{D}))$ with $\widetilde{\gamma}=\max(\gamma,\frac{\beta}{\alpha})$.
Then it holds that
\begin{align*}
\sup_{0\leq t\leq T}\mathbb{E}\|u_n(t)\|_{\mathbb{H}^{2\widetilde{\gamma}}}^2 &\leq \left(C+C(\mu_1^n)^2+C(\widetilde{\mu}_1^n)^2\right)\mathbb{E}\|a\|_{\mathbb{H}^{2\widetilde{\gamma}}}^2
\nonumber\\
&\relphantom{=}{}
+\left(C+C(\mu_1^n)^2+C(\widetilde{\mu}_1^n)^2\right)\mathbb{E}\|b\|_{\mathbb{H}^{2\widetilde{\gamma}-\frac{2\beta}{\alpha}}}^2
\nonumber\\
&\relphantom{=}{}+C+C(\mu_1^n)^2+C(\widetilde{\mu}_1^n)^2
+C\left(1+(\mu_1^n)^2+(\widetilde{\mu}_1^n)^2\right)^2,
\end{align*}
\begin{align*}
&\sup_{0\leq t\leq T}\mathbb{E}\|\partial_t u_n(t)\|_{\mathbb{H}^{2\widetilde{\gamma}-\frac{2\beta}{\alpha}}}^2\leq C\mathbb{E}\|a\|_{\mathbb{H}^{2\widetilde{\gamma}}}^2
+C\mathbb{E}\|b\|_{\mathbb{H}^{2\widetilde{\gamma}-\frac{2\beta}{\alpha}}}^2\\
&\relphantom{=}{}~~~~~~~~~~ +\left(C+C(\mu_1^n)^2+C(\widetilde{\mu}_1^n)^2\right)\left(1+\sup_{0\leq  t\leq T}
\mathbb{E}\|u_n(t)\|_{\mathbb{H}^{2\widetilde{\gamma}}}^2\right),
\end{align*}
\begin{align*}
&\sup_{0\leq t\leq T}
\mathbb{E}\|{_0^c\partial}_t^{\alpha,\nu} u_n(t)\|_{\mathbb{H}^{2\widetilde{\gamma}-2\beta}}^2
\\
&~~~~\leq \left(C+C(\mu_1^n)^2\tau^{-1}+C(\widetilde{\mu}_1^n)^2\tau^{2H-2}\right)
\left(1+\sup_{0\leq t\leq T}
\mathbb{E}\|u_n(t)\|_{\mathbb{H}^{2\widetilde{\gamma}}}^2\right),
\end{align*}
and for $0\leq \theta_1\leq\theta_2\leq T$,
\begin{align*}
&\mathbb{E}\|u_n(\theta_2)-u_n(\theta_1)\|_{\mathbb{H}^{2\widetilde{\gamma}-\frac{2\beta}{\alpha}}}^2
\\
&
\leq C|\theta_2-\theta_1|^2
\bigg(\mathbb{E}\|a\|_{\mathbb{H}^{2\widetilde{\gamma}}}^2
+\mathbb{E}\|b\|_{\mathbb{H}^{2\widetilde{\gamma}-\frac{2\beta}{\alpha}}}^2+1+
(\mu_1^n)^2+\sup_{0\leq s\leq T}
\mathbb{E}\|u_n(s)\|_{\mathbb{H}^{2\widetilde{\gamma}}}^2\nonumber\\
&\relphantom{=}{}
+(\widetilde{\mu}_1^n)^2+ (\mu_1^n)^2\sup_{0\leq s\leq T}
\mathbb{E}\|u_n(s)\|_{\mathbb{H}^{2\widetilde{\gamma}}}^2+(\widetilde{\mu}_1^n)^2 \sup_{0\leq s\leq T}
\mathbb{E}\|u_n(s)\|_{\mathbb{H}^{2\widetilde{\gamma}}}^2 \bigg).
\end{align*}
\end{theorem}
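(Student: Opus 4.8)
The plan is to follow the template of Theorem~\ref{thm3.5} and its successor, starting from the mild representation \eqref{eq2.15} of $u_n$ and splitting it into the five terms governed by $a$, $b$, $f$, $g$ and $h$. The only genuinely new feature is that, after the time discretization, the integrals against $\mathbb{W}_n$ and $\mathbb{W}_n^H$ are no longer It\^o (resp. fractional) stochastic integrals but ordinary Bochner integrals in $s$ carrying the random coefficients $\tfrac{1}{\sqrt{\tau}}\xi_{ki}$ and $\tfrac{1}{\tau^{1-H}}\xi_{ki}^H$. Writing $\tfrac{1}{\sqrt{\tau}}\xi_{ki}=\tfrac{1}{\tau}(\xi_k(t_{i+1})-\xi_k(t_i))$ and $\tfrac{1}{\tau^{1-H}}\xi_{ki}^H=\tfrac{1}{\tau}(\xi_k^H(t_{i+1})-\xi_k^H(t_i))$, each discretized noise term can be recast as a single stochastic integral $\int_0^t\psi_k(r)\,d\xi_k(r)$, resp. $\int_0^t\psi_k^H(r)\,d\xi_k^H(r)$, whose integrand is the local time average of the kernel over each subinterval. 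This reduction lets me reuse Proposition~\ref{pro1} and Lemma~\ref{le2.10} essentially verbatim.

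For the first estimate I would bound the $a$- and $b$-terms exactly as in Lemma~\ref{le2.2} and Remark~\ref{re3.4}, producing the $\mathbb{E}\|a\|_{\mathbb{H}^{2\widetilde{\gamma}}}^2$ and $\mathbb{E}\|b\|_{\mathbb{H}^{2\widetilde{\gamma}-2\beta/\alpha}}^2$ contributions, and treat the $f$-term with $(\mathbf{A}_1)$ and H\"older's inequality as in \eqref{2.6}. For the $g$-term, after expanding in the basis $\{\varphi_k\}$ and using $\mathbb{E}[\xi_{li}\xi_{l'i'}]=\delta_{ll'}\delta_{ii'}$, Cauchy--Schwarz on each subinterval of length $\tau$ absorbs the prefactor $1/\tau$ and returns exactly the singular integrand $\int_0^t(t-s)^{2\alpha-4}(1+\mathbb{E}\|u_n(s)\|_{\mathbb{H}^{2\widetilde{\gamma}}}^2)\,ds$, with a constant controlled by $\sum_l(\mu_l^n)^2$, which the rapid decay of the sequence bounds by a multiple of $(\mu_1^n)^2$; the $h$-term is handled identically through Lemma~\ref{le2.10}, contributing the factor $(\widetilde{\mu}_1^n)^2$. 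Collecting the five bounds gives an inequality of the form $\mathbb{E}\|u_n(t)\|_{\mathbb{H}^{2\widetilde{\gamma}}}^2\le a(t)+C(1+(\mu_1^n)^2+(\widetilde{\mu}_1^n)^2)\int_0^t(t-s)^{2\alpha-4}\mathbb{E}\|u_n(s)\|_{\mathbb{H}^{2\widetilde{\gamma}}}^2\,ds$, to which the singular Gr\"onwall inequality of Lemma~\ref{lem2.11} applies with exponent $2\alpha-3>0$ since $\alpha>\tfrac32$; the quadratic term $(1+(\mu_1^n)^2+(\widetilde{\mu}_1^n)^2)^2$ in the claim is the leading contribution of the Gr\"onwall series acting on the constant part of $a(t)$.

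The second and third estimates reuse the explicit derivative expansion \eqref{eq.3.37}: for $\partial_t u_n$ I would bound the six terms $\mathcal{X}_1,\dots,\mathcal{X}_6$ in the $\mathbb{H}^{2\widetilde{\gamma}-2\beta/\alpha}$-norm, the kernels $(t-s)^{\alpha-2}$ being square-integrable because $\alpha>\tfrac32$, and then invoke the first estimate to replace $\sup_s\mathbb{E}\|u_n(s)\|_{\mathbb{H}^{2\widetilde{\gamma}}}^2$; this reproduces \eqref{eq.3.44} with the extra multiplier $C+C(\mu_1^n)^2+C(\widetilde{\mu}_1^n)^2$. The third estimate is the most direct: I would read $_0^c\partial_t^{\alpha,\nu}u_n$ off the equation \eqref{eq2.14} as $-(-\Delta)^\beta u_n+f+g\,\frac{\partial^2\mathbb{W}_n}{\partial t\partial x}+h\,\frac{\partial^2\mathbb{W}_n^H}{\partial t\partial x}$, use $\|(-\Delta)^\beta u_n\|_{\mathbb{H}^{2\widetilde{\gamma}-2\beta}}=\|u_n\|_{\mathbb{H}^{2\widetilde{\gamma}}}$ together with $(\mathbf{A}_1)$ for the $f$-term, and observe that the pointwise-in-time noise now has variance $\mathbb{E}|\tfrac{1}{\sqrt{\tau}}\xi_{ki}|^2=\tau^{-1}$ and $\mathbb{E}|\tfrac{1}{\tau^{1-H}}\xi_{ki}^H|^2=\tau^{2H-2}$; these are exactly the origin of the $\tau^{-1}$ and $\tau^{2H-2}$ factors, the key distinction from the time-integrated estimates where the $1/\tau$ was absorbed.

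For the fourth estimate I would avoid repeating the $\mathcal{E}_1,\dots,\mathcal{E}_5$ decomposition of Step~4 of Theorem~\ref{thm3.5} and instead use that, since $\partial_t u_n\in L^2(\Omega;\mathbb{H}^{2\widetilde{\gamma}-2\beta/\alpha})$ uniformly in $t$ by the second estimate, the fundamental theorem of calculus in $\mathbb{H}^{2\widetilde{\gamma}-2\beta/\alpha}$ together with Cauchy--Schwarz yields $\mathbb{E}\|u_n(\theta_2)-u_n(\theta_1)\|_{\mathbb{H}^{2\widetilde{\gamma}-2\beta/\alpha}}^2\le|\theta_2-\theta_1|^2\sup_{s}\mathbb{E}\|\partial_t u_n(s)\|_{\mathbb{H}^{2\widetilde{\gamma}-2\beta/\alpha}}^2$, and the claimed right-hand side is precisely $|\theta_2-\theta_1|^2$ times the bound of the second estimate. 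I expect the main obstacle to lie in the treatment of the discretized fractional Gaussian noise: because the increments $\xi_{ki}^H$ over distinct subintervals are correlated, the cross terms in $\mathbb{E}|\sum_i c_i\xi_{ki}^H|^2$ must be controlled, and the recasting as $\int_0^t\psi_k^H\,d\xi_k^H$ followed by Lemma~\ref{le2.10} is what makes the time-integrated estimates match the white-noise case while still isolating the $\tau$-singularity in the pointwise estimate. Careful tracking of the constants $\mu_1^n$, $\widetilde{\mu}_1^n$ and their products through the Gr\"onwall step is the remaining bookkeeping.
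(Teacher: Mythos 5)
Your treatment of the first three estimates follows the paper's own proof essentially verbatim: the same five-term splitting of \eqref{eq2.15}, the same recasting of the piecewise-constant noise increments as stochastic integrals $\tfrac{1}{\tau}\int_{t_i}^{t_{i+1}}(\cdots)\,ds\,d\xi_l(r)$ so that the It\^o isometry (resp.\ Lemma \ref{le2.10}, applied to the single integral $\int_0^t\sum_i\chi_i(r)(\cdots)\,d\xi_l^H(r)$ to handle the correlated fBm increments) can be used, Cauchy--Schwarz on each subinterval to absorb the $1/\tau$, the singular Gr\"onwall inequality of Lemma \ref{lem2.11} with exponent $2\alpha-3>0$ producing the quadratic term $(1+(\mu_1^n)^2+(\widetilde\mu_1^n)^2)^2$, the term-by-term bound of the six pieces of \eqref{eq.4.10} for $\partial_t u_n$, and the pointwise variances $\tau^{-1}$, $\tau^{2H-2}$ read off from the equation for the tempered Caputo derivative. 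Where you genuinely diverge is the fourth estimate: the paper redoes the full decomposition $\widetilde{\mathcal E}_1,\dots,\widetilde{\mathcal E}_5$ (mirroring Step 4 of Theorem \ref{thm3.5}, with the discretized-noise modifications), whereas you obtain the H\"older bound from the identity $u_n(\theta_2)-u_n(\theta_1)=\int_{\theta_1}^{\theta_2}\partial_t u_n(s)\,ds$ plus Cauchy--Schwarz, and indeed $|\theta_2-\theta_1|^2$ times the second estimate reproduces the claimed right-hand side exactly. This shortcut is legitimate here precisely because the discretization turns the stochastic convolutions into pathwise Bochner integrals whose formal $t$-derivative \eqref{eq.4.10} has locally square-integrable kernels ($(t-s)^{\alpha-2}$ with $\alpha>\tfrac32$), so $t\mapsto u_n(t)$ is absolutely continuous in $L^2(\Omega;\mathbb H^{2\widetilde\gamma-\frac{2\beta}{\alpha}})$; you should state this justification explicitly, since it is exactly what fails for the original solution $u$ (where only the weaker exponent $2\alpha-2$ is available in Theorem \ref{thm3.5}). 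The trade-off is that your route is much shorter but makes the fourth estimate logically dependent on the second, while the paper's direct computation is self-contained. One small bookkeeping remark: the paper bounds $|\varsigma_l^n(s)|^2\le(\mu_1^n)^2$ uniformly in $l$ and lets the sum over $l$ form the $\mathcal L_2^0$-norm, rather than invoking $\sum_l(\mu_l^n)^2$; your phrasing reaches the same constant but should be aligned with this to avoid an unnecessary summability requirement.
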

The proof of Theorem \ref{thm4.2} is given in  \ref{A.2}.

%--------------------------------------------------------------------------------------------
%-------------------------------------------------------------------------------------------
In order to prove that the solution $u_n$ of \eqref{eq2.14} indeed approximates
$u$, the solution of \eqref{eq0.1}, first we need the
assumptions on $\{\varsigma_k(t)\}$, $\{\varsigma^n_k (t)\}$, $\{\varrho_k(t)\}$, and $\{\varrho^n_k (t)\}$.
\begin{itemize}
  \item [$(\bf{A}_3)$] Assume that $\{\varsigma_k(t)\}$, $\{\varrho_k(t)\}$ and their derivatives are uniformly bounded by
\[|\varsigma_k(t)| \leq \mu_k,~ |\varrho_k(t)| \leq \widetilde{\mu}_k, ~|\varsigma^\prime_k(t)|\leq \gamma_k,~ |\varrho^\prime_k(t)|\leq \widetilde{\gamma}_k \quad \forall
t \in [0, T],\]
and that the coefficients $\{\varsigma^n_k (t)\}$ and $\{\varrho^n_k (t)\}$ are constructed such that
\[|\varsigma_k(t)- \varsigma^n_k (t)| \leq \eta^n_k , ~|\varrho_k(t)- \varrho^n_k (t)| \leq \widetilde{\eta}^n_k ,~|\varsigma^n_k (t)|\leq \mu^n_k ,~ |\varrho^n_k (t)|\leq \widetilde{\mu}^n_k,\] \[|(\varsigma^{n}_k)^\prime(t)| \leq \gamma^n_k,
~|(\varrho^{n}_k)^\prime(t)| \leq \widetilde{\gamma}^n_k \quad \forall t \in [0, T]\]
with positive sequences $\{\eta^n_k\}$ and $\{\widetilde{\eta}^n_k\}$ being arbitrarily chosen, and $\{\mu^n_k\}$, $\{\widetilde{\mu}^n_k\}$, $\{\gamma^n_k\}$, and
$\{\widetilde{\gamma}^n_k\}$ being related to $\{\eta^n_k \mu_k\}$, $\{\widetilde{\eta}^n_k \widetilde{\mu}_k\}$, $\{\gamma_k\}$, and $\{\widetilde{\gamma}_k\}$.
The series $(\{\mu_k\}$, $\{\widetilde{\mu}_k\}$, $\{\gamma_k\}$, $\{\widetilde{\gamma}_k\}$, $\{\gamma^n_k\}$, $\{\widetilde{\gamma}^n_k\}$, $\{\mu^n_k\}$, and $\{\widetilde{\mu}^n_k\})$ are rapidly decaying with the increase of $k$, and the series $(\{\eta^n_k\}$, $\{\widetilde{\eta}^n_k\})$ is required
to rapidly decay to ensure the convergence of the series in Theorem \ref{thm2.1}.
\end{itemize}
\begin{theorem}\label{thm2.1} Assume that the functions $f$, $g$ and $h$ satisfy $(\textbf{A}_1)$ for some $\gamma\geq 0$, and that $(\textbf{A}_3)$ holds, $\frac{3}{2}<\alpha<2$, $\frac{1}{2}<\beta\leq 1$, $\nu>0$ and $\frac{1}{2}<H<1$.
Let $u_n$ and $u$ be the solutions of \eqref{eq2.15} and \eqref{eq0.1}, respectively, where the $\mathcal{F}_0$-adapted random initial values satisfy $a\in L^2(\Omega;\mathbb{H}^{2\widetilde{\gamma}}(\mathcal{D}))$,  $b\in L^2(\Omega;\mathbb{H}^{2\widetilde{\gamma}-\frac{2\beta}{\alpha}}(\mathcal{D}))$ with $\widetilde{\gamma}=\max(\gamma,\frac{\beta}{\alpha})$. Then for some constant $C>0$ independent of $\tau$,
\begin{align*}
\mathbb{E}\|u(t)-u_n(t)\|_{\mathbb{H}^{2\widetilde{\gamma}-\frac{2\beta}{\alpha}}}^2&\leq C\tau^2+C\sum_{l=1}^{\infty}(\eta_l^n)^2+C\sum_{l=1}^{\infty}(\widetilde{\eta}_l^n)^2, \quad t>0,
\end{align*}
provided that the infinite series are all convergent.
\end{theorem}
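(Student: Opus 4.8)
The plan is to subtract the two mild-solution representations \eqref{eq2.1} and \eqref{eq2.15}. Since $u$ and $u_n$ share the same $\mathcal{F}_0$-measurable initial data $a,b$, the deterministic propagator contributions built from $\mathcal{T}^\nu_{\alpha,\beta}$ and $\mathcal{R}^\nu_{\alpha,\beta}$ cancel exactly, leaving $u(t)-u_n(t)=\mathcal{F}+\mathcal{G}+\mathcal{H}$, where $\mathcal{F},\mathcal{G},\mathcal{H}$ are the $\mathcal{S}^\nu_{\alpha,\beta}$-convolutions against the $f$-, $g$-, and $h$-contributions. I would measure everything in the norm $\mathbb{H}^{2\widetilde{\gamma}-\frac{2\beta}{\alpha}}$ prescribed by the statement; the key structural point is that this is precisely the norm in which $(\textbf{A}_1)$ makes $f,g,h$ Lipschitz (the $L^2$ case when $\gamma\le\beta/\alpha$, and the $(-\Delta)^{\gamma-\beta/\alpha}$ case when $\gamma>\beta/\alpha$, where $\widetilde{\gamma}=\gamma$). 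Hence no regularity has to be gained from $\mathcal{S}^\nu_{\alpha,\beta}$, the bound $|E_{\alpha,\alpha}(-\lambda_k^\beta t^\alpha)|\le C(1+\lambda_k^\beta t^\alpha)^{-1}$ from Lemma \ref{le2.1} suffices, and only the integrable kernel $(t-s)^{2\alpha-2}$ appears.

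For $\mathcal{G}$ and $\mathcal{H}$ I would insert and subtract, writing
$$g(s,u)\,d\mathbb{W}-g(s,u_n)\,d\mathbb{W}_n=[g(s,u)-g(s,u_n)]\,d\mathbb{W}+g(s,u_n)\,[d\mathbb{W}-d\mathbb{W}_n],$$
and similarly for $h$. The first summand is a Lipschitz part: using the It\^o isometry of Proposition \ref{pro1} (respectively Lemma \ref{le2.10} for the fractional Gaussian noise), the orthonormality of $\{\varphi_k\}$, and $(\textbf{A}_1)$–$(\textbf{A}_2)$, exactly as in \eqref{2.3}–\eqref{2.4}, the Lipschitz parts together with $\mathcal{F}$ are bounded by $C\int_0^t(t-s)^{2\alpha-2}\,\mathbb{E}\|u(s)-u_n(s)\|_{\mathbb{H}^{2\widetilde{\gamma}-\frac{2\beta}{\alpha}}}^2\,ds$. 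This installs the Gr\"onwall structure with the target norm on both sides.

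The heart of the argument is the modeling part $g(s,u_n)[d\mathbb{W}-d\mathbb{W}_n]$ (and its $h$-analogue), which I would split mode by mode into a coefficient error and a temporal-discretization error via $\varsigma_k\,d\xi_k-\varsigma_k^n\tfrac{1}{\sqrt{\tau}}\xi_{ki}\chi_i\,ds=(\varsigma_k-\varsigma_k^n)\,d\xi_k+\varsigma_k^n\bigl(d\xi_k-\tfrac{1}{\sqrt{\tau}}\xi_{ki}\chi_i\,ds\bigr)$. The coefficient error is controlled by $|\varsigma_k-\varsigma_k^n|\le\eta_k^n$ from $(\textbf{A}_3)$, the uniform bound $\sup_{0\le t\le T}\mathbb{E}\|u_n(t)\|_{\mathbb{H}^{2\widetilde{\gamma}}}^2\le C$ from Theorem \ref{thm4.2}, and the growth half of $(\textbf{A}_1)$; it yields the $C\sum_l(\eta_l^n)^2$ term (and $C\sum_l(\widetilde{\eta}_l^n)^2$ for $h$). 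The temporal part compares, on each subinterval $[t_i,t_{i+1}]$, the true stochastic integral with its piecewise-constant average $\tfrac{1}{\tau}(\xi_k(t_{i+1})-\xi_k(t_i))\int_{t_i}^{t_{i+1}}(\cdot)\,ds$; this difference vanishes when the integrand is constant in time, so it is governed by the temporal oscillation of the integrand. Estimating that oscillation by the smoothness of the Mittag-Leffler kernel and the mean-square Lipschitz-in-time bound $\mathbb{E}\|u_n(\theta_2)-u_n(\theta_1)\|_{\mathbb{H}^{2\widetilde{\gamma}-\frac{2\beta}{\alpha}}}^2\le C|\theta_2-\theta_1|^2$ from Theorem \ref{thm4.2} gives an $O(\tau)$ per-interval amplitude, hence an $O(\tau^3)$ mean-square contribution per interval, which after summation over the $N=T/\tau$ intervals produces the $C\tau^2$ term.

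I expect the fractional-Gaussian-noise modeling term to be the main obstacle: since $\xi^H$ is neither a semimartingale nor possesses independent increments, the It\^o isometry is unavailable and one must instead use the covariance identity of Proposition \ref{pro1} and the increment bound of Lemma \ref{le2.10}, carefully tracking the double time integral against $|s-r|^{2H-2}$ so that the oscillation estimate still closes at order $\tau^2$. Collecting all pieces gives
$$\mathbb{E}\|u(t)-u_n(t)\|_{\mathbb{H}^{2\widetilde{\gamma}-\frac{2\beta}{\alpha}}}^2\le C\Bigl(\tau^2+\sum_{l=1}^\infty(\eta_l^n)^2+\sum_{l=1}^\infty(\widetilde{\eta}_l^n)^2\Bigr)+C\int_0^t(t-s)^{2\alpha-2}\mathbb{E}\|u(s)-u_n(s)\|_{\mathbb{H}^{2\widetilde{\gamma}-\frac{2\beta}{\alpha}}}^2\,ds,$$
and since $2\alpha-2>-1$ the singular Gr\"onwall inequality of Lemma \ref{lem2.11}, with nondecreasing (indeed constant) forcing term, absorbs the integral and delivers the stated estimate, the convergence of the $\eta$- and $\widetilde{\eta}$-series being exactly the hypothesis secured by $(\textbf{A}_3)$.
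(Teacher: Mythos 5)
Your proposal is correct and follows essentially the same route as the paper's proof in Appendix C: cancellation of the initial-data terms, decomposition of the difference into Lipschitz parts (handled in the norm $\mathbb{H}^{2\widetilde{\gamma}-\frac{2\beta}{\alpha}}$ where $(\textbf{A}_1)$ applies, with the nonsingular kernel $(t-s)^{2\alpha-2}$), a coefficient-error part controlled by $\{\eta_l^n\},\{\widetilde{\eta}_l^n\}$, and a temporal-averaging part controlled by the kernel's smoothness together with the H\"older-in-time and uniform bounds of Theorem \ref{thm4.2}, all closed by the singular Gr\"onwall inequality of Lemma \ref{lem2.11}. The paper organizes the modeling error into three pieces per noise ($\mathscr{A}_3$--$\mathscr{A}_5$ and $\mathscr{A}_7$--$\mathscr{A}_9$) rather than your two, but the estimates involved are the same.
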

The detailed proof of Theorem \ref{thm2.1} is given in the \ref{B.2}.
%=========================================================================================
%=========================================================================================
\section{Galerkin finite element approximation}\label{sec:5}
~~~~In this section, we provide the Galerkin FEM scheme and derive the corresponding
 error estimates.
%--------------------------------------------------------------------------------------
\subsection{Spatially Galerkin FEM and its properties}\label{sec:5.1}
Let $\mathcal{T}_{\bar{h}}$ be a shape regular and quasi-uniform triangulation of the convex polygonal  domain
$\mathcal{D}$. Let $\mathcal{S}_{\bar{h}}\subset \mathbb{H}^\beta(\mathcal{D})$ be the space of  piecewise polynomial functions with respect to $\tau_{\bar{h}}$, which are zero on the boundary of $\mathcal{D}$.

On the space $\mathcal{S}_{\bar{h}}$ we define the orthogonal $L^2$-projection $P_{\bar{h}}: \mathbb{H}^{0}(\mathcal{D}) \rightarrow \mathcal{S}_{\bar{h}}$ and the generalized Ritz projection $R_{\bar{h}} : \mathbb{H}^{\beta}(\mathcal{D}) \rightarrow \mathcal{S}_{\bar{h}}$, respectively, by
\begin{equation*}
(P_{\bar{h}}\psi,\chi)=(\psi,\chi)\quad\forall \chi\in \mathcal{S}_{\bar{h}},
\end{equation*}
\begin{equation*}
\left((-\Delta)^{\frac{\beta}{2}} R_{\bar{h}}\psi, (-\Delta)^{\frac{\beta}{2}}\chi\right)=\left((-\Delta)^{\frac{\beta}{2}}\psi,(-\Delta)^{\frac
{\beta}{2}}\chi\right) \quad \forall~ \chi \in \mathcal{S}_{\bar{h}}.
\end{equation*}
The projection $R_{\bar{h}}$ of $\psi$ is unique, since $\psi\in \mathbb{H}^{\beta}(\mathcal{D})$ and it equals to zero on the boundary.

In the next lemma, we establish the error estimates for $P_{\bar{h}}\psi$ and $R_{\bar{h}}\psi$; see  \cite{Li:16} for details.
\begin{lemma}\label{le3.0} The operators $P_{\bar{h}}$ and $R_{\bar{h}}$ satisfy
\begin{equation}\label{3.1}
\|P_{\bar{h}}\psi-\psi\|+{\bar{h}}^{\beta}\|(-\Delta)^{\frac{\beta}{2}}(P_{\bar{h}}\psi-\psi)\| \leq C{\bar{h}}^q\|\psi\|_{\mathbb{H}^q} \quad {\rm for}\quad \psi\in \mathbb{H}^q,~ q \in [\beta, r],
\end{equation}
and
\begin{equation}\label{3.2}
\|R_{\bar{h}}\psi- \psi\|+{\bar{h}}^{\beta}\|(-\Delta)^{\frac{\beta}{2}}(R_{\bar{h}}\psi-\psi)\| \leq C{\bar{h}}^q\|\psi\|_{\mathbb{H}^q} \quad{\rm for}\quad \psi\in \mathbb{H}^q,~ q \in [\beta, r].
\end{equation}
\end{lemma}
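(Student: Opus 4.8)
The plan is to reduce both estimates to the single approximation property of the finite element space $\mathcal{S}_{\bar{h}}$: for every $\psi\in\mathbb{H}^q$ with $q\in[\beta,r]$ there is a $\chi\in\mathcal{S}_{\bar{h}}$ with $\|\psi-\chi\|+\bar{h}^\beta\|(-\Delta)^{\frac{\beta}{2}}(\psi-\chi)\|\leq C\bar{h}^q\|\psi\|_{\mathbb{H}^q}$. This is the standard interpolation-error bound for piecewise polynomials on a shape-regular quasi-uniform mesh; because the $\mathbb{H}^s$-norms here are defined spectrally, I would first identify them, up to equivalence, with the usual fractional Sobolev norms on the convex polygonal domain and then invoke the classical Bramble--Hilbert/interpolation estimates. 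Once this is available, the two projections are handled through their respective variational characterizations.

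For the Ritz projection I would use Galerkin orthogonality. Since $a(u,v):=((-\Delta)^{\frac{\beta}{2}}u,(-\Delta)^{\frac{\beta}{2}}v)$ satisfies $a(u,u)=\|u\|_{\mathbb{H}^\beta}^2$, the relation $a(R_{\bar{h}}\psi-\psi,\chi)=0$ for all $\chi\in\mathcal{S}_{\bar{h}}$ gives $\|(-\Delta)^{\frac{\beta}{2}}(R_{\bar{h}}\psi-\psi)\|\leq\|(-\Delta)^{\frac{\beta}{2}}(\chi-\psi)\|$ for every such $\chi$; taking the infimum and inserting the approximation property yields the energy-norm bound $C\bar{h}^{q-\beta}\|\psi\|_{\mathbb{H}^q}$. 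For the $L^2$-part I would run an Aubin--Nitsche duality argument: set $e=R_{\bar{h}}\psi-\psi$ and let $w$ solve $a(w,v)=(e,v)$ for all $v\in\mathbb{H}^\beta$, i.e. $w=(-\Delta)^{-\beta}e$, which satisfies $\|w\|_{\mathbb{H}^{2\beta}}=\|e\|$ directly from the spectral definition, so that no extra elliptic regularity is needed. Then $\|e\|^2=a(w,e)=a(w-\chi,e)\leq\|(-\Delta)^{\frac{\beta}{2}}(w-\chi)\|\,\|(-\Delta)^{\frac{\beta}{2}}e\|$, and choosing $\chi$ to be the best approximant of $w$ bounds the first factor by $C\bar{h}^\beta\|w\|_{\mathbb{H}^{2\beta}}=C\bar{h}^\beta\|e\|$; dividing by $\|e\|$ and combining with the energy estimate gives $\|e\|\leq C\bar{h}^q\|\psi\|_{\mathbb{H}^q}$.

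For the $L^2$-projection the $L^2$-estimate is immediate from the best-approximation property $\|P_{\bar{h}}\psi-\psi\|=\inf_{\chi\in\mathcal{S}_{\bar{h}}}\|\chi-\psi\|\leq C\bar{h}^q\|\psi\|_{\mathbb{H}^q}$. For the energy part I would exploit the $\mathbb{H}^\beta$-stability $\|(-\Delta)^{\frac{\beta}{2}}P_{\bar{h}}\psi\|\leq C\|(-\Delta)^{\frac{\beta}{2}}\psi\|$ of the $L^2$-projection on quasi-uniform meshes: writing $P_{\bar{h}}\psi-\psi=(P_{\bar{h}}(\psi-\chi)-(\psi-\chi))+(P_{\bar{h}}\chi-\chi)$ with $\chi\in\mathcal{S}_{\bar{h}}$ so that $P_{\bar{h}}\chi=\chi$, stability and the approximation property give $\|(-\Delta)^{\frac{\beta}{2}}(P_{\bar{h}}\psi-\psi)\|\leq C\inf_{\chi}\|(-\Delta)^{\frac{\beta}{2}}(\chi-\psi)\|\leq C\bar{h}^{q-\beta}\|\psi\|_{\mathbb{H}^q}$, and multiplying by $\bar{h}^\beta$ closes the estimate. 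The main obstacle is the fractional order: both the interpolation-error bound and the $\mathbb{H}^\beta$-stability of $P_{\bar{h}}$ are classical for integer-order norms but must be transferred to the spectrally defined, non-integer space $\mathbb{H}^\beta$. I would resolve this by an inverse inequality together with interpolation between $\mathbb{H}^0$ and $\mathbb{H}^1$, which is precisely the technical point carried out in \cite{Li:16}.
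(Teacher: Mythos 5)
Your argument is sound and is essentially the standard route: the paper itself gives no proof of this lemma, deferring entirely to \cite{Li:16}, and what you outline (best approximation plus Galerkin orthogonality and an Aubin--Nitsche duality for $R_{\bar{h}}$, where $w=(-\Delta)^{-\beta}e$ gives $\|w\|_{\mathbb{H}^{2\beta}}=\|e\|$ with no extra regularity needed; best approximation plus $\mathbb{H}^{\beta}$-stability, or equivalently the inverse inequality, for $P_{\bar{h}}$) is exactly the argument carried out there. You also correctly identify the only genuinely delicate point, namely transferring the interpolation-error bound and the projection stability from integer-order to the spectrally defined fractional space $\mathbb{H}^{\beta}$ by space interpolation, which is the technical content of the cited reference.
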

\begin{remark}
The number $r$ refers to the order of accuracy of the family $\{S_{\bar{h}}\}$. In the case $r=2$, $S_{\bar{h}}$ is a piecewise linear finite element subspace. For the case $r>2$, $S_{\bar{h}}$ often consists of piecewise polynomials of degree at most $r-1$ on a triangulation $\tau_{\bar{h}}$. For instance, $r=4$ in the case of piecewise cubic polynomial subspaces.
\end{remark}

The discrete fractional Laplacian $(-\Delta_{\bar{h}})^\beta : \mathcal{S}_{\bar{h}} \rightarrow \mathcal{S}_{\bar{h}}$ is then defined by
\begin{equation}\label{eq3.0}
((-\Delta_{\bar{h}})^\beta \psi, \chi) = ((-\Delta)^\frac{\beta}{2}\psi, (-\Delta)^\frac{\beta}{2} \chi) \quad \forall \psi, \chi \in \mathcal{S}_{\bar{h}},
\end{equation}
and thus we can write the spatial FEM approximation of \eqref{eq2.14} as
\begin{equation}\label{eq3.1}\begin{split}
^c_0\partial_t^{\alpha,\nu} u^{\bar{h}}_n(t,x)+(-\Delta_{\bar{h}})^\beta u^{\bar{h}}_n(t,x)=&P_{\bar{h}}\bigg(f(t,u_n^{\bar{h}}(t,x))+
g(t,u_n^{\bar{h}}(t,x))\frac{\partial^2 \mathbb{W}_n(t,x)}{\partial t \partial x}\\
&+h(t,u_n^{\bar{h}}(t,x))\frac{\partial^2 \mathbb{W}_n^H(t,x)}{\partial t \partial x}\bigg), \quad 0<t\leq T,
\end{split}\end{equation}
with $u^{\bar{h}}_n(0)=a_{\bar{h}}$ and $\partial_t{u^{\bar{h}}_n}(0)=b_{\bar{h}},$
where $a_{\bar{h}}=P_{\bar{h}} a$, $b_{\bar{h}}=P_{\bar{h}} b$.

Now we give a representation of the solution of \eqref{eq3.1} using the eigenvalues and eigenfunctions $\{\lambda_k^{{\bar{h}},\beta}\}_{k=1}^{M}$ and $\{\varphi_k^{\bar{h}}\}_{k=1}^{M}$ of the discrete fractional Laplacian $(-\Delta_{\bar{h}})^\beta$. Since we know that the operator $(-\Delta_{\bar{h}})^\beta$ is symmetrical, $\{\varphi_k^{\bar{h}}\}_{k=1}^{M}$ is orthogonal. Take $\{\varphi_k^{\bar{h}}\}_{k=1}^{M}$ as the orthonormal bases in $\mathcal{S}_{\bar{h}}$ and define the discrete analogues of \eqref{eq2.2'}-\eqref{eq2.2} by
\begin{equation}\label{eq3.2}
\widehat{\mathcal{T}}_{\alpha,\beta}^{\nu}(t,x,y)= e^{-\nu t}\sum\limits_{k=1}
^{M}\left(E_{\alpha,1}\left(-\lambda_k^{{\bar{h}},\beta}t^{\alpha}\right)+\nu tE_{\alpha,2}
\left(-\lambda_k^{{\bar{h}},\beta}t^{\alpha}\right)\right)\varphi_k^{\bar{h}}(x)\varphi_k^{\bar{h}}(y),
\end{equation}
\begin{equation}\label{eq3.6}
\widehat{\mathcal{R}}_{\alpha,\beta}^{\nu}(t,x,y)= te^{-\nu t}\sum\limits_{k=1}^{M}
E_{\alpha,2}\left(-\lambda_k^{{\bar{h}},\beta} t^\alpha\right)\varphi_k^{\bar{h}}(x)\varphi_k^{\bar{h}}(y),
\end{equation}
and
\begin{equation}\label{eq3.3}
\widehat{\mathcal{S}}_{\alpha,\beta}^{\nu}(t,x,y)= t^{\alpha-1}e^{-\nu t}
\sum\limits_{k=1}^{M}E_{\alpha,\alpha}\left(-\lambda_k^{{\bar{h}},\beta} t^\alpha\right)
\varphi_k^{\bar{h}}(x)\varphi_k^{\bar{h}}(y).
\end{equation}
Then the solution $u^{\bar{h}}_n$ of the discrete problem \eqref{eq3.1} can be expressed by
\begin{equation}\label{eq3.4}
\begin{split}
\displaystyle u_n^{\bar{h}}(t, x) &=\int_{\mathcal{D}}\widehat{\mathcal{T}}^\nu_{\alpha,\beta}(t,x,y)a_{\bar{h}}(y)dy
+\int_{\mathcal{D}}\widehat{\mathcal{R}}^\nu_{\alpha,\beta}(t,x,y)b_{\bar{h}}(y)dy\\
&\relphantom{=}{}+\int_{0}^t\int_{\mathcal{D}}\widehat{\mathcal{S}}^\nu_{\alpha,\beta}(t-s,x,y)P_{\bar{h}}
f(s,u_n^{\bar{h}}(s,y))dyds\\
&\relphantom{=}{}+\int_{0}^t\int_{\mathcal{D}}\widehat{\mathcal{S}}^\nu_{\alpha,\beta}(t-s,x,y)
P_{\bar{h}}g(s,u_n^{\bar{h}}(s,y))d{\mathbb{W}}(s,y)\\
&\relphantom{=}{}+\int_{0}^t\int_{\mathcal{D}}\widehat{\mathcal{S}}^\nu_{\alpha,\beta}(t-s,x,y)
P_{\bar{h}}h(s,u_n^{\bar{h}}(s,y))d{\mathbb{W}}^H(s,y).
\end{split}
\end{equation}

Also, on the finite element space $\mathcal{S}_{\bar{h}}$, we introduce the discrete norm $|\!|\!|\cdot|\!|\!|_{p}$ for any $p\in \mathbb{R}$ defined by
\begin{equation}\label{eq3.7}
|\!|\!|\psi|\!|\!|_{p}^2=\sum\limits^M_{k=1}(\lambda_k^{\bar{h},\beta})^{\frac{p}{\beta}}
(\psi,\varphi^{\bar{h}}_k)^2, \quad \psi\in \mathcal{S}_{\bar{h}}.
\end{equation}
It is clear that the norm $|\!|\!|\cdot|\!|\!|_{p}$ is well defined for all real
$p$. From the definition of the discrete fractional Laplacian$(-\Delta_{\bar{h}})^{\beta}$
we have $|\!|\!|\psi|\!|\!|_{p}=\|\psi\|_{\mathbb{H}^p}$ for $p=0, \beta$ and for all $\psi\in \mathcal{S}_{\bar{h}}$. Therefore there is no confusion in using $\|\psi\|_{\mathbb{H}^p}$ instead of $|\!|\!|\psi|\!|\!|_{p}$ for $p=0, \beta$ and for all $\psi\in \mathcal{S}_{\bar{h}}$.
Further, we need the following inverse inequality; see \cite[Lemma 3.2]{Li:16} for details.
\begin{lemma}\label{le5.2} For any $l >s$, there exists a constant $C$ independent of $\bar{h}$ such that
\begin{equation}\label{eq3.5}
\displaystyle |\!|\!|\chi|\!|\!|_{l}\leq C\bar{h}^{s-l}|\!|\!|\chi|\!|\!|_{s} \quad \forall
\chi\in \mathcal{S}_{\bar{h}}.
\end{equation}
\end{lemma}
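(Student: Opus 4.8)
The plan is to exploit the spectral definition \eqref{eq3.7} of the discrete norms and reduce the whole statement to a single mesh-dependent bound on the largest eigenvalue $\lambda_M^{\bar{h},\beta}$ of the discrete fractional Laplacian $(-\Delta_{\bar{h}})^\beta$. First I would expand an arbitrary $\chi\in\mathcal{S}_{\bar{h}}$ in the orthonormal eigenbasis $\{\varphi_k^{\bar{h}}\}_{k=1}^M$ as $\chi=\sum_{k=1}^M c_k\varphi_k^{\bar{h}}$ with $c_k=(\chi,\varphi_k^{\bar{h}})$, so that by \eqref{eq3.7} we have $|\!|\!|\chi|\!|\!|_{l}^2=\sum_{k=1}^M(\lambda_k^{\bar{h},\beta})^{l/\beta}c_k^2$ and $|\!|\!|\chi|\!|\!|_{s}^2=\sum_{k=1}^M(\lambda_k^{\bar{h},\beta})^{s/\beta}c_k^2$. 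Ordering the eigenvalues as $0<\lambda_1^{\bar{h},\beta}\leq\cdots\leq\lambda_M^{\bar{h},\beta}$ and using that $l>s$ forces $(l-s)/\beta>0$, the factor $(\lambda_k^{\bar{h},\beta})^{(l-s)/\beta}$ is increasing in $k$ and hence maximal at $k=M$; a term-by-term estimate then yields $|\!|\!|\chi|\!|\!|_{l}^2\leq(\lambda_M^{\bar{h},\beta})^{(l-s)/\beta}\,|\!|\!|\chi|\!|\!|_{s}^2$.

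It then remains to show $\lambda_M^{\bar{h},\beta}\leq C\bar{h}^{-2\beta}$ with $C$ independent of $\bar{h}$. By the definition \eqref{eq3.0} of $(-\Delta_{\bar{h}})^\beta$, the top eigenvalue has the Rayleigh-quotient characterization $\lambda_M^{\bar{h},\beta}=\max_{0\neq\chi\in\mathcal{S}_{\bar{h}}}\|\chi\|_{\mathbb{H}^\beta}^2/\|\chi\|^2$, where I use the identity $|\!|\!|\chi|\!|\!|_{\beta}=\|\chi\|_{\mathbb{H}^\beta}$ noted after \eqref{eq3.7}. Thus the bound is equivalent to the base-case inverse inequality $\|\chi\|_{\mathbb{H}^\beta}\leq C\bar{h}^{-\beta}\|\chi\|$ for all $\chi\in\mathcal{S}_{\bar{h}}$. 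For $\beta=1$ this is the classical finite element inverse inequality $\|\nabla\chi\|\leq C\bar{h}^{-1}\|\chi\|$, which follows from the shape regularity and quasi-uniformity of $\mathcal{T}_{\bar{h}}$ by the standard scaling-to-reference-element argument. For $0<\beta<1$ I would interpolate: the spectral interpolation inequality $\|\chi\|_{\mathbb{H}^\beta}\leq\|\chi\|_{\mathbb{H}^1}^\beta\|\chi\|^{1-\beta}$, obtained by applying H\"older's inequality with exponents $1/\beta$ and $1/(1-\beta)$ to $\sum_k\lambda_k^\beta(\chi,\varphi_k)^2$ and valid because $\mathcal{S}_{\bar{h}}\subset H_0^1=\mathbb{H}^1$, combines with the $\beta=1$ case to give $\|\chi\|_{\mathbb{H}^\beta}\leq(C\bar{h}^{-1}\|\chi\|)^\beta\|\chi\|^{1-\beta}=C^\beta\bar{h}^{-\beta}\|\chi\|$.

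Combining the two steps, $(\lambda_M^{\bar{h},\beta})^{(l-s)/\beta}\leq(C\bar{h}^{-2\beta})^{(l-s)/\beta}=C^{(l-s)/\beta}\bar{h}^{-2(l-s)}$, so that $|\!|\!|\chi|\!|\!|_{l}\leq C\bar{h}^{-(l-s)}|\!|\!|\chi|\!|\!|_{s}=C\bar{h}^{s-l}|\!|\!|\chi|\!|\!|_{s}$, which is exactly the claimed estimate. I expect the main obstacle to be the base-case eigenvalue bound, i.e.\ verifying that the spectrum of the discrete fractional Laplacian on a quasi-uniform mesh scales like $\bar{h}^{-2\beta}$; once that mesh-dependent constant is secured, passing to arbitrary exponents $l>s$ is only an elementary monotonicity argument in the eigenvalue index. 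The fractional range $0<\beta<1$ is the only place where genuinely fractional subtleties enter, and it is dispatched cleanly by the spectral interpolation inequality rather than by any further regularity analysis.
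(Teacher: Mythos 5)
Your proof is correct, and it follows essentially the standard route that the paper delegates to \cite{Li:16}: expand $\chi$ in the discrete eigenbasis, reduce the estimate to the bound $\lambda_M^{\bar h,\beta}\leq C\bar h^{-2\beta}$ on the largest discrete eigenvalue via its Rayleigh-quotient characterization, and obtain that bound from the classical inverse inequality combined with spectral interpolation for $0<\beta<1$. No gaps; all the ingredients you invoke (orthonormality of $\{\varphi_k^{\bar h}\}$, the identity $|\!|\!|\chi|\!|\!|_\beta=\|\chi\|_{\mathbb{H}^\beta}$, and quasi-uniformity of $\mathcal{T}_{\bar h}$) are available in the paper's setting.
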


As the discrete analogues of Lemma \ref{le2.2}, we have following estimates.
\begin{lemma}\label{le3.1}
Let $\widehat{\mathcal{T}}_{\alpha,\beta}^{\nu}(t,x,y)$ be defined by \eqref{eq3.2} and $a_{\bar{h}} \in \mathcal{S}_{\bar{h}}$. Then, for all $t>0$,
\begin{equation*}
\displaystyle \bigg|\!\bigg|\!\bigg|\int_{\mathcal{D}}\widehat{\mathcal{T}}_{\alpha,\beta}^{\nu}(t,x,y)
a_{\bar{h}}(y)dy\bigg|\!\bigg|\!\bigg|_{p} \leq \left\{\begin{array}{l}
C (1+\nu t)e^{-\nu t}t^{\frac{\alpha (q-p)}{2\beta}}|\!|\!|a_{\bar{h}}|\!|\!|_{q},\quad 0\leq q \leq p \leq 2\beta,\\
\\
C(1+\nu t)e^{-\nu t}t^{-\alpha}|\!|\!|a_{\bar{h}}|\!|\!|_{q}, \quad\quad\quad~~ q>p.
\end{array}\right.
\end{equation*}
\end{lemma}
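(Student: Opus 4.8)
The plan is to transcribe the proof of Lemma~\ref{le2.2} into the discrete setting, replacing the continuous Dirichlet eigenpairs $\{(\lambda_k,\varphi_k)\}$ by the discrete pairs $\{(\lambda_k^{\bar{h},\beta},\varphi_k^{\bar{h}})\}_{k=1}^{M}$ and the $\mathbb{H}^p$-norm by the discrete norm $|\!|\!|\cdot|\!|\!|_p$. First I would substitute the definition \eqref{eq3.2} of $\widehat{\mathcal{T}}_{\alpha,\beta}^{\nu}$ into the integral and use the orthonormality of $\{\varphi_k^{\bar{h}}\}_{k=1}^{M}$ in $\mathcal{S}_{\bar{h}}$ together with $\int_{\mathcal{D}}\varphi_k^{\bar{h}}(y)a_{\bar{h}}(y)\,dy=(a_{\bar{h}},\varphi_k^{\bar{h}})$ to collapse the double sum, obtaining
\[
\int_{\mathcal{D}}\widehat{\mathcal{T}}_{\alpha,\beta}^{\nu}(t,x,y)a_{\bar{h}}(y)\,dy
=e^{-\nu t}\sum_{k=1}^{M}\Big(E_{\alpha,1}(-\lambda_k^{\bar{h},\beta}t^\alpha)+\nu tE_{\alpha,2}(-\lambda_k^{\bar{h},\beta}t^\alpha)\Big)(a_{\bar{h}},\varphi_k^{\bar{h}})\varphi_k^{\bar{h}}(x).
\]
Then the definition \eqref{eq3.7} of the discrete norm gives directly
\[
\bigg|\!\bigg|\!\bigg|\int_{\mathcal{D}}\widehat{\mathcal{T}}_{\alpha,\beta}^{\nu}(t,\cdot,y)a_{\bar{h}}(y)\,dy\bigg|\!\bigg|\!\bigg|_{p}^{2}
=e^{-2\nu t}\sum_{k=1}^{M}(\lambda_k^{\bar{h},\beta})^{\frac{p}{\beta}}\Big(E_{\alpha,1}(-\lambda_k^{\bar{h},\beta}t^\alpha)+\nu tE_{\alpha,2}(-\lambda_k^{\bar{h},\beta}t^\alpha)\Big)^2(a_{\bar{h}},\varphi_k^{\bar{h}})^2 .
\]

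Next, since each argument $-\lambda_k^{\bar{h},\beta}t^\alpha$ is a negative real number (so $|\arg(\cdot)|=\pi$ lies in the admissible range of Lemma~\ref{le2.1}), the bound \eqref{eq.2.3} yields $|E_{\alpha,1}(-\lambda_k^{\bar{h},\beta}t^\alpha)|$, $|E_{\alpha,2}(-\lambda_k^{\bar{h},\beta}t^\alpha)|\le C/(1+\lambda_k^{\bar{h},\beta}t^\alpha)$, so the squared bracket is dominated by $C(1+\nu t)^2(1+\lambda_k^{\bar{h},\beta}t^\alpha)^{-2}$. In the regime $0\le q\le p\le 2\beta$ I would factor
\[
\frac{(\lambda_k^{\bar{h},\beta})^{\frac{p}{\beta}}}{(1+\lambda_k^{\bar{h},\beta}t^\alpha)^2}
=t^{-\frac{\alpha(p-q)}{\beta}}\,\frac{(\lambda_k^{\bar{h},\beta}t^\alpha)^{\frac{p-q}{\beta}}}{(1+\lambda_k^{\bar{h},\beta}t^\alpha)^2}\,(\lambda_k^{\bar{h},\beta})^{\frac{q}{\beta}},
\]
and apply the elementary inequality $w^{s}(1+w)^{-2}\le C$ for $0\le s\le 2$ (valid here with $s=(p-q)/\beta\in[0,2]$) to pull out $t^{-\alpha(p-q)/\beta}$ and the factor $|\!|\!|a_{\bar{h}}|\!|\!|_q^2=\sum_k(\lambda_k^{\bar{h},\beta})^{q/\beta}(a_{\bar{h}},\varphi_k^{\bar{h}})^2$. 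Taking square roots and using $1+\nu^2t^2\le(1+\nu t)^2$ produces the first estimate. For the case $q>p$ I would instead write
\[
\frac{(\lambda_k^{\bar{h},\beta})^{\frac{p}{\beta}}}{(1+\lambda_k^{\bar{h},\beta}t^\alpha)^2}
=t^{-2\alpha}\,\frac{(\lambda_k^{\bar{h},\beta}t^\alpha)^{2}}{(1+\lambda_k^{\bar{h},\beta}t^\alpha)^2}\,(\lambda_k^{\bar{h},\beta})^{-\frac{q-p}{\beta}-2}\,(\lambda_k^{\bar{h},\beta})^{\frac{q}{\beta}},
\]
bound the middle quotient by $C$, and extract $t^{-2\alpha}$ and $|\!|\!|a_{\bar{h}}|\!|\!|_q^2$.

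The main obstacle is the negative power $(\lambda_k^{\bar{h},\beta})^{-(q-p)/\beta-2}$ that appears in the case $q>p$: I need it bounded by a constant that is independent of the mesh parameter $\bar{h}$. This reduces to showing a uniform positive lower bound on the smallest discrete eigenvalue. I would establish it from the Courant--Fischer min-max principle together with the definition \eqref{eq3.0}: because $\mathcal{S}_{\bar{h}}\subset\mathbb{H}^{\beta}(\mathcal{D})$, minimizing the Rayleigh quotient $\|(-\Delta)^{\beta/2}\psi\|^2/\|\psi\|^2$ over the smaller space $\mathcal{S}_{\bar{h}}$ can only increase its value, so $\lambda_1^{\bar{h},\beta}\ge\lambda_1^\beta>0$. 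Hence $(\lambda_k^{\bar{h},\beta})^{-(q-p)/\beta-2}\le(\lambda_1^\beta)^{-(q-p)/\beta-2}=:C$ uniformly in $\bar{h}$, which closes the argument and gives the second estimate after taking square roots.
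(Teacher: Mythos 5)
Your proposal is correct and follows essentially the same route the paper intends: the paper states Lemma~\ref{le3.1} without proof as the "discrete analogue" of Lemma~\ref{le2.2}, and your argument is exactly the transcription of that proof with $(\lambda_k,\varphi_k)$ replaced by $(\lambda_k^{\bar h,\beta},\varphi_k^{\bar h})$ and $\|\cdot\|_{\mathbb H^p}$ by $|\!|\!|\cdot|\!|\!|_p$. Your extra step establishing $\lambda_1^{\bar h,\beta}\ge\lambda_1^{\beta}>0$ via the min--max principle is a detail the paper leaves implicit but which is genuinely needed for the constant in the case $q>p$ to be independent of $\bar h$, so it is a welcome addition.
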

\begin{lemma}\label{le3.4}
Let $\widehat{\mathcal{R}}_{\alpha,\beta}^{\nu}(t,x,y)$ be defined by \eqref{eq3.6} and
$b_{\bar{h}} \in \mathcal{S}_{\bar{h}}$. Then, for all $t>0$,
\begin{equation*}
\displaystyle \bigg|\!\bigg|\!\bigg|\int_{\mathcal{D}}\widehat{\mathcal{R}}_{\alpha,\beta}^{\nu}(t,x,y)
b_{\bar{h}}(y)dy\bigg|\!\bigg|\!\bigg|_{p} \leq \left\{\begin{array}{l}
Ce^{-\nu t}t^{1-\frac{\alpha (p-q)}{2\beta}}|\!|\!|b_{\bar{h}}|\!|\!|_{q},
\quad 0 \leq q\leq p\leq 2\beta,\\ \\
Ce^{-\nu t}t^{1-\alpha}|\!|\!|b_{\bar{h}}|\!|\!|_{q}, \quad\quad\quad~~ q>p.
\end{array}\right.
\end{equation*}
\end{lemma}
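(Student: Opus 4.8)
The plan is to follow the proof of Lemma~\ref{le2.2} essentially verbatim, replacing the continuous eigenpairs $\{(\lambda_k,\varphi_k)\}$ by the discrete ones $\{(\lambda_k^{\bar h,\beta},\varphi_k^{\bar h})\}_{k=1}^M$ and the $\mathbb{H}^p$-norm by the discrete norm $|\!|\!|\cdot|\!|\!|_p$ from \eqref{eq3.7}. First I would set $v(\cdot)=\int_{\mathcal{D}}\widehat{\mathcal{R}}^\nu_{\alpha,\beta}(t,\cdot,y)b_{\bar h}(y)\,dy$ and use the definition \eqref{eq3.6} together with the orthonormality of $\{\varphi_k^{\bar h}\}_{k=1}^M$ in $\mathcal{S}_{\bar h}$ to read off the coefficients $(v,\varphi_k^{\bar h})=t e^{-\nu t}E_{\alpha,2}(-\lambda_k^{\bar h,\beta}t^\alpha)(b_{\bar h},\varphi_k^{\bar h})$. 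Substituting into \eqref{eq3.7} yields the exact identity
\[
|\!|\!|v|\!|\!|_p^2 = t^2 e^{-2\nu t}\sum_{k=1}^M (\lambda_k^{\bar h,\beta})^{p/\beta}\big|E_{\alpha,2}(-\lambda_k^{\bar h,\beta}t^\alpha)\big|^2 (b_{\bar h},\varphi_k^{\bar h})^2,
\]
which reduces the lemma to a termwise estimate on the Mittag-Leffler weights.

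The key analytic input is the decay bound \eqref{eq.2.3} of Lemma~\ref{le2.1}: since $-\lambda_k^{\bar h,\beta}t^\alpha$ is a negative real (so $|\arg|=\pi$), one has $|E_{\alpha,2}(-\lambda_k^{\bar h,\beta}t^\alpha)|\le C/(1+\lambda_k^{\bar h,\beta}t^\alpha)$. For the regime $0\le q\le p\le 2\beta$ I would split $(\lambda_k^{\bar h,\beta})^{p/\beta}=(\lambda_k^{\bar h,\beta})^{q/\beta}(\lambda_k^{\bar h,\beta})^{(p-q)/\beta}$, write $z_k=\lambda_k^{\bar h,\beta}t^\alpha$, and estimate
\[
(\lambda_k^{\bar h,\beta})^{(p-q)/\beta}|E_{\alpha,2}(-z_k)|^2 \le C\,t^{-\alpha(p-q)/\beta}\,\frac{z_k^{(p-q)/\beta}}{(1+z_k)^2}\le C\,t^{-\alpha(p-q)/\beta},
\]
the last inequality holding because $0\le(p-q)/\beta\le 2$ makes $z^{(p-q)/\beta}/(1+z)^2$ bounded on $[0,\infty)$; this is precisely the type of bound $\frac{(\lambda_k^\beta t^\alpha)^{(p-r)/\beta}}{(1+\lambda_k^\beta t^\alpha)^2}\le C$ exploited in the proof of Lemma~\ref{le2.2}. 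Summing against $\sum_k(\lambda_k^{\bar h,\beta})^{q/\beta}(b_{\bar h},\varphi_k^{\bar h})^2=|\!|\!|b_{\bar h}|\!|\!|_q^2$ and taking square roots gives the first bound $Ce^{-\nu t}t^{1-\alpha(p-q)/(2\beta)}|\!|\!|b_{\bar h}|\!|\!|_q$.

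For the regime $q>p$ I would instead use the cruder lower bound $(1+z_k)^2\ge z_k^2=(\lambda_k^{\bar h,\beta})^2 t^{2\alpha}$, which gives
\[
(\lambda_k^{\bar h,\beta})^{(p-q)/\beta}|E_{\alpha,2}(-z_k)|^2 \le \frac{C}{(\lambda_k^{\bar h,\beta})^{(q-p)/\beta+2}\,t^{2\alpha}}.
\]
Since $q>p$ and $\beta>0$, the exponent $(q-p)/\beta+2$ is positive, so this factor is largest at the smallest eigenvalue and is bounded by $Ct^{-2\alpha}$; summing and taking square roots produces the second bound $Ce^{-\nu t}t^{1-\alpha}|\!|\!|b_{\bar h}|\!|\!|_q$. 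The one point deserving care — the main, if modest, obstacle — is that the constant here is $C(\lambda_1^{\bar h,\beta})^{-((q-p)/\beta+2)}$ and must be made \emph{uniform} in $\bar h$ for later use in the error analysis. This follows from the min-max principle: because $\mathcal{S}_{\bar h}\subset\mathbb{H}^\beta(\mathcal{D})$, minimizing the Rayleigh quotient $\|(-\Delta)^{\beta/2}v\|^2/\|v\|^2$ over a smaller class of subspaces forces $\lambda_1^{\bar h,\beta}\ge\lambda_1^\beta>0$, so the smallest discrete eigenvalue is bounded below independently of the mesh. With that observation the two cases combine to give the claimed estimate, completing the proof.
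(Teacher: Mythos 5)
Your proof is correct and is exactly the argument the paper intends: Lemma \ref{le3.4} is stated without proof as the ``discrete analogue'' of Lemma \ref{le2.2}, and your adaptation — expanding in the discrete eigenbasis, applying the Mittag-Leffler decay bound \eqref{eq.2.3}, and using the boundedness of $z^{(p-q)/\beta}/(1+z)^2$ for $0\le p-q\le 2\beta$ in the first regime and $(1+z)^{-2}\le z^{-2}$ in the second — mirrors the continuous proof step for step. Your additional observation that $\lambda_1^{\bar h,\beta}\ge\lambda_1^{\beta}$ by the min-max principle, so that the constant in the case $q>p$ is uniform in $\bar h$, is a correct and worthwhile point that the paper glosses over.
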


By slightly modifying the proof of Lemma 3.5 in \cite{Li:16}, we have
\begin{lemma}\label{le3.2}
Let $\widehat{\mathcal{S}}_{\alpha,\beta}^{\nu}(t,x,y)$ be defined by \eqref{eq3.3}
and $\psi\in \mathcal{S}_{\bar{h}}$. Then, for all $t > 0$,
\begin{equation*}
\displaystyle \bigg|\!\bigg|\!\bigg|\int_{\mathcal{D}}\widehat{\mathcal{S}}_{\alpha,\beta}^{\nu}(t,x,y)
\psi(y)dy\bigg|\!\bigg|\!\bigg|_{p}
 \leq\left\{\begin{array}{l}
 Ce^{-\nu t}t^{-1+\alpha+\frac{\alpha(q-p)}{2\beta}}|\!|\!|\psi|\!|\!|_{q}, ~~~  p-2\beta \leq q \leq p,\\
\\
\displaystyle Ce^{-\nu t}t^{-1} |\!|\!|\psi|\!|\!|_{q},~~~~\quad\quad\quad  q>p.
\end{array}
\right.
\end{equation*}
\end{lemma}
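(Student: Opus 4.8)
The plan is to follow the pattern of the proof of Lemma~\ref{le2.2}, transplanted to the discrete space $\mathcal{S}_{\bar{h}}$, using the orthonormality of $\{\varphi_k^{\bar{h}}\}_{k=1}^M$ together with the Mittag-Leffler decay estimate \eqref{eq.2.3}. First I would expand the integral: since $\{\varphi_k^{\bar{h}}\}$ is orthonormal in $\mathcal{S}_{\bar{h}}$, definition \eqref{eq3.3} gives
\[
\Phi(t):=\int_{\mathcal{D}}\widehat{\mathcal{S}}_{\alpha,\beta}^{\nu}(t,\cdot,y)\psi(y)\,dy
= t^{\alpha-1}e^{-\nu t}\sum_{k=1}^M E_{\alpha,\alpha}\bigl(-\lambda_k^{\bar{h},\beta}t^\alpha\bigr)(\psi,\varphi_k^{\bar{h}})\varphi_k^{\bar{h}}.
\]
Applying the discrete norm \eqref{eq3.7} reduces everything to the scalar sum
\[
|\!|\!|\Phi(t)|\!|\!|_p^2
= t^{2\alpha-2}e^{-2\nu t}\sum_{k=1}^M (\lambda_k^{\bar{h},\beta})^{p/\beta}\bigl|E_{\alpha,\alpha}(-\lambda_k^{\bar{h},\beta}t^\alpha)\bigr|^2(\psi,\varphi_k^{\bar{h}})^2,
\]
so the whole argument becomes a pointwise-in-$k$ estimate of the factor $(\lambda_k^{\bar{h},\beta})^{p/\beta}|E_{\alpha,\alpha}(-\lambda_k^{\bar{h},\beta}t^\alpha)|^2$, after which I recognise the residual sum as $|\!|\!|\psi|\!|\!|_q^2$. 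Throughout I invoke \eqref{eq.2.3} with second parameter $\alpha$, i.e. $|E_{\alpha,\alpha}(-\lambda_k^{\bar{h},\beta}t^\alpha)|\leq C/(1+\lambda_k^{\bar{h},\beta}t^\alpha)$.

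For the regime $p-2\beta\leq q\leq p$ I would split $(\lambda_k^{\bar{h},\beta})^{p/\beta}=(\lambda_k^{\bar{h},\beta})^{q/\beta}(\lambda_k^{\bar{h},\beta})^{(p-q)/\beta}$ and write $(\lambda_k^{\bar{h},\beta})^{(p-q)/\beta}=t^{-\alpha(p-q)/\beta}(\lambda_k^{\bar{h},\beta}t^\alpha)^{(p-q)/\beta}$. The Mittag-Leffler bound then yields the uniform estimate
\[
\frac{(\lambda_k^{\bar{h},\beta}t^\alpha)^{(p-q)/\beta}}{(1+\lambda_k^{\bar{h},\beta}t^\alpha)^2}\leq C,
\]
which holds precisely because $0\leq (p-q)/\beta\leq 2$, i.e. exactly on the stated range. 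Collecting powers of $t$ gives the summand bound $C e^{-2\nu t}t^{2\alpha-2-\alpha(p-q)/\beta}(\lambda_k^{\bar{h},\beta})^{q/\beta}(\psi,\varphi_k^{\bar{h}})^2$; summing in $k$, factoring out $|\!|\!|\psi|\!|\!|_q^2$, and taking the square root produces the exponent $-1+\alpha+\alpha(q-p)/(2\beta)$ claimed in the first case.

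For the regime $q>p$ the exponent $(p-q)/\beta$ is negative, so the factored quotient is no longer uniformly bounded near $\lambda_k^{\bar{h},\beta}t^\alpha=0$. Here I would instead use the cruder lower bound $(1+\lambda_k^{\bar{h},\beta}t^\alpha)^2\geq(\lambda_k^{\bar{h},\beta})^2 t^{2\alpha}$ to extract a factor $t^{-2\alpha}$, leaving $(\lambda_k^{\bar{h},\beta})^{(p-q)/\beta-2}$ with a strictly negative exponent. This is the one place needing genuine care, and I expect it to be the main obstacle: the resulting power of $\lambda_k^{\bar{h},\beta}$ must be bounded above uniformly in both $k$ and the mesh size $\bar{h}$. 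The key fact is that the smallest discrete eigenvalue satisfies $\lambda_1^{\bar{h},\beta}\geq\lambda_1^\beta>0$ (Galerkin eigenvalues approximate the continuous ones from above), whence $(\lambda_k^{\bar{h},\beta})^{(p-q)/\beta-2}\leq(\lambda_1^{\bar{h},\beta})^{(p-q)/\beta-2}\leq C$ with $C$ independent of $\bar{h}$. Combining $t^{2\alpha-2}\cdot t^{-2\alpha}=t^{-2}$, summing in $k$, and taking square roots then gives $Ce^{-\nu t}t^{-1}|\!|\!|\psi|\!|\!|_q$, completing the second case. Since these manipulations are identical in spirit to Lemma~3.5 of \cite{Li:16}, only the substitution of the discrete eigenpairs $\{(\lambda_k^{\bar{h},\beta},\varphi_k^{\bar{h}})\}$ and the discrete norm $|\!|\!|\cdot|\!|\!|_p$ is new.
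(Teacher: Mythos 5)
Your proof is correct and follows essentially the same route the paper intends (the discrete transplant of the argument for Lemma \ref{le2.2}, i.e.\ expand in the discrete eigenbasis, apply the Mittag--Leffler bound \eqref{eq.2.3}, and balance powers of $\lambda_k^{\bar h,\beta}t^\alpha$ against $(1+\lambda_k^{\bar h,\beta}t^\alpha)^2$ in the first regime, resp.\ use $\lambda_k^{\bar h,\beta}\geq\lambda_1^{\bar h,\beta}\geq\lambda_1^\beta>0$ in the second). Your explicit justification of the mesh-uniform lower bound on $\lambda_1^{\bar h,\beta}$ via the min--max principle is exactly the point that makes the second case go through.
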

%===================================================================================
\subsection{Mean-square convergence analysis}
~~~~In this subsection, we derive a error estimate for the problem \eqref{eq2.14}. First, we
need the following Lipschitz assumption:
\begin{enumerate}
  \item [$(\bf{A}_4)$] There exists a positive constant $l'$ such that for any $\gamma\geq 0$, %$$
  \begin{align*}
 & \|f(t,u_1)-f(t,u_2)\|+\|g(t,u_1)-g(t,u_2)\|_{\mathcal{L}_2^0}+\|h(t,u_1)-h(t,u_2)\|_{\mathcal{L}_2^0}
  \\
 & \leq l'\|u_1-u_2\|
  \end{align*}
  %$$
      for all $t\in\mathbb{R}$ and $u_1, u_2\in \mathbb{H}$.
\end{enumerate}
\begin{theorem}\label{thm5.6}
Assume that the functions $f$, $g$ and $h$ satisfy $(\textbf{A}_1)$ for some $\gamma\geq 0$, $(\textbf{A}_3)$-$(\textbf{A}_4)$ hold, $\frac{3}{2}<\alpha<2$, $\frac{1}{2}<\beta\leq 1$, $\nu>0$, $\frac{1}{2}<H<1$, and that the $\mathcal{F}_0$-adapted random initial values satify $a\in L^2(\Omega;\mathbb{H}^{2\widetilde{\gamma}}(\mathcal{D}))$, $b\in L^2(\Omega;\mathbb{H}^{2\widetilde{\gamma}-\frac{2\beta}{\alpha}}(\mathcal{D}))$, with $\widetilde{\gamma}=\max(\gamma,\frac{\beta}{\alpha})$. Let $u_n$ and $u_n^h$ be the solutions of \eqref{eq2.14} and \eqref{eq3.1}, respectively.
Then, with $\ell_{\bar{h}}=|\ln {\bar{h}}|$,
\begin{equation*}\begin{split}
\displaystyle&\mathbb{E}\|u_n(t)-u_n^{\bar{h}}(t)\|^2+{\bar{h}}^{2\beta}\mathbb{E}
\|(-\Delta)^{\frac{\beta}{2}}(u_n^{\bar{h}}(t)-u_n(t))\|^2
\\
& \leq C\ell_{\bar{h}}
 {\bar{h}}^{4\widetilde{\gamma}}+C {\bar{h}}^{4\widetilde{\gamma}} \quad \forall~
t \in [0, T],
\end{split}
\end{equation*}
where $C$ is a positive constant independent of $\tau$ and ${\bar{h}}$.
\end{theorem}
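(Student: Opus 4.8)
The plan is to bound the error $u_n(t)-u_n^{\bar{h}}(t)$ by comparing the two mild-solution representations \eqref{eq2.15} and \eqref{eq3.4} term by term. Writing the solution kernels as operators (so that $\int_{\mathcal{D}}\mathcal{S}^\nu_{\alpha,\beta}(t-s,\cdot,y)v(y)\,dy =: \mathcal{S}^\nu_{\alpha,\beta}(t-s)v$, and likewise for $\mathcal{T}^\nu_{\alpha,\beta}$, $\mathcal{R}^\nu_{\alpha,\beta}$ and their discrete analogues), subtracting \eqref{eq3.4} from \eqref{eq2.15} splits the error into five contributions: one from the initial datum $a$ carried by $\mathcal{T}^\nu_{\alpha,\beta}-\widehat{\mathcal{T}}^\nu_{\alpha,\beta}P_{\bar{h}}$, one from $b$ carried by $\mathcal{R}^\nu_{\alpha,\beta}-\widehat{\mathcal{R}}^\nu_{\alpha,\beta}P_{\bar{h}}$, the deterministic convolution in $f$, the It\^o (white-noise) convolution in $g$, and the fractional-Gaussian convolution in $h$. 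For each of the three convolution terms I would further split into a pure consistency error (the operator difference applied to the \emph{exact} data) plus a nonlinearity increment controlled by the discrete operator; e.g.\ for the $f$-term,
\begin{align*}
&\int_0^t\big[\mathcal{S}^\nu_{\alpha,\beta}(t-s)-\widehat{\mathcal{S}}^\nu_{\alpha,\beta}(t-s)P_{\bar{h}}\big]f(s,u_n(s))\,ds\\
&\quad+\int_0^t\widehat{\mathcal{S}}^\nu_{\alpha,\beta}(t-s)P_{\bar{h}}\big[f(s,u_n(s))-f(s,u_n^{\bar{h}}(s))\big]\,ds,
\end{align*}
and analogously for $g$ and $h$.

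The core ingredient is a family of error bounds for the operator differences $\mathcal{T}^\nu_{\alpha,\beta}-\widehat{\mathcal{T}}^\nu_{\alpha,\beta}P_{\bar{h}}$, $\mathcal{R}^\nu_{\alpha,\beta}-\widehat{\mathcal{R}}^\nu_{\alpha,\beta}P_{\bar{h}}$ and $\mathcal{S}^\nu_{\alpha,\beta}-\widehat{\mathcal{S}}^\nu_{\alpha,\beta}P_{\bar{h}}$, measured both in $\|\cdot\|$ and in the energy quantity $\bar{h}^{\beta}\|(-\Delta)^{\beta/2}\cdot\|$, each of the form $\bar{h}^{2\widetilde{\gamma}}$ times an appropriate power of $t$. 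Following the strategy of \cite{Li:16}, I would establish these by inserting the generalized Ritz projection $R_{\bar{h}}$, writing the operator error as (continuous operator $-$ its Ritz projection) plus a fully discrete remainder, and then controlling the first piece by the projection estimates of Lemma \ref{le3.0} and the second by the decay of the Mittag-Leffler functions (Lemma \ref{le2.1}) together with the discrete stability estimates of Lemmas \ref{le3.1}, \ref{le3.4} and \ref{le3.2} and the inverse inequality of Lemma \ref{le5.2}.

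With these bounds in hand I would assemble the five contributions. The homogeneous terms follow directly from the data regularity $a\in\mathbb{H}^{2\widetilde{\gamma}}$ and $b\in\mathbb{H}^{2\widetilde{\gamma}-\frac{2\beta}{\alpha}}$. For the consistency parts of the convolutions I would invoke Theorem \ref{thm4.2} to bound $\mathbb{E}\|f(s,u_n(s))\|^2$, $\mathbb{E}\|(-\Delta)^{\gamma-\frac{\beta}{\alpha}}g(s,u_n(s))\|_{\mathcal{L}_2^0}^2$ and the $h$-analogue uniformly in $s$; the white-noise convolution is reduced to a time integral of the squared Hilbert--Schmidt operator error by the It\^o isometry (Proposition \ref{pro1}), while the fractional-Gaussian convolution is handled by Lemma \ref{le2.10}, which supplies the weight $(t-s)^{2H-1}$. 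The nonlinearity-increment parts are controlled by the Lipschitz condition $(\textbf{A}_4)$ together with the discrete smoothing estimate of Lemma \ref{le3.2}, yielding a self-referential term $C\int_0^t(t-s)^{2\alpha-4}\,\mathbb{E}\|u_n(s)-u_n^{\bar{h}}(s)\|^2\,ds$; the singular-kernel Gr\"onwall inequality (Lemma \ref{lem2.11}) then closes the estimate.

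The main obstacle is twofold: obtaining the operator error bounds with temporal weights sharp enough that the ensuing time integrals converge, and identifying the origin of the logarithmic factor $\ell_{\bar{h}}=|\ln\bar{h}|$. This factor arises from a borderline temporal integral: after the It\^o isometry (or Lemma \ref{le2.10}) the squared $\mathcal{S}$-operator error is integrated against a kernel whose exponent is exactly critical, so that $\int_0^t(\cdots)\,ds$ is only logarithmically divergent; truncating the singularity at the mesh scale by means of the inverse inequality (Lemma \ref{le5.2}) replaces the divergence by a factor $|\ln\bar{h}|$, giving the $C\ell_{\bar{h}}\bar{h}^{4\widetilde{\gamma}}$ contribution, while the remaining terms produce the clean $C\bar{h}^{4\widetilde{\gamma}}$. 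Extra care is required because the fractional-Gaussian term carries the weight $(t-s)^{2H-1}$ rather than the white-noise weight, so one must verify that for every $H\in(\tfrac12,1)$ the associated integral stays at worst logarithmically singular, and that the nonlinear coupling through $f,g,h$ does not degrade the rate before Gr\"onwall is applied.
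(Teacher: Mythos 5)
Your overall strategy---compare the two mild-solution representations \eqref{eq2.15} and \eqref{eq3.4} directly and control the operator differences $\mathcal{T}^\nu_{\alpha,\beta}-\widehat{\mathcal{T}}^\nu_{\alpha,\beta}P_{\bar h}$, $\mathcal{R}^\nu_{\alpha,\beta}-\widehat{\mathcal{R}}^\nu_{\alpha,\beta}P_{\bar h}$, $\mathcal{S}^\nu_{\alpha,\beta}-\widehat{\mathcal{S}}^\nu_{\alpha,\beta}P_{\bar h}$---is a legitimate route, but it is not the one the paper takes, and the part you defer is exactly the part the paper's argument is built to avoid. The paper never estimates any solution-operator error. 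It splits $u_n^{\bar h}-u_n=\Pi_1+\Pi_2$ with $\Pi_2=P_{\bar h}u_n-u_n$ (controlled by the projection estimate \eqref{3.1} together with the regularity of $u_n$ from Theorem \ref{thm4.2}), and then subtracts the two \emph{equations} rather than the two mild-solution formulas: introducing the auxiliary problem \eqref{eq.5.12} (the discrete equation driven by the nonlinearities evaluated at the exact $u_n$) and using the identity $(-\Delta_{\bar h})^{\beta}R_{\bar h}=P_{\bar h}(-\Delta)^{\beta}$, the entire spatial consistency error collapses into the single deterministic source term $(-\Delta_{\bar h})^{\beta}(R_{\bar h}u_n-P_{\bar h}u_n)$ fed through the discrete operator $\widehat{\mathcal{S}}^\nu_{\alpha,\beta}$ with zero initial data (this is $\Pi_1^1$), while the remainder $\Pi_1^2=u_n^{\bar h}-v$ carries only the Lipschitz increments of $f,g,h$ and closes by Gr\"onwall (Lemmas \ref{lem4} and \ref{lem2.11}), as in your last step. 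In particular the initial data never enter the error equation at all, because $a_{\bar h}=P_{\bar h}a$ and $b_{\bar h}=P_{\bar h}b$ force $\Pi_1^1(0)=\partial_t\Pi_1^1(0)=0$; your two separate contributions from $a$ and $b$ are an artifact of the mild-solution splitting and would require smooth-data error estimates for $\mathcal{T}-\widehat{\mathcal{T}}P_{\bar h}$ and $\mathcal{R}-\widehat{\mathcal{R}}P_{\bar h}$ that the paper neither proves nor needs.

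Two concrete concerns with your plan. First, the weighted operator-error estimates for $\mathcal{S}^\nu_{\alpha,\beta}-\widehat{\mathcal{S}}^\nu_{\alpha,\beta}P_{\bar h}$ are the whole difficulty, not a preliminary: after the (discretized-noise analogue of the) It\^o isometry your stochastic consistency term needs $\int_0^t\|(\mathcal{S}-\widehat{\mathcal{S}}P_{\bar h})(t-s)\|^2\,ds$ at the full spatial rate $\bar h^{4\widetilde\gamma}$, and the natural nonsmooth-data weight of order $(t-s)^{-1+\frac{\alpha\varepsilon}{2\beta}}$ squares to a non-integrable kernel; you would have to exhibit an estimate with a genuinely better time weight (exploiting the $\mathbb{H}^{2\gamma-\frac{2\beta}{\alpha}}$ regularity of $g(s,u_n(s))$ from $(\mathbf{A}_1)$) and verify convergence, which you only assert. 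Second, your diagnosis of the logarithm is off: in the paper $\ell_{\bar h}$ does not come from a critical stochastic integral but from the purely deterministic estimate \eqref{eq.5.13} of $\Pi_1^1$, where the inverse inequality (Lemma \ref{le5.2}) costs $\bar h^{-2\varepsilon}$, the discrete smoothing (Lemma \ref{le3.2}) produces $\Gamma(\frac{\alpha\varepsilon}{2\beta})\sim\frac{2\beta}{\alpha\varepsilon}$, and the choice $\varepsilon=1/\ell_{\bar h}$ converts the $\varepsilon^{-1}$ into $|\ln\bar h|$. A comparable $\varepsilon$-trade would have to surface somewhere in your operator-error bounds, so the log is not spurious in your framework, but locating it ``after the It\^o isometry'' is not supported. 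Note also that the noises in \eqref{eq2.14} are the piecewise-constant-in-time $\mathbb{W}_n$ and $\mathbb{W}_n^H$, so the It\^o isometry (respectively Lemma \ref{le2.10}) applies only after rewriting the integral over each subinterval and a Cauchy--Schwarz in time, as in the $\Lambda_2,\Lambda_3$ estimates of the paper's proof.
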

\begin{proof}
We split the error $u_n^{\bar{h}}-u_n$ into
$$u_n^{\bar{h}}-u_n=(u_n^{\bar{h}}-P_{\bar{h}}u_n)+(P_{\bar{h}}u_n-u_n):=\Pi_1+\Pi_2.$$
By \eqref{eq.4.9} and \eqref{3.1} we have
\begin{equation}\label{3.9}
\begin{split}
\mathbb{E}\|\Pi_2(t)\|^2+{\bar{h}}^{2\beta}\mathbb{E}\|(-\Delta)^{\frac{\beta}{2}}\Pi_2(t)\|^2
&\leq C\bar{h}^{4\beta}\mathbb{E}\|u_n(t)\|_{\mathbb{H}^{2\beta}}^2.
\end{split}
\end{equation}
Moreover, we consider the equation
\begin{align}\label{eq.5.12}
_0^c\partial_t^{\alpha,\nu} v(t,x) +(-\Delta_{\bar{h}})^{\beta}
v(t,x)=&P_{\bar{h}}\bigg(f(t,u_n(t,x))+g(t,u_n(t,x))\frac{\partial^2 \mathbb{W}_n(t,x)}{\partial t \partial x}\nonumber\\
&+h(t,u_n(t,x))\frac{\partial^2 \mathbb{W}_n^H(t,x)}{\partial t \partial x}\bigg)
\end{align}
with $0<t\leq T$, $v(0)=a_{\bar{h}}=P_{\bar{h}}a$, and $\partial_t{v^{\bar{h}}_n}(0)=b_{\bar{h}}(x)=P_{\bar{h}}b$.
Let $\Pi_1^1=v-P_{\bar{h}}u_n$, $\Pi_1^2=u_n^{\bar{h}}-v$. Then
$\Pi_1=\Pi_1^1+\Pi_1^2$.

It follows from \eqref{eq2.14} and \eqref{eq.5.12} that
\[_0^c\partial_t^{\alpha,\nu} \Pi_1^1 +(-\Delta_{\bar{h}})^{\beta}
\Pi_1^1 = (-\Delta_{\bar{h}})^{\beta}(R_{\bar{h}}u_n-P_{\bar{h}}u_n)
\quad{\rm with}\quad\Pi_1^1(0)=\partial_t\Pi_1^1(0)=0,\]
where we have used the identity $(-\Delta_{\bar{h}})^{\beta}R_{\bar{h}}=P_{\bar{h}}(-\Delta)^{\beta}$.
By \eqref{eq3.4}, we obtain that
\begin{equation*}\begin{split}
\Pi_1^1(t,x)&=\int_{0}^t\int_{\mathcal{D}}\widehat{\mathcal{S}}_{\alpha,\beta}^
{\nu}(t-s,x,y)(-\Delta_{\bar{h}})^{\beta}(R_{\bar{h}}u_n(s,y)-P_{\bar{h}}u_n(s,y))dyds.
\end{split}\end{equation*}
For any $0<\varepsilon<2\beta$, by H\"{o}lder's inequality, and using Lemmas \ref{le3.0}, \ref{le5.2} and \ref{le3.2}, we deduce that, for $p=0, \beta$,
\begin{flalign}\label{eq.5.13}
&\displaystyle \mathbb{E}\|\Pi_1^1(t)\|_{\mathbb{H}^p}^2 
\nonumber\\
&\leq C\mathbb{E}
\bigg(\int^t_0\bigg{\|}\int_{\mathcal{D}}\widehat{\mathcal{S}}_{\alpha,\beta}^{\nu}
(t-s,x,y)(-\Delta_{\bar{h}})^{\beta}(R_{\bar{h}}u_n(s,y)-P_{\bar{h}}u_n(s,y))dy
\bigg{\|}_{\mathbb{H}^p}ds\bigg)^2\nonumber\\
\displaystyle & \leq C \mathbb{E}\left(\int^t_0(t-s)^{\frac{\alpha\varepsilon}
{2\beta}-1}e^{-\nu(t-s)}|\!|\!|(-\Delta_{\bar{h}})^{\beta}(R_{\bar{h}}u_n-P_{\bar{h}}u_n)(s)
|\!|\!|_{\varepsilon-2\beta+p}ds\right)^2\nonumber\\
\displaystyle & =C \mathbb{E}\left(\int^t_0(t-s)^{\frac{\alpha\varepsilon}
{2\beta}-1}e^{-\nu(t-s)}|\!|\!|(R_{\bar{h}}u_n-P_{\bar{h}}u_n)(s)|\!|\!|_{\varepsilon+p}
ds\right)^2\nonumber\\
\displaystyle & \leq C{\bar{h}}^{-2\varepsilon}\mathbb{E}\left(\int^t_0(t-s)^{\frac{\alpha
\varepsilon}{2\beta}-1}e^{-\nu(t-s)}\|(R_{\bar{h}}u_n-P_{\bar{h}}u_n)(s)\|_{\mathbb{H}^p}ds\right)^2\nonumber
&
\end{flalign}
\begin{flalign}
&
 \displaystyle\leq C {\bar{h}}^{4\widetilde{\gamma}-2p-2\varepsilon}\mathbb{E}\left(\int^t_0(t-s)^
{\frac{\alpha\varepsilon}{2\beta}-1}e^{-\nu(t-s)}\|u_n(s)\|_{\mathbb{H}^{2\widetilde{\gamma}}}ds
\right)^2\nonumber\\
\displaystyle & \leq C {\bar{h}}^{4\widetilde{\gamma}-2p-2\varepsilon}\int^t_0(t-s)^{\frac{\alpha
\varepsilon}{2\beta}-1}e^{-2\nu(t-s)}ds\int^t_0(t-s)^{\frac{\alpha\varepsilon}
{2\beta}-1}\mathbb{E}\|u_n(s)\|_{\mathbb{H}^{2\widetilde{\gamma}}}^2ds\nonumber\\
\displaystyle & \leq C {\bar{h}}^{4\widetilde{\gamma}-2p-2\varepsilon}(2\nu)^{-\frac{\alpha
\varepsilon}{2\beta}}\Gamma\left(\frac{\alpha\varepsilon}{2\beta}\right)\int^t_0
(t-s)^{\frac{\alpha\varepsilon}{2\beta}-1}\mathbb{E}\|u_n(s)\|_{\mathbb{H}^{2\widetilde{\gamma}}}^2ds\nonumber\\
& \leq C {\bar{h}}^{4\widetilde{\gamma}-2p-2\varepsilon}(2\nu)^{-\frac{\alpha
\varepsilon}{2\beta}}\Gamma\left(\frac{\alpha\varepsilon}{2\beta}\right)
t^{\frac{\alpha\varepsilon}{2\beta}}\nonumber\\
& \leq C \varepsilon^{-1} {\bar{h}}^{4\widetilde{\gamma}-2p-2\varepsilon}
\leq C\ell_{\bar{h}}\bar{h}^{4\widetilde{\gamma}-2p}
\quad \forall t\in[0,T]. &
\end{flalign}
The last two inequalities follow from the fact $\Gamma(\frac{\alpha\varepsilon}
{2\beta})\sim \frac{2\beta}{\alpha\varepsilon}$ as $\varepsilon\rightarrow 0^+$ and by choosing $\varepsilon=1/\ell_{\bar{h}}$.

Noticing that $\Pi_1^2$ satisfies the following equation
\begin{align*}
&_0^c\partial_t^{\alpha,\nu} \Pi_1^2(t,x) +(-\Delta_{\bar{h}})^{\beta}
\Pi_1^2(t,x)=P_{\bar{h}}f(t,u_n^{\bar{h}}(t,x))-P_{\bar{h}}f(t,u_n(t,x))\\
&\relphantom{=}{}+P_{\bar{h}}g(t,u_n^{\bar{h}}(t,x))\frac{\partial^2 \mathbb{W}_n(t,x)}{\partial t \partial x}-P_{\bar{h}}g(t,u_n(t,x))\frac{\partial^2 \mathbb{W}_n(t,x)}{\partial t \partial x}\\
&\relphantom{=}{}+P_{\bar{h}}h(t,u_n^{\bar{h}}(t,x))\frac{\partial^2 \mathbb{W}_n^H(t,x)}{\partial t \partial x}
-P_{\bar{h}}h(t,u_n(t,x))\frac{\partial^2 \mathbb{W}_n^H(t,x)}{\partial t \partial x}
\end{align*}
with $\Pi_1^2(0)=\partial_t\Pi_1^2(0)=0$, by \eqref{eq3.4} we have
\begin{align*}
&\Pi_1^2(t,x)
\\
&=\int_{0}^t\int_{\mathcal{D}}\widehat{\mathcal{S}}_{\alpha,\beta}^
{\nu}(t-s,x,y)(P_{\bar{h}}f(s,u_n^{\bar{h}}(s,y))-P_{\bar{h}}f(s,u_n(s,y)))dyds\\
&\relphantom{=}{}+\int_{0}^t\int_{\mathcal{D}}\widehat{\mathcal{S}}_{\alpha,\beta}^
{\nu}(t-s,x,y)(P_{\bar{h}}g(s,u_n^{\bar{h}}(s,y))d\mathbb{W}_n(s,y)-P_{\bar{h}}g(s,u_n(s,y))
\\
&\relphantom{=}{}
\times d\mathbb{W}_n(s,y))+\int_{0}^t\int_{\mathcal{D}}\widehat{\mathcal{S}}_{\alpha,\beta}^
{\nu}(t-s,x,y)(P_{\bar{h}}h(s,u_n^{\bar{h}}(s,y))d\mathbb{W}_n^H(s,y)
\\
&\relphantom{=}{}-P_{\bar{h}}h(s,u_n(s,y))d\mathbb{W}_n^H(s,y)).
\end{align*}
For $p=0,\beta$, we observe that
\begin{flalign}\label{4.10}
&\mathbb{E}|\!|\Pi_1^2(t)|\!|_{\mathbb{H}^p}^2
\nonumber\\
&
\leq C\mathbb{E}
\bigg{\|}\int^t_0\int_{\mathcal{D}}\widehat{\mathcal{S}}_{\alpha,\beta}^{\nu}
(t-s,x,y)(P_{\bar{h}}f(s,u_n^{\bar{h}}(s,y))-P_{\bar{h}}f(s,u_n(s,y)))dy
ds\bigg{\|}_{\mathbb{H}^p}^2\nonumber\\
&\relphantom{=}{}+C\mathbb{E}
\bigg{\|}\int^t_0\int_{\mathcal{D}}\widehat{\mathcal{S}}_{\alpha,\beta}^{\nu}
(t-s,x,y)(P_{\bar{h}}g(s,u_n^{\bar{h}}(s,y))d\mathbb{W}_n(s,y)
\nonumber\\
&\relphantom{=}{}
-P_{\bar{h}}g(s,u_n(s,y))d\mathbb{W}_n(s,y))
\bigg{\|}_{\mathbb{H}^p}^2+C\mathbb{E}
\bigg{\|}\int^t_0\int_{\mathcal{D}}\widehat{\mathcal{S}}_{\alpha,\beta}^{\nu}
(t-s,x,y)
\nonumber\\
&\relphantom{=}{}
\times (P_{\bar{h}}h(s,u_n^{\bar{h}}(s,y))d\mathbb{W}_n^H(s,y)-P_{\bar{h}}h(s,u_n(s,y))d\mathbb{W}_n^H(s,y))
\bigg{\|}_{\mathbb{H}^p}^2\nonumber\\
&\relphantom{=}{}:=\Lambda_1+\Lambda_2+\Lambda_3.
\end{flalign}

By using the fact that $\{\varphi_k^{\bar{h}}\}_{k=1}^{M}$ is an orthonormal basis in $S_{\bar{h}}$, the assumption on $f$ given in $(\bf{A}_4)$, Lemma \ref{le2.1}, H\"{o}lder's inequality, \eqref{eq3.3}, \eqref{3.9}  and \eqref{eq.5.13}, we deduce that
\begin{flalign}\label{4.7}
&\Lambda_1\nonumber\\
&=C\mathbb{E}\sum_{k=1}^{M}(\lambda_{k}^{\bar{h},\beta})^{\frac{p}{\beta}}
\bigg(\int^t_0\int_{\mathcal{D}}(t-s)^{\alpha-1}e^{-\nu(t-s)}
\sum_{l=1}^{M}E_{\alpha,\alpha}(-\lambda_{l}^{\bar{h},\beta}(t-s)^\alpha)\nonumber\\
&\relphantom{=}{}\times\varphi_l^{\bar{h}}(\cdot)
\varphi_l^{\bar{h}}(y)(P_{\bar{h}}f(s,u_n^{\bar{h}}(s,y))-P_{\bar{h}}f(s,u_n(s,y)))dyds,\varphi_k^{\bar{h}}\bigg)^2\nonumber\\
&=C\mathbb{E}\sum_{k=1}^{M}(\lambda_{k}^{\bar{h},\beta})^{\frac{p}{\beta}}
\bigg|\int^t_0\int_{\mathcal{D}}(t-s)^{\alpha-1}e^{-\nu(t-s)}
E_{\alpha,\alpha}(-\lambda_{k}^{\bar{h},\beta}(t-s)^\alpha)
\varphi_k^{\bar{h}}(y)\nonumber\\
&\relphantom{=}{}\times(P_{\bar{h}}f(s,u_n^{\bar{h}}(s,y))-P_{\bar{h}}f(s,u_n(s,y)))dyds\bigg|^2\nonumber
&
\end{flalign}
\begin{flalign}
&\leq CT\mathbb{E}\sum_{k=1}^{M}\int^t_0
(t-s)^{2\alpha-2-\frac{\alpha p}{\beta}}
e^{-2\nu(t-s)}\frac{(\lambda_{k}^{\bar{h},\beta}(t-s)^\alpha)^{\frac{p}{\beta}}}{(1+\lambda_{k}^{\bar{h},\beta}(t-s)^\alpha)^2}\nonumber\\
&\relphantom{=}{}\times
(P_{\bar{h}}f(s,u_n^{\bar{h}}(s))-P_{\bar{h}}f(s,u_n(s)),\varphi_k^{\bar{h}})^2ds\nonumber\\
&\leq C\mathbb{E}\int^t_0
(t-s)^{2\alpha-2-\frac{\alpha p}{\beta}}e^{-2\nu(t-s)}\|P_{\bar{h}}f(s,u_n^{\bar{h}}(s))-P_{\bar{h}}f(s,u_n(s))\|^2ds\nonumber\\
&\leq C\int^t_0(t-s)^{2\alpha-2-\frac{\alpha p}{\beta}}e^{-2\nu(t-s)}\mathbb{E}\|u_n^{\bar{h}}(s)-u_n(s)\|^2ds\nonumber
&
\end{flalign}
\begin{flalign}
&\leq C\int^t_0(t-s)^{2\alpha-2-\frac{\alpha p}{\beta}}e^{-2\nu(t-s)}\mathbb{E}\|u_n^{\bar{h}}(s)-u_n(s)\|_{\mathbb{H}^p}^2ds\nonumber\\
&\leq C\int^t_0(t-s)^{2\alpha-2-\frac{\alpha p}{\beta}}e^{-2\nu(t-s)}(\mathbb{E}\|\Pi_1^2(s)\|_{\mathbb{H}^p}^2+\mathbb{E}\|\Pi_1^1(s)\|_{\mathbb{H}^p}^2
\nonumber\\
&\relphantom{=}{}+\mathbb{E}\|\Pi_2(s)\|_{\mathbb{H}^p}^2)ds\nonumber\\
&\leq C\int^t_0(t-s)^{2\alpha-2-\frac{\alpha p}{\beta}}e^{-2\nu(t-s)}\mathbb{E}\|\Pi_1^2(s)\|_{\mathbb{H}^p}^2ds +C\ell_{\bar{h}}\bar{h}^{4\widetilde{\gamma}-2p}+C\bar{h}^{4\widetilde{\gamma}-2p}, &
\end{flalign}
where we have used $\frac{(\lambda_{k}^{\bar{h},\beta}(t-s)^\alpha)^{\frac{p}{\beta}}}{(1+\lambda_{k}^{\bar{h},\beta}(t-s)^\alpha)^2}\leq C$ and $\mathbb{E}\|u_n^{\bar{h}}(s)-u_n(s)\|^2\leq C\mathbb{E}\|u_n^{\bar{h}}(s)-u_n(s)\|_{\mathbb{H}^p}^2$.

Without loss of generality, we assume that there exists a positive integer $N_t$ such that $t=t_{N_t+1}$. Since $\{\varphi_k^{\bar{h}}\}_{k=1}^{M}$ is an orthonormal basis in $S_{\bar{h}}$,
$\{\xi_l\}_{l=1}^\infty$ is a family of mutually independent one-dimensional standard Brownian motions with independent increments, then by Lemma \ref{le2.1}, $(\bf{A}_4)$, H\"{o}lder's inequality, the It\^{o} isometry, the boundedness assumption on $\varsigma_k^n(t)$, \eqref{eq3.3}, \eqref{3.9} and \eqref{eq.5.13}, we obtain that
\begin{flalign}\label{4.8}
&\Lambda_2 \nonumber
\\
&=C\mathbb{E}\sum_{k=1}^{M}(\lambda_{k}^{\bar{h},\beta})^{\frac{p}{\beta}}
\bigg(\int^t_0\int_{\mathcal{D}}(t-s)^{\alpha-1}e^{-\nu(t-s)}
\sum_{l=1}^{M}E_{\alpha,\alpha}(-\lambda_{l}^{\bar{h},\beta}(t-s)^\alpha)\nonumber\\
&\relphantom{=}{}\times\varphi_l^{\bar{h}}(\cdot)
\varphi_l^{\bar{h}}(y)(P_{\bar{h}}g(s,u_n^{\bar{h}}(s,y))-P_{\bar{h}}g(s,u_n(s,y)))d\mathbb{W}_n(s,y),\varphi_k^{\bar{h}}\bigg)^2\nonumber &
\end{flalign}
\begin{flalign}
&=C\mathbb{E}\sum_{k=1}^{M}(\lambda_{k}^{\bar{h},\beta})^{\frac{p}{\beta}}
\bigg|\int^t_0\int_{\mathcal{D}}(t-s)^{\alpha-1}e^{-\nu(t-s)}
E_{\alpha,\alpha}(-\lambda_{k}^{\bar{h},\beta}(t-s)^\alpha)
\varphi_k^{\bar{h}}(y)\nonumber\\
&\relphantom{=}{}\times(P_{\bar{h}}g(s,u_n^{\bar{h}}(s,y))d\mathbb{W}_n(s,y)-P_{\bar{h}}g(s,u_n(s,y))d\mathbb{W}_n(s,y))\bigg|^2\nonumber\\
&=C\mathbb{E}\sum_{k=1}^{M}(\lambda_{k}^{\bar{h},\beta})^{\frac{p}{\beta}}
\bigg|\int^t_0\int_{\mathcal{D}}(t-s)^{\alpha-1}e^{-\nu(t-s)}
E_{\alpha,\alpha}(-\lambda_{k}^{\bar{h},\beta}(t-s)^\alpha)
\varphi_k^{\bar{h}}(y)\nonumber\\
&\relphantom{=}{}\times\sum_{j=1}^{M}\sum_{l=1}^{\infty}(P_{\bar{h}}(g(s,u_n^{\bar{h}}(s))-g(s,u_n(s)))\cdot e_l,\varphi_j^{\bar{h}})\varphi_j^{\bar{h}}(y)\varsigma_l^n(s)
\nonumber\\
&\relphantom{=}{}\times
\left(\sum_{i=1}^{N_t}\frac{1}{\sqrt{\tau}}\xi_{li}\chi_i(s)\right)dyds\bigg|^2\nonumber&
\end{flalign}
\begin{flalign}
&=C\mathbb{E}\sum_{k=1}^{M}(\lambda_{k}^{\bar{h},\beta})^{\frac{p}{\beta}}
\bigg{|}\sum_{i=1}^{N_t}\int^{t_{i+1}}_{t_i}(t-s)^{\alpha-1}e^{-\nu(t-s)}
E_{\alpha,\alpha}(-\lambda_{k}^{\bar{h},\beta}(t-s)^\alpha)\nonumber\\
&\relphantom{=}{}\times
\sum_{l=1}^{\infty}(P_{\bar{h}}(g(s,u_n^{\bar{h}}(s))-g(s,u_n(s)))\cdot e_l,\varphi_k^{\bar{h}})\varsigma_l^n(s)\frac{\xi_{l}(t_{i+1})-\xi_{l}(t_{i})}{\tau}ds\bigg|^2\nonumber\\
&=\frac{C}{\tau^2}\mathbb{E}\sum_{k=1}^{M}(\lambda_{k}^{\bar{h},\beta})^{\frac{p}{\beta}}
\bigg{|}\sum_{i=1}^{N_t}\int^{t_{i+1}}_{t_i}
\int^{t_{i+1}}_{t_i}(t-s)^{\alpha-1}e^{-\nu(t-s)}
E_{\alpha,\alpha}(-\lambda_{k}^{\bar{h},\beta}(t-s)^\alpha)\nonumber\\
&\relphantom{=}{}\times
\sum_{l=1}^{\infty}(P_{\bar{h}}(g(s,u_n^{\bar{h}}(s))-g(s,u_n(s)))\cdot e_l,\varphi_k^{\bar{h}})\varsigma_l^n(s)dsd\xi_l(r)\bigg{|}^2\nonumber&
\end{flalign}
\begin{flalign}
&\leq\frac{C}{\tau^2}\mathbb{E}\sum_{k=1}^{M}\sum_{l=1}^{\infty}(\lambda_{k}^{\bar{h},\beta})^{\frac{p}{\beta}}
\nonumber\\
&\relphantom{=}{}\times
\sum_{i=1}^{N_t}\bigg{|}\int^{t_{i+1}}_{t_i}
\int^{t_{i+1}}_{t_i}(t-s)^{\alpha-1}e^{-\nu(t-s)}
E_{\alpha,\alpha}(-\lambda_{k}^{\bar{h},\beta}(t-s)^\alpha)\nonumber\\
&\relphantom{=}{}\times
(P_{\bar{h}}(g(s,u_n^{\bar{h}}(s))-g(s,u_n(s)))\cdot e_l,\varphi_k^{\bar{h}})\varsigma_l^n(s)dsd\xi_l(r)\bigg{|}^2\nonumber\\
&\leq\frac{C}{\tau}\mathbb{E}\sum_{k=1}^{M}\sum_{l=1}^{\infty}(\lambda_{k}^{\bar{h},\beta})^{\frac{p}{\beta}}
\sum_{i=1}^{N_t}\bigg{|}\int^{t_{i+1}}_{t_i}
(t-s)^{\alpha-1}e^{-\nu(t-s)}
E_{\alpha,\alpha}(-\lambda_{k}^{\bar{h},\beta}(t-s)^\alpha)\nonumber\\
&\relphantom{=}{}\times
(P_{\bar{h}}(g(s,u_n^{\bar{h}}(s))-g(s,u_n(s)))\cdot e_l,\varphi_k^{\bar{h}})\varsigma_l^n(s)ds\bigg{|}^2\nonumber&
\end{flalign}
\begin{flalign}
& \leq C\mathbb{E}\sum_{k=1}^{M}\sum_{l=1}^{\infty}(\lambda_{k}^{\bar{h},\beta})^{\frac{p}{\beta}}
\sum_{i=1}^{N_t}\int^{t_{i+1}}_{t_i}(t-s)^{2\alpha-2}e^{-2\nu(t-s)}
\left|E_{\alpha,\alpha}(-\lambda_{k}^{\bar{h},\beta}(t-s)^\alpha)\right|^2\nonumber\\
&\relphantom{=}{}\times
|(P_{\bar{h}}(g(s,u_n^{\bar{h}}(s))-g(s,u_n(s)))\cdot e_l,\varphi_k^{\bar{h}})|^2|\varsigma_l^n(s)|^2ds\nonumber\\
&\leq C(\mu_1^n)^2\mathbb{E}\sum_{k=1}^{M}\sum_{l=1}^{\infty}
\int^{t}_{0}(t-s)^{2\alpha-2-\frac{\alpha p}{\beta}}e^{-2\nu(t-s)}\frac{(\lambda_{k}^{\bar{h},\beta}(t-s)^\alpha)^{\frac{p}{\beta}}}{(1+\lambda_{k}^{\bar{h},\beta}(t-s)^\alpha)^2}\nonumber\\
&\relphantom{=}{}\times
(P_{\bar{h}}(g(s,u_n^{\bar{h}}(s))-g(s,u_n(s)))\cdot e_l,\varphi_k^{\bar{h}})^2ds\nonumber\\
&\leq C(\mu_1^n)^2\mathbb{E}
\int^{t}_{0}(t-s)^{2\alpha-2-\frac{\alpha p}{\beta}}e^{-2\nu(t-s)}
\|P_{\bar{h}}g(s,u_n^{\bar{h}}(s))-P_{\bar{h}}g(s,u_n(s))\|_{\mathcal{L}_2^0}^2ds\nonumber&
\end{flalign}
\begin{flalign}
&\leq C(\mu_1^n)^2\int^{t}_{0}(t-s)^{2\alpha-2-\frac{\alpha p}{\beta}}e^{-2\nu(t-s)}
\mathbb{E}\|u_n^{\bar{h}}(s)-u_n(s)\|^2ds\nonumber\\
&\leq C(\mu_1^n)^2\int^{t}_{0}(t-s)^{2\alpha-2-\frac{\alpha p}{\beta}}e^{-2\nu(t-s)}
\mathbb{E}\|u_n^{\bar{h}}(s)-u_n(s)\|^2_{\mathbb{H}^p}ds\nonumber\\
&\leq C(\mu_1^n)^2\int^{t}_{0}(t-s)^{2\alpha-2-\frac{\alpha p}{\beta}}e^{-2\nu(t-s)}
(\mathbb{E}\|\Pi_1^2(s)\|_{\mathbb{H}^p}^2+\mathbb{E}\|\Pi_1^1(s)\|_{\mathbb{H}^p}^2
\nonumber
\\
&\relphantom{=}{}
+\mathbb{E}\|\Pi_2(s)\|_{\mathbb{H}^p}^2)ds\nonumber\\
&\leq C(\mu_1^n)^2\int^t_0
(t-s)^{2\alpha-2-\frac{\alpha p}{\beta}}
e^{-2\nu(t-s)}\mathbb{E}\|\Pi_1^2(s)\|_{\mathbb{H}^p}^2ds +C(\mu_1^n)^2\ell_{\bar{h}}\bar{h}^{4\widetilde{\gamma}-2p} \nonumber
\\
&\relphantom{=}{}
+C(\mu_1^n)^2\bar{h}^{4\widetilde{\gamma}-2p}. &
\end{flalign}
Note that $\{\xi_l^H\}_{l=1}^\infty$ is a family of mutually independent one-dimensional fractional Brownian motions. Similar to the arguments in  \eqref{4.8}, in view of \eqref{eq.2.9}, $(\bf{A}_4)$, Lemmas \ref{le2.1} and \ref{le2.10}, H\"{o}lder's inequality, the boundness assumption on $\varrho_k^n(t)$, \eqref{eq3.3}, \eqref{3.9} and \eqref{eq.5.13}, we deduce that
\begin{flalign}\label{4.9}
&\Lambda_3 \nonumber\\
&=C\mathbb{E}\sum_{k=1}^{M}(\lambda_{k}^{\bar{h},\beta})^{\frac{p}{\beta}} \nonumber\\
&\relphantom{=}{}\times
\bigg(\int^t_0\int_{\mathcal{D}}(t-s)^{\alpha-1}e^{-\nu(t-s)}
\sum_{l=1}^{M}E_{\alpha,\alpha}(-\lambda_{l}^{\bar{h},\beta}(t-s)^\alpha)\varphi_l^{\bar{h}}(\cdot)
\varphi_l^{\bar{h}}(y)\nonumber\\
&\relphantom{=}{}\times(P_{\bar{h}}h(s,u_n^{\bar{h}}(s,y))d\mathbb{W}_n^H(s,y)-P_{\bar{h}}h(s,u_n(s,y))d\mathbb{W}_n^H(s,y)),\varphi_k^{\bar{h}}\bigg)^2\nonumber\\
&=C\mathbb{E}\sum_{k=1}^{M}(\lambda_{k}^{\bar{h},\beta})^{\frac{p}{\beta}}
\bigg|\int^t_0\int_{\mathcal{D}}(t-s)^{\alpha-1}e^{-\nu(t-s)}
E_{\alpha,\alpha}(-\lambda_{k}^{\bar{h},\beta}(t-s)^\alpha)
\varphi_k^{\bar{h}}(y)\nonumber\\
&\relphantom{=}{}\times(P_{\bar{h}}h(s,u_n^{\bar{h}}(s,y))d\mathbb{W}_n^H(s,y)-P_{\bar{h}}h(s,u_n(s,y))d\mathbb{W}_n^H(s,y))\bigg|^2\nonumber&
\end{flalign}
\begin{flalign}
&=C\mathbb{E}\sum_{k=1}^{M}(\lambda_{k}^{\bar{h},\beta})^{\frac{p}{\beta}}
\bigg|\int^t_0\int_{\mathcal{D}}(t-s)^{\alpha-1}e^{-\nu(t-s)}
E_{\alpha,\alpha}(-\lambda_{k}^{\bar{h},\beta}(t-s)^\alpha)
\varphi_k^{\bar{h}}(y)\nonumber\\
&\relphantom{=}{}\times\sum_{j=1}^{M}\sum_{l=1}^{\infty}(P_{\bar{h}}(h(s,u_n^{\bar{h}}(s))-h(s,u_n(s)))\cdot e_l,\varphi_j^{\bar{h}})\varphi_j^{\bar{h}}(y)\varrho_l^n(s)
\nonumber\\
&\relphantom{=}{}\times
\left(\sum_{i=1}^{N_t}\frac{1}{\tau^{1-H}}\xi_{li}^H\chi_i(s)\right)dyds\bigg|^2\nonumber\\
&=C\mathbb{E}\sum_{k=1}^{M}(\lambda_{k}^{\bar{h},\beta})^{\frac{p}{\beta}}
\bigg|\int^t_0(t-s)^{\alpha-1}e^{-\nu(t-s)}
E_{\alpha,\alpha}(-\lambda_{k}^{\bar{h},\beta}(t-s)^\alpha)\nonumber\\
&\relphantom{=}{}\times
\sum_{l=1}^{\infty}(P_{\bar{h}}(h(s,u_n^{\bar{h}}(s))-h(s,u_n(s)))\cdot e_l,\varphi_k^{\bar{h}})\varrho_l^n(s)
\nonumber\\
&\relphantom{=}{}\times
\frac{1}{\tau}\int_0^t\sum_{i=1}^{N_t}\chi_i(r)d\xi_{l}^H(r)\chi_i(s)ds\bigg|^2\nonumber&
\end{flalign}
\begin{flalign}
&=\frac{C}{\tau^2}\mathbb{E}\sum_{k=1}^{M}\sum_{l=1}^{\infty}(\lambda_{k}^{\bar{h},\beta})^{\frac{p}{\beta}}
\bigg|\int^t_0\int^t_0(t-s)^{\alpha-1}e^{-\nu(t-s)}
E_{\alpha,\alpha}(-\lambda_{k}^{\bar{h},\beta}(t-s)^\alpha)\nonumber\\
&\relphantom{=}{}\times
(P_{\bar{h}}(h(s,u_n^{\bar{h}}(s))-h(s,u_n(s)))\cdot e_l,\varphi_k^{\bar{h}})\varrho_l^n(s)\sum_{i=1}^{N_t}\chi_i(r)\chi_i(s)ds d\xi_{l}^H(r)\bigg|^2\nonumber\\
&\leq\frac{C}{\tau^2}t^{2H-1}\mathbb{E}\sum_{k=1}^{M}\sum_{l=1}^{\infty}(\lambda_{k}^{\bar{h},\beta})^{\frac{p}{\beta}}
\int^t_0\bigg|\int^t_0(t-s)^{\alpha-1}e^{-\nu(t-s)}
E_{\alpha,\alpha}(-\lambda_{k}^{\bar{h},\beta}(t-s)^\alpha)\nonumber\\
&\relphantom{=}{}\times
(P_{\bar{h}}(h(s,u_n^{\bar{h}}(s))-h(s,u_n(s)))\cdot e_l,\varphi_k^{\bar{h}})\varrho_l^n(s)\sum_{i=1}^{N_t}\chi_i(r)\chi_i(s)ds \bigg|^2 dr\nonumber&
\end{flalign}
\begin{flalign}
&\leq\frac{C}{\tau^2}\mathbb{E}\sum_{k=1}^{M}\sum_{l=1}^{\infty}(\lambda_{k}^{\bar{h},\beta})^{\frac{p}{\beta}}
\nonumber\\
&\relphantom{=}{}\times\int^t_0\sum_{i=1}^{N_t}\chi_i(r)^2\bigg|\int^t_0(t-s)^{\alpha-1}e^{-\nu(t-s)}
E_{\alpha,\alpha}(-\lambda_{k}^{\bar{h},\beta}(t-s)^\alpha)\nonumber\\
&\relphantom{=}{}\times
(P_{\bar{h}}(h(s,u_n^{\bar{h}}(s))-h(s,u_n(s)))\cdot e_l,\varphi_k^{\bar{h}})\varrho_l^n(s)\chi_i(s)ds \bigg|^2 dr\nonumber\\
&\leq \frac{C}{\tau}\mathbb{E}\sum_{k=1}^{M}\sum_{l=1}^{\infty}(\lambda_{k}^{\bar{h},\beta})^{\frac{p}{\beta}}
\nonumber\\
&\relphantom{=}{}\times\sum_{i=1}^{N_t}\int^{t_{i+1}}_{t_i}\int^{t_{i+1}}_{t_i}
(t-s)^{2\alpha-2}e^{-2\nu(t-s)}
\left|E_{\alpha,\alpha}(-\lambda_{k}^{\bar{h},\beta}(t-s)^\alpha)\right|^2\nonumber\\
&\relphantom{=}{}\times
|(P_{\bar{h}}(h(s,u_n^{\bar{h}}(s))-h(s,u_n(s)))\cdot e_l,\varphi_k^{\bar{h}})|^2|\varrho_l^n(s)|^2dsdr\nonumber&
\end{flalign}
\begin{flalign}
&\leq C(\widetilde{\mu}_1^n)^2\mathbb{E}\sum_{k=1}^{M}\sum_{l=1}^{\infty}
\int^{t}_{0}(t-s)^{2\alpha-2-\frac{\alpha p}{\beta}}e^{-2\nu(t-s)}\frac{(\lambda_{k}^{\bar{h},\beta}(t-s)^\alpha)^{\frac{p}{\beta}}}{(1+\lambda_{k}^{\bar{h},\beta}(t-s)^\alpha)^2}\nonumber\\
&\relphantom{=}{}\times
(P_{\bar{h}}(h(s,u_n^{\bar{h}}(s))-h(s,u_n(s)))\cdot e_l,\varphi_k^{\bar{h}})^2ds\nonumber\\
&\leq C(\widetilde{\mu}_1^n)^2\mathbb{E}\int^{t}_{0}(t-s)^{2\alpha-2-\frac{\alpha p}{\beta}}e^{-2\nu(t-s)}\mathbb{E}\|P_{\bar{h}}h(s,u_n^{\bar{h}}(s))-P_{\bar{h}}h(s,u_n(s))
\|_{\mathcal{L}_2^0}^2ds\nonumber\\
&\leq C(\widetilde{\mu}_1^n)^2\int^{t}_{0}(t-s)^{2\alpha-2-\frac{\alpha p}{\beta}}e^{-2\nu(t-s)}
\mathbb{E}\|u_n^{\bar{h}}(s)-u_n(s)\|^2ds\nonumber&
\end{flalign}
\begin{flalign}
&\leq C(\widetilde{\mu}_1^n)^2\int^{t}_{0}(t-s)^{2\alpha-2-\frac{\alpha p}{\beta}}e^{-2\nu(t-s)}
\mathbb{E}\|u_n^{\bar{h}}(s)-u_n(s)\|^2_{\mathbb{H}^p}ds\nonumber\\
&\leq C(\widetilde{\mu}_1^n)^2\int^{t}_{0}(t-s)^{2\alpha-2-\frac{\alpha p}{\beta}}e^{-2\nu(t-s)}
(\mathbb{E}\|\Pi_1^2(s)\|_{\mathbb{H}^p}^2+\mathbb{E}\|\Pi_1^1(s)\|_{\mathbb{H}^p}^2
\nonumber
\\
&\relphantom{=}{}
+\mathbb{E}\|\Pi_2(s)\|_{\mathbb{H}^p}^2)ds\nonumber\\
&\leq C(\widetilde{\mu}_1^n)^2\int^t_0(t-s)^{2\alpha-2-\frac{\alpha p}{\beta}}e^{-2\nu(t-s)}\mathbb{E}\|\Pi_1^2(s)\|_{\mathbb{H}^p}^2ds +C(\widetilde{\mu}_1^n)^2\ell_{\bar{h}}\bar{h}^{4\widetilde{\gamma}-2p} \nonumber
\\
&\relphantom{=}{}
+C(\widetilde{\mu}_1^n)^2\bar{h}^{4\widetilde{\gamma}-2p}. &
\end{flalign}
Combining \eqref{4.10}-\eqref{4.9} together, we obtain for $p=0,\beta$ that
\begin{align*}
&\mathbb{E}|\!|\Pi_1^2(t)|\!|_{\mathbb{H}^p}^2
\\
&\leq C
\int^t_0(t-s)^{2\alpha-2-\frac{\alpha p}{\beta}}e^{-2\nu(t-s)}
\mathbb{E}\|\Pi_1^2(s)\|_{\mathbb{H}^p}^2 ds
+C\ell_{\bar{h}}\bar{h}^{4\widetilde{\gamma}-2p}
+C\bar{h}^{4\widetilde{\gamma}-2p}.
\end{align*}
Lemma \ref{lem4} conduces us to
\begin{align*}
\mathbb{E}|\!|\Pi_1^2(t)|\!|^2&\leq C\ell_{\bar{h}}\bar{h}^{4\widetilde{\gamma}}
+C\bar{h}^{4\widetilde{\gamma}}
\quad \forall t\in[0,T],
\end{align*}
and by Lemma \ref{lem2.11}, we have
\begin{align*}
\mathbb{E}|\!|\Pi_1^2(t)|\!|_{\mathbb{H}^\beta}^2&\leq
C\ell_{\bar{h}}\bar{h}^{4\widetilde{\gamma}-2\beta}
+C\bar{h}^{4\widetilde{\gamma}-2\beta} \quad t\in[0,T].
\end{align*}
Therefore,
\begin{align*}
\mathbb{E}|\!|\Pi_1^2(t)|\!|_{\mathbb{H}^p}^2&\leq
C\ell_{\bar{h}}\bar{h}^{4\widetilde{\gamma}-2p}
+C\bar{h}^{4\widetilde{\gamma}-2p} \quad t\in[0,T],
\end{align*}
and consequently, the desired assertion follows by the triangle inequality.
\end{proof}
Furthermore, thanks to  Theorems \ref{thm2.1} and  \ref{thm5.6}, a space-time mean-square error
 estimate for problem  \eqref{eq0.1}  follows from the triangle inequality.
\begin{theorem}\label{4.2}
Suppose that the assumptions of Theorem \ref{thm5.6} hold. Let $u$ be the
solution of \eqref{eq0.1} with $a\in L^2(\Omega;\mathbb{H}^{2\widetilde{\gamma}}(\mathcal{D}))$, $b\in L^2(\Omega;\mathbb{H}^{2\widetilde{\gamma}-\frac{2\beta}{\alpha}}(\mathcal{D}))$, and let $u_n^{\bar{h}}$ be the
 solution of \eqref{eq3.1} with $a_{\bar{h}}=P_{\bar{h}}a$, $b_h=P_{\bar{h}}b$. Then, with $\ell_{\bar{h}}=|\ln {\bar{h}}|$,
\begin{align*}
&\mathbb{E}\|u(t)-u_n^{\bar{h}}(t)\|^2
\\
&
\leq C\tau^2+C\sum_{l=1}^{\infty}(\eta_l^n)^2+C\sum_{l=1}^{\infty}(\widetilde{\eta}_l^n)^2
+C\ell_{\bar{h}}\bar{h}^{4\widetilde{\gamma}}
+C\bar{h}^{4\widetilde{\gamma}} \quad \forall t\in[0,T],
\end{align*}
provided that the infinite series are convergent, where $C$ is a positive constant independent of $\tau$ and ${\bar{h}}$.
\end{theorem}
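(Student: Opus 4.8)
The plan is to derive the combined estimate by a direct triangle-inequality decomposition, isolating the modeling error (from discretizing the two noises) and the spatial Galerkin error, both of which are already controlled by the preceding theorems. First I would split
\[
u(t)-u_n^{\bar{h}}(t)=\bigl(u(t)-u_n(t)\bigr)+\bigl(u_n(t)-u_n^{\bar{h}}(t)\bigr),
\]
and use $\|a+b\|^2\le 2\|a\|^2+2\|b\|^2$ together with the expectation to obtain
\[
\mathbb{E}\|u(t)-u_n^{\bar{h}}(t)\|^2\le 2\,\mathbb{E}\|u(t)-u_n(t)\|^2+2\,\mathbb{E}\|u_n(t)-u_n^{\bar{h}}(t)\|^2 .
\]

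For the spatial error term, Theorem \ref{thm5.6} immediately gives $\mathbb{E}\|u_n(t)-u_n^{\bar{h}}(t)\|^2\le C\ell_{\bar{h}}\bar{h}^{4\widetilde{\gamma}}+C\bar{h}^{4\widetilde{\gamma}}$ for all $t\in[0,T]$, since its left-hand side dominates $\mathbb{E}\|u_n(t)-u_n^{\bar{h}}(t)\|^2$. For the modeling error term I would invoke Theorem \ref{thm2.1}, which bounds it in the stronger norm $\mathbb{H}^{2\widetilde{\gamma}-\frac{2\beta}{\alpha}}$. The only point needing a line of checking is the reduction to the $L^2=\mathbb{H}^0$ norm: because $\widetilde{\gamma}=\max(\gamma,\frac{\beta}{\alpha})$ forces $2\widetilde{\gamma}-\frac{2\beta}{\alpha}\ge 0$, and since $0<\lambda_1\le\lambda_k$ gives $\lambda_k^{2\widetilde{\gamma}-\frac{2\beta}{\alpha}}\ge\lambda_1^{2\widetilde{\gamma}-\frac{2\beta}{\alpha}}$, one has $\|v\|_{\mathbb{H}^0}^2\le\lambda_1^{-(2\widetilde{\gamma}-\frac{2\beta}{\alpha})}\|v\|_{\mathbb{H}^{2\widetilde{\gamma}-\frac{2\beta}{\alpha}}}^2$; hence
\[
\mathbb{E}\|u(t)-u_n(t)\|^2\le C\,\mathbb{E}\|u(t)-u_n(t)\|_{\mathbb{H}^{2\widetilde{\gamma}-\frac{2\beta}{\alpha}}}^2\le C\tau^2+C\sum_{l=1}^{\infty}(\eta_l^n)^2+C\sum_{l=1}^{\infty}(\widetilde{\eta}_l^n)^2 ,
\]
under the assumed convergence of the two series. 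Adding the two bounds yields exactly the stated estimate.

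Finally I would note that the hypotheses match up: Theorem \ref{4.2} assumes the conditions of Theorem \ref{thm5.6}, namely $(\bf{A}_1)$, $(\bf{A}_3)$--$(\bf{A}_4)$ with $\frac{3}{2}<\alpha<2$, $\frac{1}{2}<\beta\le1$, $\nu>0$, $\frac{1}{2}<H<1$, and these in particular include the hypotheses $(\bf{A}_1)$ and $(\bf{A}_3)$ required by Theorem \ref{thm2.1}, so both cited estimates are applicable with the same data $a$, $b$. There is no genuine obstacle in this argument: all the hard analysis---the regularity theory, the modeling-error convergence, and the finite-element convergence with the logarithmic factor $\ell_{\bar{h}}$---has already been carried out, and the present statement is a one-step synthesis via the triangle inequality, with the only care being the trivial spectral norm embedding noted above.
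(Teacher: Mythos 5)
Your proposal is correct and follows essentially the same route as the paper, which derives Theorem \ref{4.2} directly from Theorems \ref{thm2.1} and \ref{thm5.6} via the triangle inequality; your explicit check that $2\widetilde{\gamma}-\frac{2\beta}{\alpha}\ge 0$ justifies the passage from the $\mathbb{H}^{2\widetilde{\gamma}-\frac{2\beta}{\alpha}}$ norm to the $L^2$ norm, a step the paper leaves implicit.
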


\section{Conclusions}\label{sec:6}

This paper first introduces the semilinear stochastic space-time fractional wave equations with external infinite dimensional multiplicative white noise and fractional Gaussian noise. It is modeling the wave propagation with frequency-dependent power-law attenuation under the influences of the fluctuations of the external noises, the striking features of which are multiscale and usually happening in inhomogeneous media.
Then we establish the theory of the existence, uniqueness, and regularity of the model and its regularized version. Finally, the theory of finite element approximations is proposed.

\appendix
\renewcommand\thesection{\appendixname~\Alph{section}}
\renewcommand\thetheorem{\Alph{section}.\arabic{theorem}}
\renewcommand\theequation{\Alph{section}.\arabic{equation}}
\section{Proof of Lemma \ref{le1}.}\label{A}
\begin{proof}
Let
\begin{equation}\label{a1}
\displaystyle v(t,x)= \sum\limits_{k=1}^{\infty}(v(t),\varphi_k)\varphi_k(x)
=\sum\limits_{k=1}^{\infty}v_k(t)\varphi_k(x)
\end{equation}
be the solution of \eqref{eq1.4'}.
Substituting \eqref{eq.2.8}-\eqref{eq.2.9} and \eqref{a1} into \eqref{eq1.4'}, and taking the scalar product in $\mathbb{H}$ of \eqref{eq1.4'} with $\varphi_k$, we get that for $1<\alpha<2$,
\begin{equation}\label{a2}
\displaystyle ^c_0\partial^\alpha_t v_k(t)+ \lambda_k^\beta v_k(t)=e^{\nu t}f_k(t)+e^{\nu t}
\sum_{j=1}^{\infty}g^{k,j}(t)\varsigma_j(t)\dot{\xi}_j(t)+e^{\nu t}\sum_{j=1}^{\infty}h^{k,j}(t)
\varrho_j(t)\dot{\xi}^H_j(t)
\end{equation}
with $v_k(0)=a_{k}$,~$\partial_tv_k(0)=\nu a_k+b_{k}$, where $a_k=(a,\varphi_k)$, $b_k=(b,\varphi_k)$, $f_k(t)=(f(t,u),\varphi_k)$, and $g^{k,j}(t)$ and $h^{k,j}(t)$ are given in Remark \ref{rem2.2}.

By Theorem 5.15 in \cite{Kilbas}, we have
\begin{equation}\label{a13}\begin{split}
\displaystyle v_k(t)&= a_{k} E_{\alpha,1}(-\lambda_k^\beta t^\alpha)+(\nu a_{k}+b_k)tE_{\alpha,2}(-\lambda_k^\beta t^\alpha)\\
&\relphantom{=}{}+\int^t_0 (t-s)^{\alpha-1}E_{\alpha,\alpha}(-\lambda_k^\beta (t-s)^\alpha)
e^{\nu s}f_k(s)ds\\
&\relphantom{=}{}+\int^t_0 (t-s)^{\alpha-1}E_{\alpha,\alpha}(-\lambda_k^\beta (t-s)^\alpha)
e^{\nu s}\sum_{j=1}^{\infty}g^{k,j}(s)\varsigma_j(s)\dot{\xi}_j(s)ds\\
&\relphantom{=}{}+\int^t_0 (t-s)^{\alpha-1}E_{\alpha,\alpha}(-\lambda_k^\beta (t-s)^\alpha)
e^{\nu s}\sum_{j=1}^{\infty}h^{k,j}(s)\varrho_j(s)\dot{\xi}^H_j(s)ds.
\end{split}\end{equation}

%\begin{flushleft}
%$\dot{\xi}^H_j(s)ds\varphi_k(x)$
%\end{flushleft}

Then, it follows from \eqref{eq.2.8}-\eqref{eq.2.9}, \eqref{a1}, and \eqref{a13} that
%\begin{flushleft}
%abc
%\end{flushleft}
\begin{flalign*}\label{a4}
%\begin{split}
& \displaystyle v(t,x)
 \nonumber\\
& \displaystyle = \sum\limits_{k=1}^{\infty}a_{k} E_{\alpha,1}(-\lambda_k^\beta t^\alpha)\varphi_k(x)
+\sum\limits_{k=1}^{\infty}(\nu a_{k}+b_k)t E_{\alpha,2}(-\lambda_k^\beta t^\alpha)\varphi_k(x) \\
&\relphantom{=}{}+\sum\limits_{k=1}^{\infty}\int^t_0 (t-s)^{\alpha-1}E_{\alpha,\alpha}(-\lambda_k^\beta (t-s)^\alpha)e^{\nu s}f_k(s)ds\varphi_k(x) \\
&\relphantom{=}{}+\sum\limits_{k=1}^{\infty}\int^t_0 (t-s)^{\alpha-1}E_{\alpha,\alpha}(-\lambda_k^\beta (t-s)^\alpha)e^{\nu s}\sum_{j=1}^{\infty}g^{k,j}(s)\varsigma_j(s)\dot{\xi}_j(s)ds\varphi_k(x)  \\
&\relphantom{=}{}+\sum\limits_{k=1}^{\infty}\int^t_0 (t-s)^{\alpha-1}E_{\alpha,\alpha}(-\lambda_k^\beta (t-s)^\alpha)e^{\nu s}\sum_{j=1}^{\infty}h^{k,j}(s)\varrho_j(s)\dot{\xi}^H_j(s)ds\varphi_k(x)  & %\\  \nonumber
%\end{split} &
\end{flalign*}

\begin{flalign*}
&=\sum\limits_{k=1}^{\infty} E_{\alpha,1}(-\lambda_k^\beta t^\alpha)(a, \varphi_k)\varphi_k(x)
+\sum\limits_{k=1}^{\infty}tE_{\alpha,2}(-\lambda_k^\beta t^\alpha)(\nu a+b, \varphi_k)\varphi_k(x)\\
&\relphantom{=}{}+\sum\limits_{k=1}^{\infty}\int^t_0 (t-s)^{\alpha-1}E_{\alpha,\alpha}(-\lambda_k^\beta (t-s)^\alpha)e^{\nu s}(f(s,u),\varphi_k)ds\varphi_k(x)\\
&\relphantom{=}{}+\sum\limits_{k=1}^{\infty}\int^t_0 (t-s)^{\alpha-1}E_{\alpha,\alpha}(-\lambda_k^\beta (t-s)^\alpha)e^{\nu s}\left(\sum_{l,j=1}^{\infty}g^{l,j}(s)\varsigma_j(s)\dot{\xi}_j(s)\varphi_l,\varphi_k\right)ds\varphi_k(x)\\
&\relphantom{=}{}+\sum\limits_{k=1}^{\infty}\int^t_0 (t-s)^{\alpha-1}E_{\alpha,\alpha}(-\lambda_k^\beta (t-s)^\alpha)e^{\nu s}
\left(\sum_{l,j=1}^{\infty}h^{l,j}(s)\varrho_j(s)\dot{\xi}^H_j(s)\varphi_l,\varphi_k\right)ds\varphi_k(x) & %\\
\end{flalign*}
\begin{flalign*}
&=\int_{\mathcal{D}}\sum\limits_{k=1}^{\infty} \left(E_{\alpha,1}(-\lambda_k^\beta t^\alpha)+\nu tE_{\alpha,2}(-\lambda_k^\beta t^\alpha)\right)\varphi_k(x)\varphi_k(y)a(y)dy\\
&\relphantom{=}{}+\int_{\mathcal{D}}\sum\limits_{k=1}^{\infty}tE_{\alpha,2}(-\lambda_k^\beta t^\alpha)\varphi_k(x) \varphi_k(y)b(y)dy\\
&\relphantom{=}{}+\int^t_0\int_{\mathcal{D}} (t-s)^{\alpha-1}e^{\nu s}\sum\limits_{k=1}^{\infty} E_{\alpha,\alpha}(-\lambda_k^\beta (t-s)^\alpha)\varphi_k(x)\varphi_k(y)f(s,u(s,y))dyds\\
&\relphantom{=}{}+\int^t_0\int_{\mathcal{D}} (t-s)^{\alpha-1}e^{\nu s}
\sum\limits_{k=1}^{\infty}E_{\alpha,\alpha}(-\lambda_k^\beta (t-s)^\alpha)\varphi_k(x)\varphi_k(y)g(s,u(s,y))d\mathbb{W}(s,y)\\
&\relphantom{=}{}+\int^t_0\int_{\mathcal{D}}(t-s)^{\alpha-1}e^{\nu s}\sum\limits_{k=1}^{\infty}
E_{\alpha,\alpha}(-\lambda_k^\beta (t-s)^\alpha)\varphi_k(x)\varphi_k(y)h(s,u(s,y))d\mathbb{W}^H(s,y). &
\end{flalign*}

The convergence of the above series can be ascertained by the fact that for any
$\gamma\in\{1,2,\alpha\}$, $E_{\alpha,\gamma}\left(-\lambda_k^\beta t^\alpha\right)\sim
\left[\lambda_k^\beta t^\alpha\Gamma(\gamma-\alpha)\right]^{-1}$ when $k\rightarrow +\infty$.\\
Noticing that $v(t,x)=e^{\nu t}u(t,x)$, hence we obtain
\begin{flalign*}
& u(t,x)
\\
&=\int_{\mathcal{D}}e^{-\nu t}\sum\limits_{k=1}^{\infty} \left(E_{\alpha,1}(-\lambda_k^\beta t^\alpha)+\nu tE_{\alpha,2}(-\lambda_k^\beta t^\alpha)\right)\varphi_k(x)\varphi_k(y)a(y)dy\\
&\relphantom{=}{}+\int_{\mathcal{D}}e^{-\nu t}\sum\limits_{k=1}^{\infty}tE_{\alpha,2}(-\lambda_k^\beta t^\alpha)\varphi_k(x) \varphi_k(y)b(y)dy\\
&\relphantom{=}{}+\int^t_0\int_{\mathcal{D}} (t-s)^{\alpha-1}e^{-\nu (t-s)}\sum\limits_{k=1}^{\infty} E_{\alpha,\alpha}(-\lambda_k^\beta (t-s)^\alpha)\varphi_k(x)\varphi_k(y)f(s,u(s,y))dyds\\
&\relphantom{=}{}+\int^t_0\int_{\mathcal{D}}(t-s)^{\alpha-1}e^{-\nu (t-s)}
\sum\limits_{k=1}^{\infty}E_{\alpha,\alpha}(-\lambda_k^\beta (t-s)^\alpha)\varphi_k(x)\varphi_k(y)g(s,u(s,y))d\mathbb{W}(s,y)\\
&\relphantom{=}{}+\int^t_0\int_{\mathcal{D}}(t-s)^{\alpha-1}e^{-\nu (t-s)}\sum\limits_{k=1}^{\infty}
E_{\alpha,\alpha}(-\lambda_k^\beta (t-s)^\alpha)\varphi_k(x)\varphi_k(y)h(s,u(s,y))d\mathbb{W}^H(s,y) &
\end{flalign*}
\begin{flalign*}
&=\int_{\mathcal{D}}\mathcal{T}^\nu_{\alpha,\beta}(t,x,y)a(y)dy
+\int_{\mathcal{D}}\mathcal{R}^\nu_{\alpha,\beta}(t,x,y)b(y)dy\\
&\relphantom{=}{}+\int_{0}^t\int_{\mathcal{D}}\mathcal{S}^\nu_{\alpha,\beta}(t-s,x,y)f(s,u(s,y))dyds\\
&\relphantom{=}{}+\int_{0}^t\int_{\mathcal{D}}\mathcal{S}^\nu_{\alpha,\beta}(t-s,x,y)g(s,u(s,y))d{\mathbb{W}}(s,y)\\
&\relphantom{=}{}+\int_{0}^t\int_{\mathcal{D}}\mathcal{S}^\nu_{\alpha,\beta}(t-s,x,y)h(s,u(s,y))d{\mathbb{W}}^H(s,y). &
\end{flalign*}

Conversely, assuming that $u$ satisfies \eqref{eq2.1}, then for $t=0$,
\begin{equation*}
\begin{split}
u(0,x)&=\int_{\mathcal{D}}\mathcal{T}^\nu_{\alpha,\beta}(0,x,y)a(y)dy
=\int_{\mathcal{D}}\sum\limits_{k=1}^{\infty} E_{\alpha,1}(0)\varphi_k(x)\varphi_k(y)a(y)dy\\
&=\int_{\mathcal{D}}\sum\limits_{k=1}^{\infty}\varphi_k(x)\varphi_k(y)a(y)dy=a(x);
\end{split}
\end{equation*}
by Lemma \ref{le2.1} and Eq. (1.10.7) in \cite{Kilbas}, we have
\begin{flalign*}
& \partial_tu(t,x)|_{t=0}
\\
&=\partial_t\bigg[\int_{\mathcal{D}}\mathcal{T}^\nu_{\alpha,\beta}(t,x,y)a(y)dy
+\int_{\mathcal{D}}\mathcal{R}^\nu_{\alpha,\beta}(t,x,y)b(y)dy\\
&\relphantom{=}{}+\int_{0}^t\int_{\mathcal{D}}\mathcal{S}^\nu_{\alpha,\beta}(t-s,x,y)f(s,u(s,y))dyds\\
&\relphantom{=}{}+\int_{0}^t\int_{\mathcal{D}}\mathcal{S}^\nu_{\alpha,\beta}(t-s,x,y)g(s,u(s,y))d{\mathbb{W}}(s,y)\\
&\relphantom{=}{}+\int_{0}^t\int_{\mathcal{D}}\mathcal{S}^\nu_{\alpha,\beta}(t-s,x,y)h(s,u(s,y))d{\mathbb{W}}^H(s,y)\bigg]_{t=0} &
\end{flalign*}
\begin{flalign*}
&=\bigg[\int_{\mathcal{D}}(-\nu e^{-\nu t})\sum\limits_{k=1}^{\infty}
\left(E_{\alpha,1}(-\lambda_k^\beta t^\alpha)+\nu tE_{\alpha,2}(-\lambda_k^\beta t^\alpha)\right)
\varphi_k(x)\varphi_k(y)a(y)dy\\
&\relphantom{=}{}+\int_{\mathcal{D}}e^{-\nu t}
\sum\limits_{k=1}^{\infty}\left(-\lambda_k^\beta t^{\alpha-1}E_{\alpha,\alpha}(-\lambda_k^\beta t^\alpha)+\nu E_{\alpha,1}(-\lambda_k^\beta t^\alpha)\right)\varphi_k(x)\varphi_k(y)a(y)dy\\
&\relphantom{=}{}+\int_{\mathcal{D}}e^{-\nu t}\sum\limits_{k=1}^{\infty}\left(-\nu tE_{\alpha,2}(-\lambda_k^\beta t^\alpha)+E_{\alpha,1}(-\lambda_k^\beta t^\alpha)\right)\varphi_k(x) \varphi_k(y)b(y)dy\\
&\relphantom{=}{}+\int^t_0\int_{\mathcal{D}}e^{-\nu(t-s)}\sum\limits_{k=1}^{\infty}
\Big(-\nu (t-s)^{\alpha-1}E_{\alpha,\alpha}(-\lambda_k^\beta (t-s)^\alpha)\\
&\relphantom{==}{}+(t-s)^{\alpha-2} E_{\alpha,\alpha-1}(-\lambda_k^\beta(t-s)^\alpha)\Big)
\varphi_k(x)\varphi_k(y)f(s,u(s,y))dyds\\
&\relphantom{=}{}+\int^t_0\int_{\mathcal{D}}e^{-\nu(t-s)}\sum\limits_{k=1}^{\infty}
\Big(-\nu (t-s)^{\alpha-1}E_{\alpha,\alpha}(-\lambda_k^\beta (t-s)^\alpha)\\
&\relphantom{==}{}
+(t-s)^{\alpha-2} E_{\alpha,\alpha-1}(-\lambda_k^\beta(t-s)^\alpha)\Big)
\varphi_k(x)\varphi_k(y)g(s,u(s,y))d\mathbb{W}(s,y)\\
&\relphantom{=}{}+\int^t_0\int_{\mathcal{D}}e^{-\nu(t-s)}\sum\limits_{k=1}^{\infty}
\Big(-\nu (t-s)^{\alpha-1}E_{\alpha,\alpha}(-\lambda_k^\beta (t-s)^\alpha)\\
&\relphantom{==}{}
+(t-s)^{\alpha-2} E_{\alpha,\alpha-1}(-\lambda_k^\beta(t-s)^\alpha)\Big)
\varphi_k(x)\varphi_k(y)h(s,u(s,y))d\mathbb{W}^H(s,y)\bigg]_{t=0}\\
&=b(x).&
\end{flalign*}
We further show that \eqref{eq2.1} is the solution of \eqref{eq0.1}, i.e., to prove that
 \eqref{eq2.1} satisfies
\begin{equation}\label{A.3}\begin{split}
&{_0I}_t^{1,\nu}[ {^c_0\partial}^{\alpha,\nu}_t u(t,x)]+{_0I}_t^{1,\nu}[(-\Delta)^{\beta}u(t,x)]\\
&\relphantom{=}{}={_0I}_t^{1,\nu}\left[f(t,u)+g(t,u)\frac{\partial^2 \mathbb{W}(t,x)}{\partial t \partial x}+h(t,u)\frac{\partial^2 \mathbb{W}^H(t,x)}{\partial t \partial x}\right].
\end{split}\end{equation}
By Definitions \ref{def2.3}-\ref{def2.4}, it follows from \eqref{A.3} that
\begin{equation}\label{eq.2.18}\begin{split}
&{_0I}_t^{1}[ {^c_0\partial}^{\alpha}_t e^{\nu t}u(t,x)]+ {_0I}_t^{1}[(-\Delta)^{\beta}e^{\nu t}u(t,x)]\\
&\relphantom{=}{}={_0I}_t^{1}\left[e^{\nu t}\left(f(t,u)+g(t,u)\frac{\partial^2 \mathbb{W}(t,x)}{\partial t \partial x}+h(t,u)\frac{\partial^2\mathbb{W}^H(t,x)}{\partial t \partial x}\right)\right].
\end{split}\end{equation}
Let $v(t,x)=e^{\nu t}u(t,x)$. Then \eqref{eq.2.18} can be rewritten as
\begin{equation}\label{eq.2.19}\begin{split}
&{_0I}_t^{1}[ {^c_0\partial}^{\alpha}_t v(t,x)]+{_0I}_t^{1}[(-\Delta)^{\beta}v(t,x)]\\
&\relphantom{=}{}={_0I}_t^{1}\left[e^{\nu t}\left(f(t,u)+g(t,u)\frac{\partial^2 \mathbb{W}(t,x)}{\partial t \partial x}+h(t,u)\frac{\partial^2\mathbb{W}^H(t,x)}{\partial t \partial x}\right)\right].
\end{split}\end{equation}
Evidently, $v(0,x)=a(x)$ and $\partial_tv(t,x)|_{t=0}=\nu a(x)+b(x)$.
From \eqref{eq2.1} we have
\begin{equation}\label{eq2.17}
\begin{split}
v(t,x)&=\int_{\mathcal{D}}e^{\nu t}\mathcal{T}^\nu_{\alpha,\beta}(t,x,y)a(y)dy
+\int_{\mathcal{D}}e^{\nu t}\mathcal{R}^\nu_{\alpha,\beta}(t,x,y)b(y)dy\\
&\relphantom{=}{}+\int_{0}^t\int_{\mathcal{D}}e^{\nu t}\mathcal{S}^\nu_{\alpha,\beta}(t-s,x,y)f(s,u(s,y))dyds\\
&\relphantom{=}{}+\int_{0}^t\int_{\mathcal{D}}e^{\nu t}\mathcal{S}^\nu_{\alpha,\beta}(t-s,x,y)g(s,u(s,y))d{\mathbb{W}}(s,y)\\
&\relphantom{=}{}+\int_{0}^t\int_{\mathcal{D}}e^{\nu t}\mathcal{S}^\nu_{\alpha,\beta}(t-s,x,y)h(s,u(s,y))d{\mathbb{W}}^H(s,y).
\end{split}
\end{equation}
Thus, it only remains to prove that \eqref{eq2.17} satisfies \eqref{eq.2.19}.
Notice that
\begin{align}
& _0I_t^{1}[{^c_0\partial}^{\alpha}_t v(t,x)] \\
&
={^c_0\partial}^{\alpha-1}_t v(t,x)-\frac{t^{2-\alpha}}{\Gamma(3-\alpha)}\partial_tv(t,x)|_{t=0} \nonumber
\\
&
={^c_0\partial}^{\alpha-1}_t v(t,x)-\frac{t^{2-\alpha}}{\Gamma(3-\alpha)}(\nu a(x)+b(x)). \nonumber
\end{align}

Then, performing ${^c_0\partial}^{\alpha-1}_t$ on both sides of \eqref{eq2.17}, and using Fubini's theorem, Definition \ref{def2.2}, Lemma \ref{le2.1}, and Eqs. (1.83) and (1.100) in \cite{Podlubny}, we get

\begin{flalign*}
\end{flalign*}

\begin{flalign}\label{A.4}
& ^c_0\partial^{\alpha-1}_t v(t,x) \nonumber\\
&= {^c_0\partial}^{\alpha-1}_t\bigg[\int_{\mathcal{D}}e^{\nu t}\mathcal{T}^\nu_{\alpha,\beta}(t,x,y)a(y)dy
+\int_{\mathcal{D}}e^{\nu t}\mathcal{R}^\nu_{\alpha,\beta}(t,x,y)b(y)dy\nonumber\\
&\relphantom{=}{}+\int_{0}^t\int_{\mathcal{D}}e^{\nu t}\mathcal{S}^\nu_{\alpha,\beta}(t-s,x,y)f(s,u(s,y))dyds\\
&\relphantom{=}{}+\int_{0}^t\int_{\mathcal{D}}e^{\nu t}\mathcal{S}^\nu_{\alpha,\beta}(t-s,x,y)g(s,u(s,y))d{\mathbb{W}}(s,y)\nonumber\\
&\relphantom{=}{}+\int_{0}^t\int_{\mathcal{D}}e^{\nu t}\mathcal{S}^\nu_{\alpha,\beta}(t-s,x,y)h(s,u(s,y))d{\mathbb{W}}^H(s,y)\bigg] \nonumber  &
\end{flalign}
\begin{flalign*}
&=\int_{\mathcal{D}}\sum\limits_{k=1}^{\infty} \Big(-\lambda_k^\beta tE_{\alpha,2}(-\lambda_k^\beta t^\alpha)+\nu t^{2-\alpha}E_{\alpha,3-\alpha}(-\lambda_k^\beta t^\alpha)\Big)\varphi_k(x)\varphi_k(y)a(y)dy\nonumber\\
&\relphantom{=}{}+\int_{\mathcal{D}}\sum\limits_{k=1}^{\infty}t^{2-\alpha}
E_{\alpha,3-\alpha}(-\lambda_k^\beta t^\alpha)\varphi_k(x) \varphi_k(y)b(y)dy\nonumber\\
&\relphantom{=}{}+\int^t_0\int_{\mathcal{D}} \sum\limits_{k=1}^{\infty} E_{\alpha,1}(-\lambda_k^\beta (t-s)^\alpha)e^{\nu s}\varphi_k(x)\varphi_k(y)f(s,u(s,y))dyds\nonumber\\
&\relphantom{=}{}+\int^t_0\int_{\mathcal{D}} \sum\limits_{k=1}^{\infty} E_{\alpha,1}(-\lambda_k^\beta (t-s)^\alpha)e^{\nu s}\varphi_k(x)\varphi_k(y)g(s,u(s,y))d{\mathbb{W}}(s,y)\nonumber\\
&\relphantom{=}{}+\int^t_0\int_{\mathcal{D}} \sum\limits_{k=1}^{\infty} E_{\alpha,1}(-\lambda_k^\beta (t-s)^\alpha)e^{\nu s}\varphi_k(x)\varphi_k(y)h(s,u(s,y))d{\mathbb{W}^H}(s,y). &
\end{flalign*}
Acting $_0I_t^1(-\Delta)^{\beta}$ on both sides of \eqref{eq2.17}, in view of Fubini's theorem and
 Eq. (1.100) in \cite{Podlubny}, we obtain

\begin{flalign}\label{A.5}
&_0I_t^1[(-\Delta)^{\beta}v(t,x)] \\
&=\int_0^t\int_{\mathcal{D}}\sum\limits_{k=1}^{\infty} \left(E_{\alpha,1}(-\lambda_k^\beta s^\alpha)+\nu sE_{\alpha,2}(-\lambda_k^\beta s^\alpha)\right)\lambda_k^\beta\varphi_k(x)\varphi_k(y)a(y)dyds\nonumber\\
&\relphantom{}{}+\int_0^t\int_{\mathcal{D}}\sum\limits_{k=1}^{\infty}sE_{\alpha,2}(-\lambda_k^\beta s^\alpha)\lambda_k^\beta\varphi_k(x) \varphi_k(y)b(y)dyds\nonumber\\
&\relphantom{}{}+\int_0^t\int^s_0\int_{\mathcal{D}} (s-r)^{\alpha-1}e^{\nu r}\sum\limits_{k=1}^{\infty} E_{\alpha,\alpha}(-\lambda_k^\beta (s-r)^\alpha)
\lambda_k^\beta\varphi_k(x)\varphi_k(y)f(r,u(r,y))dydrds\nonumber\\
&\relphantom{}{}+\int_0^t\int^s_0\int_{\mathcal{D}} (s-r)^{\alpha-1}e^{\nu r}
\sum\limits_{k=1}^{\infty}E_{\alpha,\alpha}(-\lambda_k^\beta (s-r)^\alpha)
\lambda_k^\beta\varphi_k(x)\varphi_k(y)g(r,u(r,y))d\mathbb{W}(r,y)ds\nonumber\\
&\relphantom{}{}+\int_0^t\int^s_0\int_{\mathcal{D}}(s-r)^{\alpha-1}e^{\nu r}\sum\limits_{k=1}^{\infty}
E_{\alpha,\alpha}(-\lambda_k^\beta (s-r)^\alpha)
\lambda_k^\beta\varphi_k(x)\varphi_k(y)h(r,u(r,y))d\mathbb{W}^H(r,y)ds \nonumber &
\end{flalign}
\begin{flalign}
&=\int_{\mathcal{D}}\sum\limits_{k=1}^{\infty} \left(tE_{\alpha,2}(-\lambda_k^\beta t^\alpha)+\nu t^2E_{\alpha,3}(-\lambda_k^\beta t^\alpha)\right)\lambda_k^\beta \varphi_k(x)\varphi_k(y)a(y)dy\nonumber\\
&\relphantom{}{}+\int_{\mathcal{D}}\sum\limits_{k=1}^{\infty}t^2E_{\alpha,3}(-\lambda_k^\beta t^\alpha)\lambda_k^\beta \varphi_k(x) \varphi_k(y)b(y)dy\nonumber\\
&\relphantom{}{}+\int_0^t\int_{\mathcal{D}} (t-s)^{\alpha}e^{\nu s}\sum\limits_{k=1}^{\infty} E_{\alpha,\alpha+1}(-\lambda_k^\beta (t-s)^\alpha)\lambda_k^\beta \varphi_k(x)\varphi_k(y)f(s,u(s,y))dyds\nonumber\\
&\relphantom{}{}+\int_0^t\int_{\mathcal{D}} (t-s)^{\alpha}e^{\nu s}\sum\limits_{k=1}^{\infty} E_{\alpha,\alpha+1}(-\lambda_k^\beta (t-s)^\alpha)\lambda_k^\beta \varphi_k(x)\varphi_k(y)g(s,u(s,y))d\mathbb{W}(s,y)\nonumber\\
&\relphantom{}{}+\int_0^t\int_{\mathcal{D}} (t-s)^{\alpha}e^{\nu s}\sum\limits_{k=1}^{\infty} E_{\alpha,\alpha+1}(-\lambda_k^\beta (t-s)^\alpha)\lambda_k^\beta \varphi_k(x)\varphi_k(y)h(s,u(s,y))d\mathbb{W}^H(s,y). \nonumber &
\end{flalign}
Substituting \eqref{A.4} and \eqref{A.5} into the left side of \eqref{eq.2.19}, in view of \eqref{eq.2.1} and \eqref{eq.2.8}-\eqref{eq.2.9}, we deduce that
\begin{flalign}\label{A.6}
&_0I_t^1[^c_0\partial^{\alpha}_t v(t,x)]+{_0I}_t^1[(-\Delta)^{\beta}v(t,x)]
=-\frac{t^{2-\alpha}}{\Gamma(3-\alpha)}(\nu a(x)+b(x))\nonumber\\
&\relphantom{=}{}+\int_{\mathcal{D}}\sum\limits_{k=1}^{\infty} \Big(-\lambda_k^\beta tE_{\alpha,2}(-\lambda_k^\beta t^\alpha)+\nu t^{2-\alpha}
E_{\alpha,3-\alpha}(-\lambda_k^\beta t^\alpha)\Big)\varphi_k(x)\varphi_k(y)a(y)dy\nonumber\\
&\relphantom{=}{}+\int_{\mathcal{D}}\sum\limits_{k=1}^{\infty}t^{2-\alpha}
E_{\alpha,3-\alpha}(-\lambda_k^\beta t^\alpha)\varphi_k(x) \varphi_k(y)b(y)dy\nonumber\\
&\relphantom{=}{}+\int^t_0\int_{\mathcal{D}} \sum\limits_{k=1}^{\infty} E_{\alpha,1}(-\lambda_k^\beta (t-s)^\alpha)e^{\nu s}\varphi_k(x)\varphi_k(y)f(s,u(s,y))dyds\nonumber\\
&\relphantom{=}{}+\int^t_0\int_{\mathcal{D}} \sum\limits_{k=1}^{\infty} E_{\alpha,1}(-\lambda_k^\beta (t-s)^\alpha)e^{\nu s}\varphi_k(x)\varphi_k(y)g(s,u(s,y))d{\mathbb{W}}(s,y)\nonumber\\
&\relphantom{=}{}+\int^t_0\int_{\mathcal{D}} \sum\limits_{k=1}^{\infty} E_{\alpha,1}(-\lambda_k^\beta (t-s)^\alpha)e^{\nu s}\varphi_k(x)\varphi_k(y)h(s,u(s,y))d{\mathbb{W}^H}(s,y)\nonumber &
\end{flalign}
\begin{flalign}
&\relphantom{=}{}+\int_{\mathcal{D}}\sum\limits_{k=1}^{\infty} \left(tE_{\alpha,2}(-\lambda_k^\beta t^\alpha)+\nu t^2E_{\alpha,3}(-\lambda_k^\beta t^\alpha)\right)\lambda_k^\beta \varphi_k(x)\varphi_k(y)a(y)dy\nonumber\\
&\relphantom{=}{}+\int_{\mathcal{D}}\sum\limits_{k=1}^{\infty}t^2E_{\alpha,3}(-\lambda_k^\beta t^\alpha)\lambda_k^\beta \varphi_k(x) \varphi_k(y)b(y)dy\nonumber\\
&\relphantom{=}{}+\int_0^t\int_{\mathcal{D}} (t-s)^{\alpha}e^{\nu s}\sum\limits_{k=1}^{\infty} E_{\alpha,\alpha+1}(-\lambda_k^\beta (t-s)^\alpha)\lambda_k^\beta \varphi_k(x)\varphi_k(y)f(s,u(s,y))dyds\nonumber\\
&\relphantom{=}{}+\int_0^t\int_{\mathcal{D}} (t-s)^{\alpha}e^{\nu s}\sum\limits_{k=1}^{\infty} E_{\alpha,\alpha+1}(-\lambda_k^\beta (t-s)^\alpha)\lambda_k^\beta \varphi_k(x)\varphi_k(y)g(s,u(s,y))d\mathbb{W}(s,y)\nonumber\\
&\relphantom{=}{}+\int_0^t\int_{\mathcal{D}} (t-s)^{\alpha}e^{\nu s}\sum\limits_{k=1}^{\infty} E_{\alpha,\alpha+1}(-\lambda_k^\beta (t-s)^\alpha)\lambda_k^\beta \varphi_k(x)\varphi_k(y)h(s,u(s,y))d\mathbb{W}^H(s,y)\nonumber &
\end{flalign}
\begin{flalign}
&%\relphantom{=}{}
=\int_0^t\int_{\mathcal{D}}e^{\nu s}\sum\limits_{k=1}^{\infty} \varphi_k(x)\varphi_k(y)f(s,u(s,y)) dyds\nonumber\\
&\relphantom{=}{}+\int_0^t\int_{\mathcal{D}}e^{\nu s}\sum\limits_{k=1}^{\infty} \varphi_k(x)\varphi_k(y)g(s,u(s,y))d\mathbb{W}(s,y)\nonumber\\
&\relphantom{=}{}+\int_0^t\int_{\mathcal{D}}e^{\nu s}\sum\limits_{k=1}^{\infty} \varphi_k(x)\varphi_k(y)h(s,u(s,y))d\mathbb{W}^H(s,y)\nonumber &
\end{flalign}
\begin{flalign}
&%\relphantom{=}{}
={_0I}_t^1[e^{\nu t}f(t,u)]+\int_0^te^{\nu s}\int_{\mathcal{D}}\sum\limits_{k=1}^{\infty} \varphi_k(x)\varphi_k(y)\sum_{j,l=1}^{\infty}g^{j,l}(t)\varphi_j(y)\varsigma_l(s)\dot{\xi}_l(s)dyds\nonumber\\
&\relphantom{=}{}+\int_0^te^{\nu s}\int_{\mathcal{D}}\sum\limits_{k=1}^{\infty} \varphi_k(x)\varphi_k(y)\sum_{j,l=1}^{\infty}h^{j,l}(t)\varphi_j(y)\varrho_l(s)\dot{\xi}^H_l(s)dyds\nonumber &
\end{flalign}
\begin{flalign}
&%\relphantom{=}{}
={_0I}_t^1[e^{\nu t}f(t,u)]+\int_0^te^{\nu s}\sum_{k,l=1}^{\infty}g^{k,l}(t)\varphi_k(x)\varsigma_l(s)\dot{\xi}_l(s)ds\nonumber\\
&\relphantom{=}{}
+\int_0^te^{\nu s}\sum_{k,l=1}^{\infty}h^{k,l}(t)\varphi_k(x)\varrho_l(s)\dot{\xi}^H_l(s)ds\nonumber\\
&%\relphantom{=}{}
={_0I}_t^1\left[e^{\nu t}\left(f(t,u)+g(t,u)\frac{\partial^2 \mathbb{W}(t,x)}{\partial t \partial x}+h(t,u)\frac{\partial^2 \mathbb{W}^H(t,x)}{\partial t \partial x}\right)
\right]. &
\end{flalign}
The lemma is now proved.
\end{proof}

\section{Proof of Theorem \ref{thm4.2}.}\label{A.2}
\begin{proof}
We divide the proof into four steps.

Step 1. Estimate of $\mathbb{E}\|u_n(t)\|_{\mathbb{H}^{2\widetilde{\gamma}}}^2$.

By Lemma \ref{le2.2} and Remark \ref{re3.4}, from \eqref{eq2.15} we obtain  that
\begin{align}\label{eq.4.4}
\mathbb{E}\|u_n(t)\|_{\mathbb{H}^{2\widetilde{\gamma}}}^2&\leq C\mathbb{E}\bigg{\|}\int_{\mathcal{D}}\mathcal{T}^\nu_{\alpha,\beta}(t,\cdot,y)a(y)dy\bigg{\|}_{\mathbb{H}^{2\widetilde{\gamma}}}^2
+C\mathbb{E}\bigg{\|}\int_{\mathcal{D}}\mathcal{R}^\nu_{\alpha,\beta}(t,\cdot,y)b(y)dy\bigg{\|}_{\mathbb{H}^{2\widetilde{\gamma}}}^2\nonumber\\
&\relphantom{=}{}+C\mathbb{E}\bigg{\|}\int_{0}^t\int_{\mathcal{D}}\mathcal{S}^\nu_{\alpha,\beta}(t-s,\cdot,y)
f(s,u_n(s,y))dyds\bigg{\|}_{\mathbb{H}^{2\widetilde{\gamma}}}^2\nonumber\\
&\relphantom{=}{}+C\mathbb{E}\bigg{\|}\int_{0}^t\int_{\mathcal{D}}\mathcal{S}^\nu_{\alpha,\beta}(t-s,\cdot,y)
g(s,u_n(s,y))d{\mathbb{W}_n}(s,y)\bigg{\|}_{\mathbb{H}^{2\widetilde{\gamma}}}^2\nonumber\\
&\relphantom{=}{}+C\mathbb{E}\bigg{\|}\int_{0}^t\int_{\mathcal{D}}\mathcal{S}^\nu_{\alpha,\beta}(t-s,\cdot,y)
h(s,u_n(s,y))d{\mathbb{W}^H_n}(s,y)\bigg{\|}_{\mathbb{H}^{2\widetilde{\gamma}}}^2\nonumber\\
&\leq C(1+\nu^2t^2)e^{-2\nu t}\mathbb{E}\|a\|_{\mathbb{H}^{2\widetilde{\gamma}}}^2+Ce^{-2\nu t}\mathbb{E}\|b\|_{\mathbb{H}^{2\widetilde{\gamma}-\frac{2\beta}{\alpha}}}^2
+\mathcal{J}_1+\mathcal{J}_2+\mathcal{J}_3.
\end{align}
To estimate $\mathcal{J}_1$, we use the fact that $\{\varphi_k\}_{k=1}^{\infty}$ is an orthonormal basis in $L^2(\mathcal{D})$, the assumption on $f$ given in $(\bf{A}_1)$, Lemma \ref{le2.1}, and  H\"{o}lder's inequality, which leads to
\begin{align}\label{eq.4.5}
\mathcal{J}_1 & = C\mathbb{E}\sum_{k=1}^{\infty}\lambda_k^{2\widetilde{\gamma}}\bigg{(}
\int_{0}^t\int_{\mathcal{D}}(t-s)^{\alpha-1}e^{-\nu(t-s)}
\sum_{l=1}^{\infty}E_{\alpha,\alpha}\left(-\lambda_l^\beta(t-s)^\alpha\right)
\varphi_l(\cdot)\varphi_l(y)\nonumber\\
&\relphantom{=}{}\cdot f(s,u_n(s,y))dyds,\varphi_k\bigg{)}^2\nonumber\\
&=C\mathbb{E}\sum_{k=1}^{\infty}\lambda_k^{2\widetilde{\gamma}}\bigg{|}\int_{0}^t
\int_{\mathcal{D}}(t-s)^{\alpha-1}e^{-\nu(t-s)}
E_{\alpha,\alpha}\left(-\lambda_k^\beta(t-s)^\alpha\right)\varphi_k(y)
f(s,u_n(s,y))dyds\bigg{|}^2\nonumber\\
&\leq CT\mathbb{E}\sum_{k=1}^{\infty}\int_{0}^t(t-s)^{2\alpha-4}e^{-2\nu(t-s)}
\frac{(\lambda_k^\beta(t-s)^\alpha)^{\frac{2}{\alpha}}}{(1+\lambda_k^\beta(t-s)^\alpha)^2}
\lambda_k^{2\widetilde{\gamma}-\frac{2\beta}{\alpha}}(f(s,u_n(s)),\varphi_k)^2ds\nonumber\\
&\leq C\int_{0}^t (t-s)^{2\alpha-4} e^{-2\nu(t-s)}\left(1+\mathbb{E}\|u_n(s)\|_{\mathbb{H}^{2\widetilde{\gamma}-
\frac{2\beta}{\alpha}}}^2\right)ds\nonumber\\
&\leq C+C\int_{0}^t (t-s)^{2\alpha-4}e^{-2\nu(t-s)} \mathbb{E}\|u_n(s)\|_{\mathbb{H}^{2\widetilde{\gamma}}}^2ds,
\end{align}
where we also use $\frac{(\lambda_k^\beta(t-s)^\alpha)^{\frac{2}{\alpha}}}{(1+\lambda_k^\beta(t-s)^\alpha)^2}\leq C$ and $\mathbb{E}\|u_n(s)\|_{\mathbb{H}^{2\widetilde{\gamma}-\frac{2\beta}{\alpha}}}^2\leq C\mathbb{E}\|u_n(s)\|_{\mathbb{H}^{2\widetilde{\gamma}}}^2$. %\\

Without loss of generality, we assume that there exists a positive integer $N_t$ such that $t=t_{N_t+1}$. Since $\{\varphi_k\}_{k=1}^{\infty}$ is an orthonormal basis in $L^2(\mathcal{D})$, the Brownian motion has independent increments and
$\{\xi_l\}_{l=1}^{\infty}$ is a family of mutally independent one-dimensional standard Brownain motions, then by \eqref{eq.2.8}, $(\bf{A}_1)$, H\"{o}lder's inequality, the It\^{o} isometry, and the boundedness assumption of $\varsigma_k^n(t)$, we obtain
\begin{flalign}\label{eq.4.6}
& \mathcal{J}_2 \nonumber \\
& =C\mathbb{E}\sum_{k=1}^{\infty}\lambda_k^{2\widetilde{\gamma}}\bigg{(}
\int_{0}^t\int_{\mathcal{D}}(t-s)^{\alpha-1}e^{-\nu(t-s)}
\sum_{l=1}^{\infty}E_{\alpha,\alpha}\left(-\lambda_l^\beta(t-s)^\alpha\right)
\varphi_l(\cdot)\varphi_l(y)\nonumber\\
&\relphantom{=}{}\cdot g(s,u_n(s,y))d\mathbb{W}_n(s,y),\varphi_k\bigg{)}^2\nonumber\\
& =C\mathbb{E}\sum_{k=1}^{\infty}\lambda_k^{2\widetilde{\gamma}}
\bigg{|}\int_{0}^t\int_{\mathcal{D}}(t-s)^{\alpha-1}e^{-\nu(t-s)}
E_{\alpha,\alpha}\left(-\lambda_k^\beta(t-s)^\alpha\right)\varphi_k(y)
g(s,u_n(s,y))
\nonumber\\
& \relphantom{=}{} \cdot d\mathbb{W}_n(s,y)\bigg{|}^2&
\end{flalign}

\begin{flalign*}
&=C\mathbb{E}\sum_{k=1}^{\infty}\lambda_k^{2\widetilde{\gamma}}
\bigg(\int_{0}^{t}\int_{\mathcal{D}}(t-s)^{\alpha-1}e^{-\nu(t-s)}E_{\alpha,\alpha}
\left(-\lambda_k^\beta(t-s)^\alpha\right)
\varphi_k(y)\\
&\relphantom{=}{}\cdot \sum_{j,l=1}^{\infty}(g(s,u_n(s))\cdot e_l,\varphi_j) \varphi_j(y)\varsigma_l^n(s)\left(\sum_{i=1}^{N_t}\frac{1}{\sqrt{\tau}}
\xi_{li}\chi_i(s)\right)dyds\bigg)^2\nonumber\\
&=C\mathbb{E}\sum_{k=1}^{\infty}\lambda_k^{2\widetilde{\gamma}}
\bigg(\sum_{i=1}^{N_t}\int_{t_i}^{t_{i+1}}
(t-s)^{\alpha-1}e^{-\nu(t-s)}E_{\alpha,\alpha}\left(-\lambda_k^\beta(t-s)^\alpha\right)
\\
&\relphantom{=}{} \cdot \sum_{l=1}^{\infty}(g(s,u_n(s))\cdot e_l,\varphi_k) \varsigma_l^n(s)\frac{\xi_l(t_{i+1})-\xi_l(t_i)}{\tau}ds\bigg)^2&
\end{flalign*}

\begin{flalign*}
& = \frac{C}{\tau^2}\mathbb{E}\sum_{k=1}^{\infty}\lambda_k^{2\widetilde{\gamma}}
\bigg(\sum_{i=1}^{N_t}
\int_{t_i}^{t_{i+1}}\int_{t_i}^{t_{i+1}}(t-s)^{\alpha-1}
e^{-\nu(t-s)}E_{\alpha,\alpha}\left(-\lambda_k^\beta(t-s)^\alpha\right)\nonumber\\
&\relphantom{=}{}\cdot\sum_{l=1}^{\infty}(g(s,u_n(s))\cdot e_l,\varphi_k)\varsigma_l^n(s)dsd\xi_l(r)\bigg)^2\nonumber\\
&=\frac{C}{\tau^2}\mathbb{E}\sum_{k,l=1}^{\infty}\lambda_k^{2\widetilde{\gamma}} \sum_{i=1}^{N_t}\bigg(
\int_{t_i}^{t_{i+1}}\int_{t_i}^{t_{i+1}}(t-s)^{\alpha-1}
e^{-\nu(t-s)}E_{\alpha,\alpha}\left(-\lambda_k^\beta(t-s)^\alpha\right)\nonumber\\
&\relphantom{=}{}\cdot(g(s,u_n(s))\cdot e_l,\varphi_k)\varsigma_l^n(s)dsd\xi_l(r)\bigg)^2&
\end{flalign*}

\begin{flalign*}
&\leq \frac{C}{\tau}\mathbb{E}\sum_{k,l=1}^{\infty}\lambda_k^{2\widetilde{\gamma}} \sum_{i=1}^{N_t}\bigg|
\int_{t_i}^{t_{i+1}}(t-s)^{\alpha-1}
e^{-\nu(t-s)}E_{\alpha,\alpha}\left(-\lambda_k^\beta(t-s)^\alpha\right)\nonumber\\
&\relphantom{=}{}\cdot(g(s,u_n(s))\cdot e_l,\varphi_k)\varsigma_l^n(s)ds\bigg|^2\nonumber\\
&\leq C\mathbb{E}\sum_{k,l=1}^{\infty}\lambda_k^{2\widetilde{\gamma}} \sum_{i=1}^{N_t}
\int_{t_i}^{t_{i+1}}(t-s)^{2\alpha-2}
e^{-2\nu(t-s)}\left|E_{\alpha,\alpha}\left(-\lambda_k^\beta(t-s)^\alpha\right)\right|^2\nonumber\\
&\relphantom{=}{}\cdot|(g(s,u_n(s))\cdot e_l,\varphi_k)|^2|\varsigma_l^n(s)|^2ds&
\end{flalign*}

\begin{flalign*}
&\leq  C(\mu_1^n)^2\mathbb{E}\sum_{k,l=1}^{\infty}
\int_{0}^t (t-s)^{2\alpha-4} e^{-2\nu(t-s)}\frac{(\lambda_k^\beta(t-s)^\alpha)^{\frac{2}{\alpha}}}{(1+\lambda_k^\beta(t-s)^\alpha)^2}
\lambda_k^{2\widetilde{\gamma}-\frac{2\beta}{\alpha}}(g(s,u_n(s))\cdot e_l,\varphi_k)^2ds \nonumber\\
&\leq  C(\mu_1^n)^2\mathbb{E}\int_{0}^t (t-s)^{2\alpha-4} e^{-2\nu(t-s)}
\|(-\Delta)^{\widetilde{\gamma}-\frac{\beta}{\alpha}}g(s,u_n(s))\|_{\mathcal{L}_2^0}^2ds \nonumber\\
&\leq  C(\mu_1^n)^2+C(\mu_1^n)^2\int_{0}^t (t-s)^{2\alpha-4} e^{-2\nu(t-s)}
\mathbb{E}\|u_n(s)\|_{\mathbb{H}^{2\widetilde{\gamma}}}^2ds.
\end{flalign*}
Note that $\{\xi_l^H\}_{l=1}^{\infty}$ is a family of mutally independent one-dimensional fractional Brownain motions. By slightly modifying the arguments in \eqref{eq.4.6}, in view of \eqref{eq.2.9}, $(\bf{A}_1)$, Lemma \ref{le2.10}, H\"{o}lder's inequality, and the boundedness assumption of $\varrho_k^n(t)$, we deduce that
\begin{flalign}\label{eq.4.7}
&  \mathcal{J}_3 \nonumber\\
& = C\mathbb{E}\sum_{k=1}^{\infty}\lambda_k^{2\widetilde{\gamma}}
\bigg{(}\int_{0}^t\int_{\mathcal{D}}(t-s)^{\alpha-1}e^{-\nu(t-s)}
\sum_{l=1}^{\infty}E_{\alpha,\alpha}\left(-\lambda_l^\beta(t-s)^\alpha\right)\nonumber\\
&\relphantom{=}{}\cdot 
\varphi_l(\cdot)\varphi_l(y) h(s,u_n(s,y))d\mathbb{W}^H_n(s,y),\varphi_k\bigg{)}^2\nonumber\\
& =C\mathbb{E}\sum_{k=1}^{\infty}\lambda_k^{2\widetilde{\gamma}}
\bigg{|}\int_{0}^t\int_{\mathcal{D}}(t-s)^{\alpha-1}e^{-\nu(t-s)}
E_{\alpha,\alpha}\left(-\lambda_k^\beta(t-s)^\alpha\right)\varphi_k(y)
\nonumber\\
&
\relphantom{=}{} \cdot h(s,u_n(s,y))d\mathbb{W}^H_n(s,y)\bigg{|}^2 &
\end{flalign}

\begin{flalign*}
&=C\mathbb{E}\sum_{k=1}^{\infty}\lambda_k^{2\widetilde{\gamma}}
\bigg(\int_{0}^{t}\int_{\mathcal{D}}(t-s)^{\alpha-1}e^{-\nu(t-s)}E_{\alpha,\alpha}\left(-\lambda_k^\beta(t-s)^\alpha\right)
\varphi_k(y) \\
&\relphantom{=}{}\cdot  \sum_{j,l=1}^{\infty}(h(s,u_n(s))\cdot e_l,\varphi_j) \varphi_j(y)\varrho_l^n(s)\left(\sum_{i=1}^{N_t}\frac{1}{\tau^{1-H}}
\xi_{li}^H\chi_i(s)\right)dyds\bigg)^2\nonumber\\
&=C\mathbb{E}\sum_{k=1}^{\infty}\lambda_k^{2\widetilde{\gamma}}
\bigg(\int_{0}^{t}
(t-s)^{\alpha-1}e^{-\nu(t-s)}E_{\alpha,\alpha}\left(-\lambda_k^\beta(t-s)^\alpha\right)
\sum_{l=1}^{\infty}(h(s,u_n(s))\cdot e_l,\varphi_k)\nonumber\\
&\relphantom{=}{}\cdot\varrho_l^n(s)\frac{1}{\tau}\int_{0}^t
\sum_{i=1}^{N_t}\chi_i(r)d\xi_l^H(r)\chi_i(s)ds\bigg)^2\nonumber &
\end{flalign*}

\begin{flalign*}
&= \frac{C}{\tau^2}\mathbb{E}\sum_{k,l=1}^{\infty}\lambda_k^{2\widetilde{\gamma}}
\bigg(
\int_{0}^{t}\int_{0}^{t}(t-s)^{\alpha-1}
e^{-\nu(t-s)}E_{\alpha,\alpha}\left(-\lambda_k^\beta(t-s)^\alpha\right)\\
&\relphantom{=}{}\cdot(h(s,u_n(s))\cdot e_l,\varphi_k)\varrho_l^n(s)\sum_{i=1}^{N_t}\chi_i(r)\chi_i(s)dsd\xi_l^H(r)\bigg)^2\nonumber\\
&\leq\frac{C}{\tau^2}t^{2H-1}\mathbb{E}\sum_{k,l=1}^{\infty}\lambda_k^{2\widetilde{\gamma}} \int_{0}^{t}\bigg(\int_{0}^{t}(t-s)^{\alpha-1}
e^{-\nu(t-s)}E_{\alpha,\alpha}\left(-\lambda_k^\beta(t-s)^\alpha\right)\nonumber\\
&\relphantom{=}{}\cdot(h(s,u_n(s))\cdot e_l,\varphi_k)\varrho_l^n(s)\sum_{i=1}^{N_t}\chi_i(r)\chi_i(s)ds\bigg)^2dr &
\end{flalign*}

\begin{flalign*}
&= \frac{C}{\tau^2}\mathbb{E}\sum_{k,l=1}^{\infty}\lambda_k^{2\widetilde{\gamma}} \int_{0}^{t}\sum_{i=1}^{N_t}\chi_i(r)^2\bigg(\int_{0}^{t}
(t-s)^{\alpha-1}
e^{-\nu(t-s)}E_{\alpha,\alpha}\left(-\lambda_k^\beta(t-s)^\alpha\right)\nonumber\\
&\relphantom{=}{}\cdot(h(s,u_n(s))\cdot e_l,\varphi_k)\varrho_l^n(s)\chi_i(s)ds\bigg)^2dr\nonumber\\
&\leq \frac{C}{\tau}\mathbb{E}\sum_{k,l=1}^{\infty}\lambda_k^{2\widetilde{\gamma}} \sum_{i=1}^{N_t}\int_{t_i}^{t_{i+1}}
\int_{t_i}^{t_{i+1}}(t-s)^{2\alpha-2}
e^{-2\nu(t-s)}\left|E_{\alpha,\alpha}\left(-\lambda_k^\beta(t-s)^\alpha\right)\right|^2\nonumber\\
&\relphantom{=}{}\cdot|(h(s,u_n(s))\cdot e_l,\varphi_k)|^2|\varrho_l^n(s)|^2dsdr&
\end{flalign*}

\begin{flalign*}
&\leq  C(\widetilde{\mu}_1^n)^2\mathbb{E}\sum_{k,l=1}^{\infty}
\int_{0}^t (t-s)^{2\alpha-4} e^{-2\nu(t-s)}\frac{(\lambda_k^\beta(t-s)^\alpha)^{\frac{2}{\alpha}}}{(1+\lambda_k^\beta(t-s)^\alpha)^2}
\lambda_k^{2\widetilde{\gamma}-\frac{2\beta}{\alpha}}
\\
&\relphantom{=}{}\cdot(h(s,u_n(s))\cdot e_l,\varphi_k)^2ds \nonumber\\
&\leq  C(\widetilde{\mu}_1^n)^2\mathbb{E}\int_{0}^t (t-s)^{2\alpha-4} e^{-2\nu(t-s)}
\|(-\Delta)^{\widetilde{\gamma}-\frac{\beta}{\alpha}}h(s,u_n(s))\|_{\mathcal{L}_2^0}^2ds \nonumber\\
&\leq  C(\widetilde{\mu}_1^n)^2+C(\widetilde{\mu}_1^n)^2\int_{0}^t (t-s)^{2\alpha-4} e^{-2\nu(t-s)}
\mathbb{E}\|u_n(s)\|_{\mathbb{H}^{2\widetilde{\gamma}}}^2ds. &
\end{flalign*}
Collecting the above estimates, we obtain that for all $t\in[0,T]$,
\begin{align*}
& \mathbb{E}\|u_n(t)\|_{\mathbb{H}^{2\widetilde{\gamma}}}^2 \\
&\leq C(1+\nu^2 t^2)e^{-2\nu t}\mathbb{E}\|a\|_{\mathbb{H}^{2\widetilde{\gamma}}}^2+Ce^{-2\nu t}\mathbb{E}\|b\|_{\mathbb{H}^{2\widetilde{\gamma}-\frac{2\beta}{\alpha}}}^2
+C+C (\mu_1^n)^2+C(\widetilde{\mu}_1^n)^2\nonumber\\
&\relphantom{=}{}+\left(C+C (\mu_1^n)^2+C(\widetilde{\mu}_1^n)^2\right)
\int_{0}^t (t-s)^{2\alpha-4} e^{-2\nu(t-s)}
\mathbb{E}\|u_n(s)\|_{\mathbb{H}^{2\widetilde{\gamma}}}^2ds.
\end{align*}
Then an application of Lemma \ref{lem2.11} gives
\begin{align}\label{eq.4.8}
\mathbb{E}\|u_n(t)\|_{\mathbb{H}^{2\widetilde{\gamma}}}^2
&\leq \left(C+C (\mu_1^n)^2+C(\widetilde{\mu}_1^n)^2\right)(1+\nu^2 t^2)e^{-2\nu t}\mathbb{E}\|a\|_{\mathbb{H}^{2\widetilde{\gamma}}}^2\nonumber\\
&\relphantom{=}{}+\left(C+\left(C+C (\mu_1^n)^2+C(\widetilde{\mu}_1^n)^2\right)t^{2\alpha-3}\right)e^{-2\nu t}\mathbb{E}\|b\|_{\mathbb{H}^{2\widetilde{\gamma}-\frac{2\beta}{\alpha}}}^2\nonumber\\
&\relphantom{=}{}+C+C (\mu_1^n)^2+C(\widetilde{\mu}_1^n)^2+C\left(1+ (\mu_1^n)^2+(\widetilde{\mu}_1^n)^2\right)^2t^{2\alpha-3},
\end{align}
and consequently
\begin{align}\label{eq.4.9}
&\sup_{0\leq t\leq T}\mathbb{E}\|u_n(t)\|_{\mathbb{H}^{2\widetilde{\gamma}}}^2
\nonumber\\
&\leq \left(C+C (\mu_1^n)^2+C(\widetilde{\mu}_1^n)^2\right)\mathbb{E}\|a\|_{\mathbb{H}^{2\widetilde{\gamma}}}^2
+\left(C+C (\mu_1^n)^2+C(\widetilde{\mu}_1^n)^2\right) \mathbb{E}\|b\|_{\mathbb{H}^{2\widetilde{\gamma}-\frac{2\beta}{\alpha}}}^2\nonumber\\
&\relphantom{=}{}+C+C (\mu_1^n)^2+C(\widetilde{\mu}_1^n)^2+C\left(1+ (\mu_1^n)^2+(\widetilde{\mu}_1^n)^2\right)^2.
\end{align}

Step 2. Estimate of $\mathbb{E}\|\partial_t u_n(t)\|_{\mathbb{H}^{2\widetilde{\gamma}-\frac{2\beta}{\alpha}}}$.

By \eqref{eq.3.37}-\eqref{eq.3.41}, we deduce that
\begin{flalign}\label{eq.4.10}
& \partial_tu_n(t,x) \nonumber
\\
&=\int_{\mathcal{D}}(-\nu e^{-\nu t})\sum\limits_{k=1}^{\infty}
\left(E_{\alpha,1}(-\lambda_k^\beta t^\alpha)+\nu tE_{\alpha,2}(-\lambda_k^\beta t^\alpha)\right)
\varphi_k(x)\varphi_k(y)a(y)dy\nonumber\\
&\relphantom{=}{}+\int_{\mathcal{D}}e^{-\nu t}
\sum\limits_{k=1}^{\infty}\left(-\lambda_k^\beta t^{\alpha-1}E_{\alpha,\alpha}(-\lambda_k^\beta t^\alpha)+\nu E_{\alpha,1}(-\lambda_k^\beta t^\alpha)\right)\varphi_k(x)\varphi_k(y)a(y)dy &
\end{flalign}
\begin{flalign*}
&\relphantom{=}{}+\int_{\mathcal{D}}e^{-\nu t}\sum\limits_{k=1}^{\infty}\left(-\nu tE_{\alpha,2}(-\lambda_k^\beta t^\alpha)+E_{\alpha,1}(-\lambda_k^\beta t^\alpha)\right)\varphi_k(x) \varphi_k(y)b(y)dy\nonumber\\
&\relphantom{=}{}+\int^t_0\int_{\mathcal{D}}e^{-\nu(t-s)}\sum\limits_{k=1}^{\infty}
\Big(-\nu (t-s)^{\alpha-1}E_{\alpha,\alpha}(-\lambda_k^\beta (t-s)^\alpha)\nonumber\\
&\relphantom{==}{}+(t-s)^{\alpha-2} E_{\alpha,\alpha-1}(-\lambda_k^\beta(t-s)^\alpha)\Big)
\varphi_k(x)\varphi_k(y)f(s,u_n(s,y))dyds\nonumber &
\end{flalign*}
\begin{flalign*}
&\relphantom{=}{}+\int^t_0\int_{\mathcal{D}}e^{-\nu(t-s)}\sum\limits_{k=1}^{\infty}
\Big(-\nu (t-s)^{\alpha-1}E_{\alpha,\alpha}(-\lambda_k^\beta (t-s)^\alpha)\nonumber\\
&\relphantom{==}{}
+(t-s)^{\alpha-2} E_{\alpha,\alpha-1}(-\lambda_k^\beta(t-s)^\alpha)\Big)
\varphi_k(x)\varphi_k(y)g(s,u_n(s,y))d\mathbb{W}_n(s,y)\nonumber\\
&\relphantom{=}{}+\int^t_0\int_{\mathcal{D}}e^{-\nu(t-s)}\sum\limits_{k=1}^{\infty}
\Big(-\nu (t-s)^{\alpha-1}E_{\alpha,\alpha}(-\lambda_k^\beta (t-s)^\alpha)\nonumber\\
&\relphantom{==}{}
+(t-s)^{\alpha-2} E_{\alpha,\alpha-1}(-\lambda_k^\beta(t-s)^\alpha)\Big)
\varphi_k(x)\varphi_k(y)h(s,u_n(s,y))d\mathbb{W}_n^H(s,y)\nonumber &
\end{flalign*}
\begin{flalign*}
&:=\sum_{i=1}^6\widehat{\mathcal{J}}_i, & \nonumber
\end{flalign*}
\begin{align}\label{eq.4.11}
\mathbb{E}\|\widehat{\mathcal{J}}_1\|_{\mathbb{H}^{2\widetilde{\gamma}-\frac{2\beta}{\alpha}}}^2
+\mathbb{E}\|\widehat{\mathcal{J}}_2\|_{\mathbb{H}^{2\widetilde{\gamma}-\frac{2\beta}{\alpha}}}^2
\leq Ce^{-2\nu t}\mathbb{E}\|a\|_{\mathbb{H}^{2\widetilde{\gamma}}}^2,
\end{align}
\begin{align}\label{eq.4.12}
\mathbb{E}\|\widehat{\mathcal{J}}_3\|_{\mathbb{H}^{2\widetilde{\gamma}-\frac{2\beta}{\alpha}}}^2
\leq Ce^{-2\nu t}\mathbb{E}\|b\|_{\mathbb{H}^{2\widetilde{\gamma}-\frac{2\beta}{\alpha}}}^2,
\end{align}
and
\begin{align}\label{eq.4.13}
\mathbb{E}\|\widehat{\mathcal{J}}_4\|_{\mathbb{H}^{2\widetilde{\gamma}-\frac{2\beta}{\alpha}}}^2
\leq C(t^{2\alpha-1}+t^{2\alpha-3})\sup_{0\leq t\leq T}\left(1+\mathbb{E}\|u_n(t)\|_{\mathbb{H}^{2\widetilde{\gamma}}}^2\right).
\end{align}
By slightly modifying the arguments in \eqref{eq.4.6} and \eqref{eq.4.7}, we conclude that
\begin{flalign} \label{eq.4.14}
&\mathbb{E}\|\widehat{\mathcal{J}}_5\|_{\mathbb{H}^{2\widetilde{\gamma}-\frac{2\beta}{\alpha}}}^2\nonumber\\
&%\relphantom{=}{}
=\mathbb{E}\sum_{k=1}^{\infty}\lambda_k^{2\widetilde{\gamma}-\frac{2\beta}{\alpha}}
\bigg{(}\int_{0}^t\int_{\mathcal{D}}e^{-\nu(t-s)}
\sum_{l=1}^{\infty}\Big(-\nu(t-s)^{\alpha-1}E_{\alpha,\alpha}\left(-\lambda_l^\beta(t-s)^\alpha\right)\nonumber\\
&\relphantom{=}{}
+(t-s)^{\alpha-2}E_{\alpha,\alpha-1}\left(-\lambda_l^\beta(t-s)^\alpha\right)\Big)
\varphi_l(\cdot)\varphi_l(y)
g(s,u_n(s,y))d\mathbb{W}_n(s,y),\varphi_k\bigg{)}^2
&%\relphantom{=}{}
\end{flalign}
\begin{flalign*}
&=\mathbb{E}\sum_{k=1}^{\infty}\lambda_k^{2\widetilde{\gamma}-\frac{2\beta}{\alpha}}
\bigg(\int_{0}^t\int_{\mathcal{D}}e^{-\nu(t-s)}
\Big(-\nu(t-s)^{\alpha-1}E_{\alpha,\alpha}\left(-\lambda_k^\beta(t-s)^\alpha\right) \\
&\relphantom{=}{}
+(t-s)^{\alpha-2}E_{\alpha,\alpha-1}\left(-\lambda_k^\beta(t-s)^\alpha\right)\Big)\varphi_k(y)
\sum_{j,l=1}^{\infty}(g(s,u_n(s))\cdot e_l,\varphi_j)\varphi_j(y) \\
&\relphantom{=}{}
\cdot\varsigma_l^n(s)\left(\sum_{i=1}^{N_t}\frac{1}{\sqrt{\tau}}\xi_{li}\chi_i(s)\right)dyds\bigg)^2 &%\relphantom{=}{}
\end{flalign*}
\begin{flalign*}
&%\relphantom{=}{}
 =\frac{1}{\tau^2}\mathbb{E}\sum_{k=1}^{\infty}
\lambda_k^{2\widetilde{\gamma}-\frac{2\beta}{\alpha}}
\bigg(\sum_{i=1}^{N_t}
\int_{t_i}^{t_{i+1}}\int_{t_i}^{t_{i+1}}e^{-\nu(t-s)}
\Big(-\nu(t-s)^{\alpha-1}E_{\alpha,\alpha}\left(-\lambda_k^\beta(t-s)^\alpha\right) \\
&\relphantom{=}{}
+(t-s)^{\alpha-2}E_{\alpha,\alpha-1}\left(-\lambda_k^\beta(t-s)^\alpha\right)\Big)
\sum_{l=1}^{\infty}(g(s,u_n(s))\cdot e_l,\varphi_k)\varsigma_l^n(s)dsd\xi_l(r)\bigg)^2 \\
&%\relphantom{=}{}
\leq \frac{1}{\tau}
\mathbb{E}\sum_{k,l=1}^{\infty}\lambda_k^{2\widetilde{\gamma}-\frac{2\beta}{\alpha}} \sum_{i=1}^{N_t}\bigg|\int_{t_i}^{t_{i+1}}e^{-\nu(t-s)}
\Big(-\nu(t-s)^{\alpha-1}E_{\alpha,\alpha}\left(-\lambda_k^\beta(t-s)^\alpha\right) \\
&\relphantom{=}{}
+(t-s)^{\alpha-2}E_{\alpha,\alpha-1}\left(-\lambda_k^\beta(t-s)^\alpha\right)\Big)
(g(s,u_n(s))\cdot e_l,\varphi_k)\varsigma_l^n(s)ds\bigg|^2 
&
\end{flalign*}
\begin{flalign*}
&
\leq C\mathbb{E}\sum_{k,l=1}^{\infty}\int_{0}^{t}
e^{-2\nu(t-s)}\left((t-s)^{2\alpha-2}+(t-s)^{2\alpha-4}\right)
\frac{1}{(1+\lambda_k^\beta(t-s)^\alpha)^2} \\
&\relphantom{=}{}\cdot|\lambda_k^{\widetilde{\gamma}-\frac{\beta}{\alpha}}(g(s,u_n(s))\cdot e_l,\varphi_k)\varsigma_l^n(s)|^2ds\\
&%\relphantom{=}{}
\leq  C(\mu_1^n)^2\int_{0}^t e^{-2\nu(t-s)}
\left((t-s)^{2\alpha-2}+(t-s)^{2\alpha-4}\right)
\mathbb{E}\|(-\Delta)^{\widetilde{\gamma}-\frac{\beta}{\alpha}}g(s,u_n(s))\|_{\mathcal{L}_2^0}^2ds \\
&%\relphantom{=}{}
\leq  C(\mu_1^n)^2\left(t^{2\alpha-1}+t^{2\alpha-3}\right)
\sup_{0\leq t\leq T}\left(1+\mathbb{E}\|u_n(t)\|_{\mathbb{H}^{2\widetilde{\gamma}}}^2\right),  &
\end{flalign*}
and
\begin{flalign*}
&\mathbb{E}\|\widehat{\mathcal{J}}_6\|_{\mathbb{H}^{2\widetilde{\gamma}-\frac{2\beta}{\alpha}}}^2\nonumber\\
& %\relphantom{=}{} 
= \mathbb{E}\sum_{k=1}^{\infty}\lambda_k^{2\widetilde{\gamma}-\frac{2\beta}{\alpha}}
\bigg{(}\int_{0}^t\int_{\mathcal{D}}e^{-\nu(t-s)}
\sum_{l=1}^{\infty}\Big(-\nu(t-s)^{\alpha-1}E_{\alpha,\alpha}\left(-\lambda_l^\beta(t-s)^\alpha\right)\nonumber\\
&\relphantom{=}{}
+(t-s)^{\alpha-2}E_{\alpha,\alpha-1}\left(-\lambda_l^\beta(t-s)^\alpha\right)\Big)
\varphi_l(\cdot)\varphi_l(y)
h(s,u_n(s,y))d\mathbb{W}^H_n(s,y),\varphi_k\bigg{)}^2\nonumber &
\end{flalign*}
\begin{flalign} \label{eq.4.15}
& %\relphantom{=}{} 
=\mathbb{E}\sum_{k=1}^{\infty}
\lambda_k^{2\widetilde{\gamma}-\frac{2\beta}{\alpha}}
\bigg{(}\int_{0}^t\int_{\mathcal{D}}e^{-\nu(t-s)}
\Big(-\nu(t-s)^{\alpha-1}E_{\alpha,\alpha}\left(-\lambda_k^\beta(t-s)^\alpha\right)\nonumber\\
&\relphantom{=}{}
+(t-s)^{\alpha-2}E_{\alpha,\alpha-1}\left(-\lambda_k^\beta(t-s)^\alpha\right)\Big)\varphi_k(y)
\sum_{j,l=1}^{\infty}(h(s,u_n(s))\cdot e_l,\varphi_j)\varphi_j(y)\nonumber\\
&\relphantom{=}{}\cdot\varrho_l^n(s)\left(\sum_{i=1}^{N_t}\frac{1}{\tau^{1-H}}\xi_{li}^H\chi_i(s)\right)dyds\bigg)^2
&%\relphantom{=}{} 
\end{flalign}
\begin{flalign*}
&= \frac{1}{\tau^2}
\mathbb{E}\sum_{k,l=1}^{\infty}\lambda_k^{2\widetilde{\gamma}-\frac{2\beta}{\alpha}}
\bigg(\int_{0}^{t}\int_{0}^{t}e^{-\nu(t-s)}
\Big(-\nu(t-s)^{\alpha-1}E_{\alpha,\alpha}\left(-\lambda_k^\beta(t-s)^\alpha\right)\nonumber\\
&\relphantom{=}{}
+(t-s)^{\alpha-2}E_{\alpha,\alpha-1}\left(-\lambda_k^\beta(t-s)^\alpha\right)\Big)
(h(s,u_n(s))\cdot e_l,\varphi_k)\varrho_l^n(s)\nonumber\\
&\relphantom{=}{}\cdot\sum_{i=1}^{N_t}\chi_i(r)\chi_i(s)dsd\xi_l^H(r)\bigg)^2&
\end{flalign*}
\begin{flalign*}
& %\relphantom{=}{}
\leq\frac{t^{2H-1}}{\tau^2}
\mathbb{E}\sum_{k,l=1}^{\infty}\lambda_k^{2\widetilde{\gamma}-\frac{2\beta}{\alpha}} \int_{0}^{t}\bigg(\int_{0}^{t}e^{-\nu(t-s)}
\Big(-\nu(t-s)^{\alpha-1}E_{\alpha,\alpha}\left(-\lambda_k^\beta(t-s)^\alpha\right)\nonumber\\
&\relphantom{=}{}
+(t-s)^{\alpha-2}E_{\alpha,\alpha-1}\left(-\lambda_k^\beta(t-s)^\alpha\right)\Big)
(h(s,u_n(s))\cdot e_l,\varphi_k)\varrho_l^n(s)\sum_{i=1}^{N_t}\chi_i(r)\chi_i(s)ds\bigg)^2dr&
\end{flalign*}
\begin{flalign*}
&
\leq \frac{t^{2H-1}}{\tau^2}\mathbb{E}\sum_{k,l=1}^{\infty}\lambda_k^{2\widetilde{\gamma}-\frac{2\beta}{\alpha}} \int_{0}^{t}\sum_{i=1}^{N_t}\chi_i(r)^2\bigg(\int_{0}^{t}e^{-\nu(t-s)}
\Big(-\nu(t-s)^{\alpha-1}E_{\alpha,\alpha}\left(-\lambda_k^\beta(t-s)^\alpha\right)\nonumber\\
&\relphantom{=}{}
+(t-s)^{\alpha-2}E_{\alpha,\alpha-1}\left(-\lambda_k^\beta(t-s)^\alpha\right)\Big)
(h(s,u_n(s))\cdot e_l,\varphi_k)\varrho_l^n(s)\chi_i(s)ds\bigg)^2dr&
\end{flalign*}
\begin{flalign*}
&
\leq \frac{Ct^{2H-1}}{\tau}\mathbb{E}\sum_{k,l=1}^{\infty}\sum_{i=1}^{N_t}\int_{t_i}^{t_{i+1}}\int_{t_i}^{t_{i+1}}
e^{-2\nu(t-s)}\left((t-s)^{2\alpha-2}+(t-s)^{2\alpha-4}\right)\nonumber\\
&\relphantom{=}{}\cdot\frac{1}{(1+\lambda_k^\beta(t-s)^\alpha)^2}
|\lambda_k^{\widetilde{\gamma}-\frac{\beta}{\alpha}} (h(s,u_n(s))\cdot e_l,\varphi_k)\varrho_l^n(s)|^2dsdr&
\end{flalign*}
\begin{flalign*}
&
\leq  Ct^{2H-1}(\widetilde{\mu}_1^n)^2
\int_{0}^te^{-2\nu(t-s)}\left((t-s)^{2\alpha-2}+(t-s)^{2\alpha-4}\right)
\mathbb{E}\|(-\Delta)^{\widetilde{\gamma}-\frac{\beta}{\alpha}}h(s,u_n(s))\|_{\mathcal{L}_2^0}^2ds \nonumber\\
& %\relphantom{=}{}
\leq  C(\widetilde{\mu}_1^n)^2\left(t^{2\alpha+2H-2}+(t-s)^{2\alpha+2H-4}\right)
\sup_{0\leq t\leq T}\left(1+\mathbb{E}\|u_n(t)\|_{\mathbb{H}^{2\widetilde{\gamma}}}^2\right). &
\end{flalign*}
Then \eqref{eq.4.10}-\eqref{eq.4.15} implies that
\begin{flalign*}
 & \mathbb{E}\|\partial_tu_n(t)\|_{\mathbb{H}^{2\widetilde{\gamma}-\frac{2\beta}{\alpha}}}^2 \\
 &
\leq Ce^{-2\nu t}\mathbb{E}\|a\|_{\mathbb{H}^{2\widetilde{\gamma}}}^2
+Ce^{-2\nu t}\mathbb{E}\|b\|_{\mathbb{H}^{2\widetilde{\gamma}-\frac{2\beta}{\alpha}}}^2\nonumber\\
& \relphantom{=}{} +C(t^{2\alpha-1}+t^{2\alpha-3})\sup_{0\leq t\leq T}\left(1+\mathbb{E}\|u_n(t)\|_{\mathbb{H}^{2\widetilde{\gamma}}}^2\right)\nonumber\\
& \relphantom{=}{} +C(\mu_1^n)^2\left(t^{2\alpha-1}+t^{2\alpha-3}\right)
\sup_{0\leq t\leq T}\left(1+\mathbb{E}\|u_n(t)\|_{\mathbb{H}^{2\widetilde{\gamma}}}^2\right)\nonumber\\
& \relphantom{=}{} +C(\widetilde{\mu}_1^n)^2\left(t^{2\alpha+2H-2}+(t-s)^{2\alpha+2H-4}\right)
\sup_{0\leq t\leq T}\left(1+\mathbb{E}\|u_n(t)\|_{\mathbb{H}^{2\widetilde{\gamma}}}^2\right);
\end{flalign*}
and thus
\begin{flalign}\label{eq.4.16}
& \sup_{0\leq t\leq T}\mathbb{E}\|\partial_tu_n(t)\|_{\mathbb{H}^{2\widetilde{\gamma}-\frac{2\beta}{\alpha}}}^2 \nonumber
\\
& \leq  C\mathbb{E}\|a\|_{\mathbb{H}^{2\widetilde{\gamma}}}^2
+C\mathbb{E}\|b\|_{\mathbb{H}^{2\widetilde{\gamma}-\frac{2\beta}{\alpha}}}^2\nonumber\\
& \relphantom{=}{}+\left(C+C(\mu_1^n)^2+C(\widetilde{\mu}_1^n)^2\right)
\left(1+\sup_{0\leq t\leq T}\mathbb{E}\|u_n(t)\|_{\mathbb{H}^{2\widetilde{\gamma}}}^2\right).
\end{flalign}

Step 3. Estimate of $\mathbb{E}\|{_0^c\partial}_t^{\alpha,\nu}u_n(t)\|_{\mathbb{H}^{2\widetilde{\gamma}-2\beta}}^2$.

It follows from \eqref{eq0.1} that
\begin{flalign}\label{eq.4.17}
&\mathbb{E}\|{_0^c\partial}_t^{\alpha,\nu}u_n(t)\|_{\mathbb{H}^{2\widetilde{\gamma}-2\beta}}^2 \nonumber \\
&=\mathbb{E}\sum_{k=1}^{\infty}\lambda_k^{2\widetilde{\gamma}-2\beta}
({_0^c\partial}_t^{\alpha,\nu}u_n(t),\varphi_k)^2\nonumber\\
&\leq C\mathbb{E}\sum_{k=1}^{\infty}\lambda_k^{2\widetilde{\gamma}-2\beta}
\left((-\Delta)^\beta u_n(t),\varphi_k\right)^2+C\mathbb{E}\sum_{k=1}^{\infty}\lambda_k^{2\widetilde{\gamma}-2\beta}
(f(t,u_n(t)),\varphi_k)^2\nonumber\\
&\relphantom{=}{}+C\mathbb{E}\sum_{k=1}^{\infty}\lambda_k^{2\widetilde{\gamma}-2\beta}
\left(g(t,u_n(t))\frac{\partial^2\mathbb{W}_n(t,\cdot)}{\partial t\partial\cdot},\varphi_k\right)^2\nonumber\\
&\relphantom{=}{}+C\mathbb{E}\sum_{k=1}^{\infty}\lambda_k^{2\widetilde{\gamma}-2\beta}
\left(h(t,u_n(t))\frac{\partial^2\mathbb{W}^H_n(t,\cdot)}{\partial t\partial\cdot},\varphi_k\right)^2\nonumber\\
&:=\widetilde{\mathcal{J}}_1+\widetilde{\mathcal{J}}_2+\widetilde{\mathcal{J}}_3+\widetilde{\mathcal{J}}_4.
\end{flalign}
Note that $\{\varphi_k\}_{k=1}^{\infty}$ is an orthonormal basis in $L^2(\mathcal{D})$. By the definition of fractional Laplacian, there exists
\begin{align}\label{eq.4.18}
\widetilde{\mathcal{J}}_1&=C\mathbb{E}\sum_{k=1}^{\infty}\lambda_k^{2\widetilde{\gamma}-2\beta}
\left(\sum_{j=1}^\infty \lambda_j^\beta(u_n(t),\varphi_j)\varphi_j,\varphi_k\right)^2=C\mathbb{E}\sum_{k=1}^{\infty}\lambda_k^{2\widetilde{\gamma}}
(u_n(t),\varphi_k)^2\nonumber\\
&=C\mathbb{E}\|u_n(t)\|_{\mathbb{H}^{2\widetilde{\gamma}}}^2.
\end{align}
For $\widetilde{\mathcal{J}}_2$, we use $(\bf{A}_1)$, $\lambda_k^{\frac{2\beta}{\alpha}-2\beta}\leq C$ and $\mathbb{E}\|u_n(t)\|_{\mathbb{H}^{2\widetilde{\gamma}-\frac{2\beta}{\alpha}}}^2
\leq C \mathbb{E}\|u_n(t)\|_{\mathbb{H}^{2\widetilde{\gamma}}}^2$ to get
\begin{align}\label{eq.4.19}
\widetilde{\mathcal{J}}_2&=C\mathbb{E}\sum_{k=1}^{\infty}\lambda_k^{\frac{2\beta}{\alpha}-2\beta}
\lambda_k^{2\widetilde{\gamma}-\frac{2\beta}{\alpha}}
(f(t,u_n(t)),\varphi_k)^2\leq C\mathbb{E}\sum_{k=1}^{\infty}\lambda_k^{2\widetilde{\gamma}-\frac{2\beta}{\alpha}}
(f(t,u_n(t)),\varphi_k)^2\nonumber\\
&\leq C\mathbb{E}\left(1+\|u_n(t)\|_{\mathbb{H}^{2\widetilde{\gamma}-\frac{2\beta}{\alpha}}}^2\right) \leq C\mathbb{E}\left(1+\|u_n(t)\|_{\mathbb{H}^{2\widetilde{\gamma}}}^2\right).
\end{align}
Note that $\{\varphi_k\}_{k=1}^{\infty}$ is an orthonormal basis in $L^2(\mathcal{D})$, the Brownian motion has independent increments, and
$\{\xi_l\}_{l=1}^{\infty}$ is a family of mutally independent one-dimensional standard Brownain motions. Hence we deduce from \eqref{eq.2.8}, $(\bf{A}_1)$, the It\^{o} isometry and the boundedness assumption on $\varsigma_k^n$ that
\begin{flalign}\label{eq.4.20}
&\widetilde{\mathcal{J}}_3
\nonumber\\
&=C\mathbb{E}\sum_{k=1}^{\infty}\lambda_k^{2\widetilde{\gamma}-2\beta}
\left(\sum_{j,l=1}^{\infty}(g(t,u_n(t))\cdot e_l,\varphi_j)\varphi_j\varsigma_l^n(t)\left(\sum_{i=1}^{N_t}\frac{1}{\sqrt{\tau}}\xi_{li}\chi_i(t)\right),
\varphi_k\right)^2\nonumber\\
&= C\mathbb{E}\sum_{k=1}^{\infty}\lambda_k^{2\widetilde{\gamma}-2\beta}
\left(\sum_{l=1}^{\infty}(g(t,u_n(t))\cdot e_l,\varphi_k)\varsigma_l^n(t)\left(\sum_{i=1}^{N_t}\frac{\xi_{l}(t_{i+1})-\xi_{l}(t_{i})}{\tau}\chi_i(t)\right)
\right)^2\nonumber\\
&\leq \frac{C}{\tau^2}\mathbb{E}\sum_{k,l=1}^{\infty}\lambda_k^{2\widetilde{\gamma}-2\beta}
\sum_{i=1}^{N_t}\bigg(\int_{t_i}^{t_{i+1}}(g(t,u_n(t))\cdot e_l,\varphi_k)\varsigma_l^n(t)\chi_i(t)d\xi_l(r)\bigg)^2\nonumber&
\end{flalign}
\begin{flalign}
& \leq \frac{C(\mu_1^n)^2}{\tau^2}\mathbb{E}\sum_{k,l=1}^{\infty}\lambda_k^{2\widetilde{\gamma}-2\beta}
\sum_{i=1}^{N_t}\int_{t_i}^{t_{i+1}}\chi_i(t)(g(t,u_n(t))\cdot e_l,\varphi_k)^2dr\nonumber\\
&\leq \frac{Ct(\mu_1^n)^2}{\tau}\mathbb{E}\sum_{k,l=1}^{\infty}\lambda_k^{2\widetilde{\gamma}-\frac{2\beta}{\alpha}}
(g(t,u_n(t))\cdot e_l,\varphi_k)^2\nonumber\\
&=\frac{Ct(\mu_1^n)^2}{\tau}\mathbb{E}
\|(-\Delta)^{\widetilde{\gamma}-\frac{\beta}{\alpha}}g(t,u_n(t))\|_{\mathcal{L}_2^0}^2\nonumber\\
&\leq \frac{Ct(\mu_1^n)^2}{\tau} \left(1+\mathbb{E}\|u_n(t)\|_{\mathbb{H}^{2\widetilde{\gamma}}}^2\right). &
\end{flalign}
Since $\{\xi_l^H\}_{l=1}^{\infty}$ is a family of mutally independent one-dimensional fractional Brownain motions, using \eqref{eq.2.9}, $(\bf{A}_1)$, Lemma \ref{le2.10} and the boundedness assumption on $\varrho_k^n$, we get
\begin{flalign}\label{eq.4.21}
& \widetilde{\mathcal{J}}_4 \nonumber \\
&=C\mathbb{E}\sum_{k=1}^{\infty}\lambda_k^{2\widetilde{\gamma}-2\beta}
\left(\sum_{j,l=1}^{\infty}(h(t,u_n(t))\cdot e_l,\varphi_j)\varphi_j\varrho_l^n(t)\left(\sum_{i=1}^{N_t}\frac{1}{\tau^{1-H}}\xi^H_{li}\chi_i(t)\right),
\varphi_k\right)^2\nonumber\\
&= C\mathbb{E}\sum_{k=1}^{\infty}\lambda_k^{2\widetilde{\gamma}-2\beta}
\left(\sum_{l=1}^{\infty}(h(t,u_n(t))\cdot e_l,\varphi_k)\varrho_l^n(t)\left(\sum_{i=1}^{N_t}\frac{\xi_{l}^H(t_{i+1})-\xi_{l}^H(t_{i})}{\tau}\chi_i(t)\right)
\right)^2\nonumber\\
&= \frac{C}{\tau^2}\mathbb{E}\sum_{k=1}^{\infty}\lambda_k^{2\widetilde{\gamma}-2\beta}
\bigg(\sum_{i=1}^{N_t}\int_{t_i}^{t_{i+1}}\sum_{l=1}^{\infty}(h(t,u_n(t))\cdot e_l,\varphi_k)\varrho_l^n(t)\chi_i(t)d\xi_l^H(r)\bigg)^2\nonumber\\ &= \frac{C}{\tau^2}\mathbb{E}\sum_{k,l=1}^{\infty}\lambda_k^{2\widetilde{\gamma}-2\beta}
\sum_{i=1}^{N_t}\bigg(\int_{t_i}^{t_{i+1}}(h(t,u_n(t))\cdot e_l,\varphi_k)\varrho_l^n(t)\chi_i(t)d\xi_l^H(r)\bigg)^2\nonumber&
\end{flalign}
\begin{flalign} &\leq \frac{C(\widetilde{\mu}_1^n)^2\tau^{2H-1}}{\tau^2}\mathbb{E}\sum_{k,l=1}^{\infty}\lambda_k^{2\widetilde{\gamma}-\frac{2\beta}{\alpha}}
\sum_{i=1}^{N_t}\int_{t_i}^{t_{i+1}}\chi_i(t)(h(t,u_n(t))\cdot e_l,\varphi_k)^2dr\nonumber\\
&\leq Ct(\widetilde{\mu}_1^n)^2\tau^{2H-2}\mathbb{E}\sum_{k,l=1}^{\infty}\lambda_k^{2\widetilde{\gamma}-\frac{2\beta}{\alpha}}
(h(t,u_n(t))\cdot e_l,\varphi_k)^2\nonumber\\
&=Ct(\widetilde{\mu}_1^n)^2\tau^{2H-2}\mathbb{E}
\|(-\Delta)^{\widetilde{\gamma}-\frac{\beta}{\alpha}}h(t,u_n(t))\|_{\mathcal{L}_2^0}^2\nonumber\\
&\leq Ct(\widetilde{\mu}_1^n)^2\tau^{2H-2} \left(1+\mathbb{E}\|u_n(t)\|_{\mathbb{H}^{2\widetilde{\gamma}}}^2\right). &
\end{flalign}
Therefore,
\begin{align*}
\mathbb{E}\|{_0^c\partial}_t^{\alpha,\nu}u_n(t)\|_{\mathbb{H}^{2\widetilde{\gamma}-2\beta}}^2
\leq &C\mathbb{E} \left(1+\|u_n(t)\|_{\mathbb{H}^{2\widetilde{\gamma}}}^2\right)
+Ct(\mu_1^n)^2\tau^{-1} \mathbb{E}\left(1+\|u_n(t)\|_{\mathbb{H}^{2\widetilde{\gamma}}}^2\right)\\
&+Ct(\widetilde{\mu}_1^n)^2\tau^{2H-2} \mathbb{E}\left(1+\|u_n(t)\|_{\mathbb{H}^{2\widetilde{\gamma}}}^2\right),
\end{align*}
and consequently,
\begin{align}\label{eq.4.22}
&\sup_{0\leq t\leq T}\mathbb{E}\|{_0^c\partial}_t^{\alpha,\nu}u_n(t)\|_{\mathbb{H}^{2\widetilde{\gamma}-2\beta}}^2 \nonumber
\\
&\leq \left(C+C(\mu_1^n)^2\tau^{-1}
+C(\widetilde{\mu}_1^n)^2\tau^{2H-2}\right) \left(1+\sup_{0\leq t\leq T}\mathbb{E} \|u_n(t)\|_{\mathbb{H}^{2\widetilde{\gamma}}}^2\right).
\end{align}

Step 4. We prove a H\"{o}lder regularity property of the solution $u_n$.

In a similar way as in \eqref{eq.3.29}-\eqref{eq.3.33}, we have
for $0\leq \theta_1\leq \theta_2\leq T$,
\begin{flalign}\label{eq.4.23}
&\mathbb{E}\|u_n(\theta_2)-u_n(\theta_1)\|_{\mathbb{H}^{2\widetilde{\gamma}-\frac{2\beta}{\alpha}}}^2\nonumber\\
&%\relphantom{=}{}
\leq C
\mathbb{E}\bigg{\|}\int_{\mathcal{D}}\mathcal{T}^\nu_{\alpha,\beta}(\theta_2,\cdot,y)a(y)dy
-\int_{\mathcal{D}}\mathcal{T}^\nu_{\alpha,\beta}(\theta_1,x,y)a(y)dy\bigg{\|}_{\mathbb{H}^{2\widetilde{\gamma}-\frac{2\beta}{\alpha}}}^2\nonumber\\
&\relphantom{=}{}+C\mathbb{E}\bigg{\|}\int_{\mathcal{D}}\mathcal{R}^\nu_{\alpha,\beta}(\theta_2,\cdot,y)b(y)dy
-\int_{\mathcal{D}}\mathcal{R}^\nu_{\alpha,\beta}(\theta_1,\cdot,y)b(y)dy\bigg{\|}_{\mathbb{H}^{2\widetilde{\gamma}-\frac{2\beta}{\alpha}}}^2\nonumber\\
&\relphantom{=}{}+C\mathbb{E}\bigg{\|}\int_{0}^{\theta_2}\int_{\mathcal{D}}\mathcal{S}^\nu_{\alpha,\beta}
(\theta_2-s,\cdot,y)f(s,u_n(s,y))dyds\nonumber\\
&\relphantom{=}{}-\int_{0}^{\theta_1}\int_{\mathcal{D}}\mathcal{S}^\nu_{\alpha,\beta}
(\theta_1-s,\cdot,y)f(s,u_n(s,y))dyds\bigg{\|}_{\mathbb{H}^{2\widetilde{\gamma}-\frac{2\beta}{\alpha}}}^2\nonumber&
\end{flalign}
\begin{flalign}
&\relphantom{=}{}+C\mathbb{E}\bigg{\|}\int_{0}^{\theta_2}\int_{\mathcal{D}}\mathcal{S}^\nu_{\alpha,\beta}
(\theta_2-s,\cdot,y)g(s,u_n(s,y))d{\mathbb{W}_n}(s,y)\nonumber\\
&\relphantom{=}{}-\int_{0}^{\theta_1}\int_{\mathcal{D}}
\mathcal{S}^\nu_{\alpha,\beta}(\theta_1-s,\cdot,y)g(s,u_n(s,y))d{\mathbb{W}_n}(s,y)
\bigg{\|}_{\mathbb{H}^{2\widetilde{\gamma}-\frac{2\beta}{\alpha}}}^2\nonumber\\
&\relphantom{=}{}+C\mathbb{E}\bigg{\|}\int_{0}^{\theta_2}\int_{\mathcal{D}}\mathcal{S}^\nu_{\alpha,\beta}
(\theta_2-s,\cdot,y)h(s,u_n(s,y))d{\mathbb{W}^H_n}(s,y)\nonumber\\
&\relphantom{=}{}-\int_{0}^{\theta_1}\int_{\mathcal{D}}
\mathcal{S}^\nu_{\alpha,\beta}(\theta_1-s,\cdot,y)h(s,u_n(s,y))d{\mathbb{W}^H_n}(s,y)
\bigg{\|}_{\mathbb{H}^{2\widetilde{\gamma}-\frac{2\beta}{\alpha}}}^2\nonumber\\
&%\relphantom{=}{}
:=\widetilde{\mathcal{E}}_1+\widetilde{\mathcal{E}}_2+\widetilde{\mathcal{E}}_3
+\widetilde{\mathcal{E}}_4+\widetilde{\mathcal{E}}_5, &
\end{flalign}
\begin{align}\label{eq.4.24}
\widetilde{\mathcal{E}}_1 & \leq C|\theta_2-\theta_1|^2
\mathbb{E}\|a\|_{\mathbb{H}^{2\widetilde{\gamma}}}^2,
\end{align}
\begin{align}\label{eq.4.25}
\widetilde{\mathcal{E}}_2 & \leq C |\theta_2-\theta_1|^2
\mathbb{E}\|b\|_{\mathbb{H}^{2\widetilde{\gamma}-\frac{2\beta}{\alpha}}}^2,
\end{align}
and
\begin{align}\label{eq.4.26}
\widetilde{\mathcal{E}}_3 & \leq C \left(|\theta_2-\theta_1|^2+|\theta_2-\theta_1|^{2\alpha}\right)
\left(1+\sup_{0\leq s \leq T}\mathbb{E}\|u_n(s)\|_{\mathbb{H}^{2\widetilde{\gamma}}}^2\right).
\end{align}
Without loss of generality, we assume that there exist positive integers $N_{\theta_1}$, $N_{\theta_2}$ such that $\theta_1=t_{N_{\theta_1}+1}$, $\theta_2=t_{N_{\theta_2}+1}$.
Recall that $\{\varphi_k\}_{k=1}^{\infty}$ is an orthonormal basis in $L^2(\mathcal{D})$, $\{\xi_l\}_{l=1}^{\infty}$ and $\{\xi_l^H\}_{l=1}^{\infty}$, respectively, are the sequences of mutually independent one-dimensional standard Brownian motions and fractional Brownian motions, and that the Brownian motion has independent increments. By slightly modifying the arguments in \eqref{eq.3.34} and \eqref{eq.3.35}, in view of \eqref{eq.2.8}-\eqref{eq.2.9}, $(\bf{A}_1)$, Holder's inequality, the It\^{o} isometry, Lemma \ref{le2.10}, the boundedness assumption on $\varsigma_k^n(t)$ and $\varrho_k^n(t)$, we deduce that
\begin{flalign}\label{eq.4.27}
& \widetilde{\mathcal{E}}_4 \nonumber
\\
& =
C\mathbb{E}\sum\limits_{k=1}^{\infty}\lambda_k^{2\widetilde{\gamma}-\frac{2\beta}{\alpha}}
\bigg(\int_{0}^{\theta_2}\int_{\mathcal{D}}
(\theta_2-s)^{\alpha-1}e^{-\nu(\theta_2-s)}\sum\limits_{l=1}^{\infty}
E_{\alpha,\alpha}\left(-\lambda_l^\beta(\theta_2-s)^\alpha\right)\varphi_l(\cdot)\varphi_l(y)\nonumber\\
&\relphantom{=}{}\times
g(s,u_n(s,y))d\mathbb{W}_n(s,y)-\int_{0}^{\theta_1}\int_{\mathcal{D}}
(\theta_1-s)^{\alpha-1}e^{-\nu(\theta_1-s)}\sum\limits_{l=1}^{\infty}
E_{\alpha,\alpha}\left(-\lambda_l^\beta(\theta_1-s)^\alpha\right)\nonumber\\
&\relphantom{=}{}\times\varphi_l(\cdot)\varphi_l(y)
g(s,u_n(s,y))d\mathbb{W}_n(s,y),\varphi_k\bigg)^2\nonumber &
\end{flalign}
\begin{flalign}
&\leq C\mathbb{E}\sum_{k=1}^{\infty}\lambda_k^{2\widetilde{\gamma}-\frac{2\beta}{\alpha}}
\bigg|\int_{0}^{\theta_1}\int_{\mathcal{D}}
\bigg((\theta_2-s)^{\alpha-1}e^{-\nu(\theta_2-s)}
E_{\alpha,\alpha}\left(-\lambda_k^\beta(\theta_2-s)^\alpha\right)\nonumber\\
&\relphantom{=}{}-(\theta_1-s)^{\alpha-1}e^{-\nu(\theta_1-s)}
E_{\alpha,\alpha}\left(-\lambda_k^\beta(\theta_1-s)^\alpha\right)\bigg)
\varphi_k(y)\nonumber\\
&\relphantom{=}{}\times\sum_{j,l=1}^{\infty}(g(s,u_n(s))\cdot e_l,\varphi_j)\varphi_j(y)\varsigma_l^n(s)\left(\sum_{i=1}^{N_{\theta_1}}\frac{1}{\sqrt{\tau}}
\xi_{li}\chi_i(s)\right)dyds\bigg|^2\nonumber\\
&\relphantom{=}{}+C\mathbb{E}\sum_{k=1}^{\infty}\lambda_k^{2\widetilde{\gamma}-\frac{2\beta}{\alpha}}
\bigg|\int_{\theta_1}^{\theta_2}\int_{\mathcal{D}}
(\theta_2-s)^{\alpha-1}e^{-\nu(\theta_2-s)}
E_{\alpha,\alpha}\left(-\lambda_k^\beta(\theta_2-s)^\alpha\right)\varphi_k(y)\nonumber\\
&\relphantom{=}{}\times\sum_{j,l=1}^{\infty}(g(s,u_n(s))\cdot e_l,\varphi_j)\varphi_j(y)\varsigma_l^n(s)\left(\sum_{i=N_{\theta_1}+1}^{N_{\theta_2}}\frac{1}{\sqrt{\tau}}
\xi_{li}\chi_i(s)\right)dyds\bigg|^2\nonumber&
\end{flalign}
\begin{flalign}
&= C\mathbb{E}\sum_{k=1}^{\infty}\lambda_k^{2\widetilde{\gamma}-\frac{2\beta}{\alpha}}
\bigg|\sum_{i=1}^{N_{\theta_1}}\int_{t_i}^{t_{i+1}}
\bigg((\theta_2-s)^{\alpha-1}e^{-\nu(\theta_2-s)}
E_{\alpha,\alpha}\left(-\lambda_k^\beta(\theta_2-s)^\alpha\right)\nonumber\\
&\relphantom{=}{}-(\theta_1-s)^{\alpha-1}
e^{-\nu(\theta_1-s)} E_{\alpha,\alpha}\left(-\lambda_k^\beta(\theta_1-s)^\alpha\right)\bigg)
\sum_{l=1}^{\infty}(g(s,u_n(s))\cdot e_l,\varphi_k)\nonumber\\
&\relphantom{=}{}
\times\varsigma_l^n(s)\frac{\xi_l(t_{i+1})-\xi_l(t_{i})}{\tau}ds\bigg|^2
+C\mathbb{E}\sum_{k=1}^{\infty}\lambda_k^{2\widetilde{\gamma}-\frac{2\beta}{\alpha}}
\bigg|\sum_{i=N_{\theta_1}+1}^{N_{\theta_2}}
\int_{t_i}^{t_{i+1}}(\theta_2-s)^{\alpha-1}e^{-\nu(\theta_2-s)}\nonumber\\
&\relphantom{=}{}\times
E_{\alpha,\alpha}\left(-\lambda_k^\beta(\theta_2-s)^\alpha\right)
\sum_{l=1}^{\infty}(g(s,u_n(s))\cdot e_l,\varphi_k)\varsigma_l^n(s)\frac{\xi_l(t_{i+1})-\xi_l(t_{i})}{\tau}ds\bigg|^2\nonumber&
\end{flalign}
\begin{flalign}
&=\frac{C}{\tau^2}\mathbb{E}\sum_{k,l=1}^{\infty}\lambda_k^{2\widetilde{\gamma}-\frac{2\beta}{\alpha}}
\sum_{i=1}^{N_{\theta_1}}\bigg|\int_{t_i}^{t_{i+1}}
\int_{t_i}^{t_{i+1}}\bigg((\theta_2-s)^{\alpha-1}e^{-\nu(\theta_2-s)}
E_{\alpha,\alpha}\left(-\lambda_k^\beta(\theta_2-s)^\alpha\right)\nonumber\\
&\relphantom{=}{}-(\theta_1-s)^{\alpha-1}e^{-\nu(\theta_1-s)} E_{\alpha,\alpha}\left(-\lambda_k^\beta(\theta_1-s)^\alpha\right)\bigg)
(g(s,u_n(s))\cdot e_l,\varphi_k)\varsigma_l^n(s)dsd\xi_l(r)\bigg|^2\nonumber\\
&\relphantom{=}{}+\frac{C}{\tau^2}\mathbb{E}\sum_{k,l=1}^{\infty}\lambda_k^{2\widetilde{\gamma}-\frac{2\beta}{\alpha}}
\sum_{i=N_{\theta_1}+1}^{N_{\theta_2}}
\bigg|\int_{t_i}^{t_{i+1}}\int_{t_i}^{t_{i+1}}(\theta_2-s)^{\alpha-1}e^{-\nu(\theta_2-s)}
\nonumber\\
&\relphantom{=}{}\times E_{\alpha,\alpha}\left(-\lambda_k^\beta(\theta_2-s)^\alpha\right)(g(s,u_n(s))\cdot e_l,\varphi_k)\varsigma_l^n(s)dsd\xi_l(r)\bigg|^2\nonumber & 
\end{flalign}
\begin{flalign}
&\leq C \mathbb{E} \sum_{k,l=1}^{\infty}\lambda_k^{2\widetilde{\gamma}-\frac{2\beta}{\alpha}}
\sum_{i=1}^{N_{\theta_1}}\int_{t_i}^{t_{i+1}}\bigg((\theta_2-s)^{\alpha-1}e^{-\nu(\theta_2-s)}
E_{\alpha,\alpha}\left(-\lambda_k^\beta(\theta_2-s)^\alpha\right)\nonumber\\
&\relphantom{=}{}-(\theta_1-s)^{\alpha-1}
e^{-\nu(\theta_1-s)} E_{\alpha,\alpha}\left(-\lambda_k^\beta(\theta_1-s)^\alpha\right)\bigg)^2
\left|(g(s,u_n(s))\cdot e_l,\varphi_k)\right|^2
|\varsigma_l^n(s)|^2 ds\nonumber\\
&\relphantom{=}{}+C\mathbb{E}\sum_{k,l=1}^{\infty}\lambda_k^{2\widetilde{\gamma}-\frac{2\beta}{\alpha}}
\sum_{i=N_{\theta_1}+1}^{N_{\theta_2}}
\int_{t_i}^{t_{i+1}}(\theta_2-s)^{2\alpha-2}e^{-2\nu(\theta_2-s)}
\left|E_{\alpha,\alpha}\left(-\lambda_k^\beta(\theta_2-s)^\alpha\right)\right|^2\nonumber\\
&\relphantom{=}{}\times\left|(g(s,u_n(s))\cdot e_l,\varphi_k)\right|^2|\varsigma_l^n(s)|^2ds\nonumber &
\end{flalign}
\begin{flalign}
&
\leq C \mathbb{E}\sum_{k,l=1}^{\infty}
\int_{0}^{\theta_1}\bigg(\bigg|\int_{\theta_1}^{\theta_2}(\tau-s)^{\alpha-2}
E_{\alpha,\alpha-1}\left(-\lambda_k^\beta(\tau-s)^\alpha\right)d\tau\bigg|^2
+(\theta_1-s)^{2\alpha-2}|\theta_2-\theta_1|^2\nonumber\\
&\relphantom{=}{}\times \frac{1}{(1+\lambda_k^\beta(\theta_1-s)^\alpha)^2}\bigg)
\lambda_k^{2\widetilde{\gamma}-\frac{2\beta}{\alpha}}
\left|(g(s,u_n(s))\cdot e_l,\varphi_k)\right|^2|\varsigma_l^n(s)|^2ds\nonumber\\
&\relphantom{=}{}+C\mathbb{E}\sum_{k,l=1}^{\infty}
\int_{\theta_1}^{\theta_2}(\theta_2-s)^{2\alpha-2}
\frac{1}{(1+\lambda_k^\beta(\theta_2-s)^\alpha)^2}
\lambda_k^{2\widetilde{\gamma}-\frac{2\beta}{\alpha}}
\nonumber\\
&
\relphantom{=}{} \times\left|(g(s,u_n(s))\cdot e_l,\varphi_k)\right|^2|\varsigma_l^n(s)|^2ds\nonumber
&
\end{flalign}
\begin{flalign}
&
\leq C (\mu_1^n)^2|\theta_2-\theta_1|^2\int_{0}^{\theta_1}\left(
(\theta_1-s)^{2\alpha-4}+(\theta_1-s)^{2\alpha-2}\right)
\mathbb{E}\|(-\Delta)^{\widetilde{\gamma}-\frac{\beta}{\alpha}}g(s,u_n(s))\|_{\mathcal{L}_2^0}^2ds\nonumber\\
&\relphantom{=}{}+C(\mu_1^n)^2\int_{\theta_1}^{\theta_2}
(\theta_2-s)^{2\alpha-2}
\mathbb{E}\|(-\Delta)^{\widetilde{\gamma}-\frac{\beta}{\alpha}}g(s,u_n(s))\|_{\mathcal{L}_2^0}^2ds\nonumber\\
&\leq C (\mu_1^n)^2\left(|\theta_2-\theta_1|^2+|\theta_2-\theta_1|^{2\alpha-1}\right)
\left(1+\sup_{0\leq s\leq T}\mathbb{E}\|u_n(s)\|_{\mathbb{H}^{2\widetilde{\gamma}}}^2\right),
\end{flalign}
and
\begin{flalign}\label{eq.4.28}
&\widetilde{\mathcal{E}}_5  \nonumber
\\
&= C\mathbb{E}\sum\limits_{k=1}^{\infty}\lambda_k^{2\widetilde{\gamma}-\frac{2\beta}{\alpha}}
\bigg(\int_{0}^{\theta_2}\int_{\mathcal{D}}
(\theta_2-s)^{\alpha-1}e^{-\nu(\theta_2-s)}\sum\limits_{l=1}^{\infty}
E_{\alpha,\alpha}\left(-\lambda_l^\beta(\theta_2-s)^\alpha\right)\varphi_l(\cdot)\varphi_l(y)\nonumber\\
&\relphantom{=}{}\times
h(s,u_n(s,y))d\mathbb{W}^H_n(s,y)-\int_{0}^{\theta_1}\int_{\mathcal{D}}
(\theta_1-s)^{\alpha-1}e^{-\nu(\theta_1-s)}\sum\limits_{l=1}^{\infty}
E_{\alpha,\alpha}\left(-\lambda_l^\beta(\theta_1-s)^\alpha\right)\nonumber\\
&\relphantom{=}{}\times\varphi_l(\cdot)\varphi_k(y)
h(s,u_n(s,y))d\mathbb{W}^H_n(s,y),\varphi_k\bigg)^2\nonumber
\nonumber &
\end{flalign}
\begin{flalign}
&\leq C\mathbb{E}\sum_{k=1}^{\infty}\lambda_k^{2\widetilde{\gamma}-\frac{2\beta}{\alpha}}
\bigg|\int_{0}^{\theta_1}\int_{\mathcal{D}}
\bigg((\theta_2-s)^{\alpha-1}e^{-\nu(\theta_2-s)}
E_{\alpha,\alpha}\left(-\lambda_k^\beta(\theta_2-s)^\alpha\right)\nonumber\\
&\relphantom{=}{}-(\theta_1-s)^{\alpha-1}e^{-\nu(\theta_1-s)}
E_{\alpha,\alpha}\left(-\lambda_k^\beta(\theta_1-s)^\alpha\right)\bigg)
\varphi_k(y)\nonumber\\
&\relphantom{=}{}\times\sum_{j,l=1}^{\infty}(h(s,u_n(s))\cdot e_l,\varphi_j)\varphi_j(y)\varrho_l^n(s)\left(\sum_{i=1}^{N_{\theta_1}}\frac{1}{\tau^{1-H}}
\xi^H_{li}\chi_i(s)\right)dyds\bigg|^2\nonumber\\
&\relphantom{=}{}+C\mathbb{E}\sum_{k=1}^{\infty}\lambda_k^{2\widetilde{\gamma}-\frac{2\beta}{\alpha}}
\bigg|\int_{\theta_1}^{\theta_2}\int_{\mathcal{D}}
(\theta_2-s)^{\alpha-1}e^{-\nu(\theta_2-s)}
E_{\alpha,\alpha}\left(-\lambda_k^\beta(\theta_2-s)^\alpha\right)\varphi_k(y)\nonumber\\
&\relphantom{=}{}\times\sum_{j,l=1}^{\infty}(h(s,u_n(s))\cdot e_l,\varphi_j)\varphi_j(y)\varrho_l^n(s)\left(\sum_{i=N_{\theta_1}+1}^{N_{\theta_2}}\frac{1}{\tau^{1-H}}
\xi^H_{li}\chi_i(s)\right)dyds\bigg|^2\nonumber&
\end{flalign}
\begin{flalign}
&= C\mathbb{E}\sum_{k=1}^{\infty}\lambda_k^{2\widetilde{\gamma}-\frac{2\beta}{\alpha}}
\bigg|\int_{0}^{\theta_1}
\bigg((\theta_2-s)^{\alpha-1}e^{-\nu(\theta_2-s)}
E_{\alpha,\alpha}\left(-\lambda_k^\beta(\theta_2-s)^\alpha\right)\nonumber\\
&\relphantom{=}{}-(\theta_1-s)^{\alpha-1}
e^{-\nu(\theta_1-s)} E_{\alpha,\alpha}\left(-\lambda_k^\beta(\theta_1-s)^\alpha\right)\bigg)
\nonumber\\
&
\relphantom{=}{}\times\sum_{l=1}^{\infty}(h(s,u_n(s))\cdot e_l,\varphi_k)\varrho_l^n(s)\frac{1}{\tau}\int_{0}^{\theta_1}
\sum_{i=1}^{N_{\theta_1}}\chi_i(r)d\xi^H_{l}(r)\chi_i(s)ds\bigg|^2\nonumber\\
&\relphantom{=}{}+C\mathbb{E}\sum_{k=1}^{\infty}\lambda_k^{2\widetilde{\gamma}-\frac{2\beta}{\alpha}}
\bigg|\int_{\theta_1}^{\theta_2}(\theta_2-s)^{\alpha-1}e^{-\nu(\theta_2-s)}
E_{\alpha,\alpha}\left(-\lambda_k^\beta(\theta_2-s)^\alpha\right)\nonumber\\
&\relphantom{=}{}\times\sum_{l=1}^{\infty}(h(s,u_n(s))\cdot e_l,\varphi_k)\varrho_l^n(s)\frac{1}{\tau}\int_{\theta_1}^{\theta_2}
\sum_{i=N_{\theta_1}+1}^{N_{\theta_2}}\chi_i(r)d\xi^H_{l}(r)\chi_i(s)ds\bigg|^2\nonumber&
\end{flalign}
\begin{flalign}
&=\frac{C}{\tau^2}\mathbb{E}\sum_{k,l=1}^{\infty}\lambda_k^{2\widetilde{\gamma}-\frac{2\beta}{\alpha}}
\bigg|\int_{0}^{\theta_1}\int_{0}^{\theta_1}
\bigg((\theta_2-s)^{\alpha-1}e^{-\nu(\theta_2-s)}
E_{\alpha,\alpha}\left(-\lambda_k^\beta(\theta_2-s)^\alpha\right)\nonumber\\
&\relphantom{=}{}-(\theta_1-s)^{\alpha-1}
e^{-\nu(\theta_1-s)} E_{\alpha,\alpha}\left(-\lambda_k^\beta(\theta_1-s)^\alpha\right)\bigg)
(h(s,u_n(s))\cdot e_l,\varphi_k)\varrho_l^n(s)\nonumber\\
&\relphantom{=}{}
\times\sum_{i=1}^{N_{\theta_1}}\chi_i(r)\chi_i(s)dsd\xi^H_l(r)\bigg|^2
+\frac{C}{\tau^2}\mathbb{E}\sum_{k,l=1}^{\infty}\lambda_k^{2\widetilde{\gamma}-\frac{2\beta}{\alpha}}
\bigg|\int_{\theta_1}^{\theta_2}\int_{\theta_1}^{\theta_2}(\theta_2-s)^{\alpha-1}e^{-\nu(\theta_2-s)}\nonumber\\
&\relphantom{=}{}\times E_{\alpha,\alpha}\left(-\lambda_k^\beta(\theta_2-s)^\alpha\right)
(h(s,u_n(s))\cdot e_l,\varphi_k)\varrho_l^n(s)\sum_{i=N_{\theta_1}+1}^{N_{\theta_2}}\chi_i(r)\chi_i(s)dsd\xi^H_l(r)\bigg|^2\nonumber&
\end{flalign}
\begin{flalign}
&\leq \frac{C}{\tau^2}T^{2H-1} \mathbb{E} \sum_{k,l=1}^{\infty}\lambda_k^{2\widetilde{\gamma}-\frac{2\beta}{\alpha}}
\int_{0}^{\theta_1}\bigg|\int_{0}^{\theta_1}\bigg((\theta_2-s)^{\alpha-1}e^{-\nu(\theta_2-s)}
E_{\alpha,\alpha}\left(-\lambda_k^\beta(\theta_2-s)^\alpha\right)\nonumber\\
&\relphantom{=}{}-(\theta_1-s)^{\alpha-1}e^{-\nu(\theta_1-s)} E_{\alpha,\alpha}\left(-\lambda_k^\beta(\theta_1-s)^\alpha\right)\bigg)^2
(h(s,u_n(s))\cdot e_l,\varphi_k)\varrho_l^n(s)\nonumber\\
&\relphantom{=}{}\times \sum_{i=1}^{N_{\theta_1}}\chi_i(r)\chi_i(s)ds\bigg|^2dr
+\frac{C}{\tau^2}T^{2H-1}\mathbb{E}\sum_{k,l=1}^{\infty}\lambda_k^{2\widetilde{\gamma}-\frac{2\beta}{\alpha}}
\int_{\theta_1}^{\theta_2}\bigg|\int_{\theta_1}^{\theta_2}(\theta_2-s)^{\alpha-1}e^{-\nu(\theta_2-s)}\nonumber\\
&\relphantom{==}{}\times E_{\alpha,\alpha}\left(-\lambda_k^\beta(\theta_2-s)^\alpha\right)
(h(s,u_n(s))\cdot e_l,\varphi_k)\varrho_l^n(s)\sum_{i=N_{\theta_1}+1}^{N_{\theta_2}}\chi_i(r)\chi_i(s)ds\bigg|^2dr\nonumber&
\end{flalign}
\begin{flalign}
&=\frac{C}{\tau^2}\mathbb{E} \sum_{k,l=1}^{\infty}\lambda_k^{2\widetilde{\gamma}-\frac{2\beta}{\alpha}}
\int_{0}^{\theta_1}\sum_{i=1}^{N_{\theta_1}}\chi_i(r)^2\bigg|\int_{0}^{\theta_1}\bigg((\theta_2-s)^{\alpha-1}e^{-\nu(\theta_2-s)}
E_{\alpha,\alpha}\left(-\lambda_k^\beta(\theta_2-s)^\alpha\right)\nonumber\\
&\relphantom{=}{}-(\theta_1-s)^{\alpha-1}
e^{-\nu(\theta_1-s)} E_{\alpha,\alpha}\left(-\lambda_k^\beta(\theta_1-s)^\alpha\right)\bigg)^2
(h(s,u_n(s))\cdot e_l,\varphi_k)
\varrho_l^n(s)\chi_i(s)ds\bigg|^2dr\nonumber\\
&\relphantom{=}{}+\frac{C}{\tau^2}\mathbb{E}\sum_{k,l=1}^{\infty}\lambda_k^{2\widetilde{\gamma}-\frac{2\beta}{\alpha}}
\int_{\theta_1}^{\theta_2}\sum_{i=N_{\theta_1}+1}^{N_{\theta_2}}\chi_i(r)^2\bigg|\int_{\theta_1}^{\theta_2}(\theta_2-s)^{\alpha-1}e^{-\nu(\theta_2-s)}
\nonumber\\
&\relphantom{=}{}\times E_{\alpha,\alpha}\left(-\lambda_k^\beta(\theta_2-s)^\alpha\right)(h(s,u_n(s))\cdot e_l,\varphi_k)\varrho_l^n(s)\chi_i(s)ds\bigg|^2dr\nonumber&
\end{flalign}
\begin{flalign}
&\leq \frac{C}{\tau}\mathbb{E} \sum_{k,l=1}^{\infty}\lambda_k^{2\widetilde{\gamma}-\frac{2\beta}{\alpha}}
\sum_{i=1}^{N_{\theta_1}}\int_{t_i}^{t_{i+1}}\int_{t_i}^{t_{i+1}}
\bigg((\theta_2-s)^{\alpha-1}e^{-\nu(\theta_2-s)}
E_{\alpha,\alpha}\left(-\lambda_k^\beta(\theta_2-s)^\alpha\right)\nonumber\\
&\relphantom{=}{}-(\theta_1-s)^{\alpha-1}
e^{-\nu(\theta_1-s)} E_{\alpha,\alpha}\left(-\lambda_k^\beta(\theta_1-s)^\alpha\right)\bigg)^2
|(h(s,u_n(s))\cdot e_l,\varphi_k)|^2
|\varrho_l^n(s)|^2dsdr\nonumber\\
&\relphantom{=}{}+\frac{C}{\tau}\mathbb{E}\sum_{k,l=1}^{\infty}\lambda_k^{2\widetilde{\gamma}-\frac{2\beta}{\alpha}}
\sum_{i=N_{\theta_1}+1}^{N_{\theta_2}}\int_{t_i}^{t_{i+1}}\int_{t_i}^{t_{i+1}}
(\theta_2-s)^{2\alpha-2}e^{-2\nu(\theta_2-s)}
\nonumber\\
&\relphantom{=}{}\times\left|E_{\alpha,\alpha}\left(-\lambda_k^\beta(\theta_2-s)^\alpha\right)\right|^2|(h(s,u_n(s))\cdot e_l,\varphi_k)|^2|\varrho_l^n(s)|^2dsdr\nonumber&
\end{flalign}
\begin{flalign}
&\leq C \mathbb{E}\sum_{k,l=1}^{\infty}
\int_{0}^{\theta_1}\bigg(\bigg|\int_{\theta_1}^{\theta_2}(\tau-s)^{\alpha-2}
E_{\alpha,\alpha-1}\left(-\lambda_k^\beta(\tau-s)^\alpha\right)d\tau\bigg|^2
+(\theta_1-s)^{2\alpha-2}|\theta_2-\theta_1|^2\nonumber\\
&\relphantom{=}{}\times \frac{1}{(1+\lambda_k^\beta(\theta_1-s)^\alpha)^2}\bigg)
\lambda_k^{2\widetilde{\gamma}-\frac{2\beta}{\alpha}}
\left|(h(s,u_n(s))\cdot e_l,\varphi_k)\right|^2|\varrho_l^n(s)|^2ds\nonumber\\
&\relphantom{=}{}+C\mathbb{E}\sum_{k,l=1}^{\infty}
\int_{\theta_1}^{\theta_2}(\theta_2-s)^{2\alpha-2}
\frac{1}{(1+\lambda_k^\beta(\theta_2-s)^\alpha)^2}
\lambda_k^{2\widetilde{\gamma}-\frac{2\beta}{\alpha}}
\nonumber\\
&
\relphantom{=}{}\times\left|(h(s,u_n(s))\cdot e_l,\varphi_k)\right|^2|\varrho_l^n(s)|^2ds\nonumber
&
\end{flalign}
\begin{flalign}
&\leq C (\widetilde{\mu}_1^n)^2|\theta_2-\theta_1|^2\int_{0}^{\theta_1}\left(
(\theta_1-s)^{2\alpha-4}+(\theta_1-s)^{2\alpha-2}\right)
\mathbb{E}\|(-\Delta)^{\widetilde{\gamma}-\frac{\beta}{\alpha}}h(s,u_n(s))\|_{\mathcal{L}_2^0}^2ds\nonumber\\
&\relphantom{=}{}+C(\widetilde{\mu}_1^n)^2\int_{\theta_1}^{\theta_2}
(\theta_2-s)^{2\alpha-2}
\mathbb{E}\|(-\Delta)^{\widetilde{\gamma}-\frac{\beta}{\alpha}}h(s,u_n(s))\|_{\mathcal{L}_2^0}^2ds\nonumber\\
&\leq C (\widetilde{\mu}_1^n)^2\left(|\theta_2-\theta_1|^2+|\theta_2-\theta_1|^{2\alpha-1}\right)
\left(1+\sup_{0\leq s\leq T}\mathbb{E}\|u_n(s)\|_{\mathbb{H}^{2\widetilde{\gamma}}}^2\right).
\end{flalign}
We collect all of the above estimates and arrive at
\begin{align}\label{eq.4.29}
&\mathbb{E}\|u_n(\theta_2)-u_n(\theta_1)\|_{\mathbb{H}^{2\widetilde{\gamma}-\frac{2\beta}{\alpha}}}^2
\leq C|\theta_2-\theta_1|^2\Big(\mathbb{E}\|a\|_{\mathbb{H}^{2\widetilde{\gamma}}}^2
+\mathbb{E}\|b\|_{\mathbb{H}^{2\widetilde{\gamma}-\frac{2\beta}{\alpha}}}^2+1+
(\mu_1^n)^2+(\widetilde{\mu}_1^n)^2\nonumber\\
&\relphantom{==}{}+\sup_{0\leq s\leq T}\mathbb{E}\|u_n(s)\|_{\mathbb{H}^{2\widetilde{\gamma}}}^2+
(\mu_1^n)^2\sup_{0\leq s\leq T}\mathbb{E}\|u_n(s)\|_{\mathbb{H}^{2\widetilde{\gamma}}}^2
+(\widetilde{\mu}_1^n)^2\sup_{0\leq s\leq T}\mathbb{E}\|u_n(s)\|_{\mathbb{H}^{2\widetilde{\gamma}}}^2\Big).
\end{align}
The proof is therefore complete.
\end{proof}
%-------------------------------------------------------------------------------
\section{Proof of Theorem \ref{thm2.1}.}\label{B.2}
\begin{proof}
Without loss of generality, we assume that there exists a positive integer ${N_t}$
such that $t=t_{N_t+1}$. Subtracting \eqref{eq2.15} from \eqref{eq2.1}, we have
\begin{align}\label{eq2.18}
\displaystyle &u(t,x)-u_n(t,x)\nonumber\\
&=\int^t_0\int_{\mathcal{D}}(t-s)^{\alpha-1}e^{-\nu(t-s)}
\sum^{\infty}_{k=1}{E}_{\alpha,\alpha}\left(-\lambda^{\beta}_k(t-s)^
{\alpha}\right)\varphi_k(x)\varphi_k(y)f(s,u(s,y))dyds\nonumber\\
&%\relphantom{=}{}
-\int^t_0\int_{\mathcal{D}}(t-s)^{\alpha-1}e^{-\nu(t-s)}
\sum^{\infty}_{k=1}{E}_{\alpha,\alpha}\left(-\lambda^{\beta}_k(t-s)^
{\alpha}\right)\varphi_k(x)\varphi_k(y)f(s,u_n(s,y))dyds\nonumber\\
&%\relphantom{=}{}
+\int^t_0\int_{\mathcal{D}}(t-s)^{\alpha-1}e^{-\nu(t-s)}\sum^{\infty}_{k=1}
{E}_{\alpha,\alpha}\left(-\lambda^{\beta}_k(t-s)^{\alpha}\right)\varphi_k(x)\varphi_k(y)
g(s,u(s,y))d{\mathbb{W}}(s,y)\nonumber\\
&%\relphantom{=}{}
-\int^t_0\int_{\mathcal{D}}(t-s)^{\alpha-1}e^{-\nu(t-s)}\sum^{\infty}_{k=1}
{E}_{\alpha,\alpha}\left(-\lambda^{\beta}_k(t-s)^{\alpha}\right)\varphi_k(x)\varphi_k(y)
g(s,u_n(s,y))d{\mathbb{W}_n}(s,y)\nonumber\\
&%\relphantom{=}{}
+\int^t_0\int_{\mathcal{D}}(t-s)^{\alpha-1}e^{-\nu(t-s)}\sum^{\infty}_{k=1}
{E}_{\alpha,\alpha}\left(-\lambda^{\beta}_k(t-s)^{\alpha}\right)\varphi_k(x)\varphi_k(y)
h(s,u(s,y))d{\mathbb{W}^H}(s,y)\nonumber\\
&%\relphantom{=}{}
-\int^t_0\int_{\mathcal{D}}(t-s)^{\alpha-1}e^{-\nu(t-s)}\sum^{\infty}_{k=1}
{E}_{\alpha,\alpha}\left(-\lambda^{\beta}_k(t-s)^{\alpha}\right)\varphi_k(x)\varphi_k(y)
h(s,u_n(s,y))d{\mathbb{W}^H_n}(s,y),
\end{align}
where
\begin{equation*}\label{eq2.19}
\displaystyle d\mathbb{W}(s,y)=\frac{\partial^2\mathbb{W}}{\partial s\partial y}dyds
=\sum^{\infty}_{k=1}\varsigma_k(s)e_k(y)dyd{\xi}_k(s),
\end{equation*}
\begin{equation*}\label{eq2.20}
\displaystyle d\mathbb{W}_n(s,y)=\frac{\partial^2{\mathbb{W}_n}}{\partial s
\partial y}dyds=\sum^{\infty}_{k=1}\varsigma^n_k(s)e_k(y)\left(\sum\limits_{i=1}^{N_t}
\frac{1}{\sqrt{\tau}}\xi_{ki}\chi_i(s)\right)dyds,
\end{equation*}
\begin{equation*}\label{eq2.19}
\displaystyle d\mathbb{W}^H(s,y)=\frac{\partial^2\mathbb{W}^H}{\partial s\partial y}dyds
=\sum^{\infty}_{k=1}\varrho_k(s)e_k(y)dyd{\xi}^H_k(s),
\end{equation*}
and
\begin{equation*}\label{eq2.20}
\displaystyle d\mathbb{W}^H_n(s,y)=\frac{\partial^2{\mathbb{W}^H_n}}{\partial s
\partial y}dyds=\sum^{\infty}_{k=1}\varrho^n_k(s)e_k(y)\left(\sum\limits_{i=1}^{N_t}
\frac{1}{\tau^{1-H}}\xi^H_{ki}\chi_i(s)\right)dyds.
\end{equation*}
Let
\begin{align*}
\displaystyle \mathscr{A}_1 &=\int^{t}_{0}\int_{\mathcal{D}}(t-s)^{\alpha-1}e^{-\nu(t-s)}
\sum^{\infty}_{k=1}{E}_{\alpha,\alpha}\left(-\lambda
^{\beta}_k(t-s)^{\alpha}\right)\varphi_k(x)\varphi_k(y)\\
&\relphantom{=}{}\times(f(s,u(s,y))-f(s,u_n(s,y)))dyds,
\end{align*}
\begin{align*}
\displaystyle \mathscr{A}_2 &=\int^{t}_{0}\int_{\mathcal{D}}(t-s)^{\alpha-1}e^{-\nu(t-s)}
\sum^{\infty}_{k=1}{E}_{\alpha,\alpha}\left(-\lambda
^{\beta}_k(t-s)^{\alpha}\right)\varphi_k(x)\varphi_k(y)\\
&\relphantom{=}{}\times\sum_{l=1}^{\infty}(g(s,u(s,y))-g(s,u_n(s,y)))\cdot e_l(y)\varsigma_l(s)dyd\xi_l(s),
\end{align*}
\begin{align*}
\displaystyle \mathscr{A}_3 &=\sum_{i=1}^{N_t}\int^{t_{i+1}}_{t_i}\int_{\mathcal{D}}
(t-s)^{\alpha-1}e^{-\nu(t-s)}\sum^{\infty}_{k=1}{E}_{\alpha,\alpha}\left(-\lambda^{\beta}_k
(t-s)^{\alpha}\right)\varphi_k(x)\varphi_k(y)\\
&\relphantom{==}{}\times\sum_{l=1}^{\infty}(g(s,u_n(s,y))\cdot e_l(y))\varsigma_l(s)dyd\xi_l(s)\\
&\relphantom{=}{}-\sum_{i=1}^{N_t}\int^{t_{i+1}}_{t_i}\int_{\mathcal{D}}(t-t_i)
^{\alpha-1}e^{-\nu(t-t_i)}\sum^{\infty}_{k=1}{E}_{\alpha,\alpha}
\left(-\lambda^{\beta}_k(t-t_i)^{\alpha}\right)\varphi_k(x)\varphi_k(y)\\
&\relphantom{==}{}\times\sum_{l=1}^{\infty}(g(t_i,u_n(t_i,y))\cdot e_l(y))\varsigma_l(t_i)dyd\xi_l(s),
\end{align*}
\begin{align*}
\displaystyle \mathscr{A}_4 &=\sum_{i=1}^{N_t}\int^{t_{i+1}}_{t_i}\int_{\mathcal{D}}
(t-t_i)^{\alpha-1}e^{-\nu(t-t_i)}\sum^{\infty}_{k=1}{E}_{\alpha,\alpha}
\left(-\lambda^{\beta}_k(t-t_i)^{\alpha}\right)\varphi_k(x)\varphi_k(y)\\
&\relphantom{==}{}\times\sum_{l=1}^{\infty}
(g(t_i,u_n(t_i,y))\cdot e_l(y))(\varsigma_l(t_i)-\varsigma_l^n(t_i))dyd\xi_l(s),
\end{align*}
\begin{align*}%\label{eq2.45}\begin{split}
\displaystyle \mathscr{A}_5 &=\sum_{i=1}^{N_t}\int^{t_{i+1}}_{t_i}\int_{\mathcal{D}}
(t-t_i)^{\alpha-1}e^{-\nu(t-t_i)}\sum^{\infty}_{k=1}{E}_{\alpha,\alpha}
\left(-\lambda^{\beta}_k(t-t_i)^{\alpha}\right)\varphi_k(x)\varphi_k(y)\\
&\relphantom{==}{}\times\sum_{l=1}^{\infty}
(g(t_i,u_n(t_i,y))\cdot e_l(y))\varsigma_l^n(t_i)dyd\xi_l(s)\\
&\relphantom{=}{}-\sum_{i=1}^{N_t}\int^{t_{i+1}}_{t_i}\int_{\mathcal{D}}
(t-s)^{\alpha-1}e^{-\nu(t-s)}\sum^{\infty}_{k=1}{E}_{\alpha,\alpha}
\left(-\lambda^{\beta}_k(t-s)^{\alpha}\right)\varphi_k(x)\varphi_k(y)\\
&\relphantom{==}{}\times\sum_{l=1}^{\infty}
(g(s,u_n(s,y))\cdot e_l(y))\varsigma_l^n(s)\frac{\xi_l(t_{i+1})-\xi_l(t_i)}{\tau}dyds,
\end{align*}
\begin{align*}
\displaystyle \mathscr{A}_6 &=\int^{t}_{0}\int_{\mathcal{D}}(t-s)^{\alpha-1}e^{-\nu(t-s)}
\sum^{\infty}_{k=1}{E}_{\alpha,\alpha}\left(-\lambda
^{\beta}_k(t-s)^{\alpha}\right)\varphi_k(x)\varphi_k(y)\\
&\relphantom{=}{}\times\sum_{l=1}^{\infty}(h(s,u(s,y))-h(s,u_n(s,y)))\cdot e_l(y)\varrho_l(s)dyd\xi_l^H(s),
\end{align*}
\begin{align*}
\displaystyle \mathscr{A}_7 &=\sum_{i=1}^{N_t}\int^{t_{i+1}}_{t_i}\int_{\mathcal{D}}
(t-s)^{\alpha-1}e^{-\nu(t-s)}\sum^{\infty}_{k=1}{E}_{\alpha,\alpha}\left(-\lambda^{\beta}_k
(t-s)^{\alpha}\right)\varphi_k(x)\varphi_k(y)\\
&\relphantom{==}{}\times\sum_{l=1}^{\infty}(h(s,u_n(s,y))\cdot e_l(y))\varrho_l(s)dyd\xi_l^H(s)\\
&\relphantom{=}{}-\sum_{i=1}^{N_t}\int^{t_{i+1}}_{t_i}\int_{\mathcal{D}}(t-t_i)
^{\alpha-1}e^{-\nu(t-t_i)}\sum^{\infty}_{k=1}{E}_{\alpha,\alpha}
\left(-\lambda^{\beta}_k(t-t_i)^{\alpha}\right)\varphi_k(x)\varphi_k(y)\\
&\relphantom{==}{}\times\sum_{l=1}^{\infty}(h(t_i,u_n(t_i,y))\cdot e_l(y))\varrho_l(t_i)dyd\xi^H_l(s),
\end{align*}
\begin{align*}
\displaystyle \mathscr{A}_8 &=\sum_{i=1}^{N_t}\int^{t_{i+1}}_{t_i}\int_{\mathcal{D}}
(t-t_i)^{\alpha-1}e^{-\nu(t-t_i)}\sum^{\infty}_{k=1}{E}_{\alpha,\alpha}
\left(-\lambda^{\beta}_k(t-t_i)^{\alpha}\right)\varphi_k(x)\varphi_k(y)\\
&\relphantom{==}{}\times\sum_{l=1}^{\infty}(h(t_i,u_n(t_i,y))\cdot e_l(y))
(\varrho_l(t_i)-\varrho_l^n(t_i))dyd\xi_l^H(s),
\end{align*}
and
\begin{align*}
\displaystyle \mathscr{A}_9 &=\sum_{i=1}^{N_t}\int^{t_{i+1}}_{t_i}\int_{\mathcal{D}}
(t-t_i)^{\alpha-1}e^{-\nu(t-t_i)}\sum^{\infty}_{k=1}{E}_{\alpha,\alpha}
\left(-\lambda^{\beta}_k(t-t_i)^{\alpha}\right)\varphi_k(x)\varphi_k(y)\\
&\relphantom{==}{}\times\sum_{l=1}^{\infty}
(h(t_i,u_n(t_i,y))\cdot e_l(y))\varrho_l^n(t_i)dyd\xi_l^H(s)\\
&\relphantom{=}{}-\sum_{i=1}^{N_t}\int^{t_{i+1}}_{t_i}\int_{\mathcal{D}}
(t-s)^{\alpha-1}e^{-\nu(t-s)}\sum^{\infty}_{k=1}{E}_{\alpha,\alpha}
\left(-\lambda^{\beta}_k(t-s)^{\alpha}\right)\varphi_k(x)\varphi_k(y)\\
&\relphantom{==}{}\times\sum_{l=1}^{\infty}
(h(s,u_n(s,y))\cdot e_l(y))\varrho_l^n(s)\frac{\xi_l^H(t_{i+1})-\xi_l^H(t_i)}{\tau}dyds.
\end{align*}
Thus
\begin{equation*}\label{eq2.46}
u(t,x)-u_n(t,x)=\mathscr{A}_1+\mathscr{A}_2+\mathscr{A}_3+\mathscr{A}_4+\mathscr{A}_5+\mathscr{A}_6+
\mathscr{A}_7+\mathscr{A}_8+\mathscr{A}_9.
\end{equation*}

For $\mathscr{A}_1$, since $\{\varphi_k\}_{j=1}^\infty$ is an orthonormal basis in $L^2(\mathcal{D})$, by the assumption on $f$ given in $(\bf{A}_1)$, Lemma \ref{le2.1} and H\"{o}lder's inequality, we have
\begin{align}\label{eq2.49}
&\displaystyle \mathbb{E}\|\mathscr{A}_1\|_{\mathbb{H}^{2\widetilde{\gamma}-\frac{2\beta}{\alpha}}}^2
\nonumber\\
&=\mathbb{E}\sum^{\infty}_{k=1}\lambda_k^{2\widetilde{\gamma}-\frac{2\beta}{\alpha}}
\bigg(\int^{t}_{0}\int_{\mathcal{D}}(t-s)^{\alpha-1}e^{-\nu(t-s)}
\sum^{\infty}_{l=1}{E}_{\alpha,\alpha}\left(-\lambda
^{\beta}_l(t-s)^{\alpha}\right)\varphi_l(\cdot)\varphi_l(y)\nonumber\\
&\relphantom{=}{}\times(f(s,u(s,y))-f(s,u_n(s,y)))dyds,\varphi_k\bigg)^2\nonumber\\
&=\mathbb{E}\sum^{\infty}_{k=1}\lambda_k^{2\widetilde{\gamma}-\frac{2\beta}{\alpha}}
\bigg|\int^{t}_{0}\int_{\mathcal{D}}(t-s)^{\alpha-1}e^{-\nu(t-s)}
{E}_{\alpha,\alpha}\left(-\lambda
^{\beta}_k(t-s)^{\alpha}\right)\varphi_k(y)\nonumber\\
&\relphantom{=}{}\times(f(s,u(s,y))-f(s,u_n(s,y)))dyds\bigg|^2\nonumber\\
&\leq CT\mathbb{E}\sum^{\infty}_{k=1}\int^{t}_{0}(t-s)^{2\alpha-2}e^{-2\nu(t-s)}
\frac{1}{(1+\lambda
^{\beta}_k(t-s)^{\alpha})^2}\nonumber\\
&\relphantom{=}{}\times\lambda_k^{2\widetilde{\gamma}-\frac{2\beta}{\alpha}}
(f(s,u(s,y))-f(s,u_n(s,y)),\varphi_k)^2ds\nonumber\\
&\leq C\int^{t}_{0}(t-s)^{2\alpha-2}e^{-2\nu(t-s)}\mathbb{E}\|u(s)-u_n(s)\|_{\mathbb{H}^{2\widetilde{\gamma}-\frac{2\beta}{\alpha}}}^2ds\nonumber\\
\end{align}
where we have used $\frac{1}{(1+\lambda
^{\beta}_k(t-s)^{\alpha})^2}\leq C$.

Since $\{\varphi_k\}_{j=1}^\infty$ is an orthonormal basis in $L^2(\mathcal{D})$ and $\{\xi_l\}_{l=1}^{\infty}$ is a family of mutually independent one-dimensional standard Brownian motions, then by \eqref{eq.2.8}, $(\bf{A}_1)$, $(\bf{A}_3)$ and the It\^{o} isometry, we have
\begin{flalign}\label{eq2.49c}
\displaystyle &\mathbb{E}\|\mathscr{A}_2\|_{\mathbb{H}^{2\widetilde{\gamma}-\frac{2\beta}{\alpha}}}^2\nonumber\\
&=\mathbb{E}\sum^{\infty}_{k=1}\lambda_k^{2\widetilde{\gamma}-\frac{2\beta}{\alpha}}
\bigg(\int^{t}_{0}\int_{\mathcal{D}}
(t-s)^{\alpha-1}e^{-\nu(t-s)}\sum^{\infty}_{j=1}{E}_{\alpha,\alpha}\left(-\lambda
^{\beta}_j(t-s)^{\alpha}\right)\varphi_j(\cdot)\varphi_j(y)\nonumber\\
&\relphantom{=}{}\times\sum_{l=1}^{\infty}(g(s,u(s,y))-g(s,u_n(s,y)))\cdot e_l(y)\varsigma_l(s)dyd\xi_l(s),\varphi_k\bigg)^2\nonumber\\
&=\mathbb{E}\sum^{\infty}_{k=1}\lambda_k^{2\widetilde{\gamma}-\frac{2\beta}{\alpha}}
\bigg{|}\int^{t}_{0}\int_{\mathcal{D}}
(t-s)^{\alpha-1}e^{-\nu(t-s)}{E}_{\alpha,\alpha}\left(-\lambda
^{\beta}_k(t-s)^{\alpha}\right)\varphi_k(y)\nonumber\\
&\relphantom{=}{}\times\sum_{j,l=1}^{\infty}((g(s,u(s))-g(s,u_n(s)))\cdot e_l,\varphi_j)\varphi_j(y)\varsigma_l(s)dyd\xi_l(s)\bigg{|}^2\nonumber&
\end{flalign}
\begin{flalign}
& =\mathbb{E}\sum^{\infty}_{k=1}\lambda_k^{2\widetilde{\gamma}-\frac{2\beta}{\alpha}}
\bigg{|}\int^{t}_{0}
(t-s)^{\alpha-1}e^{-\nu(t-s)}{E}_{\alpha,\alpha}\left(-\lambda
^{\beta}_k(t-s)^{\alpha}\right)\nonumber\\
&\relphantom{=}{}\times\sum_{l=1}^{\infty}((g(s,u(s))-g(s,u_n(s)))\cdot e_l,\varphi_k)\varsigma_l(s)d\xi_l(s)\bigg{|}^2\nonumber\\
&\leq C\mathbb{E}\sum^{\infty}_{k,l=1}
\int^{t}_{0}(t-s)^{2\alpha-2}e^{-2\nu(t-s)}
\frac{1}{(1+\lambda
^{\beta}_k(t-s)^{\alpha})^2}\nonumber\\
&\relphantom{=}{}\times\left|\lambda_k^{\widetilde{\gamma}-\frac{\beta}{\alpha}}
((g(s,u(s))-g(s,u_n(s)))\cdot e_l,\varphi_k)\varsigma_l(s)\right|^2ds\nonumber\\
&\leq C\int^{t}_{0}(t-s)^{2\alpha-2}e^{-2\nu(t-s)}
\mathbb{E}\|u(s)-u_n(s)\|_{\mathbb{H}^{2\widetilde{\gamma}-\frac{2\beta}{\alpha}}}^2ds, &
\end{flalign}
and
\begin{flalign}\label{eq2.47'}
&\displaystyle \mathbb{E}\|\mathscr{A}_3\|_{\mathbb{H}^{2\widetilde{\gamma}-\frac{2\beta}{\alpha}}}^2\nonumber\\ &=\mathbb{E}\sum^{\infty}_{k=1}\lambda_k^{2\widetilde{\gamma}-\frac{2\beta}{\alpha}}
\bigg{|}\sum_{i=1}^{N_t}\int^{t_{i+1}}_{t_i}
(t-s)^{\alpha-1}e^{-\nu(t-s)}{E}_{\alpha,\alpha}\left(-\lambda^{\beta}_k
(t-s)^{\alpha}\right)\sum_{l=1}^{\infty}(g(s,u_n(s))\cdot e_l,\varphi_k)\nonumber\\
&\relphantom{==}{}\times\varsigma_l(s)d\xi_l(s)-\sum_{i=1}^{N_t}\int^{t_{i+1}}_{t_i}(t-t_i)
^{\alpha-1}e^{-\nu(t-t_i)}{E}_{\alpha,\alpha}
\left(-\lambda^{\beta}_k(t-t_i)^{\alpha}\right)\nonumber\\
&\relphantom{==}{}\times
\sum_{l=1}^{\infty}(g(t_i,u_n(t_i))\cdot e_l,\varphi_k)\varsigma_l(t_i)d\xi_l(s)\bigg{|}^2\nonumber&
\end{flalign}
\begin{flalign}
&\leq C\mathbb{E}\sum^{\infty}_{k,l=1}\sum_{i=1}^{N_t}\int^{t_{i+1}}_{t_i}
\bigg((t-s)^{\alpha-1}
e^{-\nu(t-s)}{E}_{\alpha,\alpha}\left(-\lambda^{\beta}_k(t-s)^{\alpha}\right)\varsigma_l(s)-
(t-t_i)^{\alpha-1}e^{-\nu(t-t_i)}\nonumber\\
&\relphantom{==}{}\times{E}_{\alpha,\alpha}\left(-\lambda^{\beta}_k(t-t_i)^{\alpha}\right)
\varsigma_l(t_i)\bigg)^2\lambda_k^{2\widetilde{\gamma}-\frac{2\beta}{\alpha}}
(g(t_i,u_n(t_i))\cdot e_l,\varphi_k)^2ds\nonumber\\
&\relphantom{==}{}+C\mathbb{E}\sum^{\infty}_{k,l=1}\sum_{i=1}^{N_t}\int^{t_{i+1}}_{t_i}
(t-s)^{2\alpha-2}e^{-2\nu(t-s)}\left|{E}_{\alpha,\alpha}
\left(-\lambda^{\beta}_k(t-s)^{\alpha}\right)\right|^2|\varsigma_l(s)|^2\nonumber\\
&\relphantom{==}{}\times\lambda_k^{2\widetilde{\gamma}-\frac{2\beta}{\alpha}}
\left((g(s,u_n(s))-g(t_i,u_n(t_i)))\cdot e_l,\varphi_k\right)^2ds. &
\end{flalign}
Estimating the last term in \eqref{eq2.47'}, in view of $(\bf{A}_1)$, \eqref{eq.4.9} and \eqref{eq.4.29}, we obtain
\begin{flalign}\label{B.15}
&C\mathbb{E}\sum^{\infty}_{k,l=1}\sum_{i=1}^{N_t}\int^{t_{i+1}}_{t_i}
(t-s)^{2\alpha-2}e^{-2\nu(t-s)}\left|{E}_{\alpha,\alpha}
\left(-\lambda^{\beta}_k(t-s)^{\alpha}\right)\right|^2|\varsigma_l(s)|^2\nonumber\\
&\relphantom{==}{}\times\lambda_k^{2\widetilde{\gamma}-\frac{2\beta}{\alpha}}
\left((g(s,u_n(s))-g(t_i,u_n(t_i)))\cdot e_l,\varphi_k\right)^2ds\nonumber\\
&\leq C \mathbb{E}\sum^{\infty}_{k,l=1}\sum_{i=1}^{N_t}\int^{t_{i+1}}_{t_i}
(t-s)^{2\alpha-2}e^{-2\nu(t-s)}\frac{1}{(1+\lambda^{\beta}_k(t-s)^{\alpha})^2}
\lambda_k^{2\widetilde{\gamma}-\frac{2\beta}{\alpha}}\nonumber\\
&\relphantom{==}{}\times\left((g(s,u_n(s))-g(t_i,u_n(t_i)))\cdot e_l,\varphi_k\right)^2ds\nonumber&
\end{flalign}
\begin{flalign}
&\leq C\mathbb{E}\sum_{i=1}^{N_t}\int^{t_{i+1}}_{t_i}
(t-s)^{2\alpha-2}e^{-2\nu(t-s)}
\|(-\Delta)^{\widetilde{\gamma}-\frac{\beta}{\alpha}}(g(s,u_n(s))-g(t_i,u_n(t_i)))\|_{\mathcal{L}_2^0}^2ds\nonumber\\
&\leq C\sum_{i=1}^{N_t}\int^{t_{i+1}}_{t_i}
(t-s)^{2\alpha-2}e^{-2\nu(t-s)}\left(|s-t_i|^2+
\mathbb{E}\|u_n(s)-u_n(t_i)\|_{\mathbb{H}^{2\widetilde{\gamma}-\frac{2\beta}{\alpha}}}^2\right)ds\nonumber\\
&\leq C\tau^2. &
\end{flalign}
On the other hand, we observe that for $s\in[t_i,t_{i+1}]$, by \eqref{eq.4.9}, Lemma \ref{le2.1}, Lagrange's mean value theorem, and the equality (see \cite[eq.(1.82)]{Podlubny})
$$\frac{d}{d\tau}(t-\tau)^{\alpha-1}{E}_{\alpha,\alpha}
\left(-\lambda^{\beta}_k(t-\tau)^{\alpha}\right)=-(t-\tau)^{\alpha-2}
{E}_{\alpha,\alpha-1}
\left(-\lambda^{\beta}_k(t-\tau)^{\alpha}\right),$$
we have
\begin{align}\label{C.5}
&\Big|(t-s)^{\alpha-1}e^{-\nu(t-s)}E_{\alpha,\alpha}\left(-\lambda_k^\beta
(t-s)^\alpha\right)\varsigma_l(s)
\nonumber\\
&
\relphantom{=}{}-(t-t_i)^{\alpha-1}e^{-\nu(t-t_i)}E_{\alpha,\alpha}\left(-\lambda_k^\beta (t-t_i)^\alpha\right)\varsigma_l(t_i)\Big|\nonumber\\
&\leq e^{-\nu(t-t_i)}|\varsigma_l(t_i)|\left|(t-t_i)^{\alpha-1}E_{\alpha,\alpha}
\left(-\lambda_k^\beta (t-t_i)^\alpha\right)-(t-s)^{\alpha-1}E_{\alpha,\alpha}
\left(-\lambda_k^\beta (t-s)^\alpha\right)\right|\nonumber\\
&\relphantom{=}{}+(t-s)^{\alpha-1}\left|E_{\alpha,\alpha}
\left(-\lambda_k^\beta (t-s)^\alpha\right)\right||\varsigma_l(t_i)|
\left|e^{-\nu(t-t_i)}-e^{-\nu(t-s)}\right|\nonumber\\
&\relphantom{=}{}+(t-s)^{\alpha-1}e^{-\nu(t-s)}\left|E_{\alpha,\alpha}
\left(-\lambda_k^\beta (t-s)^\alpha\right)\right||\varsigma_l(t_i)-\varsigma_l(s)|\nonumber\\
&\leq \mu_1\int_{t_i}^s(t-r)^{\alpha-2}\left|E_{\alpha,\alpha-1}
\left(-\lambda_k^\beta (t-r)^\alpha\right)\right|dr\nonumber\\
&\relphantom{=}{}+C\mu_1\tau\frac{1}{1+\lambda_k^\beta(t-s)^\alpha}(t-s)^{\alpha-1}
+C\gamma_1\tau\frac{1}{1+\lambda_k^\beta(t-s)^\alpha}(t-s)^{\alpha-1}\nonumber\\
&\leq C\int_{t_i}^s(t-r)^{\alpha-2}\frac{1}{1+\lambda_k^\beta(t-r)^\alpha}dr+C \tau(t-s)^{\alpha-1}\nonumber\\
&\leq C\int_{t_i}^s(t-r)^{\alpha-2}dr+C \tau(t-s)^{\alpha-1}\leq C\tau(t-s)^{\alpha-2}+C \tau(t-s)^{\alpha-1}.
\end{align}
Thus we get from \eqref{eq2.47'}-\eqref{C.5} that
\begin{align}\label{C.6}
&\mathbb{E}\|\mathscr{A}_3\|_{\mathbb{H}^{2\widetilde{\gamma}-\frac{2\beta}{\alpha}}}^2\nonumber\\
&%\relphantom{=}{}
 \leq C\mathbb{E}\sum^{\infty}_{k,l=1}\sum_{i=1}^{N_t}\int^{t_{i+1}}_{t_i}
\tau^2\left((t-s)^{2\alpha-4}+(t-s)^{2\alpha-2}\right)\lambda_k^{2\widetilde{\gamma}-\frac{2\beta}{\alpha}}
\nonumber\\
&
\relphantom{=}{}\times (g(t_i,u_n(t_i))\cdot e_l,\varphi_k)^2ds+C\tau^2\nonumber\\
&%\relphantom{=}{}
\leq C\tau^2\mathbb{E}\sum_{i=1}^{N_t}\int^{t_{i+1}}_{t_i}
\left((t-s)^{2\alpha-4}+(t-s)^{2\alpha-2}\right)\|(-\Delta)^{\widetilde{\gamma}-\frac{\beta}{\alpha}}
g(t_i,u_n(t_i))\|_{\mathcal{L}_2^0}^2ds+C\tau^2\nonumber\\
&%\relphantom{=}{}
\leq C\tau^2\int^{t}_{0}\left((t-s)^{2\alpha-4}+(t-s)^{2\alpha-2}\right)
\left(1+\sup_{0\leq s\leq T}\mathbb{E}\|u_n(s)\|_{\mathbb{H}^{2\widetilde{\gamma}}}^2\right)ds+C\tau^2
\leq C\tau^2.
\end{align}
For $\mathscr{A}_4$, in a similar way as above, we get from \eqref{eq.4.9} and \eqref{eq.4.29} that
\begin{flalign}\label{B.21}
\displaystyle &\mathbb{E}\|\mathscr{A}_4\|_{\mathbb{H}^{2\widetilde{\gamma}-\frac{2\beta}{\alpha}}}^2\nonumber\\ &\relphantom{=}{}=\mathbb{E}\sum^{\infty}_{k=1}\lambda_k^{2\widetilde{\gamma}-\frac{2\beta}{\alpha}}
\bigg{|}\sum_{i=1}^{N_t}\int^{t_{i+1}}_{t_i}
(t-t_i)^{\alpha-1}e^{-\nu(t-t_i)}{E}_{\alpha,\alpha}
\left(-\lambda^{\beta}_k(t-t_i)^{\alpha}\right)\nonumber\\
&\relphantom{==}{}\times\sum_{l=1}^{\infty}(g(t_i,u_n(t_i))\cdot e_l,\varphi_k)
(\varsigma_l(t_i)-\varsigma_l^n(t_i))d\xi_l(s)\bigg{|}^2\nonumber\\
&\relphantom{=}{}\leq \mathbb{E}\sum^{\infty}_{k,l=1}
\sum_{i=1}^{N_t}\int^{t_{i+1}}_{t_i}
(t-t_i)^{2\alpha-2}e^{-2\nu(t-t_i)}\left|{E}_{\alpha,\alpha}
\left(-\lambda^{\beta}_k(t-t_i)^{\alpha}\right)\right|^2\nonumber\\
&\relphantom{==}{}\times|\varsigma_l(t_i)-\varsigma_l^n(t_i)|^2
\lambda_k^{2\widetilde{\gamma}-\frac{2\beta}{\alpha}}
(g(t_i,u_n(t_i))\cdot e_l,\varphi_k)^2ds\nonumber&
\end{flalign}
\begin{flalign}
&\relphantom{=}{}\leq C\mathbb{E}\sum^{\infty}_{k,l=1} \sum_{i=1}^{N_t}\int^{t_{i+1}}_{t_i}(t-t_i)^{2\alpha-2}e^{-2\nu(t-t_i)}
\frac{1}{(1+\lambda^{\beta}_k(t-t_i)^{\alpha})^2}(\eta_l^n)^2\nonumber\\
&\relphantom{==}{}\times \lambda_k^{2\widetilde{\gamma}-\frac{2\beta}{\alpha}}
(g(t_i,u_n(t_i))\cdot e_l,\varphi_k)^2ds\nonumber\nonumber \\
&\relphantom{=}{}\leq C\mathbb{E}\sum_{i=1}^{N_t}\int^{t_{i+1}}_{t_i}(t-t_i)^{2\alpha-2}e^{-2\nu(t-t_i)}
\sum^{\infty}_{l=1}(\eta_l^n)^2
\|(-\Delta)^{\widetilde{\gamma}-\frac{\beta}{\alpha}}g(t_i,u_n(t_i))\|^2_{\mathcal{L}_2^0}ds\nonumber\\
&\relphantom{=}{}\leq C\sum_{i=1}^{N_t}\int^{t_{i+1}}_{t_i}\sum^{\infty}_{l=1}(\eta_l^n)^2 \left(1+\mathbb{E}\|u_n(t_i)\|_{\mathbb{H}^{2\widetilde{\gamma}}}^2\right)ds
\leq C  \sum_{l=1}^{\infty}(\eta_l^n)^2. &
\end{flalign}
For $\mathscr{A}_5$, similar to the arguments in \eqref{eq2.47'}-\eqref{C.6} , we find that
\begin{flalign}\label{C.8}
\displaystyle &\mathbb{E}\|\mathscr{A}_5\|_{\mathbb{H}^{2\widetilde{\gamma}-\frac{2\beta}{\alpha}}}^2 \nonumber\\ &=\mathbb{E}\sum^{\infty}_{k=1}\lambda_k^{2\widetilde{\gamma}-\frac{2\beta}{\alpha}}
\bigg(\sum_{i=1}^{N_t}\frac{1}{\tau}\int^{t_{i+1}}_{t_i}\int^{t_{i+1}}_{t_i}\int_{\mathcal{D}}
(t-t_i)^{\alpha-1}e^{-\nu(t-t_i)}\sum^{\infty}_{j=1}{E}_{\alpha,\alpha}
\left(-\lambda^{\beta}_j(t-t_i)^{\alpha}\right)\nonumber\\
&\relphantom{=}{}\times\varphi_j(\cdot)\varphi_j(y)\sum_{l=1}^{\infty}
(g(t_i,u_n(t_i,y))\cdot e_l(y))\varsigma_l^n(t_i)dydsd\xi_l(r)\nonumber\\
&\relphantom{=}{}-\sum_{i=1}^{N_t}\frac{1}{\tau}\int^{t_{i+1}}_{t_i}\int^{t_{i+1}}_{t_i}\int_{\mathcal{D}}
(t-s)^{\alpha-1}e^{-\nu(t-s)}\sum^{\infty}_{j=1}{E}_{\alpha,\alpha}
\left(-\lambda^{\beta}_j(t-s)^{\alpha}\right)\varphi_j(\cdot)\varphi_j(y)\nonumber\\
&\relphantom{=}{}\times\sum_{l=1}^{\infty}(g(s,u_n(s,y))\cdot e_l(y))\varsigma_l^n(s)dydsd\xi_l(r),\varphi_k\bigg)^2\nonumber&
\end{flalign}
\begin{flalign}
&=\mathbb{E}\sum^{\infty}_{k=1}\lambda_k^{2\widetilde{\gamma}-\frac{2\beta}{\alpha}}
\bigg{|}\sum_{i=1}^{N_t}
\frac{1}{\tau}\int^{t_{i+1}}_{t_i}\int^{t_{i+1}}_{t_i}
(t-t_i)^{\alpha-1}e^{-\nu(t-t_i)}{E}_{\alpha,\alpha}
\left(-\lambda^{\beta}_k(t-t_i)^{\alpha}\right)\nonumber\\
&\relphantom{=}{}\times\sum^{\infty}_{l=1}(g(t_i,u_n(t_i))\cdot e_l,\varphi_k)\varsigma_l^n(t_i)dsd\xi_l(r)-\sum_{i=1}^{N_t}
\frac{1}{\tau}\int^{t_{i+1}}_{t_i}\int^{t_{i+1}}_{t_i}
(t-s)^{\alpha-1}e^{-\nu(t-s)}\nonumber\\
&\relphantom{=}{}\times{E}_{\alpha,\alpha}
\left(-\lambda^{\beta}_k(t-s)^{\alpha}\right)\sum^{\infty}_{l=1}(g(s,u_n(s))\cdot e_l,\varphi_k)\varsigma_l^n(s)dsd\xi_l(r)\bigg{|}^2\nonumber&
\end{flalign}
\begin{flalign}
&\leq C\mathbb{E} \sum^{\infty}_{k,l=1}\sum_{i=1}^{N_t}\int^{t_{i+1}}_{t_i}
\bigg((t-t_i)^{\alpha-1}e^{-\nu(t-t_i)}{E}_{\alpha,\alpha}
\left(-\lambda^{\beta}_k(t-t_i)^{\alpha}\right)\varsigma_l^n(t_i)\nonumber\\
&\relphantom{=}{}-(t-s)^{\alpha-1}e^{-\nu(t-s)}{E}_{\alpha,\alpha}
\left(-\lambda^{\beta}_k(t-s)^{\alpha}\right)\varsigma_l^n(s)\bigg)^2
\lambda_k^{2\widetilde{\gamma}-\frac{2\beta}{\alpha}}
(g(t_i,u_n(t_i))\cdot e_l,\varphi_k)^2ds\nonumber\\
&\relphantom{=}{}+C\mathbb{E} \sum^{\infty}_{k,l=1}\sum_{i=1}^{N_t}\int^{t_{i+1}}_{t_i}
(t-s)^{2\alpha-2}e^{-2\nu(t-s)}\left|{E}_{\alpha,\alpha}
\left(-\lambda^{\beta}_k(t-s)^{\alpha}\right)\right|^2|\varsigma_l^n(s)|^2\nonumber\\
&\relphantom{=}{}\times
\lambda_k^{2\widetilde{\gamma}-\frac{2\beta}{\alpha}}
((g(t_i,u_n(t_i))-g(s,u_n(s)))\cdot e_l,\varphi_k)^2ds\nonumber&
\end{flalign}
\begin{flalign}
&\leq C\mathbb{E}
\sum^{\infty}_{k,l=1}\sum_{i=1}^{N_t}\int^{t_{i+1}}_{t_i}\tau^2\left((t-s)^{2\alpha-4}+(t-s)^{2\alpha-2}\right)
\lambda_k^{2\widetilde{\gamma}-\frac{2\beta}{\alpha}}
(g(t_i,u_n(t_i))\cdot e_l,\varphi_k)^2ds\nonumber\\
&\relphantom{=}{}+C\mathbb{E}
\sum^{\infty}_{k,l=1}\sum_{i=1}^{N_t}\int^{t_{i+1}}_{t_i}
(t-s)^{2\alpha-2}e^{-2\nu(t-s)}\lambda_k^{2\widetilde{\gamma}-\frac{2\beta}{\alpha}}
\nonumber \\
&\relphantom{=}{}
\times ((g(t_i,u_n(t_i))-g(s,u_n(s)))\cdot e_l,\varphi_k)^2ds\nonumber &
\end{flalign}
\begin{flalign}
&\leq C\tau^2\mathbb{E}\sum_{i=1}^{N_t}\int^{t_{i+1}}_{t_i}\left((t-s)^{2\alpha-4}+(t-s)^{2\alpha-2}\right)
\|(-\Delta)^{\widetilde{\gamma}-\frac{\beta}{\alpha}}g(t_i,u_n(t_i))\|_{\mathcal{L}_2^0}^2ds\nonumber\\
&\relphantom{=}{}
+C\mathbb{E}\sum_{i=1}^{N_t}\int^{t_{i+1}}_{t_i}(t-s)^{2\alpha-2}
\|(-\Delta)^{\widetilde{\gamma}-\frac{\beta}{\alpha}}(g(t_i,u_n(t_i))-g(s,u_n(s)))\|_{\mathcal{L}_2^0}^2ds\nonumber &
\end{flalign}
\begin{flalign}
&\leq C\tau^2\int^{t}_{0}\left((t-s)^{2\alpha-4}+(t-s)^{2\alpha-2}\right)
\left(1+\sup_{0\leq s\leq T}\|u_n(s)\|_{\mathbb{H}^{2\widetilde{\gamma}}}^2\right)ds\nonumber\\
&\relphantom{=}{}
+C\sum_{i=1}^{N_t}\int^{t_{i+1}}_{t_i}(t-s)^{2\alpha-2}
\left(|s-t_i|^2+
\mathbb{E}\|u_n(t_i)-u_n(s)\|_{\mathbb{H}^{2\widetilde{\gamma}-\frac{2\beta}{\alpha}}}^2\right)ds\nonumber\\
&\leq C\tau^2. &
\end{flalign}
Since $\{\varphi_k\}_{j=1}^\infty$ is an orthonormal basis in $L^2(\mathcal{D})$ and $\{\xi_l^H\}_{l=1}^{\infty}$ is a family of mutually independent one-dimensional fractional Brownian motions, in view of \eqref{eq.2.9}, $(\bf{A}_1)$, $(\bf{A}_3)$ and Lemma \ref{le2.10}, we deduce that
\begin{flalign}\label{C.9}
&\mathbb{E}\|\mathscr{A}_6\|_{\mathbb{H}^{2\widetilde{\gamma}-\frac{2\beta}{\alpha}}}^2 \nonumber\\
&=\mathbb{E}\sum^{\infty}_{k=1}\lambda_k^{2\widetilde{\gamma}-\frac{2\beta}{\alpha}} \bigg(\int^{t}_{0}\int_{\mathcal{D}}(t-s)^{\alpha-1}e^{-\nu(t-s)}
\sum^{\infty}_{j=1}{E}_{\alpha,\alpha}\left(-\lambda^{\beta}_j(t-s)^{\alpha}\right)
\varphi_j(\cdot)\varphi_j(y)\nonumber\\
&\relphantom{=}{}\times\sum_{l=1}^{\infty}(h(s,u(s,y))-h(s,u_n(s,y)))\cdot e_l(y)\varrho_l(s)dyd\xi_l^H(s),\varphi_k\bigg)^2\nonumber\\
&=\mathbb{E}\sum^{\infty}_{k=1}\lambda_k^{2\widetilde{\gamma}-\frac{2\beta}{\alpha}} \bigg{|}\int^{t}_{0}\int_{\mathcal{D}}(t-s)^{\alpha-1}e^{-\nu(t-s)}
{E}_{\alpha,\alpha}\left(-\lambda^{\beta}_k(t-s)^{\alpha}\right)
\varphi_k(y)\nonumber\\
&\relphantom{=}{}\times\sum_{j,l=1}^{\infty}((h(s,u(s))-h(s,u_n(s)))\cdot e_l,\varphi_j)\varphi_j(y)\varrho_l(s)dyd\xi_l^H(s)\bigg{|}^2\nonumber &
\end{flalign}
\begin{flalign}
&=\mathbb{E}\sum^{\infty}_{k=1}\lambda_k^{2\widetilde{\gamma}-\frac{2\beta}{\alpha}} \bigg{|}\int^{t}_{0}(t-s)^{\alpha-1}e^{-\nu(t-s)}
{E}_{\alpha,\alpha}\left(-\lambda^{\beta}_k(t-s)^{\alpha}\right)
\nonumber\\
&\relphantom{=}{}\times\sum_{l=1}^{\infty}((h(s,u(s))-h(s,u_n(s)))\cdot e_l,\varphi_k)\varrho_l(s)d\xi_l^H(s)\bigg{|}^2\nonumber\\
&\leq 2HT^{2H-1}\mathbb{E}\sum_{k,l=1}^{\infty}\int^{t}_{0}
(t-s)^{2\alpha-2}e^{-2\nu(t-s)}\frac{1}{(1+\lambda^{\beta}_k(t-s)^{\alpha})^2}\nonumber\\
&\relphantom{=}{}\times\left|\lambda_k^{\widetilde{\gamma}-\frac{\beta}{\alpha}} ((h(s,u(s))-h(s,u_n(s)))\cdot e_l,\varphi_k)\varrho_l(s)\right|^2ds\nonumber\\
&\leq C\int^{t}_{0}(t-s)^{2\alpha-2}e^{-2\nu(t-s)}
\mathbb{E}\|u(s)-u_n(s)\|_{\mathbb{H}^{2\widetilde{\gamma}-\frac{2\beta}{\alpha}}}^2ds. &
\end{flalign}
By $(\bf{A}_1)$, \eqref{eq.4.9}, \eqref{eq.4.29} and the similar argument as in \eqref{C.5}, we obtain
\begin{flalign}\label{C.10}
&\mathbb{E}\|\mathscr{A}_7\|_{\mathbb{H}^{2\widetilde{\gamma}-\frac{2\beta}{\alpha}}}^2\nonumber\\ 
 & =\mathbb{E}\sum^{\infty}_{k=1}\lambda_k^{2\widetilde{\gamma}-\frac{2\beta}{\alpha}} \bigg{|}\sum_{i=1}^{N_t}\int^{t_{i+1}}_{t_i}
(t-s)^{\alpha-1}e^{-\nu(t-s)}{E}_{\alpha,\alpha}\left(-\lambda^{\beta}_k
(t-s)^{\alpha}\right)\nonumber\\
&\relphantom{=}{}\times\sum_{l=1}^{\infty}(h(s,u_n(s))\cdot e_l,\varphi_k)\varrho_l(s)d\xi_l^H(s)
-\sum_{i=1}^{N_t}\int^{t_{i+1}}_{t_i}(t-t_i)
^{\alpha-1}e^{-\nu(t-t_i)}\nonumber\\
&\relphantom{=}{}\times{E}_{\alpha,\alpha}
\left(-\lambda^{\beta}_k(t-t_i)^{\alpha}\right)\sum_{l=1}^{\infty}(h(t_i,u_n(t_i))\cdot e_l,\varphi_k)\varrho_l(t_i)d\xi^H_l(s)\bigg{|}^2\nonumber &
\end{flalign}
\begin{flalign}
&= \mathbb{E}\sum^{\infty}_{k=1}\lambda_k^{2\widetilde{\gamma}-\frac{2\beta}{\alpha}} \bigg{|}\int^{t}_{0}\sum_{i=1}^{N_t}\chi_i(s)\bigg(
(t-s)^{\alpha-1}e^{-\nu(t-s)}{E}_{\alpha,\alpha}\left(-\lambda^{\beta}_k
(t-s)^{\alpha}\right)\nonumber\\
&\relphantom{=}{}\times\sum_{l=1}^{\infty}(h(s,u_n(s))\cdot e_l,\varphi_k)\varrho_l(s)
-(t-t_i)^{\alpha-1}e^{-\nu(t-t_i)}{E}_{\alpha,\alpha}
\left(-\lambda^{\beta}_k(t-t_i)^{\alpha}\right)
\nonumber\\
&\relphantom{=}{}\times
\sum_{l=1}^{\infty}(h(t_i,u_n(t_i))\cdot e_l,\varphi_k)\varrho_l(t_i)\bigg)d\xi^H_l(s)\bigg{|}^2\nonumber&
\end{flalign}
\begin{flalign}
& \leq CHT^{2H-1} \mathbb{E}\sum^{\infty}_{k,l=1}\lambda_k^{2\widetilde{\gamma}-\frac{2\beta}{\alpha}} \int^{t}_{0}\bigg{|}\sum_{i=1}^{N_t}\chi_i(s)\bigg(
(t-s)^{\alpha-1}e^{-\nu(t-s)}{E}_{\alpha,\alpha}\left(-\lambda^{\beta}_k
(t-s)^{\alpha}\right)\nonumber\\
&\relphantom{=}{}\times(h(s,u_n(s))\cdot e_l,\varphi_k)
\varrho_l(s)
-(t-t_i)^{\alpha-1}e^{-\nu(t-t_i)}{E}_{\alpha,\alpha}
\left(-\lambda^{\beta}_k(t-t_i)^{\alpha}\right)\nonumber\\
&\relphantom{=}{}\times(h(t_i,u_n(t_i))\cdot e_l,\varphi_k)\varrho_l(t_i)\bigg)\bigg{|}^2 ds\nonumber&
\end{flalign}
\begin{flalign}
&\leq CT^{2H-1} \mathbb{E}\sum^{\infty}_{k,l=1}\lambda_k^{2\widetilde{\gamma}-\frac{2\beta}{\alpha}} \int^{t}_{0}\sum_{i=1}^{N_t}\chi_i(s)^2\bigg(
(t-s)^{\alpha-1}e^{-\nu(t-s)}{E}_{\alpha,\alpha}\left(-\lambda^{\beta}_k
(t-s)^{\alpha}\right)\nonumber\\
&\relphantom{=}{}\times(h(s,u_n(s))\cdot e_l,\varphi_k)
\varrho_l(s)
-(t-t_i)^{\alpha-1}e^{-\nu(t-t_i)}{E}_{\alpha,\alpha}
\left(-\lambda^{\beta}_k(t-t_i)^{\alpha}\right)\nonumber\\
&\relphantom{=}{}\times(h(t_i,u_n(t_i))\cdot e_l,\varphi_k)\varrho_l(t_i)\bigg)^2 ds\nonumber&
\end{flalign}
\begin{flalign}
&= CT^{2H-1} \mathbb{E}\sum^{\infty}_{k,l=1}\lambda_k^{2\widetilde{\gamma}-\frac{2\beta}{\alpha}} \sum_{i=1}^{N_t}\int^{t_{i+1}}_{t_i}\bigg(
(t-s)^{\alpha-1}e^{-\nu(t-s)}{E}_{\alpha,\alpha}\left(-\lambda^{\beta}_k
(t-s)^{\alpha}\right)\nonumber\\
&\relphantom{=}{}\times(h(s,u_n(s))\cdot e_l,\varphi_k)
\varrho_l(s)-(t-t_i)^{\alpha-1}e^{-\nu(t-t_i)}{E}_{\alpha,\alpha}
\left(-\lambda^{\beta}_k(t-t_i)^{\alpha}\right)\nonumber\\
&\relphantom{=}{}\times(h(t_i,u_n(t_i))\cdot e_l,\varphi_k)\varrho_l(t_i)\bigg)^2 ds\nonumber&
\end{flalign}
\begin{flalign}
&\leq CT^{2H-1}\mathbb{E}
\sum_{k,l=1}^{\infty}\sum_{i=1}^{N_t}\int_{t_i}^{t_{i+1}}
\bigg((t-t_i)^{\alpha-1}e^{-\nu(t-t_i)}
{E}_{\alpha,\alpha}\left(-\lambda^{\beta}_k(t-t_i)^{\alpha}\right)
\varrho_l(t_i)\nonumber\\
&\relphantom{=}{}-(t-s)^{\alpha-1}e^{-\nu(t-s)}
E_{\alpha,\alpha}\left(-\lambda^{\beta}_k(t-s)^{\alpha}\right)\varrho_l(s)\bigg)^2
\lambda_k^{2\widetilde{\gamma}-\frac{2\beta}{\alpha}} (h(t_i,u_n(t_i))\cdot e_l,\varphi_k)^2ds\nonumber\\
&\relphantom{=}{}+CT^{2H-1}\mathbb{E}\sum_{k,l=1}^{\infty}\sum_{i=1}^{N_t}\int_{t_i}^{t_{i+1}}
(t-s)^{2\alpha-2}e^{-2\nu(t-s)}\left|E_{\alpha,\alpha}\left(-\lambda^{\beta}_k(t-s)^{\alpha}\right)\right|^2
|\varrho_l(s)|^2\nonumber\\
&\relphantom{=}{}\times \lambda_k^{2\widetilde{\gamma}-\frac{2\beta}{\alpha}} ((h(t_i,u_n(t_i))-h(s,u_n(s)))\cdot e_l,\varphi_k)^2ds\nonumber&
\end{flalign}
\begin{flalign}
&\leq CT^{2H-1}\mathbb{E}
\sum_{k,l=1}^{\infty}\sum_{i=1}^{N_t}\int_{t_i}^{t_{i+1}}
\tau^2\left((t-s)^{2\alpha-4}+(t-s)^{2\alpha-2}\right)
\lambda_k^{2\widetilde{\gamma}-\frac{2\beta}{\alpha}} \nonumber\\
&\relphantom{=}{} \times (h(t_i,u_n(t_i))\cdot e_l,\varphi_k)^2ds+CT^{2H-1}\mathbb{E}
\sum_{k,l=1}^{\infty}\sum_{i=1}^{N_t}\int_{t_i}^{t_{i+1}}
(t-s)^{2\alpha-2}e^{-2\nu(t-s)}\nonumber\\
&\relphantom{=}{}\times \frac{1}{(1+\lambda^{\beta}_k(t-s)^{\alpha})^2}
\lambda_k^{2\widetilde{\gamma}-\frac{2\beta}{\alpha}} ((h(t_i,u_n(t_i))-h(s,u_n(s)))\cdot e_l,\varphi_k)^2ds\nonumber&
\end{flalign}
\begin{flalign}
&\leq CT^{2H-1}\tau^2
\int_{0}^{t}(t-s)^{2\alpha-4}
\mathbb{E}\|(-\Delta)^{\widetilde{\gamma}-\frac{\beta}{\alpha}}h(t_i,u_n(t_i))\|_{\mathcal{L}_2^0}^2ds\nonumber\\
&\relphantom{=}{}+CT^{2H-1}\mathbb{E}
\sum_{i=1}^{N_t}\int_{t_i}^{t_{i+1}}
(t-s)^{2\alpha-2}e^{-2\nu(t-s)}
\nonumber\\
&\relphantom{=}{}
\times \|(-\Delta)^{\widetilde{\gamma}-\frac{\beta}{\alpha}}(h(t_i,u_n(t_i))-h(s,u_n(s)))\|_{\mathcal{L}_2^0}^2ds\nonumber&
\end{flalign}
\begin{flalign}
&\leq CT^{2H-1}\tau^2
\int_{0}^{t}(t-s)^{2\alpha-4}
\left(1+\sup_{0\leq s\leq T}\mathbb{E}\|u_n(s)\|_{\mathbb{H}^{2\widetilde{\gamma}}}^2\right)ds\nonumber\\
&\relphantom{=}{}+CT^{2H-1}
\sum_{i=1}^{N_t}\int_{t_i}^{t_{i+1}}
(t-s)^{2\alpha-2}e^{-2\nu(t-s)} \nonumber
\\
&\relphantom{=}{}
\times
\left(|s-t_i|^2+
\mathbb{E}\|u_n(s)-u_n(t_i)\|_{\mathbb{H}^{2\widetilde{\gamma}-\frac{2\beta}{\alpha}}}^2\right)ds\nonumber\\
&\leq C\tau^2. &
\end{flalign}
For $\mathscr{A}_8$ and $\mathscr{A}_9$, arguing as in \eqref{C.10}, we have
\begin{flalign}\label{B.40}
\displaystyle &\mathbb{E}\|\mathscr{A}_8\|_{\mathbb{H}^{2\widetilde{\gamma}-\frac{2\beta}{\alpha}}}^2\nonumber\\  &%\relphantom{=}{} 
= \mathbb{E}\sum^{\infty}_{k=1}\lambda_k^{2\widetilde{\gamma}-\frac{2\beta}{\alpha}}  \bigg{|}\sum_{i=1}^{N_t}\int^{t_{i+1}}_{t_i}
(t-t_i)^{\alpha-1}e^{-\nu(t-t_i)}{E}_{\alpha,\alpha}
\left(-\lambda^{\beta}_k(t-t_i)^{\alpha}\right)\nonumber\\
&\relphantom{=}{}\times \sum_{l=1}^{\infty}(h(t_i,u_n(t_i,y))\cdot e_l,\varphi_k)
(\varrho_l(t_i)-\varrho_l^n(t_i))d\xi_l^H(s)\bigg{|}^2\nonumber&
\end{flalign}
\begin{flalign}
& \leq CHT^{2H-1}\mathbb{E}\sum^{\infty}_{k,l=1}
\sum_{i=1}^{N_t}\int_{t_i}^{t_{i+1}}(t-t_i)^{2\alpha-2}e^{-2\nu(t-t_i)}\left|{E}_{\alpha,\alpha}
\left(-\lambda^{\beta}_k(t-t_i)^{\alpha}\right)\right|^2\nonumber\\
&\relphantom{=}{}\times|\varrho_l(t_i)-\varrho_l^n(t_i)|^2\lambda_k^{2\widetilde{\gamma}-\frac{2\beta}{\alpha}}  (h(t_i,u_n(t_i,y))\cdot e_l,\varphi_k)^2ds\nonumber\\
&%\relphantom{=}{}
\leq CT^{2H-1}\mathbb{E}\sum^{\infty}_{k,l=1}
\sum_{i=1}^{N_t}\int_{t_i}^{t_{i+1}}(t-t_i)^{2\alpha-2}e^{-2\nu(t-t_i)}
\frac{1}{(1+\lambda^{\beta}_k(t-t_i)^{\alpha})^2}(\widetilde{\eta}_l^n)^2\nonumber\\
&\relphantom{=}{}\times
\lambda_k^{2\widetilde{\gamma}-\frac{2\beta}{\alpha}}  (h(t_i,u_n(t_i,y))\cdot e_l,\varphi_k)^2ds\nonumber&
\end{flalign}
\begin{flalign}
&%\relphantom{=}{}
\leq CT^{2H-1}\mathbb{E} \sum_{i=1}^{N_t}\int_{t_i}^{t_{i+1}}(t-t_i)^{2\alpha-2}e^{-2\nu(t-t_i)}
\sum_{l=1}^{\infty}(\widetilde{\eta}_l^n)^2\|(-\Delta)^{\widetilde{\gamma}-\frac{\beta}{\alpha}}h(t_i,u_n(t_i)) \|_{\mathcal{L}_2^0}^2ds\nonumber\\
&%\relphantom{=}{}
\leq C\sum_{i=1}^{N_t}\int_{t_i}^{t_{i+1}}
\sum_{l=1}^{\infty}(\widetilde{\eta}_l^n)^2
\left(1+\mathbb{E}\|u_n(t_i)\|_{\mathbb{H}^{2\widetilde{\gamma}}}^2\right)
ds\leq C\sum_{l=1}^{\infty}(\widetilde{\eta}_l^n)^2. &
\end{flalign}
and
\begin{flalign}
&\mathbb{E}\|\mathscr{A}_9\|_{\mathbb{H}^{2\widetilde{\gamma}-\frac{2\beta}{\alpha}}}^2\nonumber\\   & = \mathbb{E}\sum^{\infty}_{k=1}\lambda_k^{2\widetilde{\gamma}-\frac{2\beta}{\alpha}}  \bigg(\sum_{i=1}^{N_t}\frac{1}{\tau}\int^{t_{i+1}}_{t_i}\int^{t_{i+1}}_{t_i}\int_{\mathcal{D}}
(t-t_i)^{\alpha-1}e^{-\nu(t-t_i)}\sum^{\infty}_{j=1}{E}_{\alpha,\alpha}
\left(-\lambda^{\beta}_j(t-t_i)^{\alpha}\right)\nonumber\\
&\relphantom{=}{}\times\varphi_j(\cdot)\varphi_j(y)\sum_{l=1}^{\infty}
(h(t_i,u_n(t_i,y))\cdot e_l(y))\varrho_l^n(t_i)dydsd\xi_l^H(r)\nonumber\\
&\relphantom{=}{}
-\sum_{i=1}^{N_t}\frac{1}{\tau}\int^{t_{i+1}}_{t_i}
\int^{t_{i+1}}_{t_i}\int_{\mathcal{D}}(t-s)^{\alpha-1}e^{-\nu(t-s)}\sum^{\infty}_{j=1}
{E}_{\alpha,\alpha}\left(-\lambda^{\beta}_j(t-s)^{\alpha}\right)\varphi_j(\cdot)\varphi_j(y)\nonumber\\
&\relphantom{=}{}\times\sum_{l=1}^{\infty}
(h(s,u_n(s,y))\cdot e_l(y))\varrho_l^n(s)dydsd\xi_l^H(r),\varphi_k\bigg)^2\nonumber&
\end{flalign}
\begin{flalign}
& = \mathbb{E}\sum^{\infty}_{k=1}\lambda_k^{2\widetilde{\gamma}-\frac{2\beta}{\alpha}}
\bigg{|}\sum_{i=1}^{N_t}\frac{1}{\tau}\int^{t_{i+1}}_{t_i}\int^{t_{i+1}}_{t_i}
(t-t_i)^{\alpha-1}e^{-\nu(t-t_i)}{E}_{\alpha,\alpha}
\left(-\lambda^{\beta}_k(t-t_i)^{\alpha}\right)\nonumber\\
&\relphantom{=}{}\times\sum_{l=1}^{\infty}
(h(t_i,u_n(t_i))\cdot e_l,\varphi_k)\varrho_l^n(t_i)dsd\xi_l^H(r)-\sum_{i=1}^{N_t}
\frac{1}{\tau}\int^{t_{i+1}}_{t_i}\int^{t_{i+1}}_{t_i}
(t-s)^{\alpha-1}e^{-\nu(t-s)}\nonumber\\
&\relphantom{=}{}\times{E}_{\alpha,\alpha}
\left(-\lambda^{\beta}_k(t-s)^{\alpha}\right)\sum_{l=1}^{\infty}
(h(s,u_n(s))\cdot e_l,\varphi_k)\varrho_l^n(s)dsd\xi_l^H(r)\bigg{|}^2\nonumber&
\end{flalign}
\begin{flalign}
&\leq CHT^{2H-1}\mathbb{E}\sum^{\infty}_{k,l=1}
\sum_{i=1}^{N_t}\int_{t_i}^{t_{i+1}}\Big((t-t_i)^{\alpha-1}e^{-\nu(t-t_i)}{E}_{\alpha,\alpha}
\left(-\lambda^{\beta}_k(t-t_i)^{\alpha}\right)\varrho_l^n(t_i)\nonumber\\
&\relphantom{=}{}-(t-s)^{\alpha-1}e^{-\nu(t-s)}{E}_{\alpha,\alpha}
\left(-\lambda^{\beta}_k(t-s)^{\alpha}\right)\varrho_l^n(s)\Big)^2
\lambda_k^{2\widetilde{\gamma}-\frac{2\beta}{\alpha}}
(h(t_i,u_n(t_i))\cdot e_l,\varphi_k)^2ds\nonumber\\
&\relphantom{=}{}+CHT^{2H-1}\mathbb{E}\sum^{\infty}_{k,l=1}
\sum_{i=1}^{N_t}\int_{t_i}^{t_{i+1}}(t-s)^{2\alpha-2}e^{-2\nu(t-s)}\left|{E}_{\alpha,\alpha}
\left(-\lambda^{\beta}_k(t-s)^{\alpha}\right)\right|^2|\varrho_l^n(s)|^2\nonumber\\
&\relphantom{=}{}\times
\lambda_k^{2\widetilde{\gamma}-\frac{2\beta}{\alpha}}
((h(t_i,u_n(t_i))-h(s,u_n(s)))\cdot e_l,\varphi_k)^2ds\nonumber&
\end{flalign}
\begin{flalign}
&\leq CT^{2H-1}\mathbb{E}\sum^{\infty}_{k,l=1}
\sum_{i=1}^{N_t}\int_{t_i}^{t_{i+1}}\tau^2\Big((t-s)^{2\alpha-4}+(t-s)^{2\alpha-2}\Big)
\lambda_k^{2\widetilde{\gamma}-\frac{2\beta}{\alpha}}
\nonumber\\
&\relphantom{=}{} \times (h(t_i,u_n(t_i))\cdot e_l,\varphi_k)^2ds+CT^{2H-1}\mathbb{E}\sum^{\infty}_{k,l=1}
\sum_{i=1}^{N_t}\int_{t_i}^{t_{i+1}}(t-s)^{2\alpha-2}e^{-2\nu(t-s)}
\nonumber\\
&\relphantom{=}{}\times \lambda_k^{2\widetilde{\gamma}-\frac{2\beta}{\alpha}}
((h(t_i,u_n(t_i))-h(s,u_n(s)))\cdot e_l,\varphi_k)^2ds\nonumber&
\end{flalign}
\begin{flalign}
&\leq C\tau^2\mathbb{E}
\sum_{i=1}^{N_t}\int_{t_i}^{t_{i+1}}\Big((t-s)^{2\alpha-4}+(t-s)^{2\alpha-2}\Big)
\|(-\Delta)^{\widetilde{\gamma}-\frac{\beta}{\alpha}}h(t_i,u_n(t_i))\|_{\mathcal{L}_2^0}^2ds\nonumber\\
&\relphantom{=}{}+CT^{2H-1}\mathbb{E}
\sum_{i=1}^{N_t}\int_{t_i}^{t_{i+1}}(t-s)^{2\alpha-2}e^{-2\nu(t-s)}
\nonumber\\
&\relphantom{=}{}
\times\|(-\Delta)^{\widetilde{\gamma}-\frac{\beta}{\alpha}}(h(t_i,u_n(t_i))-h(s,u_n(s)))\|_{\mathcal{L}_2^0}^2ds\nonumber&
\end{flalign}
\begin{flalign}
&\leq C\tau^2
\sum_{i=1}^{N_t}\int_{t_i}^{t_{i+1}}\Big((t-s)^{2\alpha-4}+(t-s)^{2\alpha-2}\Big)
\left(1+\sup_{0\leq s\leq T}\mathbb{E}\|u_n(s)\|_{\mathbb{H}^{2\widetilde{\gamma}}}^2\right)ds\nonumber\\
&\relphantom{=}{}+C
\sum_{i=1}^{N_t}\int_{t_i}^{t_{i+1}}(t-s)^{2\alpha-2}
\left(|s-t_i|^2+\mathbb{E}\|u_n(t_i)-u_n(s)\|_{\mathbb{H}^{2\widetilde{\gamma}-\frac{2\beta}{\alpha}}}^2\right)ds\nonumber\\
&\leq C\tau^2. &
\end{flalign}

Collecting the above estimates, we find that
\begin{align*}
e^{2\nu t}\mathbb{E}\|u(t)-u_n(t)\|_{\mathbb{H}^{2\widetilde{\gamma}-\frac{2\beta}{\alpha}}}^2
&\leq Ce^{2\nu t}\tau^2+Ce^{2\nu t}\sum_{l=1}^{\infty}(\eta_l^n)^2+Ce^{2\nu t}\sum_{l=1}^{\infty}(\widetilde{\eta}_l^n)^2\\
&\relphantom{=}{}+C
\int^{t}_{0}(t-s)^{2\alpha-2}e^{2\nu s}\mathbb{E}\|u(s)-u_n(s)\|_{\mathbb{H}^{2\widetilde{\gamma}-\frac{2\beta}{\alpha}}}^2ds.
\end{align*}
Thus Lemma \ref{lem2.11} leads to
\begin{align}\label{B.42}
\mathbb{E}\|u(t)-u_n(t)\|_{\mathbb{H}^{2\widetilde{\gamma}-\frac{2\beta}{\alpha}}}^2
\leq C\tau^2+C\sum_{l=1}^{\infty}(\eta_l^n)^2+C \sum_{l=1}^{\infty}(\widetilde{\eta}_l^n)^2,
\quad t>0.
\end{align}
The proof is completed.
\end{proof}

\end{document}